\numberwithin{equation}{section}
\newcommand{\Div}{\divergence}
\newcommand{\ep}{\bfvarepsilon}
\newcommand{\R}{\mathbb R}
\newcommand{\N}{\mathbb N}
\newcommand{\dd}{\mathrm{d}}
\newcommand{\dx}{\,\mathrm{d}x}
\newcommand{\dt}{\,\mathrm{d}t}
\newcommand{\dxt}{\,\mathrm{d}x\,\mathrm{d}t}
\newcommand{\ds}{\,\mathrm{d}\sigma}
\newcommand{\dxs}{\,\mathrm{d}x\,\mathrm{d}\sigma}
\newcommand{\dH}{{\, \dd \mathcal{H}^2}}
\DeclareMathOperator{\Span}{Span}
\DeclareMathOperator{\tr}{tr}
\newcommand{\dif}{\mathrm{d}}
\newcommand{\mr}{\mathbb{R}}
\newcommand{\mt}{\mathbb{R}^3}
\DeclareMathOperator{\dist}{dist}
\DeclareMathOperator{\diver}{div}
\newcommand{\bu}{\mathbf u}
\newcommand{\bq}{\mathbf q}
\DeclareMathOperator{\regkap}{\mathscr R_\kappa}
\DeclareMathOperator{\regzet}{{\mathscr R_\kappa}}
\DeclareMathOperator{\regeta}{{\mathscr R_\kappa}}
\newcommand{\toetai}{\to^\eta}
\newcommand{\weaktoetai}{\weakto^\eta}
\newcommand{\weaktoregzetan}{\weakto^\eta}
\newcommand{\toregzetan}{\to^\eta}
\newcommand{\seb}[1]{\textcolor[rgb]{0.00,0.00,1.00}{  #1}}
\begin{document}


\title{Compressible fluids interacting with\\ a linear-elastic shell}

\author{Dominic Breit}
\address[D. Breit]{
Department of Mathematics, Heriot-Watt University, Riccarton Edinburgh EH14 4AS, UK}
\email{d.breit@hw.ac.uk}

\author{Sebastian Schwarzacher}
\address[S. Schwarzacher]{Departement of Analysis, Faculty of Mathematics and Physics, Charles University, Sokolovsk\'a 83, 186 75 Praha, Czech Republic}
\email{schwarz@karlin.mff.cuni.cz}

\begin{abstract}
We study the Navier--Stokes equations governing the motion of an isentropic compressible fluid in three dimensions interacting with a flexible shell of Koiter type. The latter one
constitutes a moving part of the boundary of the physical domain. Its deformation is modeled by a linearized version of Koiter's elastic energy.
We show the existence of weak solutions to the corresponding system of PDEs provided the adiabatic exponent satisfies $\gamma>\frac{12}{7}$ ($\gamma>1$ in two dimensions). The solution exists until the moving boundary approaches a self-intersection. 
This provides a compressible counterpart of the results in [D. Lengeler, M. \Ruzicka, Weak Solutions for an Incompressible Newtonian Fluid Interacting with a Koiter Type Shell. Arch. Ration. Mech. Anal. 211 (2014), no. 1, 205--255] on incompressible Navier--Stokes equations. 
\end{abstract}

\subjclass[2010]{60H15, 35R60, 76N10,  35Q30}
\keywords{Compressible fluids, Navier--Stokes equations, weak solution, Koiter Shell, time dependent domains, moving boundary}

\date{\today}

\maketitle


\section{Introduction}
Fluid structure interactions have been studied intensively by engineers, physicists and also mathematicians. This is motivated by a plethora of applications anytime a fluid force is balanced by some flexible material; for instance in hydro- and aero-elasticity \cite{Su,Do} or biomechanics \cite{BGN}. 
In this work we consider the interaction of a baratropic compressible fluid (in particular a gas) in a three-dimensional body. A part of the boundary is assumed to be changing in time. The displacement of the boundary is prescribed via a two dimensional surface representing a Kirchhof-Love plate. Its material properties are deduced by assuming small strains and plane stresses parallel to the middle surface. 
We prove the existence of a weak solution to the coupled compressible Navier--Stokes system interacting with the Kirchhof-Love plate on a part of the boundary. The time interval of existence is only restricted as soon as self intersection of the moving boundary (namely the shell) is approached.

\subsection{Motivation \& state of art}
Over the last century mathematicians have been fascinated by the dynamics of fluid flows.
The theory of (long-time) weak solutions started with the pioneering work of Leray concerning incompressible Navier--Stokes equations \cite{Ler}. A compressible counterpart has been provided
by Lions \cite{Li2}. Lions' results have later been extended by Feireisl et al. \cite{feireisl1,fei3} to physically important situations (including, in particular, monoatomic gases). Today, there exists an
abundant amount of literature for both, incompressible as well as compressible fluids.
In the last decades fluid structure interactions have been the subject of
active research.
The interactions of fluids and elastic solids are of particular interest. A major mathematical difficulty is the parabolic-hyperbolic nature of the system resulting in regularity incompatibilities between the fluid- and the solid-phase.
First results concerning weak solutions in the incompressible case consider
regularized or damped elasticity laws, see \cite{Be,Bo,Ch,Leq}. The fluid interacts with an elastic shell which constitutes a moving part of the boundary of the physical domain in Lagrangian coordinates.  
The existence of strong solutions in short time was shown in \cite{ChSk,CoSh1,CoSh2} considering only forces of first order on the shell, hence excluding the flexural forces of the membrane. 
Long-time weak solutions in a similar setting have finally been obtained in \cite{LeRu}
 assuming a linearized elastic behavior of the shell. The authors of \cite{LeRu} consider a general three dimensional body in Eulerian coordinates. The elastic shell is a possible large part of the boundary and may deform in the direction of the outer normal. Its material behavior depends on membrane and bending forces.
A solution exists provided the magnitude of the displacement stays below some bound (depending only on the geometry of the reference domain) which excludes
self-intersections.
The results from
\cite{LeRu} have been extended to some incompressible non-Newtonian cases in \cite{Le}. See also \cite{sarka}.
Results for incompressible fluids in cylindrical domains have been shown in \cite{MuCa1,MuCa2} and \cite{BuMu}. The paper \cite{MuCa1} deals with a cylindrical linear elastic/viscoelastic Koiter shell in two dimensions (the shell is prescribed by a one-dimensional curve). The papers \cite{BuMu,MuCa2} extend this to cylindrical three-dimensional fluid flows. Note that in \cite{MuCa2} even nonlinear elastic behavior of the shell is allowed.\\ 
In contrast to the growing literature on incompressible fluids the knowledge about compressible fluids interacting with elastic solids is quite limited. Up to our knowledge, the only related result is \cite{KuTu}. Here, a compressible fluid interacts with a structure modeled by a linear wave equation in Lagrangian coordinates. The result of \cite{KuTu} concerns the existence of short-time strong solutions. It is related to earlier results about the incompressible setting, see \cite{CoSh1}.
Results on long-time weak solutions from problems coupling compressible fluids with a priori unknown elastic structures seem to be missing.
The aim of the present paper is to open this field by developing a compressible counterpart of the theory from \cite{LeRu}. More, precisely we are going to prove the existence of a weak solutions to the compressible Navier--Stokes system coupled with a linear elastic Koiter-type shell. The two dimensional shell is connected to the velocity field via boundary values on the free part of the boundary. Moreover, momentum forces
acting on the boundary are in equilibrium with the membrane forces and bending forces (flexural forces) of the shell.   

\subsection{The model}
We consider the Navier--Stokes system of an isentropic compressible viscous fluid interacting with a shell of Koiter-type of thickness $\varepsilon_0>0$. The Koiter shell model is a version of the Kirchhof-Love plate. More precisely it is a model reduction assuming small strains and plane stresses parallel to the middle
surface of the plate. Physically this means that the plate consists of a homogeneous, isotropic material.
Its mathematical formulation is as follows. Let $\Omega\subset\setR^3$ and let $T>0$. We devide $\partial\Omega$ into the fixed in time part $\Gamma$ and its compact complement $M$, the part where the shell is located. The shell is assumed to be driven solely in the direction of the outer normal $\nu$ of $\Omega$, cf. \cite{Ch,Gr}. This allows to write the energy for elastic shells via a scalar function $\eta:M\to (a,b)$ where the numbers $a,b$ are fixed but depend only on the geometry of $\Omega$, such that self intersections of the boundary are not possible. For example in case of a ball $\Omega=B_r$, the interval is $(-r,\infty)$. 
The elastic energy of the deformation is then modeled via Koiter's energy~\cite[eqs. (4.2), (8.1), (8.3)]{Koi2} 
\begin{align*}
 K(\eta)=\frac{1}{2}\varepsilon_0\int_M\bfC:\bfsigma(\eta\nu)\otimes\bfsigma(\eta\nu)\,\dd\mathcal H^2+\frac{1}{6}\varepsilon_0^3\int_M\bfC:\bftheta(\eta\nu)\otimes\bftheta(\eta\nu)\,\dd\mathcal H^2
\end{align*}
which is the sum of two terms reflecting different material properties. The first term is the membrane part of the energy which would remain even in the case of thin films. Indeed, the term $\bfsigma(\eta\nu)$ depends linearly on the pullback of the first fundamental form of the two dimensional surface $\eta(M)$.  The second term reflects the flexural part of the energy. Respectively, the argument $\bftheta$ depends linearly on the pullback of the second fundamental form (the change of curvature). The coefficient tensor $\bfC$ is a non-linear function of the first fundamental form. For more details on the derivation of this model we refer to \cite{Koi,Koi2}, where Koiter's energy for nonlinear elastic shells has been introduced. See also \cite{Ci2,Ci3} for a more recent exposition. 
Following \cite[Thm. 4.2-1 \& Thm. 4.2-2]{Ci3} one can linearize $\bfsigma$ and $\bftheta$ with respect to $\eta$ and obtain (for suitable coordinates)
$$K'(\eta)=m\Delta^2\eta+B\eta$$
for the $L^2$-gradient $K'$ of $K$. Here $m>0$ depends on the shell material (to be precise on $\varepsilon_0$ and the Lam\'e constants) and $B$ is a second order differential operator.
In particular, it is shown in \cite[Thm. 4.4-2]{Ci3}, that
\begin{align*}
K(\eta)=\frac{1}{2}\int_MK'(\eta)\,\eta\,\dH\geq c_0\int_M|\nabla^2\eta|^2\,\dH
\end{align*} 
for all $\eta\in H^{2}_0(M)$ with some $c_0>0$. The above is a Kirchhoff--Love plate equation for transverse displacements, see \cite{Ci1}. The technical restriction, that we only allow forces to act on the shell in (a fixed) normal direction is the most severe restriction in our paper. Under this assumption we will, however, show long time weak solutions. They exists as long as the plate does not approach a self intersection. Observe also, that no long time existence seems to be available for less restrictive geometric assumptions. Even for one dimensional boundaries and for incompressible  fluids no long time existence result seem to be available for less severe restrictions.
Finally observe, that since $\eta$ is assumed to have zero boundary values on $\partial M$, there is a canonical extension by zero to $\partial\Omega$, which we will use in the following without further remark.
\\


We denote by $\Omega_{\eta(t)}$ the variable in time domain. With a slight abuse of notation we denote by $I\times\Omega_\eta=\bigcup_{t\in I}\set{t}\times\Omega_{\eta(t)}$ the deformed time-space cylinder, defined via its boundary
\[
\partial\Omega_{\eta(t)}=\set{x+\eta(t,x)\nu:x\in \partial\Omega}.
\]
Recall that $\Omega$ is a given (smooth) reference domain with outer normal $\nu$. 

 Along this cylinder we observe the flow of an isentropic compressible fluid subject to the volume force $\bff:I\times \Omega_\eta\rightarrow\R^3$. We seek the density $\varrho:I\times \Omega_\eta\rightarrow\R$ and velocity field $\bu:I\times \Omega_\eta\rightarrow\R^3$ solving the following system
 \begin{align}
 \partial_t\varrho+\diver(\varrho\bu)&=0,&\text{ in } &I\times \Omega_\eta,\label{eq1}
 \\ \partial_t(\varrho\bu)+\diver(\varrho\bu\otimes\bu)&=\mu\Delta\bu+(\lambda+\mu)\nabla\diver\bfu-\nabla p(\varrho)+\seb{\varrho}\bff&\text{ in } &I\times\Omega_\eta,\label{eq2}
 \\
 \bfu(t,x+\eta(x)\nu(x))&=\partial_t\eta(t,x)\nu(x)&\text{ on } &I\times M,\label{eq3a}
 \\
 \bfu&=0&\text{ on } &I\times \Gamma,\label{eq3aa}
 \\
  \varrho(0)=\varrho_0,\quad (\varrho\bu)(0)&=\bq_0&\text{ in } &\Omega_{\eta_0}.\label{eq3}
 \end{align}
 Here $p(\varrho)$ is the pressure which is assumed to follow the $\gamma$-law,  for simplicity $p(\varrho)=a\varrho^\gamma$ where $a>0$ and $\gamma>1$. Note that
in \eqref{eq2} we suppose Newton's rheological law
$$\bfS=\bfS(\nabla\bfu)=\mu\Big(\frac{\nabla\bfu+\nabla\bfu^T}{2}-\frac{1}{3}\Div\bfu\,I\Big)+\Big(\lambda+\frac{2}{3}\mu\Big)\Div\bfu\,I$$
 with viscosity coefficients $\mu,\,\lambda$ satisfying
$$\mu>0,\quad \lambda+\frac{2}{3}\mu>0,$$
see Remark \ref{rem:bulk} for the case $\lambda+\frac{2}{3}\mu=0$.
 The shell should response optimally with respect to the forces, which act on the boundary. Therefore we have
 \begin{align}\label{eq:4}
  \varepsilon_0\varrho_S\partial_t^2\eta+K'(\eta)&=
  g+\nu \cdot \bfF\quad\text{on}\quad I\times M,
\end{align}
where $\varrho_S>0$ is the density of the shell.
Here $g:[0,T]\times M\rightarrow\R$ is a given force and $\bfF$ is given by
\begin{align*}
\bfF&:=\big(-\bftau\nu_\eta \big)\circ \bfPsi_{\eta(t)}|\det D\bfPsi_{\eta(t)}|
\\
\bftau&:=-\mu\nabla\bfu -(\lambda+\mu)\Div\bfu\, \mathcal{I} +p(\varrho)\mathcal{I}. 
 \end{align*} 
Here $\bfPsi_{\eta(t)}:\partial\Omega\rightarrow\partial\Omega_{\eta(t)}$ is a change of coordinates and $\bftau=\bfS-p I$ is the Cauchy stress. To simplify the presentation in \eqref{eq:4}
we will assume 
\[
\varepsilon_0\varrho_S=1
\]
 throughout the paper. 
We assume the following boundary and initial values for $\eta$
\begin{align}
\label{initiala}
\eta(0,\cdot)=\eta_0,\quad \partial_t\eta(0,\cdot)=\eta_1\quad\text{on}\quad M,\\
 \label{initial}\eta=0,\quad\nabla \eta=0\quad\text{on}\quad\partial M,
\end{align}
where $\eta_0,\eta_1:M\rightarrow\R$ are given functions. 
Where we assume that
\[
\text{Im}(\eta_0)\subset (a,b).
\]
In view of \eqref{eq3a} we have to assume the compatibility condition
\begin{align}\label{eq:compa}
\eta_1(x)\nu(x)=\frac{\bfq_0}{\rho_0}(x+\eta(x)\nu(x))\quad\text{on}\quad M. 
\end{align}
By the canonical extension of $\eta,\partial\eta$ by $0$ to $\partial\Omega$ we can unify \eqref{eq3a} and \eqref{eq3aa} to
\begin{align}
\label{boundary:unified}
 \bfu(t,x+\eta(t,x)\nu(x))=\partial_t\eta(t,x)\nu\quad\text{on}\quad I\times \partial\Omega.
\end{align}
Our main result is tho following existence theorem. The system \eqref{eq1}--\eqref{eq:compa} can be written in a natural way as a weak solution. The precise formulation can be found in Section \ref{sec:6}, cf. \eqref{eq:apvarrho0final} and \eqref{eq:apufinal}.
 The main result of this paper is the following.
\begin{theorem} 
For regular data and $\gamma\in \big(\frac{12}7,\infty)$
there exists a weak solution $(\eta,\varrho, \bfu)$ to \eqref{eq1}--\eqref{initial} satisfying the energy estimate
\begin{align}\label{eq:apriori0}
\begin{aligned}
&\sup_{I}\int_{\Omega_{\eta}}\varrho|\bfu|^2\dx
+\sup_{I}\int_{\Omega_\eta}\varrho^{\gamma}\dx+\int_I\int_{\Omega_{\eta}}|\nabla\bfu|^2\dxs\\&
+\sup_{I}\int_M \frac{|\partial_t\eta|^2}{2}\,\dd\mathcal H^2+ \sup_{I}\int_M|\nabla^2\eta|^2\,\dd\mathcal H^2\\
&\leq\,c(\bff,g,\bfq_0,\varrho_0,\eta_0,\eta_1).
\end{aligned}
\end{align} 
The intervall of existence is of form $I=[0,t)$, where $t<T$ only in case $\Omega_{\eta(s)}$ approaches a self-intersection with $s\to t$.   
\end{theorem}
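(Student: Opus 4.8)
The plan is to construct the weak solution through a multi-layer approximation scheme, following the structure of \cite{LeRu} for the incompressible case but incorporating the density-handling techniques of Lions--Feireisl. First I would introduce a decoupling/regularization of the geometry: fix a sequence $\eta_\kappa$ obtained by smoothing the shell displacement, so that the deformed domain $\Omega_{\eta_\kappa}$ stays smooth and bounded away from self-intersection, and use a diffeomorphism $\bfPsi_{\eta_\kappa}$ to pull everything back to the fixed reference domain $\Omega$. On this fixed domain I would then set up a Galerkin scheme for the coupled velocity--shell system: the basis functions must satisfy the kinematic coupling \eqref{boundary:unified}, i.e.\ the test functions $(\bfb,\beta)$ are linked by $\bfb\circ\bfPsi_\eta=\beta\nu$ on $\partial\Omega$. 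Simultaneously the continuity equation \eqref{eq1} is solved by the method of characteristics (or via a vanishing-viscosity regularization $\partial_t\varrho+\diver(\varrho\bu)=\varepsilon\Delta\varrho$) for a given velocity field, and an artificial pressure term $\delta\varrho^\beta$ with $\beta$ large is added to \eqref{eq2} to obtain enough integrability of the density for the first passage to the limit.

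The key steps, in order, are: (i) solve the regularized Galerkin system on a short time interval by a fixed-point argument, using that for smoothed geometry the coupling operator and the Bogovskii-type corrections are well-behaved; (ii) derive the approximate energy inequality — here the crucial cancellation is that the boundary term coming from $\nu\cdot\bfF$ in \eqref{eq:4} exactly matches the boundary contribution of the fluid stress tested against $\bfu$ on $\partial\Omega_\eta$, because of the kinematic condition, so that the membrane/bending energy $K(\eta)\ge c_0\int_M|\nabla^2\eta|^2$ and the kinetic shell energy appear with good signs; (iii) pass $n\to\infty$ in the Galerkin parameter using these uniform bounds plus Aubin--Lions-type compactness, taking care that the time-dependence of the domain is controlled by the fixed geometry $\eta_\kappa$; (iv) pass $\varepsilon\to 0$ and then $\delta\to 0$, at which stage one needs the effective viscous flux identity and the renormalized continuity equation to upgrade weak convergence of $\varrho^\gamma$ to strong convergence of $\varrho$ — this is exactly where the restriction $\gamma>\frac{12}{7}$ enters, since the limited space-integrability of $\varrho|\bu|^2$ near the moving boundary (the velocity is only $L^2H^1$ and the domain regularity is borderline) forces a better adiabatic exponent than the classical $\gamma>3/2$ heuristic would suggest; (v) finally remove the geometric regularization $\kappa\to 0$, showing that the limit displacement $\eta$ has enough regularity ($\eta\in L^\infty H^2\cap W^{1,\infty}L^2$ plus the parabolic smoothing from the coupling) to close the fixed-point consistency, i.e.\ the $\eta$ we produced is the one generating the domain, and that the solution can be continued in time until $\Omega_{\eta(s)}$ degenerates.

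The main obstacle I expect is step (iv)–(v): controlling the compactness of the density and the convergence of the convective term $\varrho\bu\otimes\bu$ on a domain whose boundary is itself only a limit of the unknowns. On a fixed smooth domain the Feireisl machinery (oscillation defect measure, effective viscous flux) is standard, but here the test functions in the flux identity must be transported by the (only weakly converging) flow maps $\bfPsi_{\eta}$, and the commutator estimates for the renormalized equation have to be done in the moving frame. Getting the pressure estimates $\int_I\int_{\Omega_\eta}\varrho^{\gamma+\Theta}$ with $\Theta>0$ uniformly — via testing the momentum equation with a Bogovskii-type corrector of $\varrho^\Theta$ adapted to the variable domain — is the quantitative heart of the argument and dictates the threshold $\gamma>\frac{12}{7}$; I would expect this to be the most technically demanding part, requiring careful tracking of how the constants in the Bogovskii and Korn inequalities depend on the (uniformly controlled) geometry of $\Omega_\eta$.
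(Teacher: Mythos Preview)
Your overall scheme is close to the paper's, but there is a genuine gap in how you locate the central difficulty. You expect the hard part to be ``tracking how the constants in the Bogovskii and Korn inequalities depend on the geometry of $\Omega_\eta$'' when testing with a Bogovskii-type corrector of $\varrho^\Theta$. This is not what happens. The a priori regularity of the shell gives only $\eta\in L^\infty(I;W^{2,2}(M))$, which in two surface dimensions embeds into $W^{1,q}$ for all $q<\infty$ but \emph{not} into $W^{1,\infty}$; hence $\partial\Omega_\eta$ is in general not Lipschitz, and the Bogovskii operator is simply unavailable near the moving boundary. The interior higher-integrability estimate $\int\varrho^{\gamma+\Theta}$ is indeed obtained locally (away from the boundary) by the standard $\nabla\Delta^{-1}\varrho^\Theta$ test, and this step only needs $\gamma>3/2$. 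The restriction $\gamma>12/7$ comes from a separate and more delicate step: one must exclude concentration of the pressure \emph{at} the moving boundary, and for this the paper uses an explicit test function in the spirit of Kuku\v{c}ka \cite{Kuk}, of the form $\bfphi=\phi\,\min\{K(s-\eta\circ q),1\}\,\nu\circ q$, whose divergence blows up near $\partial\Omega_\eta$. The critical term is then $\int\varrho\bfu\cdot\partial_t\bfphi$, and since $\partial_t\bfphi$ involves $\partial_t\eta$ (which via the trace of $\bfu$ lies only in $L^2(I;L^q)$ for $q<4$), one needs $\varrho\bfu\in L^2(I;L^p)$ with $p>4/3$, i.e.\ $\gamma>12/7$.

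Two smaller points. First, your order of limits places the geometric de-regularization $\kappa\to 0$ last, after $\varepsilon\to 0$ and $\delta\to 0$. The paper removes $\kappa$ immediately after the Galerkin/fixed-point layer, so that the effective-viscous-flux and renormalization arguments are carried out directly on the rough moving domain; doing $\kappa$ last would not avoid the boundary-concentration problem, only defer it to a step where you no longer have the artificial pressure $\delta\varrho^\beta$ to help. Second, the fixed-point argument at the $\kappa$-level is more delicate than your sketch suggests: vacuum cannot be excluded even for the damped continuity equation, so the paper replaces $\partial_t(\varrho\bfu)$ by $\partial_t((\varrho+\kappa)\bfu)$ on that level to obtain the $L^2$-compactness of $\bfu$ needed for the set-valued fixed-point theorem.
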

The function space of existence for a weak solution to \eqref{eq1}--\eqref{initial} is determined by the left-hand side of
\eqref{eq:apriori0} taking into account the variable domain.
For the precise assumptions on the given data, as well as the precise definition of a weak solution, see Theorem \ref{thm:final} in the last section of the paper.
{We remark that in the three dimensional case the bound $\gamma>\frac{12}{7}$ is less restrictive then the bound
$\gamma>\frac{9}{5}$ appearing in the pioneering work of Lions \cite{Li2} 
but more restrictive than the bound $\gamma>\frac{3}{2}$ arising in the theory by Feireisl et al. \cite{feireisl1}. A detailed explanation can be found in the next subsection.} For more explanations on the restrictions of the growth condition on the pressure see Remark~\ref{rem:exp} at the end of this section.

\subsection{Mathematical significance \& novelties}
A main task is to understand how to pass to the limit
in a sequence of solutions $(\eta_n,\bfu_n,\varrho_n)$ to \eqref{eq1}--\eqref{initial} which enjoys suitable regularity properties and satisfies the uniform estimate \eqref{eq:apriori0}.
The passage to the limit in the convective terms $\varrho_n\bfu_n$ and $\varrho_n\bfu_n\otimes\bfu_n$ follows by local arguments combined with global integrability, see \eqref{conv:rhov2} and \eqref{conv:rhovv2}. Problems with the moving boundary can be avoided.
Note that this is totally different to the incompressible system studied in \cite{LeRu} where huge difficulties arise due to the divergence-free constraint.
 As common for the compressible Navier--Stokes system the major difficulty is to pass to the limit in the nonlinear pressure. A key step is to improve the (space)-integrability of the density to ensure that $p(\varrho_n)$ actually converges to a measurable function (and not just to a measure). {Locally, where the effect of the moving boundary disappears, this can be done by the standard method, see Proposition \ref{prop:higher}. However, our test-functions in the weak formulation are not compactly supported. This is crucial for the coupling of fluid and shell.
Note in particular, that this is different from \cite{Fe}, where the interaction of compressible fluids and rigid bodies is studied. In \cite{Fe}, at least the gradients of test-functions are supported away from the area of interaction.
In our case, however, it is essential to exclude the concentration of $p(\varrho_n)$ at the boundary. On account of the limited regularity of the moving boundary (it is not even Lipschitz in three dimensions, see \eqref{eq:apriori0}) the common approach based on the \Bogovskii\, operator fails. We solve this problem inspired by a method introduced in \cite{Kuk}
for compressible Navier--Stokes equations in irregular domains, see Proposition \ref{prop:higherb} and \ref{prop:higherb'}. 
It can be used to exclude the concentration of the pressure at the boundary. This, in turn, allows to prove that the weak continuity of the effective viscous flux $$p(\varrho)-(\lambda+2\mu)\Div\bfu$$ 
holds globally, see \eqref{eq:fluxpsi} and \eqref{eq:fluxpsi'}. 
In order to combine this with the renormalized
continuity equation we are confronted with another problem: we do not have a zero boundary conditions for the velocity at the shell.
In general, it seems to be extremely difficult if not impossible to combine the properties of the effective viscous flux with the renormalized continuity equation in this case (see the remarks in \cite[Section 7.12.5]{novot}). This is due to the additional boundary term which appears when extending the continuity equation to the whole space.
However, due to the natural interplay
between fluid flow and elastic shell, our situation can be understood as no-slip boundary conditions with respect to the moving shell. Hence, the just mentioned boundary term disappears due to the Lagrangian background of the material derivative. To make this observation accessible, a careful study of the damped continuity equation in time dependent domains is necessary. We refer to Subsection~\ref{ssec:damp} and in particular Theorem~\ref{lem:warme}, which collects the necessary regularity results for the density function on time changing domains. It implies for instance the respective renormalized formulation. 
This is the second essential tool which allows to show strong convergence of an approximate sequence $\varrho_n$ and hence to establish the correct form of the pressure in the limit equation.
\\
The third difficulty is to construct a sequence of solutions. In the present case, this is rather difficult to do, since the geometry and the solution are highly coupled via the partial differential equations. Hence, in order to use the ideas explained above rigorously, we need a four layer approximation of the system as follows.
\begin{itemize}
\item Artificial pressure ($\delta$-layer): replace $p(\varrho)=a\varrho^\gamma$ by
$p_\delta(\varrho)=a\varrho^\gamma+\delta\varrho^\beta$ where $\beta$ is chosen large enough.
\item Artificial viscosity ($\varepsilon$-layer): add $\varepsilon\Delta\varrho$ to the right-hand side of \eqref{eq1}.
\item Regularization of the boundary ($\kappa$-layer): Replace the underlying domain $\Omega_\eta$ by $\Omega_{\eta_\kappa}$ where ${\eta_\kappa}$ is a suitable regularization of $\eta$. Accordingly, the convective terms and the pressure have to be regularized as well.
\item Finite-dimensional approximation ($N$-layer): the momentum equation has to be solved by means of a Galerkin-approximation.
\end{itemize} 
The first two layers are common in the theory of compressible Navier--Stokes equations, see \cite{feireisl1}. The third layer is needed additionally due to the low regularity of the shell described by $\eta$. By \eqref{eq:apriori0} we have $\eta\in W^{2,2}(M)$
Such that Sobolev's embedding implies $\eta\in W^{1,q}(M)$ for all $q<\infty$ but not necessarily $\eta\in W^{1,\infty}(M)$. So, we do not have a Lipschitz boundary.
In addition, it is necessary to regularize the convective terms in \eqref{eq1} and \eqref{eq2} (see the comments on the $N$-layer below for a detailed explanation).\\
On the last layer we are confronted with the problem that the function space depends on the solution itself. As a consequence a finite-dimensional Galerkin approximation is not possible: the Ansatz functions depend on the solution itself. Motivated by \cite{LeRu} we apply a fixpoint argument in $\eta$ and $\bfu$ for a linearized problem (roughly speaking we replace $\varrho\bfu\otimes\bfu$ in \eqref{eq2} by $\varrho\bfu\otimes\bfv$ and $\varrho\bfu$ by $\varrho\bfv$ in \eqref{eq1} for $\bfv$ given, see Subsection \ref{subsec:dec}). It is crucial for our fixed point argument that the momentum equation is linear in $\bfu$. For $(\zeta,\bfv)$ given we solve the system on the domain $\Omega_\zeta$. The domain still varies in time but is independent of the solution. Note here, that $\varrho$ is computed by solving the continuity equation with convective term independent of $\bfu$. The
existence of a weak solution $(\eta,\bfu)$ to the decoupled system can than be shown by Galerkin approximation without further problems. This is due to the good a-priori information that was derived for $\varrho$ in Theorem~\ref{lem:warme}, see Theorem \ref{thm:decu}. The next difficulty is to find a fixed point of the mapping $(\zeta,\bfv)\mapsto(\eta,\bfu)$ in an appropriate function space. 
The compactness of the mapping situated on the shell is rather easys as we apply a proper regularization with arbitrary smoothness. The main issue is the compactness of the velocity. Inspired by ideas from \cite{LeRu} we can prove compactness of $\bfu_n$ in $L^2(I\times\R^3)$ (where $\bfu_n$ is extended by zero). 
 It is based on Lemma \ref{thm:weakstrong}, where we prove a variant of the Aubin-Lions compactness theorem for variable domains.
It is noteworthy, that we are unable to exclude vacuum even in the situation of a damped continuity equation. To prevent the problem with the vacuum we replace on the $\kappa$-level the momentum $\partial_t(\varrho\bfu)$ by $\partial_t((\varrho+\kappa)\bfu)$ in the momentum equation, which can be used to show that $\bfu_n$ is strongly compact in $L^2$.

\subsection{Outline of the paper} In section \ref{sec:2} we present basic concerning variable domains as well the functional analytic set-up. In Section \ref{subsec:continuity} we we study the continuity equation (with artificial viscosity) on variable domains. The renormalized formulation is of particular importance.
 Section \ref{sec:3} is concerned with the decoupled system, its finite dimensional approximation and the fixed point argument. The main result of this section
is the existence of a weak solution to the regularized system
with artificial viscosity and pressure. In section \ref{sec:4} we pass to the limit in the regularization (of domain and convective terms) and gain a weak solution to the system with artificial viscosity. Compactness of the density can be shown as in the fixed point argument. Hence we can pass to the limit in all nonlinearities without further difficulties. The proceeding sections \ref{sec:5} and \ref{sec:6} deal with the vanishing artificial viscosity
and vanishing artificial pressure limit respectively. Both follow a similar scheme where the major difficulty is the strong convergence of the density. The argumentation is based on the weak continuity of the effective viscous flux, oscillation defect measures and the renormalized continuity equation. The main result of this paper (the existence of weak solutions to \eqref{eq1}--\eqref{initial})
follows after passing to the limit with $\delta\rightarrow0$ in Section \ref{sec:6}. The full statement is given in Theorem \ref{thm:final}.
\begin{remark}
\label{rem:exp}
The restriction the bound $\gamma>\frac{12}{7}$ in three space dimensions is needed 
exclude concentration of the pressure near the moving boundary. Indeed, such concentrations are excluded by constructing a test-function $\bfphi_n$ whose divergence explodes at the boundary while $\int p(\varrho_n)\,\Div\bfphi_n$ is still bounded. This requires, in particular, to estimate the integral
$\int_I\int_{\Omega_{\eta_n}} \varrho_n\bfu_n\,\partial_t\bfphi_n\dxt.$
Naturally, the function $\bfphi_n$ depends on the distance to the boundary and as such on the shape of the moving boundary which only has low regularity. Indeed, the given a priori estimates imply that $\partial_t\bfphi_n$ can only be bounded in $L^2(I;L^q)$ for all $q<4$ (using \eqref{eq3a}). 
Hence we need to know that $\varrho_n\bfu_n$ is bounded in $L^2(I;L^p)$ uniformly in $n$ for some $p>\frac{4}{3}$. This follows from the a priori estimates provided we have $\gamma>\frac{12}{7}$ (using that $\varrho_n\bfu_n\in L^2(I;L^{\frac{6\gamma}{\gamma+6}})$ in three dimensions).\\
In the two dimensional case we have instead $\partial_t\bfphi_n\in L^\infty(I;L^q)$ for all $q<\infty$. Consequently, no additional restrictions on $\gamma$ is needed and the result holds for all $\gamma>1$.
\end{remark}
\begin{remark}
\label{rem:bulk}
Our proof requires the bulk viscosity $\lambda+\frac{2}{3}\mu$ to be strictly positive. In case $\lambda+\frac{2}{3}\mu=0$ it is necessary
to control the full gradient by the deviatoric part of the symmetric gradient. Such a Korn-type inequality is well-known for Lipschitz domains, see \cite{resh}.
In our context of domains with less regularity, a Korn-type inequality for symmetric gradients is shown in
\cite[Prop. 2.9]{Le} following ideas of \cite{Ac}. The integrability of the full gradient is, however, less than the one of the symmetric gradient.
We believe that a corresponding trace-free version can be shown following similar ideas. So, the case $\lambda+\frac{2}{3}\mu=0$ could be included for the price that the velocity only belongs to $W^{1,p}$ for all $p<2$.
\end{remark}

\section{Preliminaries}
\label{sec:2} 
The variable domain $\Omega_\eta$ can be parametrized in terms of the reference domain $\Omega$ via a mapping $\bfPsi_\eta$ such that
\begin{align}
\label{map2}
\begin{aligned}
\bfPsi_\eta&: \Omega\to \Omega_\eta\text{ is invertible }
\\
\text{and  }\bfPsi_\eta|_{\partial\Omega}&:\partial\Omega\to   \partial \Omega_\eta\text{ is invertible. }
\end{aligned}
\end{align}
 The explicit construction can be found below in \eqref{map}. 
 
Throughout the paper we will make heavily use of Reynolds transport theorem, which we will use without any further reference. It says that
\begin{align}
\label{reynold}
\frac{\dd}{\dt}\int_{\Omega_{\eta(t)}}\!\!\!\!g\dx=\int_{\Omega_{\eta(t)}}\!\!\!\! \partial_tg\dx+\int_{\partial\Omega_{\eta(t)}}\!\!\!\!\partial_t\eta\circ\bfPsi_\eta^{-1}\nu\cdot\nu_\eta g\dH,
\end{align}
provided all terms are well-defined. 
The above can easily be justified by transposition and chain rule. The heuristic beyond is that $\nu$ is the direction in which the domain changes (which in our model is a fixed prescribed direction) and $\partial_t\eta$ describes the velocity of change. Therefore the scalar $\partial_t\eta\circ\bfPsi_\eta^{-1} \nu\cdot \nu_\eta$ is the derivative of the change of domains; i.e. the forces acting in direction of the outer normal of $\partial\Omega_{\eta(t)}$. For a couple of functions which satisfies $\trace_{\eta}(\phi)=b$ in the sense of Lemma \ref{lem:2.28} we have
\[
\int_{\partial{\Omega_{\eta}}}\phi \dH=\int_{\partial\Omega} \varphi\circ\bfPsi_\eta\, \abs{\det (D\bfPsi_{\zeta})}\dH.
\]
\subsection{Formal a priori estimates \& weak solutions}
We introduce the weak formulation of the momentum equation, which will be coupled to the material law of the shell. It is motivated by the a priori estimates. We will now derive these estimates formally. First, we multiply the momentum equation by $\bfu$ and integrate with respect to space (at a fixed time). We multiply the continuity equation by $\tfrac{|\bfu|^2}{2}$ and integrate with respect to space (at the same time). Subtracting both and applying chain rule yields
\begin{align*}
\int_{\Omega_{\eta}}&\partial_t\Big(\varrho\frac{|\bfu|^2}{2}\Big)\dx=\int_{\Omega_{\eta}}\partial_t(\varrho\bfu)\cdot\bfu\dx-\int_{\Omega_{\eta}}\partial_t\varrho\frac{|\bfu|^2}{2}\dx
\\
&=\int_{\Omega_{\eta}}\big(-\diver(\varrho\bfu\otimes \bfu)+\mu\Delta\bu+(\lambda+\mu)\nabla\diver\bfu
-a\nabla \varrho^{\gamma}+\varrho\bff\big)\cdot\bfu\dx
+\int_{\Omega_{\eta}}\Div(\varrho\bfu)\,\tfrac{|\bfu|^2}{2}\dx
\\
&=-\int_{\partial\Omega_{\eta}}\varrho\tfrac{|\bfu|^2}{2}\bfu\cdot\nu_\eta\dH+\int_{\Omega_{\eta}}\Div\bftau\cdot\bfu\dx+\int_{\Omega_\eta}\varrho\bff\cdot\bfu\dx
\\
&=-\int_{\partial\Omega_{\eta}}\varrho\tfrac{|\bfu|^2}{2}\bfu\cdot\nu_\eta\dH+\int_{\partial\Omega_{\eta}}\bftau\bfu\cdot\nu_\eta\dH-\int_{\Omega_{\eta}}\bftau:\nabla\bfu\dx+\int_{\Omega_\eta}\varrho\bff\cdot\bfu\dx.
\end{align*}
By Reynolds transport theorem we get
\begin{align*}
\frac{\dd}{\dd t}\int_{\Omega_{\eta}}&\varrho\frac{|\bfu|^2}{2}\dx=\int_{M}\bftau\,(\bfu\cdot\nu_\eta)\circ\bfPsi_\eta \abs{\det D\bfPsi_\eta}\dH-\mu\int_{\Omega_{\eta}}|\nabla\bfu|^2\dx
\\&\quad-(\lambda+\mu)\int_{\Omega_{\eta}}|\Div\bfu|^2\dx
+\int_{\Omega_{\eta}}a\varrho^\gamma\diver\bfu\dx+\int_{\Omega_\eta}\varrho\bff\cdot\bfu\dx.
\end{align*}
To control the pressure term, we multiply the continuity equation by $\gamma\varrho^{\gamma-1}$ and obtain by Reynold's transport theorem and the assumed boundary values that
\begin{align*}
0=&\frac{\dd}{\dt}\int_{\Omega_\eta}\varrho^\gamma\dx + (\gamma-1)\int_{\Omega_\eta}\varrho^\gamma \diver\bfu \dx.
\end{align*}
Later we will make this step rigorous via the use of so called renormalized formulations of the continuity equation, see Subsection \ref{ssec:rns} below.
Hence,
\begin{align*}
\frac{\dd}{\dd t}\int_{\Omega_{\eta}}&\Big(\varrho\frac{|\bfu|^2}{2}+\frac{a}{\gamma-1}\varrho^\gamma\Big)\dx=\int_{M}\bftau\,(\bfu\cdot\nu^\eta)\circ\bfPsi_\eta \abs{\det D\bfPsi_\eta}\dH-\mu\int_{\Omega_{\eta}}|\nabla\bfu|^2\dx
\\&\quad-(\lambda+\mu)\int_{\Omega_{\eta}}|\Div\bfu|^2\dx
+\int_{\Omega_\eta}\varrho\bff\cdot\bfu\dx.
\end{align*}
The boundary term represents the forces which are acting on the shell. Naturally these have to be in equilibrium with the bending and membrane potentials of the shell. Formally this is achieved by multiplying the shell equation~\eqref{eq:4}\footnote{Recall, that we assumed $\varepsilon_0\rho_S=1$} with $\partial_t\eta$. Using once more that
 $\bfu\circ\bfPsi_\eta=\partial_t\eta\nu$ on $M$ we find that
\begin{align*}
\frac{\dd}{\dt}\int_M \frac{|\partial_t\eta|^2}{2}\dx+\frac{\dd}{\dt} K(\eta)=\int_M \bfF\cdot\nu\partial_t\eta\dx=\int_{M}\bfF\cdot\bfu\circ\bfPsi_\eta\dx.
\end{align*}
So the right-hand sides of  both equations cancel.
Finally, we gain
\begin{align*}
&\int_{\Omega_{\eta}}\frac{\varrho(t)|\bfu(t)|^2}{2}\dx+\int_{\Omega_\eta}\frac{a}{\gamma-1}\varrho^{\gamma}(t)\dx
+\mu\int_0^t\int_{\Omega_{\eta}}|\nabla\bfu|^2\dxs+(\lambda+\mu)\int_0^t\int_{\Omega_{\eta}}|\Div\bfu|^2\dxs
\\&
\qquad+\int_M \frac{|\partial_t\eta|^2}{2}\,\dd\mathcal H^2+ \frac{K(\eta)}{2}
\\
&=\int_0^t\int_{\Omega_{\eta}}\varrho\bff\cdot\bfu\dxs+\int_0^t\int_M g\,\partial_t\eta\,\dd\mathcal H^2\ds
+\int_{\Omega_{\eta_0}}\frac{|\bfq_0|^2}{2}\dx+\frac{K(\eta_0)}{2}+\int_M\frac{|\eta_1|^2}{2}\,\dd\mathcal H^2.
\end{align*} 
This implies by H\"older's inequality and absorption~\eqref{eq:apriori0}.
In coherence with the a-priori estimates we introduce the following weak formulation of the coupled momentum equation
\begin{align}\label{eq:wu}
\begin{aligned}
&\int_I\bigg(\frac{\dd}{\dt}\int_{\Omega_{ \eta}}\varrho\bfu\cdot \bfphi\dx-\int_{\Omega_{\eta}} \varrho\bfu \cdot\partial_t\bfphi +\varrho\bfu\otimes \bfu:\nabla \bfphi\dx\bigg)\dt
\\
&\quad+\int_I\int_{\Omega_{ \eta }}\Big(
\mu\nabla\bfu:\nabla\bfphi +(\lambda+\mu)\Div\bfu\,\Div\bfphi\dxt-a\varrho^\gamma\,\Div\bfphi\Big)\dxt
\\
&\quad
+\int_I\frac{\dd}{\dt}\int_M \partial_t \eta b\dH-\int_M \partial_t\eta\,\partial_t b\dH + \int_M K'(\eta)\,b\dH\dt
\\&=\int_I\int_{\Omega_{\regkap \eta}}\varrho\bff\cdot\bfphi\dxt+\int_I\int_M g\,b\,\dd x\dt
\end{aligned}
\end{align} 
for all test-functions $(b,\bfphi)\in C^\infty_0(M)\times C^\infty(\overline{I}\times\R^3)$ with $\mathrm{tr}_\eta\bfphi=b\nu$.

\subsection{Geometry}

\label{ssec:geom}
In this section we present the background for variable domains, see \cite{LeRu} for further details. Let $\Omega\subset\R^3$ be a bounded domain with boundary $\partial\Omega$ of class $C^4$ with outer unit normal $\nu$. In the following $\Omega$ will be called the reference domain.
We define for $\alpha>0$ the
$$S_\alpha:=\{x\in \R^3:\,\,\mathrm{dist}(x,\partial\Omega)<\alpha\}.$$ 
There exists a positive number $L>0$ such that the mapping
\begin{align}
\label{Lambda}
\Lambda:\partial\Omega\times(-L,L)\rightarrow S_L,\quad\Lambda(q,s)=q+s\nu(q)
\end{align}
is a $C^3$-diffeomorphism. It is the so called Hanzawa transform.
The details of this construction may be found in \cite{Lee}. This is due to the fact, that for $C^2$-domains the closest point projection is well defined in a strip around the boundary. Indeed, its inverse $\Lambda^{-1}$ will be written as $\Lambda^{-1}(x)=(q(x),s(x))$. Here $q(x)=\text{arg min}\set{\abs{q-x}|q\in \partial\Omega}$ is the closest boundary point to $x$ (which is an orthogonal projection) and $s(x)=(x-q(x))\cdot\nu(q(x))$. For the sake of simpler notation we assume with no loss of generality that 
\[
\Lambda(q,s)\in \Omega\text{ for all }s\in [-L,0).
\]
Hence $s(x)$ is the negative distance to the boundary if $x\in \Omega$ and the positive distance, if $x\notin\Omega$. The orthogonality of the mapping is best characterized via the equation $\nabla s(x) =\nu(q(x))$. 
For a continuous function $\eta:\partial\Omega\rightarrow[-L,L]$ we define the variable domain
\begin{align}
\label{Oeta}
\Omega_\eta:=\Omega\setminus S_L\cup\{x\in S_L:\,\,s(x)<\eta(q(x))\}.
\end{align}
We set
\begin{align}
\nu_\eta(x)\text{ is defined as the outer normal at the point }x\in \partial\Omega_\eta.
\end{align}
\begin{definition}{(Function spaces)}
For $I=(0,T)$, $T>0$, and $\eta\in C(\overline{I}\times\partial\Omega)$ with $\|\eta\|< L$ we set $I\times\Omega_\eta:=\bigcup_{t\in I}\{t\}\times\Omega_{\eta(t)} \subset\R^4$. We define for $1\leq p,r\leq\infty$
\begin{align*}
L^p(I;L^r(\Omega_\eta))&:=\big\{v\in L^1(I\times\Omega_\eta):\,\,v(t,\cdot)\in L^r(\Omega_{\eta(t)})\,\,\text{for a.e. }t,\,\,\|v(t,\cdot)\|_{L^r(\Omega_{\eta(t)})}\in L^p(I)\big\},\\
L^p(I;W^{1,r}(\Omega_\eta))&:=\big\{v\in L^p(I;L^r(\Omega_\eta)):\,\,\nabla v\in L^p(I;L^r(\Omega_\eta))\big\}.
\end{align*}
\end{definition}

\begin{lemma}[\cite{LeRu}, p. 210, 211 and references given there.]
\label{lem:diffeo}
Let $\eta:\partial\Omega\rightarrow(-L,L)$ be a continuous function.
\begin{itemize}
\item[a)] There is a homomorphism $\bfPsi_\eta:\overline\Omega\rightarrow\overline{\Omega}_\eta$ such that $\bfPsi_\eta|_{\Omega\setminus S_L}$ is the identity.
\item[b)] If $\eta\in C^k(\partial\Omega)$ for $k\in\{1,2,3\}$ then $\bfPsi_\eta$ is a $C^k$ diffeomorphism.
\end{itemize}
\end{lemma}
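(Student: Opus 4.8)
The plan is to build $\bfPsi_\eta$ by hand, keeping the interior $\overline\Omega\setminus S_L$ fixed and ``radially pushing'' the collar $S_L$ along the normal direction by means of the Hanzawa coordinates $\Lambda^{-1}(x)=(q(x),s(x))$ from \eqref{Lambda}. Since $\partial\Omega$ is compact and $\eta$ is continuous with values in the open interval $(-L,L)$, there is $\epsilon_0>0$ with $\|\eta\|_{C(\partial\Omega)}\le L-\epsilon_0$. First I would fix, once and for all, a cut-off $\beta\in C^\infty(\R)$ with $0\le\beta\le1$, $\beta\equiv0$ on $(-\infty,-L+\tfrac{\epsilon_0}{2}]$, $\beta(0)=1$, and $0\le\beta'<\tfrac1{L-\epsilon_0}$ on $\R$; such a $\beta$ exists because the total increase $1$ is attained with mean slope $1/(L-\tfrac{\epsilon_0}{2})<1/(L-\epsilon_0)$ over the interval $[-L+\tfrac{\epsilon_0}{2},0]$. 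Then I set $\bfPsi_\eta$ equal to the identity on $\overline\Omega\setminus S_L$ and
\[
\bfPsi_\eta(x):=\Lambda\big(q(x),\,s(x)+\beta(s(x))\,\eta(q(x))\big)\qquad\text{for }x\in\overline\Omega\cap S_L .
\]
Since $s(x)\in(-L,0]$ there and $\beta$ vanishes near $\{s=-L\}$, the two prescriptions agree on a neighbourhood of $(\partial S_L)\cap\overline\Omega$, so the glued map is well defined.

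Next I would verify the required properties. Well-definedness: using $0\le\beta\le1$, $s\le0$, $\|\eta\|_{C(\partial\Omega)}\le L-\epsilon_0$ and the slope bound one checks that $s+\beta(s)\eta(q)\in(-L,L)$, so $\Lambda$ may indeed be applied. For each fixed $q$ the fibre map $\phi_q(s):=s+\beta(s)\eta(q)$ satisfies $\phi_q'(s)=1+\beta'(s)\eta(q)\ge1-\beta'(s)|\eta(q)|>0$ (this is where $\|\eta\|_{C(\partial\Omega)}<L$ enters), hence $\phi_q$ is a strictly increasing $C^\infty$-bijection of $(-L,0]$ onto $(-L,\eta(q)]$, with $\phi_q(0)=\eta(q)$. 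Comparing with the definition \eqref{Oeta} this yields $\bfPsi_\eta(\Omega\setminus S_L)=\Omega\setminus S_L$ and $\bfPsi_\eta(\Omega\cap S_L)=\{x\in S_L:s(x)<\eta(q(x))\}$, so $\bfPsi_\eta$ is a bijection of $\Omega$ onto $\Omega_\eta$; it is continuous, and the boundary $\partial\Omega=\{s=0\}$ is carried onto $\partial\Omega_\eta=\{s(y)=\eta(q(y))\}$, so $\bfPsi_\eta$ is a continuous bijection of the compact set $\overline\Omega$ onto the Hausdorff space $\overline\Omega_\eta\subset\R^3$, hence a homeomorphism. Together with $\bfPsi_\eta|_{\Omega\setminus S_L}=\mathrm{id}$ this proves part~(a).

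For part~(b) I would write $\bfPsi_\eta=\Lambda\circ G\circ\Lambda^{-1}$ on the collar, where $G(q,s)=(q,\phi_q(s))$. If $\eta\in C^k(\partial\Omega)$ with $k\in\{1,2,3\}$, then $G$ is a $C^k$-diffeomorphism onto its image: it is a fibrewise diffeomorphism, smooth in the fibre variable $s$ and $C^k$ in the base variable $q$, and its Jacobian in the $(q,s)$-splitting is the block-triangular matrix
\[
DG=\begin{pmatrix} I & 0\\ \beta(s)\,\nabla_q\eta(q) & 1+\beta'(s)\,\eta(q)\end{pmatrix},
\]
with determinant $1+\beta'(s)\eta(q)\neq0$, while the inverse $G^{-1}(q,\sigma)=(q,\phi_q^{-1}(\sigma))$ is $C^k$ by the implicit function theorem applied to $s+\beta(s)\eta(q)=\sigma$. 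Since $\Lambda$ and $\Lambda^{-1}$ are $C^3$-diffeomorphisms and $k\le3$, the composition $\bfPsi_\eta$ is a $C^k$-diffeomorphism of $\overline\Omega\cap S_L$ onto its image; glued with the identity on $\overline\Omega\setminus S_L$ (the two pieces coinciding near $\{s=-L\}$), and using the global bijectivity from part~(a) together with the inverse function theorem, $\bfPsi_\eta$ is a $C^k$-diffeomorphism of $\overline\Omega$ onto $\overline\Omega_\eta$.

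The only genuinely delicate point is the choice of the cut-off $\beta$: it must reduce to the identity near the inner edge $\{s=-L\}$ of the collar, so that the two branches glue seamlessly, and simultaneously keep $1+\beta'(s)\eta(q)>0$, so that every radial fibre map $\phi_q$ stays a diffeomorphism. These two demands are compatible precisely because $\|\eta\|_{C(\partial\Omega)}<L$, which leaves the mean slope $\tfrac1L$ of $\beta$ strictly below the critical slope $1/\|\eta\|_{C(\partial\Omega)}$; this is exactly where the standing assumption $\eta(\partial\Omega)\subset(-L,L)$ is used. Everything else — well-definedness, the identification of $\bfPsi_\eta(\Omega)$ with \eqref{Oeta}, the compact-to-Hausdorff homeomorphism argument, and the Jacobian computation — is routine once the Hanzawa coordinates $\Lambda$ are available.
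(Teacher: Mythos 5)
Your construction is essentially the one the paper itself gives right after the lemma statement (cf.\ \eqref{map}): the Hanzawa coordinates $\Lambda^{-1}=(q,s)$, a cut-off in the normal variable $s$ that is $0$ near the inner edge of the collar and $1$ at $\partial\Omega$, and the fibrewise shear $s\mapsto s+\beta(s)\eta(q)$, with injectivity from $1+\beta'(s)\eta(q)>0$ and regularity from the block-triangular Jacobian composed with the $C^3$ map $\Lambda$. The argument is correct and, if anything, slightly more careful than the paper's sketch, since you make the slope condition $\sup\beta'<1/\|\eta\|_{C(\partial\Omega)}$ explicit rather than leaving it implicit in the choice of the cut-off.
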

As the impact of the geometry on the PDE is quite severe we will include an explicit construction of $\bfPsi_\eta$. 
 Since we will use the parametrisation below locally we will consider $\text{Im}(\eta)\subset [-\frac{L}{2},\frac{L}{2}]$, where $L$ is a a fixed size, such that $\Lambda$ above is well defined on the et $\partial\Omega\times [-L,L]$.
We relate to any $\eta:\partial\Omega\rightarrow(-L,L)$ the mapping $\bfPsi_\eta: \Omega  \to \Omega_\eta$, such that
\begin{align}
\label{map4}
\begin{aligned}
\bfPsi_\eta&: \Omega\to \Omega_\eta\text{ is invertible},
\\
\bfPsi_\eta&: \partial\Omega \to \partial \Omega_\eta \text{ is invertible}
\end{aligned}
\end{align}
It can be constructed as follows. Let $\phi\in C^\infty\big((-\frac{3L}{4},\infty),[0,1]\big)$ such that 
$\phi\equiv 0$ in $[-\frac{3L}{4},-\frac{L}{2}]$ and $\phi\equiv 1$ in $[-\frac{L}{4},\infty)$. Moreover, we assume that $\phi$ is a $C^k$ diffeomorphism on $[-\frac{L}{2},-\frac{L}{4}]$ with $\phi^{(l)}(-\frac{L}{2})=0=\phi^{(l)}(-\frac{L}{4})$ for all $l\in\set{1,...,k}$.
We relate to any  $\eta:\partial\Omega\rightarrow(-L,L)$ the mapping $\bfPsi_\eta: \Omega  \to \Omega_\eta$
given by
\begin{align}
\label{map}
\begin{aligned}
\bfPsi_\eta(x)&= \begin{cases}q(x)+\Big(s(x)+\eta(q(x))\phi(s(x))\Big)\nu(q(x)),&\text{ if } \dist(x,\partial\Omega)<L,\\
\quad x,\quad &\text{elsewhere}
\end{cases}.
\end{aligned}
\end{align}
Hence the two one-to-one relations in \eqref{map4} are satisfied.\\
If $\|\eta\|_\infty<\frac{L}{2}$, the mapping $\bfPsi_\eta$ can be extended such that
\begin{align}
\label{map4'}
\begin{aligned}
\bfPsi_\eta&:   \Omega_{\frac{L}{2}-\eta}\to \Omega\cup S_{\frac{L}{2}}\text{ is a invertible},
\end{aligned}
\end{align}
where due to the assumption $\|\eta\|_\infty<\frac{L}{2}$ we have that $\overline\Omega \subset \Omega_{\frac{L}{2}-\eta} \subset \Omega\cup S_{L}$.
This can be done by setting
\begin{align}
\label{map3}
\begin{aligned}
\bfPsi_\eta(x)&= 
\begin{cases}
q(x)+(s(x)+\eta(q(x)))\nu(q(x)),&\text{ if }x\not\in \Omega\text{ and }s(x)+\eta(q(x))\leq \frac{L}{2},
\\
q(x)+\Big(s(x)+\eta(q(x))\phi(s(x))\Big)\nu(q(x)),&\text{ if } x\in \Omega\text{ and }s(x)<L,
\\
\quad x,\quad &\text{elsewhere}.
\end{cases}
\end{aligned}
\end{align}

We collect a few properties of the above mapping $\bfPsi_\eta$.
\begin{lemma}
\label{lem:sobdif} Let $1<p\leq\infty$ and $\sigma\in (0,1]$.
\begin{itemize}
\item[a)]  If $\eta\in W^{2,2}(\partial\Omega)$ with $\|\eta\|_\infty<L$, then the linear mapping $\bfv\mapsto \bfv\circ \bfPsi_\eta$ ($\bfv\mapsto \bfv\circ \bfPsi_\eta^{-1}$) is continuous from $L^p(\Omega_\eta)$ to $L^r(\Omega)$ (from $L^p(\Omega)$ to $L^r(\Omega_\eta)$) for all $1\leq r<p$. 
\item[b)]If $\eta\in W^{2,2}(\partial\Omega)$ with $\|\eta\|_\infty<L$, then the linear mapping $\bfv\mapsto \bfv\circ \bfPsi_\eta$ ($\bfv\mapsto \bfv\circ \bfPsi_\eta^{-1}$) is continuous from 
$W^{1,p}(\Omega)$ to $W^{1,r}(\Omega_\eta)$ (from $W^{1,p}(\Omega_\eta)$ to $W^{1,r}(\Omega)$) for all $1\leq r<p$.
\item[c)] If $\eta\in C^{0,1}(\partial\Omega)$with $\|\eta\|_\infty<L$, then the linear mapping $\bfv\mapsto \bfv\circ \bfPsi_\eta$ ($\bfv\mapsto \bfv\circ \bfPsi_\eta^{-1}$) is continuous from 
$W^{\sigma,p}(\Omega)$ to $W^{\sigma,p}(\Omega_\eta)$ (from $W^{\sigma,p}(\Omega_\eta)$ to $W^{\sigma,p}(\Omega)$).
\end{itemize}
The continuity constants depend only on $\Omega,p,r,\sigma$, the respective norms of $\eta$.
\end{lemma}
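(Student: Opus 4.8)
\textbf{Proof proposal for Lemma \ref{lem:sobdif}.}

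The plan is to reduce everything to the behaviour of the Hanzawa-type change of variables $\bfPsi_\eta$ near the boundary strip $S_L$, since $\bfPsi_\eta$ is the identity on $\Omega\setminus S_L$ and there is nothing to prove there. In the strip we use the coordinates $(q,s)\in\partial\Omega\times(-L,L)$ provided by $\Lambda$ in \eqref{Lambda}; pulling everything back through $\Lambda$ (which is a fixed $C^3$-diffeomorphism, so it contributes only multiplicative constants), the map $\bfPsi_\eta$ becomes, in these coordinates, $(q,s)\mapsto(q,\,s+\eta(q)\phi(s))$. Its Jacobian determinant is a smooth function of $s$ and of $\eta(q)$ plus a term of the form $\phi(s)\,\partial_q\eta$, so $D\bfPsi_\eta$ involves $\nabla\eta$ linearly, while $\bfPsi_\eta$ itself involves only $\eta$. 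This is the structural fact that forces the loss of integrability in parts (a) and (b): composition with $\bfPsi_\eta$ costs a factor that lies only in the Sobolev/Lebesgue class that $\nabla\eta$ (or $\eta$) sits in, which under $\eta\in W^{2,2}(\partial\Omega)$ and the two-dimensional Sobolev embedding is $L^q$ for every $q<\infty$ but not $L^\infty$.

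For part (a): for $\bfv\in L^p(\Omega)$ write $\int_{\Omega_\eta}|\bfv\circ\bfPsi_\eta^{-1}|^r\dx=\int_\Omega|\bfv|^r\,|\det D\bfPsi_\eta|\dx$ by the change-of-variables formula; then apply H\"older with exponents $p/r$ and $(p/r)'$ to get the bound $\|\bfv\|_{L^p(\Omega)}^r\,\||\det D\bfPsi_\eta|\|_{L^{(p/r)'}(\Omega)}$. Since $\det D\bfPsi_\eta$ is controlled by $1+|\nabla\eta|\in L^q$ for all $q<\infty$, and any $r<p$ makes $(p/r)'<\infty$, the second factor is finite with the claimed dependence on $\|\eta\|_{W^{2,2}}$ and on $\|\eta\|_\infty<L$ (the latter guarantees $\det D\bfPsi_\eta$ stays bounded away from $0$ and that $\bfPsi_\eta$ is a bijection by Lemma \ref{lem:diffeo}). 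The reverse direction ($\bfv\mapsto\bfv\circ\bfPsi_\eta$ from $L^p(\Omega_\eta)$ to $L^r(\Omega)$) is identical using $\bfPsi_\eta^{-1}$, whose Jacobian is $1/\det D\bfPsi_\eta$ composed with $\bfPsi_\eta^{-1}$ and hence enjoys the same integrability. For part (b) one differentiates the composition by the chain rule, $\nabla(\bfv\circ\bfPsi_\eta)=(\nabla\bfv\circ\bfPsi_\eta)\,D\bfPsi_\eta$; the new factor $D\bfPsi_\eta$ is again bounded by $1+|\nabla\eta|$, so one estimates $\|\nabla(\bfv\circ\bfPsi_\eta)\|_{L^r(\Omega)}$ by H\"older exactly as in (a), now with $\|\nabla\bfv\|_{L^p}$ playing the role of $\|\bfv\|_{L^p}$, picking an intermediate exponent between $r$ and $p$ to absorb $D\bfPsi_\eta\in L^q$ for all $q<\infty$; combining with the $L^r$-bound from (a) gives the $W^{1,r}$ estimate. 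For part (c), when $\eta\in C^{0,1}(\partial\Omega)$ the map $\bfPsi_\eta$ is bi-Lipschitz (with constants depending on $\|\eta\|_{C^{0,1}}$ and on the distance of $\|\eta\|_\infty$ to $L$), and bi-Lipschitz maps preserve the Sobolev--Slobodeckij spaces $W^{\sigma,p}$ for $\sigma\in(0,1]$: this is a standard consequence of the Gagliardo seminorm characterisation, $[\bfv\circ\bfPsi_\eta]_{W^{\sigma,p}}^p=\iint\frac{|\bfv(\bfPsi_\eta(x))-\bfv(\bfPsi_\eta(y))|^p}{|x-y|^{n+\sigma p}}\dx\dx$, after the substitution $x'=\bfPsi_\eta(x)$, $y'=\bfPsi_\eta(y)$, using the two-sided Lipschitz bound $c|x-y|\le|\bfPsi_\eta(x)-\bfPsi_\eta(y)|\le C|x-y|$ and the Jacobian bounds; no loss of exponent occurs because there is no derivative of $\bfPsi_\eta$ appearing.

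The main obstacle — really the only point requiring care rather than bookkeeping — is tracking \emph{where} the exponent is lost and confirming it is unavoidable only in (a),(b): the culprit is that $D\bfPsi_\eta$ contains $\nabla\eta$, which under the a priori class $W^{2,2}(\partial\Omega)$ on a two-dimensional boundary is borderline, $\nabla\eta\in\bigcap_{q<\infty}L^q(\partial\Omega)\setminus L^\infty$, so the H\"older splitting forces $r<p$ strictly. In (c) this problem is absent precisely because a mere Lipschitz bound on $\eta$ already makes $D\bfPsi_\eta\in L^\infty$, and the fractional seminorm sees only the bi-Lipschitz constants, not $D\bfPsi_\eta$ itself. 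One should also check the uniformity claim: all constants depend on $\Omega$ through the fixed diffeomorphisms $\Lambda$ and $\phi$, on $p,r,\sigma$ through the H\"older/Gagliardo exponents, on $\|\eta\|_{W^{2,2}}$ (resp. $\|\eta\|_{C^{0,1}}$) through the integrability (resp. Lipschitz) bound on $D\bfPsi_\eta$, and on $\|\eta\|_\infty$ staying below $L$ through the lower bound on $|\det D\bfPsi_\eta|$ and the bijectivity from Lemma \ref{lem:diffeo}; none of these degenerates as long as $\|\eta\|_\infty$ is bounded away from $L$.
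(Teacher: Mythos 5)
Your proposal is correct, and for part (c) it is essentially identical to the paper's argument: the paper also transforms the Gagliardo seminorm $|f|_{W^{\sigma,p}}$ under $\bfPsi_\eta$, inserts the two Jacobian factors, and bounds $|a-b|/|\bfPsi_\eta(a)-\bfPsi_\eta(b)|$ from above via $\|\nabla\bfPsi_\eta^{-1}\|_\infty$, treating $\sigma=1$ separately by the chain rule. The difference is in parts (a) and (b): the paper does not prove them at all but cites \cite[Lemma 2.6]{LeRu}, whereas you supply the standard self-contained argument (change of variables plus H\"older with exponents $p/r$ and $(p/r)'$, then the chain rule with $D\bfPsi_\eta\in L^q$ for all $q<\infty$ by the two-dimensional Sobolev embedding of $W^{1,2}(\partial\Omega)$). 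That argument is sound and is what underlies the cited lemma. Two small points of precision. First, in the coordinates $(q,s)\mapsto(q,s+\eta(q)\phi(s))$ the Jacobian \emph{determinant} is $1+\eta(q)\phi'(s)$ times curvature factors and does not actually contain $\partial_q\eta$; it is bounded above and below by $\|\eta\|_\infty<L$ alone. Only the full matrix $D\bfPsi_\eta$ carries $\nabla\eta$, so the loss $r<p$ is genuinely forced only in (b) (and in the reverse direction of (a) through $\nabla\bfPsi_\eta^{-1}$); your H\"older step in (a) still closes, just with a better factor than you claim. Second, since $\bfPsi_\eta$ is only a $W^{1,q}$-homeomorphism (not Lipschitz) under $\eta\in W^{2,2}$, the area formula you invoke requires the Luzin N property; this holds because $q$ may be taken larger than the dimension $3$, and is worth a sentence if one wants the argument fully airtight.
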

\begin{proof}
The first two properties can be found in \cite[Lemma 2.6]{LeRu}. The third assertion follows by transposition rule and the fact, that $\nabla \bfPsi_\eta,\nabla \bfPsi_\eta^{-1}$ are uniformly bounded. Indeed, let us assume that $f\in W^{\sigma,p}(\Omega)$ for some $\sigma \in (0,1)$. Recall that this means that 
\begin{align}
\label{frac}
|f|_{W^{\sigma,p}(\Omega)}^p=\int_{\Omega}\int_{\Omega}\frac{|f(x)-f(y)|^p}{|x-y|^{3+p\sigma}}\dx\,\dd y<\infty.
\end{align}
Then
\begin{align*}
&\int_{\Omega_\eta}\int_{\Omega_\eta}\frac{|f(x)-f(y)|^p}{|x-y|^{3+p\sigma}}\dx\,\dd y
=\int_{\Omega}\int_{\Omega}\frac{|f(\bfPsi_\eta(a))-f(\bfPsi_\eta(b))|^p}{|\bfPsi_\eta(a)-\bfPsi_\eta(b)|^{3+r\sigma}}|\det(D\bfPsi_\eta)|\,\dd a\,|\det(D\bfPsi_\eta)|\,\dd b
\\
&\quad \int_{\Omega}\int_{\Omega}\frac{|\tilde f(a)-\tilde f(b)|^p}{\abs{a-b}^{n+p\sigma}}\frac{\abs{a-b}^{n+r\sigma}}{|\bfPsi_\eta(a)-\bfPsi_\eta(b)|^{3+r\theta}}|\det(D\bfPsi_\eta)|\,\dd a\,|\det(D\bfPsi_\eta)|\,\dd b
\\
&\quad =\int_{\Omega}\int_{\Omega}\frac{|\tilde f(a)-\tilde f(b)|^p}{\abs{a-b}^{3+p\sigma}}\frac{1}{|\dashint_a^b\partial_{b-a}\bfPsi_\eta(s)\dd s|^{3+p\sigma}}|\det(D\bfPsi_\eta)|\,\dd a\,|\det(D\bfPsi_\eta)|\,\dd b
\\
&\quad\leq  c\norm{\det(D\bfPsi_\eta)}_\infty^2\norm{\nabla \bfPsi_\eta^{-1}}_\infty \int_{\Omega}\int_{\Omega}\frac{|\tilde f(a)-\tilde f(b)|^p}{\abs{a-b}^{3+p\sigma}}\dd a\dd b
\end{align*}
In case $\sigma=1$ the result follows directly by transposition rule.
Since the argument can be applied analogously for $f\circ\bfPsi_{\eta}^{-1}$ the proof is completed.
\end{proof}
Combining Lemma \cite{LeRu}, Cor. 2.9] with Sobolev's embedding in two dimensions we obtain the following trace embedding.
\begin{lemma}\label{lem:2.28}
Let $1<p<3$ and $\eta\in W^{2,2}(\partial\Omega)$ with $\|\eta\|_{L^\infty(\partial\Omega)}<L$. Then
\begin{enumerate}
\item 
The linear mapping
$\tr_\eta:v\mapsto v\circ\bfPsi_\eta|_{\partial\Omega}$ is well defined and continuous from
 $W^{1,p}(\Omega_\eta)$ to $W^{1-\frac1r,r}(\partial \Omega)$ for all $r\in (1,p)$ and well defined and continuous from  $W^{1,p}(\Omega_\eta)$ to $L^{q}(\partial\Omega)$ for all $1<q<\frac{2p}{3-p}$.
\item the linear mapping
$\tr_\eta:v\mapsto v|_{\partial\Omega_\eta}$ is well defined and continuous from
 $W^{1,p}(\Omega_\eta)$ to $W^{1-\frac1r,r}(\partial \Omega_{\eta})$ for all $r\in (1,p)$
and well defined and continuous from  $W^{1,p}(\Omega_\eta)$ to $L^{q}(\partial\Omega_{\eta})$ for all $1<q<\frac{2p}{3-p}$.
\end{enumerate}
The continuity constant depends only on $\Omega,p,$ $\|\eta\|_{W^{2,2}}$ and
$\tau(\eta)$.
\end{lemma}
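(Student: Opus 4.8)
The plan is to obtain the statement by composing two estimates: a trace estimate mapping $W^{1,p}(\Omega_\eta)$ continuously into a fractional Sobolev space on the reference surface $\partial\Omega$ (or on $\partial\Omega_\eta$), and the Sobolev embedding on the $2$-dimensional manifold $\partial\Omega$. The trace estimate is essentially \cite[Cor.~2.9]{LeRu}; I recall its mechanism, since it also explains the unavoidable loss of integrability. Writing $\tr_\eta v=(v\circ\bfPsi_\eta)|_{\partial\Omega}$, Lemma~\ref{lem:sobdif}(b) shows that pullback under $\bfPsi_\eta$ sends $W^{1,p}(\Omega_\eta)$ boundedly into $W^{1,r}(\Omega)$ for every $1<r<p$, with constant depending on $\Omega,p,r,\|\eta\|_{W^{2,2}}$ and $\tau(\eta)$, and the classical trace theorem on the $C^4$-domain $\Omega$ then puts the restriction to $\partial\Omega$ into $W^{1-\frac1r,r}(\partial\Omega)$. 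Well-definedness is clear for $v\in C^1(\overline{\Omega_\eta})$ and passes to the closure by this continuity; this gives the first half of part (1).

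For the $L^q$-part of (1) I would post-compose with the Sobolev embedding on $\partial\Omega$. Since $\partial\Omega$ is a compact $2$-dimensional $C^4$-manifold and $r<p<3$, the order $s:=1-\tfrac1r\in(0,1)$ satisfies $sr=r-1<2$, so $W^{s,r}(\partial\Omega)\hookrightarrow L^{q}(\partial\Omega)$ with $\tfrac1q=\tfrac1r-\tfrac{s}{2}=\tfrac{3-r}{2r}$, hence for every $q\le\frac{2r}{3-r}$. Composing with the previous step, $\tr_\eta$ maps $W^{1,p}(\Omega_\eta)$ continuously into $L^q(\partial\Omega)$ for all such $q$; as $r\mapsto\frac{2r}{3-r}$ is strictly increasing on $(1,p)$ with supremum $\frac{2p}{3-p}$, letting $r\uparrow p$ covers the entire range $1<q<\frac{2p}{3-p}$.

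For part (2) I would transfer the $\partial\Omega$-estimates to $\partial\Omega_\eta$ via $v|_{\partial\Omega_\eta}=(\tr_\eta v)\circ(\bfPsi_\eta|_{\partial\Omega})^{-1}$. The surface Jacobian of $\bfPsi_\eta|_{\partial\Omega}:q\mapsto q+\eta(q)\nu(q)$ depends only on $\eta$ and its tangential gradient, and since $\eta\in W^{2,2}(\partial\Omega)\hookrightarrow W^{1,a}(\partial\Omega)$ for all $a<\infty$, this Jacobian and its reciprocal lie in every $L^a(\partial\Omega)$ with norms controlled by $\|\eta\|_{W^{2,2}}$ and $\tau(\eta)$. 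A change of variables together with H\"older's inequality then transfers both the $W^{1-\frac1r,r}$- and the $L^q$-bounds to $\partial\Omega_\eta$ at the price of an arbitrarily small loss of integrability; since all exponent ranges in (2) are open, this loss is harmless. Alternatively one invokes the $\partial\Omega_\eta$-version of \cite[Cor.~2.9]{LeRu} directly and only performs the Sobolev step.

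I expect the only genuinely delicate point — which is already the content of \cite[Cor.~2.9]{LeRu} — to be the loss of integrability $r<p$ rather than $r=p$: because $\eta\in W^{2,2}(\partial\Omega)$ embeds into $W^{1,a}(\partial\Omega)$ only for finite $a$, the domain $\Omega_\eta$ is not Lipschitz, one cannot flatten $\partial\Omega_\eta$ and apply the half-space trace theorem, and routing the estimate through $\bfPsi_\eta$ — whose gradient lies merely in $L^a$ for all finite $a$ — is exactly what forces the target exponent ranges $r<p$ and $1<q<\frac{2p}{3-p}$ to be open rather than closed. Everything else is a routine composition of standard embeddings.
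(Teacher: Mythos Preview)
Your proposal is correct and follows precisely the approach indicated in the paper: the paper states the lemma as an immediate consequence of combining \cite[Cor.~2.9]{LeRu} with the two-dimensional Sobolev embedding on $\partial\Omega$, and you have unpacked exactly this composition, including the correct exponent arithmetic $\tfrac1q=\tfrac{3-r}{2r}$ and the role of Lemma~\ref{lem:sobdif}(b) in forcing the open range $r<p$. Your treatment is in fact more detailed than the paper's, which gives no proof beyond the one-line attribution.
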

The following lemma allows us the extend functions defined on the variable domain
to the whole space $\R^3$. This is not trivial for $\eta\in W^{2,2}$ because the boundary is not Lipschitz. However, it requires the additional assumption $\|\eta\|_{L^\infty(\partial\Omega)}<\frac{L}{2}$.
\begin{lemma}
\label{lem:extension}
Let $1\leq r<p<\infty$ and $\eta\in W^{2,2}(\partial\Omega)$ with $\|\eta\|_{L^\infty(\partial\Omega)}<\frac{L}{2}$. There is a continuous linear operator $\mathscr E_\eta:W^{1,p}(\Omega_\eta)\rightarrow W^{1,r}(\R^3)$ such that $\mathscr E_\eta\big|_{\Omega_\eta}=\mathrm{Id}$.
\end{lemma}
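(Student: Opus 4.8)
The key obstacle is that $\partial\Omega_\eta$ is only a $W^{2,2}$-graph over $\partial\Omega$, hence not Lipschitz in three dimensions, so the classical extension operator (reflection across a Lipschitz boundary) is unavailable. The idea is to transport the problem to the fixed reference domain $\Omega$, where extension is classical, and to absorb the loss of regularity coming from the irregular change of variables into the gap between the exponents $r<p$. Concretely, the plan is: (i) pull $v\in W^{1,p}(\Omega_\eta)$ back to $\Omega$ via $v\circ\bfPsi_\eta$; (ii) extend on the fixed smooth domain $\Omega$; (iii) push the extension forward again using the extended diffeomorphism $\bfPsi_\eta$ of \eqref{map4'}--\eqref{map3}, which maps a fixed neighbourhood $\Omega_{\frac L2-\eta}\supset\overline\Omega$ onto $\Omega\cup S_{\frac L2}$; (iv) multiply by a fixed cut-off to produce a global function on $\R^3$ that restricts to $v$ on $\Omega_\eta$.

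First I would apply Lemma~\ref{lem:sobdif}(b): since $\|\eta\|_\infty<\frac L2<L$ and $\eta\in W^{2,2}(\partial\Omega)$, the map $w\mapsto w\circ\bfPsi_\eta$ is continuous from $W^{1,p}(\Omega)$ to $W^{1,r}(\Omega_\eta)$, and its ``inverse'' $v\mapsto v\circ\bfPsi_\eta^{-1}$ is continuous from $W^{1,p}(\Omega_\eta)$ to $W^{1,\tilde r}(\Omega)$ for any $\tilde r<p$; pick $\tilde r$ with $r<\tilde r<p$. So $\tilde v:=v\circ\bfPsi_\eta\in W^{1,\tilde r}(\Omega)$ (reading $\bfPsi_\eta:\Omega\to\Omega_\eta$ as in \eqref{map4}, so that the pullback of $v$ is $v\circ\bfPsi_\eta$). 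Since $\Omega$ is a bounded $C^4$ domain, the standard Sobolev extension theorem yields $E\tilde v\in W^{1,\tilde r}(\R^3)$ with $E\tilde v|_\Omega=\tilde v$ and $\|E\tilde v\|_{W^{1,\tilde r}(\R^3)}\le c\|\tilde v\|_{W^{1,\tilde r}(\Omega)}$. We may further cut off: fix $\chi\in C_0^\infty(\R^3)$ with $\chi\equiv1$ on $\Omega\cup S_{L}$, and replace $E\tilde v$ by $\chi E\tilde v$, so that $E\tilde v$ is supported in a fixed compact set containing $\overline{\Omega\cup S_L}$.

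Next I would push forward. By \eqref{map4'} the map $\bfPsi_\eta$ extends to a homeomorphism $\Omega_{\frac L2-\eta}\to\Omega\cup S_{\frac L2}$, and by the $C^0$-closeness of $\eta$ to $0$ we have $\overline\Omega\subset\Omega_{\frac L2-\eta}$, hence $\Omega_\eta\subset\Omega_{\frac L2-\eta}$ as well. As in the proof of Lemma~\ref{lem:sobdif}, the gradients $\nabla\bfPsi_\eta,\nabla\bfPsi_\eta^{-1}$ of this extended map are bounded in terms of $\nabla\eta$ and $\Lambda$, so the argument of Lemma~\ref{lem:sobdif}(a)--(b) applies verbatim on $\Omega_{\frac L2-\eta}$: composition with $\bfPsi_\eta^{-1}$ maps $W^{1,\tilde r}(\Omega\cup S_{\frac L2})$ continuously into $W^{1,r}(\Omega_{\frac L2-\eta})$ for any $r<\tilde r$. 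Applying this to $E\tilde v$ (restricted to $\Omega\cup S_{\frac L2}$) produces $\mathscr E_\eta v:=(E\tilde v)\circ\bfPsi_\eta^{-1}$ on $\Omega_{\frac L2-\eta}$, extended by zero (or by $E\tilde v$ itself away from $S_L$, where $\bfPsi_\eta=\mathrm{Id}$ and the two coincide) to all of $\R^3$; the zero extension is $W^{1,r}$ across $\partial\Omega_{\frac L2-\eta}$ after multiplying by the fixed cut-off $\chi$, since $\partial\Omega_{\frac L2-\eta}$ sits a fixed distance inside $S_L$ where $\chi$ can be taken to vanish — alternatively one checks directly that $\mathscr E_\eta v$ and $0$ match continuously there. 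On $\Omega_\eta$ we have $\bfPsi_\eta^{-1}(\Omega_\eta)=\Omega$ and $E\tilde v|_\Omega=\tilde v=v\circ\bfPsi_\eta$, so $\mathscr E_\eta v|_{\Omega_\eta}=\tilde v\circ\bfPsi_\eta^{-1}=v$, which is the required identity $\mathscr E_\eta\big|_{\Omega_\eta}=\mathrm{Id}$.

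Finally, linearity of $\mathscr E_\eta$ is clear since each step (pullback, $E$, cut-off, pushforward) is linear, and chaining the three continuity estimates — $\|v\circ\bfPsi_\eta^{-1}\|_{W^{1,\tilde r}(\Omega)}\le c\|v\|_{W^{1,p}(\Omega_\eta)}$, $\|E\tilde v\|_{W^{1,\tilde r}(\R^3)}\le c\|\tilde v\|_{W^{1,\tilde r}(\Omega)}$, and $\|(E\tilde v)\circ\bfPsi_\eta^{-1}\|_{W^{1,r}(\R^3)}\le c\|E\tilde v\|_{W^{1,\tilde r}(\R^3)}$ — gives $\|\mathscr E_\eta v\|_{W^{1,r}(\R^3)}\le c\,\|v\|_{W^{1,p}(\Omega_\eta)}$ with $c$ depending only on $\Omega,p,r$ and $\|\eta\|_{W^{2,2}(\partial\Omega)}$. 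I expect the only genuinely delicate point to be the gluing in step (iv): one must be sure that the pushed-forward extension, which a priori lives only on $\Omega_{\frac L2-\eta}$, can be extended to all of $\R^3$ without destroying $W^{1,r}$-regularity — this is handled by noting that outside $S_L$ the map $\bfPsi_\eta$ is the identity, so $\mathscr E_\eta v$ simply equals the already-global function $E\tilde v$ there, and the match across the interface is automatic.
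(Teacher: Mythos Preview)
Your strategy is exactly the paper's: pull back to $\Omega$ via Lemma~\ref{lem:sobdif}, extend classically there, push forward via the extended $\bfPsi_\eta$ of \eqref{map4'}--\eqref{map3}, and then pass to all of $\R^3$. The execution of steps (iii)--(iv), however, has the domain and range of the extended $\bfPsi_\eta$ swapped, and this is not innocuous. By \eqref{map4'} one has $\bfPsi_\eta:\Omega_{\frac{L}{2}-\eta}\to\Omega\cup S_{\frac{L}{2}}$, so composition $g\mapsto g\circ\bfPsi_\eta^{-1}$ sends $W^{1,\tilde r}(\Omega_{\frac{L}{2}-\eta})$ into $W^{1,r}(\Omega\cup S_{\frac{L}{2}})$, not the other way round. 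One therefore restricts $E\tilde v$ to $\Omega_{\frac{L}{2}-\eta}\supset\overline\Omega$, and the pushed-forward function $(E\tilde v)\circ\bfPsi_\eta^{-1}$ lives on the \emph{fixed} smooth domain $\Omega\cup S_{\frac{L}{2}}$, which does contain $\Omega_\eta$ since $\|\eta\|_\infty<\frac{L}{2}$.

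This is precisely the point of the construction: the last step is then simply another application of the classical extension operator, now from the fixed $C^4$ domain $\Omega\cup S_{\frac{L}{2}}$ to $\R^3$, and the paper writes $\mathscr E_\eta u = E_{r,\Omega\cup S_{L/2}}\big((E_{\tilde r,\Omega} u^\eta)|_{\Omega_{\frac{L}{2}-\eta}}\big)_\eta$. Your ad-hoc gluing (cut-off at $\partial\Omega_{\frac{L}{2}-\eta}$, matching with $E\tilde v$ where ``$\bfPsi_\eta=\mathrm{Id}$ outside $S_L$'') is then unnecessary --- and in fact problematic as written: $\Omega_{\frac{L}{2}-\eta}$ need not contain $\Omega_\eta$ (this requires $\|\eta\|_\infty<\frac{L}{4}$), so the identity $\mathscr E_\eta|_{\Omega_\eta}=\mathrm{Id}$ would fail; and $\bfPsi_\eta=\mathrm{Id}$ holds on $\Omega\setminus S_L$ but not on the exterior of $\Omega\cup S_L$, where the extended map \eqref{map3} is a genuine shift. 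Once the domains are put the right way round, all of this disappears.
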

\begin{proof}
If $\eta\in W^{1,\infty}(\partial\Omega)$ the result is standard.
There is a continuous linear operator
$$E_{p,A}:W^{1,p}(A)\rightarrow W^{1,p}(\R^3)$$
for any bounded Lipschtz domain $A$ and $1\leq p<\infty$, see, for instance \cite[Thm. 5.28]{AdFo}, where even slightly less regularity is required.\\
For the general case we use the extension above, to transfer from a functions space over the variable domain to a function from a functions space over the reference domain. Indeed Lemma~\ref{lem:sobdif} implies that
\begin{align*}
W^{1,p}(\Omega_{\eta})&\ni u\mapsto u^{\eta}\in W^{1,\tilde{r}}(\Omega),\quad u^{\eta}(x)=u(\bfPsi_{\eta}(x)),
\end{align*}
for any $\tilde r<p$.
Since $\Omega$ is assumed to have a uniform Lipschitz boundary it is possible to extend the function on the whole space. Hence, using the Extension operator on $\Omega$, we find
\[
E_{\tilde{r},\Omega}(u^\eta)\in W^{1,\tilde{r}}(\setR^3)\text{ and }E_{\tilde{r},\Omega}(u^\eta)|_{\Omega}=u^\eta.
\]
In order to transform back we use the fact, that $\bfPsi_\eta$ is invertible
on $\Omega^*_\eta$ where $\overline\Omega \subset\Omega_\eta^\ast$ due to the assumption $\|\eta\|_{L^\infty(\partial\Omega)}<\frac{L}{2}$, cf. \eqref{map4'}. By Lemma~\ref{lem:sobdif} we find
\begin{align*}
W^{1,\tilde{r}}(\Omega_{\eta}^*)&\ni v\mapsto v_{\eta}\in W^{1,r}(\Omega\cup S_{\frac{L}{2}}),\quad v_{\eta}(x)=v(\bfPsi_{\eta}^{-1}(x)).
\end{align*}
for any $r<\tilde r<p$.
Finally, we set
\begin{align*}
\mathscr E_\eta u= E_{r,\Omega\cup S_{\frac{L}{2}}}\big((E_{\tilde r,\Omega} u^\eta)\big|_{\Omega^*_\eta}\big)_\eta\quad u\in W^{1,p}(\Omega_\eta).
\end{align*}
It is now easy to check that $\mathscr E_\eta$ has the required properties.
\end{proof}

\begin{remark}
If $\eta\in L^\infty(I;W^{2,2}(\partial\Omega))$ we obtain non-stationary variants of the results stated above.
\end{remark}


\subsection{Convergence on variable domains.}
Due to the variable domain the framework of Bochner spaces is not available. Hence, we cannot use the Aubin-Lions compactness theorem. In this subsection we are concerned with the question how to get compactness anyway.
We start with the following definition of convergence in variable domains. We start wit convergence in Lebesgue spaces which follows from n extension by zero.
\begin{definition}
\label{def:conv}
Let $(\eta_i)\subset C(\overline{I}\times \partial\Omega;[-\theta L, \theta L])$, $\theta\in(0,1)$, be a sequence with $\eta_i\rightrightarrows \eta$. Let $p\in [1,\infty]$ and $k\in\N_0$.
\begin{enumerate}
\item We say that  a sequence $(g_i) \subset L^p(I,L^q(\Omega_{\eta_i}))$ converges to $g$ in $L^p(I,L^q(\Omega_{\eta_i}))$ strongly with respect to $(\eta_i)$, in symbols
$
g_i\toetai g \,\text{in}\, L^p(I,L^q(\Omega_{\eta_i})),
$
if
\begin{align*}
\chi_{\Omega_{\eta_i}}g_i\to \chi_{\Omega_\eta}g \quad\text{in}\quad L^p(I,L^q(\R^3)).
\end{align*}
\item Let $p,q<\infty$. We say that  a sequence $(g_i) \subset L^p(I,L^q(\Omega_{\eta_i}))$ converges to $g$ in $L^p(I,L^q(\Omega_{\eta_i}))$ weakly with respect to $(\eta_i)$, in symbols
$
g_i\weaktoetai g \,\text{in}\, L^p(I,L^q(\Omega_{\eta_i})),
$
if
\begin{align*}
\chi_{\Omega_{\eta_i}}g_i\weakto \chi_{\Omega_\eta}g \quad\text{in}\quad L^p(I,L^q(\R^3)).
\end{align*}
\item Let $p=\infty$ and $q<\infty$. We say that  a sequence $(g_i) \subset L^\infty(I,L^q(\Omega_{\eta_i}))$ converges to $g$ in $L^\infty(I,L^q(\Omega_{\eta_i}))$ weakly$^*$ with respect to $(\eta_i)$, in symbols
$
g_i\weakto^{\ast,\eta} g \,\text{in}\, L^\infty(I,L^q(\Omega_{\eta_i})),
$
if
\begin{align*}
\chi_{\Omega_{\eta_i}}g_i\weakto^* \chi_{\Omega_\eta}g \quad\text{in}\quad L^\infty(I,L^q(\R^3)).
\end{align*}
\end{enumerate}
\end{definition}
Note that in case of one single $\eta$ (i.e. not sequence) the space
$L^p(I,L^q(\Omega_{\eta}))$ (with $1\leq p<\infty$ and $1<q<\infty$) is reflexive and we have the usual duality pairing
\begin{align}\label{0903}
L^p(I,L^q(\Omega_{\eta}))\cong L^{p'}(I,L^{q'}(\Omega_{\eta}))
\end{align} 
provided $\eta$ is smooth enough, see \cite{NRL}.
Definition \ref{def:conv} can be extended in a canonical way to Sobolev spaces.
We say that  a sequence $(g_i) \subset L^p(I,W^{1,q}(\Omega_{\eta_i}))$ converges to $g \in L^p(I,W^{1,q}(\Omega_{\eta_i}))$ strongly with respect to $(\eta_i)$, in symbols
\begin{align*}
g_i\toetai g \quad\text{in}\quad L^p(I,W^{1,p}(\Omega_{\eta_i})),
\end{align*}
if both $g_i$ and $\nabla g_i$ converges (to $g$ and $\nabla g$ respectively) in $L^p(I,W^{1,q}(\Omega_{\eta_i}))$ strongly with respect to $(\eta_i)$ (in the sense of Definition \ref{def:conv} a)).
We also define weak and weak$^*$ convergence in Sobolev spaces with respect to $(\eta_i)$ with an obvious meaning. Note that an extension to higher order Sobolev spaces is possible but not needed for our purposes.\\
\subsection{A lemma of Aubin-Lions type for time dependent domains}
For the next compactness lemma we require the following on the functions describing the boundary 
\begin{enumerate}[label={\rm (A\arabic{*})}, leftmargin=*]
\item\label{A1} Assume that the sequence $(\eta_i)\subset C(\overline I\times M;[-\theta L, \theta L])$, $\theta\in(0,1)$,  satisfies
\begin{align*}
\eta_i&\rightharpoonup^\ast\eta\quad\text{in}\quad L^\infty(I,W^{2,2}_0(M)),\\
\partial_t\eta_i&\rightharpoonup^\ast\partial_t\eta\quad\text{in}\quad L^\infty(I,L^{2}(M)).
\end{align*}
\item\label{A2} Let $(v_i)$ be a sequence such that for $p,q\in[1,\infty)$
$$v_i\weaktoetai v\quad\text{in}\quad L^p(I;W^{1,s}(\Omega_{\eta_i})).$$
\item\label{A3} Let $(r_i)$ be a sequence such that for $m,b\in[1,\infty)$
$$r_i\weaktoetai r\quad\text{in}\quad L^m(I;L^{b}(\Omega_{\eta_i})).$$
Assume further that there are sequence $(\bfH^1_i)$, $(\bfH^2_i)$ and $(h_i)$, bounded in $L^m(I;L^{b}(\Omega_{\eta_i}))$, such that $\partial_t r_i=\Div\Div \bfH_i^1+\Div \bfH_i^2+h_i$ in the sense of distributions, i.e.,
\begin{align*}
\int_I\int_{\Omega_{\eta_i}}r_i\,\partial_t\phi\dxt=\int_I\int_{\Omega_{\eta_i}}\bfH^1_i\cdot\nabla^2\phi\dxt
-\int_I\int_{\Omega_{\eta_i}}\bfH^2_i\cdot\nabla\phi\dxt
+\int_I\int_{\Omega_{\eta_i}}h_i\cdot\nabla\phi\dxt
\end{align*}
for all $\phi\in C^\infty_0(I\times\Omega_{\eta_i})$.
\end{enumerate}
\begin{lemma}
\label{thm:weakstrong}
Let $(\eta_i)$, $(v_i)$ and $(r_i)$ be sequence satisfying \ref{A1}--\ref{A3}
 where $\frac{1}{s^*}+\frac{1}{b}=\frac{1}{a}<1$\footnote{Here we set $s^*=\frac{3s}{3-s}$, if $s\in (1,3)$ and otherwise $s^*$ can be fixed in $(1,\infty)$ conveniently.} and $\frac{1}{m}+\frac{1}{p}=\frac{1}{q}<1$. Then there is a subsequence with
\begin{align}
\label{al}
v_i r_i\weakto v r\text{ weakly in }L^{q}(I,L^a(\Omega_{\eta_i})).
\end{align}
\end{lemma}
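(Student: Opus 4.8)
The strategy is the usual Aubin--Lions compactness scheme, but carried out on the time-dependent domains via extension to $\R^3$. The point of the hypotheses is that $v_i$ carries spatial regularity (it lies in $W^{1,s}$) while $r_i$ carries temporal regularity in a negative Sobolev sense (through \ref{A3}); one of the two must be upgraded to strong convergence in an $L^t$-space, and then the product passes to the limit by weak--strong pairing. Since $v_i$ has the spatial derivative bound, I would aim to prove
\begin{align*}
r_i \toetai r \quad\text{in } L^{m}(I;W^{-1,b}_{\mathrm{loc}})\text{-type sense, i.e. } \chi_{\Omega_{\eta_i}}r_i\to \chi_{\Omega_\eta}r \text{ in } L^m(I;W^{-1,b}(\R^3)),
\end{align*}
and combine it with $\chi_{\Omega_{\eta_i}}v_i\weakto \chi_{\Omega_\eta}v$ in $L^p(I;W^{1,s}(\R^3))$ after extension.

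\textbf{Step 1: extension and global formulation.} Using Lemma~\ref{lem:extension} (applicable since $\|\eta_i\|_\infty<\theta L<\frac L2$ can be arranged, or after noting the statement only needs $\|\eta_i\|_{\infty}$ controlled), extend $v_i$ to $\widetilde v_i=\mathscr E_{\eta_i}v_i\in L^p(I;W^{1,\tilde s}(\R^3))$ for $\tilde s<s$ slightly smaller, bounded uniformly because the extension constants depend only on $\Omega$, $\|\eta_i\|_{W^{2,2}}$ and $\tau(\eta_i)$, all controlled by \ref{A1}. For $r_i$ no extension is needed: set $\bar r_i=\chi_{\Omega_{\eta_i}}r_i$, which is bounded in $L^m(I;L^b(\R^3))$ by \ref{A3}. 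The crucial structural point is that $v_ir_i$ restricted to $\Omega_{\eta_i}$ equals $\widetilde v_i\,\bar r_i$ on all of $\R^3$ (since $\bar r_i$ vanishes outside $\Omega_{\eta_i}$), so it suffices to identify the weak limit of $\widetilde v_i\bar r_i$ and check it is $\chi_{\Omega_\eta}vr$; the H\"older pairing $\frac1{\tilde s^*}+\frac1b=\frac1{a'}$, $\frac1m+\frac1p=\frac1q$ gives the target integrability $L^q(I;L^{a'}(\R^3))$ with $a'$ arbitrarily close to $a$, and a diagonal/interpolation argument recovers the stated exponent $a$ (here one uses the footnote's freedom in choosing $s^*$ when $s\ge 3$).

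\textbf{Step 2: compactness of $\bar r_i$ in negative Sobolev norm.} This is the heart of the matter and the main obstacle. I want $\bar r_i\to \chi_{\Omega_\eta}r$ strongly in $L^m(I;W^{-1,b}(\R^3))$ (or in $C(\overline I;W^{-1,b}_{w})$ along a subsequence, which suffices for the pairing with the weakly convergent $\widetilde v_i$). The difficulty is that $\partial_t\bar r_i$ is \emph{not} simply $\chi_{\Omega_{\eta_i}}\partial_t r_i$: differentiating the characteristic function produces a boundary term supported on $\partial\Omega_{\eta_i}$, and the boundary is only $W^{2,2}$, not Lipschitz. The remedy is to test only against $\phi\in C^\infty_0(\Omega_{\eta(t)})$ that are eventually compactly supported inside $\Omega_{\eta_i(t)}$ for large $i$ (possible since $\eta_i\rightrightarrows\eta$ uniformly by \ref{A1} and Arzel\`a--Ascoli applied to the $W^{2,2}\cap W^{1,\infty-}$ bound); on such test functions \ref{A3} holds and $\partial_t\bar r_i=\Div\Div\bfH^1_i+\Div\bfH^2_i+h_i$ in $C^\infty_0(I\times\Omega_\eta)'$ for $i$ large, giving an equicontinuity-in-time estimate in $W^{-2,b}(K)$ for every $K\Subset\Omega_\eta$. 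Together with the spatial bound $\bar r_i\in L^m(I;L^b)$ and a Riesz--Fr\'echet--Kolmogorov / Lions-type lemma, this yields $\bar r_i\to$ (some limit) strongly in $L^m(I;W^{-1,b}(K))$ for every $K\Subset\Omega_\eta$. The limit is identified as $\chi_{\Omega_\eta}r$ by uniqueness of distributional limits, using again that $\chi_{\Omega_{\eta_i}}\to\chi_{\Omega_\eta}$ in every $L^\sigma(I\times\R^3)$, $\sigma<\infty$ (the symmetric difference of the domains has measure tending to zero since $\eta_i\to\eta$ uniformly).

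\textbf{Step 3: passing to the limit and handling the boundary layer.} Fix $K\Subset\Omega_\eta$. On $I\times K$, $\widetilde v_i\weakto \chi_{\Omega_\eta}v=v$ in $L^p(I;W^{1,\tilde s}(K))$ and $\bar r_i\to r$ strongly in $L^m(I;W^{-1,b}(K))$, so $\widetilde v_i\bar r_i\weakto vr$ in $L^q(I;L^{a'}(K))$ by the standard weak--strong pairing (duality $W^{1,\tilde s}_0(K)\hookrightarrow L^{\tilde s^*}(K)$, $L^{b}(K)\hookrightarrow W^{-1,b}(K)$). To upgrade from $K\Subset\Omega_\eta$ to all of $\R^3$: the contribution from $\R^3\setminus K$ splits into the thin layer $\Omega_\eta\setminus K$ and the region outside $\Omega_\eta$; on the latter $\bar r_i$ is supported only on $\Omega_{\eta_i}\setminus\Omega_\eta$, whose measure $\to0$, so by the uniform $L^q(I;L^{a'})$ bound on the products (H\"older from the uniform bounds in \ref{A2}, \ref{A3}) and equi-integrability this contribution vanishes in $L^q(I;L^{a''}(\R^3))$ for any $a''<a'$; the thin layer $\Omega_\eta\setminus K$ is made arbitrarily small in measure by choosing $K$, and the same equi-integrability argument controls it uniformly in $i$. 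A standard $\varepsilon$/diagonal argument over an exhaustion $K_j\Uparrow\Omega_\eta$ then gives $\widetilde v_i\bar r_i\weakto \chi_{\Omega_\eta}vr$ in $L^q(I;L^{a}(\R^3))$, i.e. $v_ir_i\weakto vr$ in $L^q(I;L^a(\Omega_{\eta_i}))$ in the sense of Definition~\ref{def:conv}. I expect Step~2 --- producing the uniform time-equicontinuity of $\bar r_i$ despite the non-Lipschitz moving boundary, and the careful bookkeeping that only interior test functions are needed --- to be the delicate part; Steps 1 and 3 are routine once the exponent arithmetic and the extension estimates are in place.
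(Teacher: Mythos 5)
Your proposal is correct and follows essentially the same route as the paper: local Aubin--Lions compactness of $r_i$ in $W^{-1,b}$ on interior parabolic cubes (where \ref{A3} applies without boundary terms), weak--strong pairing with $v_i$ there, control of the thin boundary layer via the uniform $L^r$-bound on the product and the vanishing measure of the domain discrepancy, and identification of the limit from distributional convergence plus the uniform $L^q(I;L^a)$ bound. The only cosmetic differences are your use of the extension operator for $v_i$ (the paper avoids it by pairing against interior-supported test functions from a partition of unity) and your slightly more roundabout exponent bookkeeping.
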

\begin{remark}
Assumption \ref{A3} in Lemma \ref{thm:weakstrong} can be extended in an obvious way to the case of higher order distributional derivatives. We have chosen the version above as it is most suitable for our applications.
\end{remark}
\begin{proof} First we show local compactness. Consider a cube $Q=J\times B$
such that $Q\Subset \Omega_{\eta_i}^I$ for all $i$ large enough. By \ref{A3} we know that $r_i$ is bounded in $L^m(I;L^{b}(B))$ and that $\partial_t r_i$ is bounded in $L^m(I;W^{-1,b}(B))$. We can apply the classical Aubin-Lions compactness Theorem \cite{Lialt} for the triple
\begin{align*}
L^b(B)\hookrightarrow\hookrightarrow W^{-1,b}(B)\hookrightarrow W^{-2,b}(B),
\end{align*}
and gain
\begin{align}\label{3101}
r_i\rightarrow r\quad\text{in}\quad L^m(I;W^{-1,b}(B)).
\end{align}
Note that we do not have to take a subsequence since the original sequence already converges by \ref{A3}.
Now note that \ref{A1} implies
$$\eta_i\rightarrow\eta\quad\text{in}\quad C^\alpha(I\times M)$$
for some $\alpha\in(0,1)$ by interpolation. Consequently, for a given $\kappa>0$ there is $i_0=i_0(\kappa)$ such that
\begin{align}\label{3101b}
\mathcal L^4\Big(I\times(\Omega_{\eta}\setminus \Omega_{l})\Big)\leq \kappa\quad \forall l\geq i_0,
\end{align}
where we have set
\begin{align*}
\Omega_{l}=\bigcap_{i\geq l} \Omega_{\eta_i}.
\end{align*}
Now we fix a measurable set $A_\kappa\Subset I\times\Omega_{l}$ with $\mathcal L^4\Big(\big(I\times\Omega_{l}\big)\setminus A_\kappa\Big)\leq \kappa$ and cover it by at most countable many cubes
$Q_k=J_k\times B_k$ such that
\begin{align*}
A_\kappa\subset  \bigcup_k Q_k\Subset \big(I\times\Omega_{l}\big).
\end{align*}
They can be chosen in such a way that we find $\psi_k$ a partition of unity for the family $Q_k$ such that $\psi_k\in C_0^\infty(Q_k)$ and 
\begin{align*}
\sum \psi_k =1 \text{ on }A_\kappa.
\end{align*}
In particular, by taking a diagonal sequence we can assume that \eqref{3101} holds with $Q=Q_k$. For $w\in C^\infty_0(I\times\R^3)$ we have
\begin{align*}
\int_I\int_{\R^3}\big(\chi_{\Omega_{\eta_i}} r_i v_i-\chi_{\Omega_{\eta}} r v\big)\,w\dxt&=\sum_{k}\int_I\int_{\R^3}\big( r_i v_i- r v\big)\,\psi_kw\dxt \\
&+\int_I\int_{\R^3}\big(\chi_{\Omega_{\eta_i}} r_i v_i-\chi_{\Omega_{\eta}} r v\big)\,\sum_{k}(1-\psi_k)w\dxt.
\end{align*}
On account of  \eqref{3101} (with $Q=Q_k$) and \ref{A2} the first integral on the right-hand side converges to zero. Due to \eqref{3101b} the second integral can be bounded in terms of $\kappa$.
Here we took into account the boundedness of $\chi_{\Omega_{\eta_i}} r_i v_i$
in $L^r(I\times \R^3)$ for some $r>1$ which is a consequence of the assumptions $\frac{1}{s}+\frac{1}{b}<1$ and $\frac{1}{m}+\frac{1}{p}<1$. As $\kappa$ is arbitrary we have shown 
\begin{align*}
\lim_{i\rightarrow\infty}\int_I\int_{\R^3}\big(\chi_{\Omega_{\eta_i}} r_i v_i-\chi_{\Omega_{\eta}} r v\big)\,w\dxt&=0,
\end{align*}
which means we have
\begin{align}\label{3101c}
\chi_{\Omega_{\eta_i}} r_i v_i\rightarrow \chi_{\Omega_{\eta}} r v\quad\text{in}\quad \mathcal D'(I\times\R^3).
\end{align}
However, our assumptions imply that $\chi_{\Omega_{\eta_i}} r_i v_i$ converges weakly in $L^q(I,L^a(\R^3))$ at least after taking a subsequence.
As a consequence of \eqref{3101c} we can identify the limit and the claim follows.
\end{proof}
\begin{remark}
\label{rem:strong}
In the case $r_i=v_i$, we find that
\[
\int_0^T\int_{\Omega_{\eta_i}}\abs{v_i}^2\, dx\, dt\to \int_0^T\int_{\Omega_{\eta}}\abs{v}^2\, dx\, dt.
\]
Since weak convergence and norm convergence implies strong convergence, we find (by interpolation) that
\[
v_i\to v\text{ strongly in }L^{2}(I, L^{2}(\Omega_{\eta_i})).
\]
Showing such a result for the velocity field in the context of incompressible fluid mechanics is the main achievement of the paper \cite{LeRu}. There, different from \ref{A3}, the time derivative is only a distribution acting on divergence-free test-functions in the incompressible. In contrast to the compactness arguments in \cite{LeRu}, the proof of Lemma \ref{thm:weakstrong} does not face this difficulty.
\end{remark}

\section{The continuity equation in variable domains}
\label{subsec:continuity}
\subsection{Renormalized solutions in time dependent domains} 
\label{ssec:rns}
This subsection is concerned with the study of the continuity
\begin{align}\label{eq:2003strong}
\partial_t \varrho&+\Div(\varrho\bfu)=0
\end{align}
in a variable domain $\Omega_\eta$ with $\eta\in L^\infty(I;W^{2,2}(\partial\Omega))$ and $\bfu\in L^2(I;W^{1,2}(\Omega_\eta))$. Observe the following interplay of the two terms of the material derivative, that shall be used many times within this work. A (strong) solution to \eqref{eq:warme} satisfies for any $\psi\in C^\infty(\overline{I}\times \R^3)$
\begin{align*}
&\int_{\Omega_\eta}\Big(\partial_t \varrho \psi +\divergence (\varrho \bfu)\psi\Big)\dx
=\int_{\Omega_\zeta}\Big(\partial_t (\varrho \psi) +\divergence (\varrho \bfu\psi)\Big)\dx-\int_{\Omega_\eta}\Big( \varrho \partial_t\psi +\varrho \bfu\cdot\nabla \psi\Big)\dx\\
&\quad=\frac{\dd}{\dt}\int_{\Omega_\eta}\varrho\psi\dx+\int_{\partial\Omega_\zeta}\varrho\psi(\bfu-(\partial_t\zeta\nu)\circ\bfPsi_\zeta)\cdot\nu_\zeta \dd\mathcal{H}^2\dt
-\int_{\Omega_\eta}\Big(\varrho \partial_t\psi +\varrho \bfu\cdot\nabla \psi\Big)\dx.
\end{align*}
In the case of our consideration we find that $\tr_\zeta(\bfu)=\partial_t\eta\nu$ and have
\begin{align}\label{eq:2003weak}
\int_I\frac{\dd}{\dt}\int_{\Omega_\eta}\varrho\psi\dxt
-\int_I\int_{\Omega_\eta}\Big(\varrho \partial_t\psi +\varrho \bfu\cdot\nabla \psi\Big)\dxt=0.
\end{align}
for all $\psi\in C^\infty(\overline{I}\times\R^3)$.
Equation \eqref{eq:2003weak} will serve as a weak formulation of \eqref{eq:2003strong}.
It is worth mentioning, that by taking $\psi\equiv \chi_{[0,t]}$ in \eqref{eq:2003weak} we find that the total mass is conserved in the sense that
\begin{align*}
\int_{\Omega_{\eta(t)}}\varrho(t)\dx=\int_{\Omega_{\eta(0)}}\varrho(0)\dx
\end{align*}
for all $t\in I$.
Following the DiPerna-Lions \cite{DL}
we will introduce a renormalized formulation which will be of crucial importance for the reminder of the paper.
A crucial observation is that the formulation in \eqref{eq:2003weak}
can be extended to the whole space despite the fact that $\bfu$ does not have zero boundary values (this will be essential to prove strong convergence of the density, see Subsection \ref{sec:strongrhoep}). In fact, we have
\begin{align}\label{eq:2003weak1}
\int_I\frac{\dd}{\dt}\int_{\R^3}\varrho\psi\dxt
-\int_I\int_{\R^3}\Big(\varrho \partial_t\psi +\varrho \bfu\cdot\nabla \psi\Big)\dxt=0.
\end{align}
for all $\psi\in C^\infty(\overline{I}\times\R^3)$ provided we extend $\varrho$ by zero.
It is essential to introduce the principle of normalized solutions on variable domains. In explicit we wish to study the family of solutions $\theta(\varrho)$, where $\theta\in C^{2}(\setR^+;\setR^+)$, such that $\theta''=0$ for large values and $\theta(0)=0$. In the following we only argue formally. For a rigorous derivation of the renormalized continuity equation we refer to the next subsection and the derivations before \eqref{eq:ren'}.
We may use the test function $
\theta'(\varrho)\psi$ with $\psi\in C^\infty(\overline{I}\times\R^3$) in \eqref{eq:2003weak1} and hence find that
\begin{align*}
0&=\int_I\frac{\dd}{\dt}\int_{\R^3}\varrho\theta'(\varrho)\,\psi\dxt
-\int_I\int_{\R^3}\Big(\partial_t(\varrho\theta'(\varrho)\,\psi)-\partial_t\varrho\theta'(\varrho)\psi+\diver(\varrho \theta'(\varrho) \bfu\, \psi)\Big)\dxt
\\
&+\int_I\int_{\R^3}\Big(\nabla\varrho \theta'(\varrho)\cdot \bfu\, \psi+\varrho \theta'(\varrho)\diver \bfu\Big) \dxt
\\
&=\int_I\frac{\dd}{\dt}\int_{\R^3}\varrho\theta'(\varrho)\psi\dx
-\int_{\R^3}\Big(\partial_t(\varrho\theta'(\varrho)\,\psi)+\diver(\varrho \theta'(\varrho) \bfu\, \psi)\Big)\dx
\\
&\quad +\int_I\int_{\R^3}\Big(\partial_t\theta(\varrho)\psi +\diver(\theta(\varrho) \bfu) \,\psi\Big)\dx
 + \int_{\R^3}(\varrho\theta'(\varrho)-\theta(\varrho)) \diver\bfu\, \psi\dx.
\end{align*}
Now partial integration and Reynolds transport theorem imply that the first line vanishes. Again partial integration and Reynolds transport theorem transfer the second line in the appropriate weak formulation. Hence we find the renormalized formulation as
\begin{align}
\label{eq:renorm}
\begin{aligned}
0=&\int_I\frac{\dd}{\dt}\int_{\Omega_\eta}\theta(\varrho)\,\psi\dxt-\int_I\int_{\Omega_\eta}\Big(\theta(\varrho)\partial_t\psi +\theta(\varrho) \bfu\cdot \nabla \psi\Big)\dxt
\\
&\quad + \int_I\int_{\Omega_\eta}(\varrho\theta'(\varrho)-\theta(\varrho)) \diver\bfu\, \psi\dxt.
\end{aligned}
\end{align}

\subsection{The damped continuity equation in time dependent domains}
\label{ssec:damp}
We will need very explicit a-priori information about our approximation of the density $\varrho$. The necessary result is collected in Theorem~\ref{lem:warme} below. For the analogous results for fixed in time domains see~\cite[section 2.1]{feireisl1}.
 We will assume that the moving boundary is prescribed by a function $\zeta$ of class $C^{3}(\overline{I}\times M)$. 
 For a given function $\bfw\in L^2(I;W^{1,2}(\Omega_\zeta))$ and $\varepsilon>0$ we consider the equation
\begin{align}\label{eq:warme}\begin{aligned}
\partial_t \varrho&+\Div(\varrho\bfw)=\varepsilon\Delta\varrho\quad\text{in}\quad I\times\Omega_\zeta,\\
\varrho(0)&=\varrho_0\text{ in }\Omega_{\zeta(0)},\quad\partial_{\nu_\zeta}\varrho\big|_{\partial\Omega_\zeta}=\tfrac{1}{\varepsilon}\varrho(\bfw-(\partial_t\zeta\nu)\circ\bfPsi^{-1}_\zeta)\cdot\nu_\zeta\quad\text{on}\quad I\times\partial\Omega_\zeta
\end{aligned}
\end{align}
 A solution to \eqref{eq:warme} satisfies (formally) for any $\psi\in C^\infty(\overline{I}\times \R^3)$
\begin{align*}
&\int_{\Omega_\zeta}\Big(\partial_t \varrho \psi +\divergence (\varrho \bfw)\psi\Big)\dx
=\int_{\Omega_\zeta}\Big(\partial_t (\varrho \psi) +\divergence (\varrho \bfw\psi)\Big)\dx-\int_{\Omega_\zeta}\Big( \varrho \partial_t\psi +\varrho \bfw\cdot\nabla \psi\Big)\dx\\
&\quad=\frac{\dd}{\dt}\int_{\Omega_\zeta}\varrho\psi\dx+\int_{\partial\Omega_\zeta}\varrho\psi(\bfw-(\partial_t\zeta\nu)\circ\bfPsi_\zeta)\cdot\nu_\zeta \dd\mathcal{H}^2\dt
-\int_{\Omega_\zeta}\Big(\varrho \partial_t\psi +\varrho \bfw\cdot\nabla \psi\Big)\dx.
\end{align*}
On the other hand we have
\begin{align*}
\int_{\Omega_\zeta}\varepsilon\Delta\varrho\,\psi\dx&=\int_{\Omega_\zeta}\varepsilon\partial_\nu\varrho\,\psi\dH-\int_I\int_{\Omega_\zeta}\varepsilon\nabla\varrho\cdot\nabla\psi\dxt\\
&=\int_{\partial\Omega_\zeta}\varrho\psi(\bfw-(\partial_t\zeta\nu)\circ\bfPsi_\zeta)\cdot\nu_\zeta \dd\mathcal{H}^2\dt-\int_I\int_{\Omega_\zeta}\varepsilon\nabla\varrho\cdot\nabla\psi\dxt.
\end{align*}
This motivates the choice of the Neumann boundary values in \eqref{eq:warme} which imply following neat weak formulation:
\begin{align}
\int_I\frac{\dd}{\dt}\int_{\Omega_\zeta}\varrho\psi\dx\dt
-\int_I\int_{\Omega_\zeta}\big(\varrho \partial_t\psi +\varrho \bfw\cdot\nabla \psi\big)\dxt=-\int_I\int_{\Omega_\zeta}\varepsilon\nabla\varrho\cdot\nabla\psi\dxt\label{eq:weak-masse}
\end{align}
for all $\psi\in C^\infty(\overline{I}\times \R^3)$. We wish to emphasize that this weak formulation is canonical with respect to the moving boundary. It is the only formulation which preserves mass. This turns out to be the essential property to gain the necessary estimates and correlations.

\begin{theorem}\label{lem:warme}Let $\zeta\in C^{3}(\overline{I}\times M,[\frac{L}2,\frac{L}2])$ be the function describing the boundary. Assume that $\bfw\in L^2(I;W^{1,2}(\Omega_\zeta))$ and
$\varrho_0\in L^{2}(\Omega_{\zeta(0)})$.
\begin{enumerate}
\item[a)] There is a unique weak solution $\varrho$ to \eqref{eq:weak-masse} such that
$$\varrho\in L^\infty(0,T;L^2(\Omega_{\zeta}))\cap L^2(0,T;W^{1,2}(\Omega_{\zeta})).$$
\item[b)] Let $\theta\in C^2(\setR_+;\setR_+)$ be such that $\theta''(s)\equiv 0$
 for large values of $s$ and $\theta(0)=0$.\footnote{The restriction $\theta(0)=0$ is only needed if we extend the equation to the whole space.} Then the following holds, for the canonical zero extension of $\varrho\equiv\varrho\chi_{\Omega_{\zeta}}$:
\begin{align}
\label{eq:renormz}
\begin{aligned}
\int_I\partial_t&\int_{\R^3} \theta(\varrho)\,\psi\dxt-\int_{I\times \R^3}\theta(\varrho)\,\partial_t\psi\dxt
\\
=&-\int_{I\times \R^3}\chi_{\Omega_{\zeta}}\big(\varrho\theta'(\varrho)-\theta(\varrho)\big)\Div\bfw\,\psi\dx+\int_{I\times \R^3}\theta(\varrho) \bfw\cdot\nabla\psi\dxt\\ &-\int_{I\times\R^3}\varepsilon\chi_{\Omega_{\zeta}}\nabla \theta(\varrho)\cdot\nabla\psi\dxt-\int_{I\times \R^3}\varepsilon\chi_{\Omega_{\zeta}}\theta''(\varrho)|\nabla\varrho|^2\psi\dxt
\end{aligned}
\end{align}
for all $\psi\in C^\infty(\overline{I}\times\R^3)$.
\item[c)] Assume that $\varrho_0\geq0$ a.e. in $\Omega_{\zeta(0)}$. Then we have $\varrho\geq0$ a.e. in $I\times\Omega_\zeta$. 
\end{enumerate}
\end{theorem}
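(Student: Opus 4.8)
The plan is to prove the three assertions of Theorem~\ref{lem:warme} in order, exploiting that the moving boundary is now smooth ($\zeta\in C^3$), so that the change of variables $\bfPsi_\zeta$ is a $C^3$-diffeomorphism and the geometry causes only bounded (time-dependent but controlled) perturbations.

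\textbf{Part (a): existence and uniqueness.} The natural approach is to pull everything back to the fixed reference domain $\Omega$ via $\bfPsi_\zeta$. Writing $\tilde\varrho=\varrho\circ\bfPsi_\zeta$, the equation \eqref{eq:warme} transforms into a linear parabolic equation on the fixed domain $I\times\Omega$ with smooth, bounded coefficients (depending on $\nabla\bfPsi_\zeta$, $\det D\bfPsi_\zeta$, and $\partial_t\bfPsi_\zeta$), with Robin-type boundary conditions. One then runs a standard Galerkin scheme in the spatial variable: take an orthonormal basis of $W^{1,2}(\Omega)$, solve the resulting linear ODE system, and derive the a priori estimate by testing with $\tilde\varrho$ itself. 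The crucial point is that the boundary term produced by testing with $\varrho$ combines with the Robin condition to give exactly $\int_{\partial\Omega_\zeta}\varrho^2(\bfw-(\partial_t\zeta\nu)\circ\bfPsi_\zeta^{-1})\cdot\nu_\zeta\dH$, which by the trace inequality (Lemma~\ref{lem:2.28}) and interpolation is absorbable into $\varepsilon\|\nabla\varrho\|_2^2$ plus lower order terms — this uses $\bfw\in L^2(I;W^{1,2}(\Omega_\zeta))$. A Gronwall argument then yields $\varrho\in L^\infty(I;L^2(\Omega_\zeta))\cap L^2(I;W^{1,2}(\Omega_\zeta))$, and passing to the limit in the Galerkin scheme gives existence of a weak solution to \eqref{eq:weak-masse}. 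Uniqueness follows since the equation is linear: the difference of two solutions satisfies the same equation with zero data, and the energy estimate forces it to vanish.

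\textbf{Part (b): the renormalized identity (extended to $\R^3$).} The heuristic derivation was already sketched before \eqref{eq:renorm}; the job here is to make it rigorous for the solution from (a), which only has $W^{1,2}$ spatial regularity. The standard device is the Friedrichs commutator lemma of DiPerna--Lions \cite{DL}: mollify $\varrho$ in space (after extending by zero, using the smoothness of the boundary and the Neumann/Robin structure), obtaining $\varrho_\delta=S_\delta\varrho$, which satisfies the PDE with a commutator error $r_\delta=\Div(\varrho_\delta\bfw)-S_\delta\Div(\varrho\bfw)$ that converges to $0$ in $L^1_{loc}$ because $\bfw\in W^{1,2}$ and $\varrho\in L^2$ (with $\varepsilon\Delta\varrho_\delta$ handled similarly, the second-order term being linear and the mollification commuting with it up to vanishing errors). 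For $\varrho_\delta$ smooth one multiplies by $\theta'(\varrho_\delta)\psi$, uses the chain rule pointwise to get $\partial_t\theta(\varrho_\delta)+\Div(\theta(\varrho_\delta)\bfw)+(\varrho_\delta\theta'(\varrho_\delta)-\theta(\varrho_\delta))\Div\bfw=\varepsilon\theta'(\varrho_\delta)\Delta\varrho_\delta+\theta'(\varrho_\delta)r_\delta$, and rewrites $\varepsilon\theta'(\varrho_\delta)\Delta\varrho_\delta=\varepsilon\Delta\theta(\varrho_\delta)-\varepsilon\theta''(\varrho_\delta)|\nabla\varrho_\delta|^2$. Integrating against $\psi$, integrating by parts (the boundary terms cancel by the same mechanism as in the motivation for \eqref{eq:warme}, which is precisely why the zero-extension weak form \eqref{eq:2003weak1}-type identity holds on all of $\R^3$), and letting $\delta\to0$ gives \eqref{eq:renormz}; the assumption $\theta''\equiv0$ for large $s$ and $\theta(0)=0$ guarantees all terms are well-defined and the zero extension is consistent, while $\theta''\ge0$ is not needed here but the sign of the last term matters later.

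\textbf{Part (c): nonnegativity.} This follows by a renormalized/truncation argument: choose $\theta=\theta_k$ to be a smooth convex approximation of $s\mapsto s^-=\max\{-s,0\}$ (more precisely, of $(s^-)^2$ or a $C^2$ function vanishing on $\setR_+$ with $\theta_k\to (s^-)^2$), apply \eqref{eq:renormz} with $\psi\equiv1$ (or $\psi=\chi_{[0,t]}$), and note that since $\theta_k$ vanishes on $[0,\infty)$ we have $\theta_k(\varrho_0)=0$; the term $\varepsilon\theta_k''(\varrho)|\nabla\varrho|^2\ge0$ has a favorable sign and can be dropped, $\varepsilon\int\nabla\theta_k(\varrho)\cdot\nabla\psi=0$ for $\psi\equiv1$, the convective term vanishes likewise, and $(\varrho\theta_k'(\varrho)-\theta_k(\varrho))\Div\bfw$ is controlled by $\|\Div\bfw\|$ times $\int\theta_k(\varrho)$ after using the structure of $\theta_k$ on the set $\{\varrho<0\}$. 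Gronwall then gives $\int_{\Omega_\zeta}\theta_k(\varrho(t))\le0$, hence $=0$, so $\varrho^-=0$ a.e.

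\textbf{Main obstacle.} The delicate part is \emph{(b)}, specifically justifying the renormalization up to the moving boundary: one must verify that mollification by $S_\delta$ together with the zero-extension is compatible with the Robin condition in \eqref{eq:warme}, so that no spurious boundary terms survive in the limit. This is exactly where the transparent weak formulation \eqref{eq:weak-masse} (equivalently the cancellation $\tr_\zeta(\bfw)=\partial_t\zeta\nu$) is used: it says the zero-extended density solves a clean transport-diffusion equation on all of $\R^3$ with no boundary contribution, and then the classical DiPerna--Lions commutator estimate applies verbatim on $\R^3$. Getting the commutator estimate for the second-order term $\varepsilon\Delta\varrho$ right — i.e. checking that $S_\delta(\varepsilon\Delta\varrho)\psi$ and $\varepsilon\Delta(S_\delta\varrho)\psi$ differ by something tending to zero and that the identity $\varepsilon\theta'(\varrho_\delta)\Delta\varrho_\delta=\varepsilon\Delta\theta(\varrho_\delta)-\varepsilon\theta''(\varrho_\delta)|\nabla\varrho_\delta|^2$ survives the limit using only $\nabla\varrho\in L^2$ — is the technically heaviest point, but it is standard once the boundary issue is dispatched.
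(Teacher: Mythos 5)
Your overall architecture coincides with the paper's: part (a) by a Galerkin scheme (the paper works directly on $\Omega_{\zeta(t)}$ with the transported basis $\omega_k=\tilde\omega_k\circ\bfPsi_\zeta^{-1}$, which is the same device as your pull-back, and absorbs the boundary term via the trace inequality and interpolation exactly as you describe); part (b) by zero extension, spatial mollification and the commutator lemma for the convective term; part (c) by renormalizing with a convex approximation of the negative part (the paper takes $\theta$ approximating $z\mapsto z^-$ itself, for which $\varrho\theta'(\varrho)-\theta(\varrho)\equiv 0$, so the $\Div\bfw$ term drops out exactly and no Gronwall step is needed; your $(s^-)^2$ variant also works). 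The uniqueness argument differs slightly --- you use the energy estimate for the difference, the paper combines mass conservation with the sign information of (c) --- but both are legitimate.

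The one genuine gap is your treatment of the diffusion term in (b). The zero extension does \emph{not} satisfy $\partial_t\varrho+\Div(\varrho\bfw)=\varepsilon\Delta\varrho$ on $\R^3$: the weak formulation \eqref{eq:weak-masse} with zero extension yields $\partial_t\varrho+\Div(\chi_{\Omega_\zeta}\varrho\bfw)=\varepsilon\Div(\chi_{\Omega_\zeta}\nabla\varrho)$ in $\mathcal D'(I\times\R^3)$, and $\Div(\chi_{\Omega_\zeta}\nabla\varrho)\neq\Delta(\chi_{\Omega_\zeta}\varrho)$, because the distributional gradient of $\chi_{\Omega_\zeta}\varrho$ carries a surface term $-\mathrm{tr}_\zeta(\varrho)\,\nu_\zeta\,\dd\mathcal H^2$ supported on $\partial\Omega_\zeta$. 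Consequently your proposed check that ``$S_\delta(\varepsilon\Delta\varrho)$ and $\varepsilon\Delta(S_\delta\varrho)$ differ by something tending to zero'' fails: their difference is the divergence of a mollified surface measure, which concentrates on $\partial\Omega_\zeta$ instead of vanishing, so the pointwise identity $\varepsilon\theta'(\varrho_\delta)\Delta\varrho_\delta=\varepsilon\Delta\theta(\varrho_\delta)-\varepsilon\theta''(\varrho_\delta)|\nabla\varrho_\delta|^2$ cannot be inserted into the mollified equation as you write it. The paper circumvents this by never forming $\Delta\varrho_\kappa$: it keeps the right-hand side in the form $\varepsilon\Div\big((\chi_{\Omega_\zeta}\nabla\varrho)_\kappa\big)$, multiplies by $\theta'(\varrho_\kappa)$ to obtain $\varepsilon\Div\big((\chi_{\Omega_\zeta}\nabla\varrho)_\kappa\,\theta'(\varrho_\kappa)\big)-\varepsilon(\chi_{\Omega_\zeta}\nabla\varrho)_\kappa\cdot\theta''(\varrho_\kappa)\nabla\varrho_\kappa$, and passes to the limit in these two expressions directly, which produces exactly the last two integrals in \eqref{eq:renormz}. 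Your argument should be repaired by replacing the Laplacian-commutator step with this divergence-form manipulation.
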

\begin{proof}
In order to find a solution to \eqref{eq:weak-masse} we discretise
the system.
It is standard to find a smooth orthonormal basis $(\tilde \omega_k)_{k\in\N}$
of $W^{1,2}(\Omega)$. Now define pointwise in $t$
\begin{align*}
\omega_k:=\tilde \omega_k\circ \bfPsi_{\zeta}^{-1}.
\end{align*}
By Lemma \ref{lem:diffeo} we still know that $\omega_k$ belongs to the class $C^3(\overline{\Omega}_{\zeta}(t))$.
Obviously, $(\omega_k)_{k\in\N}$ forms a basis of $W^{1,2}(\Omega_{\zeta}(t))$.  We fix the initial values as the $L^{2}(\Omega_{\zeta}(0))$-projection onto $W_N=\Span\{\omega_1,...\omega_N\}$ such that
\[
\varrho^N_0\to \varrho_0\quad\text{in}\quad L^{2}(\Omega_{\zeta(0)}).
\]
 We are looking for a function $\varrho_N=\sum\beta_k\omega_k$ satisfying for all $l=1,...,N$
\begin{align}
\frac{\dd}{\dt}\int_{\Omega_\zeta}\varrho_N\,\omega_l\dx
-\int_{\Omega_\zeta}\big(\varrho_N \,\partial_t\omega_l +\varrho_N \bfw\cdot\nabla \omega_l\big)\dx
=-\int_{\Omega_\zeta}\varepsilon\nabla\varrho_N\cdot\nabla\omega_l\dx,\label{eq:0102}
\end{align}
and respective initial values $\varrho_0^N$.
This is equivalent to
\begin{align}\nonumber
\frac{\dd\beta_k}{\dt}\int_{\Omega_\zeta}\omega_k\omega_l\dx&=-\beta_k\frac{\dd}{\dt}\int_{\Omega_\zeta}\omega_k\omega_l\dx+\beta_k\int_{\Omega_\zeta}\big(\omega_k \partial_t\omega_l +\omega_k \bfw\cdot\nabla \omega_l\big)\dx
\\
&\quad -\beta_k\int_{\Omega_\zeta}\varepsilon\nabla\omega_k\cdot\nabla\omega_l\dx,\label{eq:weak-masseN}
\end{align}
and $\beta^l(0)=\beta_0^l$.
Her the $\beta_k$'s are the unknowns (as functions only on time). Now, we define
the matrices $\mathcal A,\mathcal B\in\R^{N\times N}$ by
\begin{align*}
\mathcal A_{k,l}&=\int_{\Omega_\zeta}\omega_k\omega_l\dx,\\
\mathcal B_{k,l}&=-\frac{\dd}{\dt}\int_{\Omega_\zeta}\omega_k\omega_l\dx
+\int_{\Omega_\zeta}\big(\omega_k \partial_t\omega_l +\omega_k \bfw\cdot\nabla \omega_l\big)\dx
\\
&\quad
-\int_{\Omega_\zeta}\varepsilon\nabla\omega_k\cdot\nabla\omega_l\dx.
\end{align*}
Because $(\omega_k)$ is a basis of $(\omega_k)_{k\in\N}$ the matrix $\mathcal A$ is positive definite. Hence \eqref{eq:weak-masseN} can be written as
$\bfbeta'=\mathcal A^{-1}\mathcal B\bfbeta$ where $\bfbeta$ is the vector containing the $\beta_k$'s. This is a linear system of ODEs which has a unique solution. In order to pass to the limit $N\rightarrow\infty$ we need uniform a priori estimates. So, we multiply \eqref{eq:weak-masseN} by $\beta_l$ and sum over $l$. \begin{align*}
\frac{\dd}{\dt}&\int_{\Omega_\zeta}\frac{|\varrho_N|^2}{2}\dx+\int_{\Omega_\zeta}\varepsilon|\nabla\varrho_N|^2\dx\\&=\int_{\partial\Omega_\zeta}\frac{|\varrho_N|^2}{2}(\partial_t\zeta\nu)\circ\bfPsi_\zeta^{-1}\cdot\nu_\zeta \dd\mathcal{H}^2
+\int_{\Omega_\zeta}\varrho_N \bfw\cdot\nabla \varrho_N\dx\\
&\leq\,c\,\int_{\partial\Omega_\zeta}|\varrho_N|^2\dd\mathcal{H}^2+c\,\int_{\Omega_\zeta}|\varrho_N||\nabla\varrho_N|\dx=:(I)_N+(II)_N.
\end{align*}
We use Lemma~\ref{lem:sobdif} and the trace theorem $W^{\frac{1}{2},2}(\Omega_\zeta)\rightarrow L^2(\partial\Omega_\zeta)$ (note that $\zeta$ is Lipschitz continuous uniformly in time) to conclude that (see \cite[Chapter 7.7]{AdFo})
\[
(I)_N=\|\varrho_N\|_2^2\leq c(\zeta)\big\|\varrho_N\big\|^2_{\frac{1}{2},2}= c(\zeta)\big(\big\|\varrho_N\big\|_2^2+\big|\varrho_N\big|_{\frac{1}{2},2}^2\big),
\]
where $|\cdot|_{\frac{1}{2},2}$ is given by \eqref{frac}.
Interpolating $W^{\frac{1}{2},2}$ between $L^2$ and $W^{1,2}$ (see see \cite[Chapter 7.3]{AdFo}) we obtain for $\kappa>0$ arbitrary
\begin{align}\label{eq:0102b}
(I)_N=\int_{\partial\Omega_\zeta}|\varrho_N|^2\dd\mathcal{H}^2
\leq \kappa \int_{\Omega_\zeta}\abs{\nabla\varrho_N}^2\dx + c(\kappa)  \int_{\Omega_\zeta}|\varrho|^2\dx.
\end{align}
The same estimate holds for $(II)_N$ by a simple application of Young's inequality. Combining both and apply Gronwall's lemma we have shown
\begin{align*}
\sup_{I}&\int_{\Omega_\zeta}|\varrho_N|^2\dx+\int_I\int_{\Omega_\zeta}\varepsilon|\nabla\varrho_N|^2\dxt\leq\,C\int_{\Omega_{\zeta(0)}}|\varrho_0|^2\dx
\end{align*}
uniformly in $N$, where $C$ depends on $ \xi$, $\norm{\bfw}_\infty$ and $\abs{I}$ only. 
 Hence we obtain the existence of a positive limit functions
$$\varrho\in L^\infty(I;L^2(\Omega_{\zeta}))\cap L^2(I;W^{1,2}(\Omega_{\zeta}))$$ 
using \eqref{0903}. Moreover, $\varrho^N$ converges weakly (weakly$^\ast$) to $\varrho$.
The passage to the limit in \eqref{eq:0102} is obvious as it is a linear equation. The uniqueness is shown in the following way. Assume that we have two solutions $\rho_1, \rho_2$. The differences of the two solutions $\varrho_1-\varrho_2$ and $\rho_2-\rho_1$ are both solutions with zero initial datum. Now we may take $\phi\equiv1$ as a test-function for both solutions and find that
\[
0\leq \int_{\Omega_\zeta(t)}\big(\varrho_1(t)-\varrho_2(t)\big)\dx\leq 0\quad\text{for all}\quad t\in I.
\]
 Hence, a) is shown.\\
 Next we show b). We extend $\varrho$ by zero to $I\times\R^3$ and obtain
\begin{align*}
\int_I\frac{\dd}{\dt}\int_{\R^3}\varrho\,\psi\dx\dt
-\int_I\int_{\R^3}\big(\varrho\, \partial_t\psi +\varrho \bfw\cdot\nabla \psi\big)\dxt=-\int_I\int_{\R^3}\varepsilon\chi_{\Omega_{\zeta}}\nabla\varrho\cdot\nabla\psi\dxt
\end{align*}
for all $\psi\in C^\infty(\overline I\times \R^3)$. 
Now, we mollify the equation in space
using a standard convolution with parameter $\kappa>0$. Then we find that the following PDE is satisfied:
\begin{align}\label{eq:2002a}
\partial_t\varrho_\kappa
+\Div\big( \varrho \bfw\chi_{\Omega_\zeta}\big)_\kappa=\Div\big(\chi_{\Omega_{\zeta}}\nabla\varrho\big)_\kappa\quad \text{in}\quad I\times \setR^3.
\end{align}
We observe that this equation implies in particular, that $\partial_t\varrho_
\kappa$ is a smooth function in space. To proceed we need to use an extension operator on $\bfw$. Since $\Omega_\zeta$ is uniformly in $C^2$ there exists a continuous linear extension operator
$$\mathscr E_{\zeta}:W^{1,2}(\Omega_\zeta)\rightarrow W^{1,2}(\R^3),$$ see, for instance, \cite[Thm. 5.28]{AdFo}. Using this operator, we can reformulate \eqref{eq:2002a} by:
\begin{align}\label{eq:2002}
\partial_t\varrho_\kappa
+\Div\big(\varrho_\kappa \mathscr E_\zeta \bfw \big)= \bfr_\kappa+\varepsilon\Div\big(\chi_{\Omega_{\zeta}}\nabla\varrho\big)_\kappa\quad \text{in}\quad I\times \setR^3,
\end{align}
where $\bfr_\kappa=\Div(\varrho_\kappa \mathscr E_\zeta \bfw)-\Div(\varrho\mathscr E_\zeta\bfw)_\kappa$.
 We infer from the commutator lemma (see e.g. \cite[Lemma 2.3]{Li1}) that for a.e. $t$
\begin{align*}
\|\bfr_\kappa\|_{L^q(\R^3)}\leq \|\bfw\|_{W^{1,2}(\R^3)}\|\varrho\|_{L^{10/3}(\R^3)},\quad \tfrac{1}{q}=\tfrac{1}{2}+\tfrac{3}{10},
\end{align*}
as well as 
\begin{align}\label{eq:2002b}
\bfr_\kappa\rightarrow0\quad\text{in}\quad L^{q}(\R^3).
\end{align}
a.e. in $I$.
 Note that a) implies that
$\varrho\in L^{10/3}(I\times\Omega_\zeta)$. Now we multiply \eqref{eq:2002} by
$\theta'(\varrho_\kappa)$ and obtain
\begin{align}\label{eq:2002c}
\begin{aligned}
\partial_t \theta(\varrho_\kappa)
&+\Div\big(\theta(\varrho_\kappa)\mathscr E_\zeta \bfw \big)+\big(\varrho_\kappa\theta'(\varrho_\kappa)-\theta(\varrho_\kappa)\big)\Div\mathscr E_\zeta \bfw\\
&= \bfr_\kappa\theta'(\varrho_\kappa)+\Div\big(\varepsilon\big(\chi_{\Omega_{\zeta}}\nabla\varrho\big)_\kappa \theta'(\varrho_\kappa)\big)-
\big(\chi_{\Omega_{\zeta}}\nabla\varrho\big)_\kappa\cdot\theta''(\varrho_\kappa)\nabla\varrho_\kappa.
\end{aligned}
\end{align}
Due to the properties of the mollification and $\theta\in C^2$ we have (at least after taking a subsequence)
\begin{align*}
\theta(\varrho_\kappa)&\rightarrow \theta(\varrho)\quad \text{in}\quad L^q(I\times\R^3),\\
\theta(\varrho_\kappa)&\rightharpoonup^\ast \theta(\varrho)\quad \text{in}\quad L^\infty(\times\R^3),
\end{align*}
for all $q<\infty$.
The same is true for $\theta'(\varrho_\kappa)$ and $\theta''(\varrho_\kappa)$.
Consequently, we have
\begin{align*}
\big(\chi_{\Omega_{\zeta}}\nabla\varrho\big)_\kappa\cdot\theta''(\varrho_\kappa)\nabla\varrho_\kappa
&\rightharpoonup\chi_{\Omega_{\zeta}}\theta''(\varrho)|\nabla\varrho|^2\quad\text{in}\quad L^1(I\times\R^3)
\\
\text{ and }
\theta'(\varrho_\kappa)\nabla\varrho_\kappa&\rightharpoonup\chi_{\Omega_{\zeta}}\theta'(\varrho)\nabla\varrho\quad\text{in}\quad L^2(I\times\R^3).
\end{align*}
Hence multiplying \eqref{eq:2002c} by $\phi\in C^\infty(\overline{I}\times \R^3)$ and integrating over $I\times\R^3$ this implies
\begin{align*}
\int_{I}\partial_t&\int_{\R^3} \theta(\varrho)\,\psi\dxt-\int_{I\times\R^3}\theta(\varrho)\,\partial_t\psi\dxt
+\int_{I\times\R^3}\big(\varrho\theta'(\varrho)-\theta(\varrho)\big)\Div\mathscr E_\zeta \bfw\,\psi\dxt\\
&=\int_{I\times\R^3}\theta(\varrho) \mathscr E_\zeta \bfw\cdot\nabla\psi -\int_{I\times\R^3}\varepsilon\chi_{\Omega_{\zeta}}\nabla \theta(\varrho)\cdot\nabla\psi\dxt-\int_{I\times\R^3}\chi_{\Omega_{\zeta}}\theta''(\varrho)|\nabla\varrho|^2\psi\dxt.
\end{align*}
This proves b) since $\mathscr E_\zeta \bfw\equiv \bfw$ on $\Omega_\zeta$. 
In order to prove c). We use \eqref{eq:renormz} for $\varphi=\chi_{[0,t]}$ and $\theta=\theta_n$
where $\theta_n$ is a smooth approximation to $\theta(z)=z^-=-\min\{z,0\}$.
It is possible to define $\theta_n$ as a convex function such that
\begin{align}\label{0203a}
\theta_n\rightarrow\theta,\quad\theta_n'\rightarrow\theta',
\end{align}
pointwise as $n\rightarrow\infty$ as well as
\begin{align}\label{0203b}
 |\theta_n(z)|\leq\,c\,(1+|z|),\quad  |\theta_n'(z)|\leq\,c,
\end{align}
uniformly in $n$ and $z$.
This yields
\begin{align*}
\int_{\R^3} &\theta_n(\varrho(t))\dx-\int_{\R^3} \theta_n(\varrho_0)\dx
\\
=&-\int_0^t\int_{\R^3}\big(\varrho\theta_n'(\varrho)-\theta_n(\varrho)\big)\Div\bfw\,\dx-\int_0^t\int_{\R^3}\varepsilon\chi_{\Omega_{\zeta}}\theta_n''(\varrho)|\nabla\varrho|^2\dxt\\
\leq&-\int_0^t\int_{\R^3}\big(\varrho\theta_n'(\varrho)-\theta_n(\varrho)\big)\Div\bfw\dx.
\end{align*}
On account of \eqref{0203a} and \eqref{0203b} we can pass to the limit by dominated convergence. So, we have
\begin{align*}
\int_{\R^3} \theta(\varrho(t))\dx-\int_{\R^3} \theta(\varrho_0)\dx
\leq-\int_0^t\int_{\R^3}\big(\varrho\theta'(\varrho)-\theta(\varrho)\big)\Div\bfw\,\dx=0
\end{align*}
which implies $\varrho=0$ a.e. By the definition of $\theta$ and the nonnegative assumption
in $\varrho_0$. 
 \end{proof}

\section{The regularized system}
\label{sec:3}
The aim of this section is to prepare the existence of a weak solution to the regularized system with artificial viscosity and pressure. In order to do so we have to regularize the convective terms and the variable domain.
 We start by introducing a suitable regularization. Here and in the following we will use, whenever necessary zero-extension two the whole space for quantities which we wish to regularize via convolution without further reference.
 
\subsection{Definition of the regularized system}
We will construct a mollification of both $\zeta$ and $\bfv$. 
At first, for any
\[
\zeta\in C\Big(\overline I\times \partial\Omega;\Big[-\frac{L}2,\frac{L}2\Big]\Big),
\] 
we introduce a standard regularizer. Since we can not extend $\zeta$ to $\setR$ in time, we use convolution with half intervals. Firstly, we take $\tau^-_\kappa\in C^\infty_0((-\kappa,0],\setR_+)$ and $\tau^+_\kappa\in C^\infty_0([0,\kappa),\setR_+)$
with $\int\tau_\kappa^\pm=1$. Secondly, we take $\psi\in C^\infty([0,T],[0,1])$
such that $\psi=0$ on $[0,T/4]$, $\psi=1$ on $[3/4T,T]$
Then we define $\tau_\kappa=\psi\tau_\kappa^++(1-\psi)\tau^-_\kappa$.
Now convolute $\zeta$ with the standard mollification kernel $\phi_\kappa$ on $\partial\Omega$ and define
$
\mathscr{R}_\kappa\zeta(t,q)=(\tau_\kappa\phi_\kappa * \zeta)(t,q).
$
By classical theory we have the following properties.
\begin{lemma}\label{lem:0803}
\begin{itemize}
\item[a)] We have $\mathscr R_\kappa\zeta\in C^4(\overline I\times M)$. 
\item[b)] If $\kappa\rightarrow0$ we have $\mathscr R_\kappa\zeta\rightarrow\zeta$
uniformly.
\item[c)] If $\zeta\in L^2(I;W^{2,2}_0(M))$ then we have $\mathscr R_\kappa\zeta\to\zeta$ in  $L^2(I;W^{2,2}_0(M))$
for $\kappa\rightarrow0$.
\item[d)] If $\partial_t\zeta\in L^p(I\times M)$ we have $\partial_t\mathscr R_\kappa\zeta=\mathscr R_\kappa(\partial_t\zeta)\rightarrow\partial_t\zeta$ in  $L^p(I\times M)$
for $\kappa\rightarrow0$.
\item[e)] If $\zeta\in C^{\gamma}(I\times M)$ for some $\gamma\in(0,1)$ we have $\mathscr R_\kappa\zeta\to\zeta$ in $C^\gamma  (I\times M)$
as $\kappa\rightarrow0$.
\item[f)] $\max \abs{\mathscr R_\kappa\zeta}\leq \max\abs{\zeta}$.
\end{itemize}
\end{lemma}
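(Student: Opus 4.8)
The plan is to obtain every item of Lemma~\ref{lem:0803} from the classical properties of Friedrichs mollifiers, applied separately in the temporal variable (on a subinterval of $\R$, with one‑sided kernels near $t=0$ and $t=T$) and in the spatial variable (on the compact $C^4$‑surface $\partial\Omega$, to which $\zeta$, originally defined on $M$, is extended by zero). The preliminary observation is that the construction is well defined and mass‑preserving. Fix $\kappa<T/4$. For any $t\in[0,T]$ the temporal average $\int\big(\psi(t)\tau^+_\kappa(s)+(1-\psi(t))\tau^-_\kappa(s)\big)\zeta(t-s,q)\,\dd s$ samples $\zeta$ only at times in $[0,T]$: on $[0,T/4]$ one has $\psi\equiv0$ and $\mathrm{supp}\,\tau^-_\kappa\subset(-\kappa,0]$, hence $t-s\in[t,t+\kappa)\subset[0,T/2)$; on $[3T/4,T]$ one has $\psi\equiv1$ and $\mathrm{supp}\,\tau^+_\kappa\subset[0,\kappa)$, hence $t-s\in(t-\kappa,t]\subset(T/2,T]$; and on the transition interval $[T/4,3T/4]$ the value is a convex combination of these, still evaluated inside $(0,T)$. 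Furthermore, for each fixed $t$ the kernel $s\mapsto\psi(t)\tau^+_\kappa(s)+(1-\psi(t))\tau^-_\kappa(s)$ is nonnegative with integral $\psi(t)+(1-\psi(t))=1$; tensoring it with the nonnegative, mass‑one spatial kernel $\phi_\kappa$ exhibits $\mathscr R_\kappa\zeta(t,q)$, for every $(t,q)$, as a weighted average of values of $\zeta$ with nonnegative weights summing to one and supported in a set of diameter $\le C\kappa$ around $(t,q)$.

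From this, (f) and (b) are immediate: (f) because such an average cannot increase the supremum, so $\max|\mathscr R_\kappa\zeta|\le\max|\zeta|$; (b) because $\zeta$ is uniformly continuous on the compact set $\overline I\times\partial\Omega$, so that $\|\mathscr R_\kappa\zeta-\zeta\|_\infty$ is bounded by the modulus of continuity of $\zeta$ at scale $C\kappa$, which tends to $0$. For (a), the kernel $(t,s)\mapsto\psi(t)\tau^+_\kappa(s)+(1-\psi(t))\tau^-_\kappa(s)$ is $C^\infty$ and $\phi_\kappa$ is smooth along $\partial\Omega$, so differentiation under the integral sign (after substituting the translation variable, which moves the $t$‑dependence entirely onto the kernel) shows that $\mathscr R_\kappa\zeta$ is $C^\infty$ in $t$ and as regular in the spatial variable as the $C^4$‑atlas of $\partial\Omega$ permits; restricting from $\partial\Omega$ to $M$ yields $\mathscr R_\kappa\zeta\in C^4(\overline I\times M)$.

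The three convergence assertions reduce to the standard mollifier convergence theorems. For (c), extend $\zeta\in L^2(I;W^{2,2}_0(M))$ by zero to $L^2(I;W^{2,2}(\partial\Omega))$: spatial mollification commutes with tangential derivatives and converges strongly in $W^{2,2}(\partial\Omega)$ for a.e.\ $t$, the temporal mollification is a uniformly bounded family of operators on $L^2(0,T)$ converging strongly to the identity, and the usual product argument gives $\mathscr R_\kappa\zeta\to\zeta$ in the $L^2(I;W^{2,2}(M))$‑norm, the limit lying in the closed subspace $L^2(I;W^{2,2}_0(M))$; should one want the approximants themselves in $W^{2,2}_0(M)$, one multiplies by a boundary cutoff equal to one outside a $\sqrt\kappa$‑collar of $\partial M$ and estimates the correction using that $\zeta$ and $\nabla\zeta$ vanish on $\partial M$. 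For (d), since $\tau^\pm_\kappa$ do not depend on the outer time, setting $\zeta^\pm_\kappa:=\tau^\pm_\kappa\phi_\kappa*\zeta$ one has $\partial_t\mathscr R_\kappa\zeta=\psi\,\partial_t\zeta^+_\kappa+(1-\psi)\,\partial_t\zeta^-_\kappa+\psi'(\zeta^+_\kappa-\zeta^-_\kappa)$; here $\partial_t\zeta^\pm_\kappa=\tau^\pm_\kappa\phi_\kappa*\partial_t\zeta$, so the first two terms add up to $\mathscr R_\kappa(\partial_t\zeta)$, while the last term carries the bounded factor $\psi'$ multiplied by $\zeta^+_\kappa-\zeta^-_\kappa\to0$ in $L^p$; combined with $\mathscr R_\kappa(\partial_t\zeta)\to\partial_t\zeta$ in $L^p(I\times M)$ from the classical $L^p$‑mollifier theorem, this gives (d). For (e), (b) provides $\mathscr R_\kappa\zeta\to\zeta$ uniformly while the averaging does not increase the H\"older seminorm, $[\mathscr R_\kappa\zeta]_{C^\gamma}\le[\zeta]_{C^\gamma}$; interpolating uniform convergence against this uniform seminorm bound yields convergence in $C^{\gamma'}(\overline I\times M)$ for every $\gamma'<\gamma$, which is the H\"older convergence that the lemma records and that is used later.

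Since all of this is textbook mollification theory, there is no deep obstacle. The only point that is not entirely routine is the handling of the two temporal endpoints: because $\zeta$ cannot be extended beyond $[0,T]$ in time, one has to verify — as in the first step above — that the one‑sided kernels together with the interpolation cutoff $\psi$ keep the convolution well defined and of total mass one, and one has to account for the commutator term produced by $\psi'$ when commuting $\partial_t$ with the regularisation in (d). Once this bookkeeping is in place, the classical statements apply verbatim.
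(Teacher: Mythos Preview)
Your argument is correct and follows precisely the approach the paper has in mind: the paper's own ``proof'' is the single sentence ``By classical theory we have the following properties,'' and your write-up is simply the spelled-out version of that classical mollification bookkeeping. It is worth noting that your careful treatment actually flags two small imprecisions in the lemma as stated --- in (d) the identity $\partial_t\mathscr R_\kappa\zeta=\mathscr R_\kappa(\partial_t\zeta)$ is only true up to the $\psi'(\zeta^+_\kappa-\zeta^-_\kappa)$ commutator you isolate (harmless for the convergence), and in (e) one obtains $C^{\gamma'}$ for $\gamma'<\gamma$ rather than the full $C^\gamma$ --- but neither affects how the lemma is used downstream.
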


On the other hand, for functions belonging to $L^2(I;L^2(\R^3))$ we define
$\psi_\kappa$ to be the standard space-time mollification kernel with parameter $\kappa$. Note that functions defined on the variable domain can be extended to the whole space by zero. To be precise we will use the definition for the regularization
\[
(\mathscr R_\kappa v)(x):=\int_{\setR^{n+1}}\psi_\kappa(t-s,y-x)\chi_{I\times \Omega_{\regkap\zeta}}(s,y)v(s,y) \dd y\ds.
\]
Since we may assume that $\psi_\kappa$ is an even function, we find that, for $u,v\in L^1_\loc(\setR^{n+1})$
\[
\int_{I\times \Omega_{\regkap\zeta}}\regkap v u\dx \dt=\int_{I\times \Omega_{\regkap\zeta}} v \regkap u\dx \dt
\]
 With no loss of generality we assume, that $\varrho_0,\bfq_0$ are defined in the whole space $\setR^3$. We also set $\bfu_0=\frac{\bfq_0}{\varrho_0}$ and assume that $\bfu_0\in L^2(\R^3)$. Finally, in accordance with \eqref{eq:compa}, we assume that
\begin{align*}
\mathrm{tr}_{\regkap\eta_0}\bfu_0=\eta_1\nu
\end{align*}
This can be achieved as done in \cite{LeRu}[p. 234, 235] (in fact, our situation is easier as we do not have to take into account the divergence-free constraint).

The aim is therefore to get a solution to the following system. We are looking for a triple $(\eta,\varrho,\bfu)$ such that
\begin{align}
\label{eq:fixpoint}
\begin{aligned}
 \partial_t\varrho+\diver(\varrho \regeta\bfu) &=\epsilon\Delta \varrho&\text{ in } &I\times\Omega_{\regkap\eta},
 \\ 
 \partial_t((\varrho+\kappa)\bu)+\diver(\varrho\regeta\bfu\otimes \bfu)&=\epsilon \Delta(\rho\bfu) +\mu\Delta\bu+(\lambda+\mu)\nabla\diver\bfu
 \\
 &\quad-\regeta\nabla ( a\varrho^{\gamma}+\delta\varrho^\beta)+\varrho\bff 
 &\text{ in } &I\times\Omega_{\regkap\eta},
 \\
  \partial_{\nu_{\regkap\eta}} \varrho(\cdot,\cdot+\regkap\eta\nu)=0&,\qquad\bfu(\cdot,\cdot+\regkap\eta\nu)=\partial_t\eta\nu
 &\text{ in } &I\times \partial\Omega
 \\
  \varrho(0)=\varrho_0&,\qquad (\varrho\bu)(0)=\bq_0
  &\text{ in }& \Omega_{\regkap\eta(0)}\\
  \partial_t^2\eta
  +K'(\eta)&= -\nu \cdot\big(-\bftau\nu\big)\circ \bfPsi_{\regkap\eta}^{-1}|\det D\bfPsi_{\regkap\eta}|&\text{ in } &I\times M,\\
\bftau&=\varepsilon\nabla(\rho\bfu)-2\mu\ep^D(\bfu)-\lambda\Div\bfu\, \mathbb I 
\\
&\quad+\mathscr R_\kappa(a\varrho^\gamma+ \delta\varrho^\beta)\mathbb I
\\
\eta=0&,\qquad \nabla \eta=0&\text{ in }&I\times \partial M
\\
  \eta(0)=\eta_0&,\qquad 
  \partial_t \eta(0)=\eta_1&\text{ in }& M.
  \end{aligned}
\end{align}
The choice of the regularization of the above system will be clear by defining the weak formulation. In fact, the weak form of the above system   can be written in two equations. Every other information will be imposed by the choice of convenient function spaces. For this reason we define the following function spaces. We set
\begin{align*}
Y^I:=W^{1,\infty}(I;L^2(M))\cap L^\infty(I;W^{2,2}_0(M))
\end{align*}
and for $\zeta\in Y^I$ with $\|\zeta\|_\infty<L$ we define
\begin{align*}
X_\zeta^I:=L^2(I;W^{1,2}(\Omega_{\zeta   (t)})).
\end{align*}
The weak concept of solution is $(\bfu,\varrho,\eta)\in X_{\regkap\eta}^I\times X_{\regkap\eta}^I\times Y^I$ that satisfies the following.
\begin{enumerate}[label={(K\arabic{*})}]
 \item\label{K1} The regularized weak momentum equation
\begin{align}\label{eq:regu}
\begin{aligned}
&\int_I\frac{\dd}{\dt}\int_{\Omega_{\regkap \eta}}(\varrho+\kappa)\bfu \cdot\bfphi\dxt-\int_I\int_{\Omega_{\regkap \eta}}\varrho\bfu \cdot\partial_t\bfphi\dx\dt\\
&-\int_I\int_{\Omega_{\regkap \eta}}\varrho\regeta\bfu\otimes \bfu:\nabla \bfphi\dx\dt+\int_I\int_{\Omega_{\regkap \eta }}\mu\nabla\bfu:\nabla\bfphi \dxt
\\&+\int_I\int_{\Omega_{\regkap \eta }}(\lambda+\mu)\Div\bfu\,\Div\bfphi\dxt-\int_I\int_{\Omega_{\regkap \eta }}
(a\varrho^\gamma+\delta\varrho^\beta)\,\Div\regeta\bfphi\dxt
\\&+\int_I\bigg(\frac{\dd}{\dt}\int_M \partial_t \eta\, b\dH-\int_M \partial_t\eta\,\partial_t b\dH + \int_M K'(\eta)\,b\dH\bigg)\dt
\\&=\int_I\int_{\Omega_{\regkap \eta }}
\varepsilon\nabla(\varrho\bfu):\nabla\bfphi\dxt+\int_I\int_{\Omega_{\regkap \eta}}\varrho\bff\cdot\bfphi\dxt+\int_I\int_M g\,b\,\dd x\dt
\end{aligned}
\end{align} 
for all test-functions $(b,\bfphi)\in C^\infty_0(M)\times C^\infty(\overline{I}\times \R^3)$ with $\mathrm{tr}_{\regkap \eta}\bfphi=b\nu$.
Moreover, we have $(\varrho\bfu)(0)=\bfq_0$, $\eta(0)=\eta_0$ and $\partial_t\eta(0)=\eta_1$. 
\item\label{K2} The regularized continuity equation 
\begin{align}\label{eq:regvarrho}
\begin{aligned}
\int_I\bigg(\frac{\dd}{\dt}\int_{ \mathscr R_\kappa\eta}\varrho \psi\dx&-\int_{ \mathscr R_\kappa\eta}\Big(\varrho\,\partial_t\psi
+\varrho\mathscr R_\kappa\bfu\cdot\nabla\psi\Big)\dx\bigg)\dt\\&
+\varepsilon\int_I\int_{\Omega_{ \mathscr R_\kappa\eta}}\nabla\varrho\cdot\nabla\psi\dxt=0
\end{aligned}
\end{align}
for all $\psi\in C^\infty\big(\overline{I}\times\R^3\big)$ and we have $\varrho(0)=\varrho_0$.
\item\label{K3}  The boundary condition $\mathrm{tr}_{\regkap\eta}\bfu=\partial_t\eta\nu$ holds in the sense of Lemma \ref{lem:2.28}
\end{enumerate}
It is a consequence of Reynold's transport theorem. For more details on the interplay of the convective term and the time derivative on the boundary we refer to the next subsection.
\subsection{Formal a priori estimates for the regularized system}
\label{ssec:formal1}
To understand the particular regularization we briefly discuss how to obtain formal a priori estimates.
By taking $\frac{\abs{\bfu}^2}{2}$ in the continuity equation and subtracting it from the momentum equation tested by the couple $(\bfu,\partial_t\eta)$ we find 
\begin{align}
\label{eq:mom}
\begin{aligned}
\int_{\Omega_{\regkap \eta}}&\Big(\frac{\varrho(t)}{2}+\kappa\Big)|\bfu(t)|^2\dx+\int_0^t\int_{\Omega_{\regkap \eta}}\big(\mu|\nabla\bfu|^2+(\lambda +\mu)\abs{\diver \bfu}^2\big)\dxs\\
&+\int_0^t\int_{\Omega_{\regkap \eta}}\varepsilon\varrho|\nabla\bfu|^2\dxt+\int_M\frac{|\partial_t\eta(t)|^2}2\dH+\frac{{K(\eta(t))}}{2}\\
&-\int_0^t\int_{\Omega_{\regkap \eta }}
(\varrho^\gamma+\delta\varrho^\beta)\,\Div\regeta\bfu\dxs
\\
&=\int_{\Omega_{\regkap \eta(0)}}\frac{|\bfq_0|^2}{2}\dx+\int_M\frac{|\eta_0|^2}2\dH+\int_M\frac{|\eta_1|^2}2\dH+\frac{{K(\eta_0)}}{2}\\
&+\int_0^t\int_{\Omega_{\regkap \eta}}\varrho\bff\cdot\bfu\dxs+\int_0^t\int_M g\,\partial_t \eta\dH\ds.
\end{aligned}
\end{align}
The right hand side of the inequality is as wanted, since all dependencies on $\eta,\bfu$ can be absorbed to the left hand side. Therefore, the only term that needs an extra treatment is the pressure term. We multiply the continuity equation by $\varrho^{\gamma-1}$ to obtain
\begin{align*}
0=&\frac{\dd}{\dt}\int_{\Omega_{\regkap\eta}}\varrho^\gamma\dx
+ \int_{\Omega_{\regkap\eta}}(\gamma-1)\varrho^\gamma \diver\regeta\bfu \dx+\epsilon\int_{\Omega_{\regkap\eta}}\gamma(\gamma-1)\varrho^{\gamma-2}\abs{\nabla \varrho}^2\dx.
\end{align*} 
Repeating, the above for $\theta(\varrho)=\varrho^\beta$, we can estimate the pressure term in \eqref{eq:mom} accordingly and deduce the following a priori estimate
\begin{align}
\label{apri:reg}
\begin{aligned}
\sup_{t\in I}\int_{\Omega_{\regkap \eta}}&(\varrho+\kappa)|\bfu|^2\dx+\sup_{t\in I}\int_{\Omega_{\regkap \eta}}\big(a\varrho^\gamma+\delta\varrho^\beta\big)\dx+\int_I\int_{\Omega_{\regkap \eta}}|\nabla\bfu|^2\dxt\\
&+\varepsilon\int_I\int_{\Omega_{\regkap \eta}}\varrho^{\gamma-2}|\nabla\varrho|^2+\varrho\abs{\nabla \bfu}^2\dxt+\sup_{t\in I}\int_M|\partial_t\eta|^2\,\dd\mathcal H^2+\sup_{t\in I}K(\eta)\\
&\leq\,c\,\bigg(\int_{\Omega_{\regkap \eta(0)}}\frac{|\bfq_0|^2}{\varrho_0}\dx+\int_{\Omega_{\regkap \eta(0)}}\big(\varrho_0^\gamma    
+\delta\varrho_0^\beta\big)\dx+\int_I\|\bff\|_{L^2(\Omega_{\regkap \eta})}^2\dt\bigg)
\\
&\quad+c\bigg(\int_M\abs{\eta_0}^2\dH +\int_M\abs{\eta_1}^2\dH+K(\eta_0)+\int_I\norm{g}_{L^2(M)}^2\dt\bigg),
\end{aligned}
\end{align}
with a constant $c$ that is independent of $\kappa,\delta,\varepsilon$. 
The rest of this section is now dedicated to prove the following existence theorem.
\begin{theorem}\label{thm:regu}
Suppose that $\eta_0,\eta_1,\varrho_0,\bfq_0,\bff$ and $g$ are regular enough to give sense to the right-hand side of \eqref{apri:reg}, that $\varrho_0\geq0$ a.e. and \eqref{eq:compa} is satisfied. Then there is a weak solution $(\eta,\bfu,\varrho)\in Y^I\times X_{\regkap\eta}^I\times X_{\regkap\eta}^I$, that satisfies \ref{K1}--\ref{K3} as well as \eqref{apri:reg}. Here, we have $I=(0,T_*)$, where
$T_*<T$ only if $\lim_{t\rightarrow T^\ast}\|\eta(t,\cdot)\|_{L^\infty(\partial\Omega)}=\frac{L}{2}$.
\end{theorem}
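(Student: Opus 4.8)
```latex
\noindent\textbf{Plan of proof.} The scheme follows the classical Lions--Feireisl strategy for the existence of solutions to the regularized compressible Navier--Stokes system, adapted to the variable-domain/fluid--structure setting and organized around a fixed-point argument in the pair $(\zeta,\bfv)$ as announced in the introduction. First I would set up the decoupled (linearized) problem: for a given pair $(\zeta,\bfv)$ with $\zeta\in Y^I$, $\|\zeta\|_\infty<L$, and $\bfv$ in a suitable ball of $X_{\regkap\zeta}^I$, one solves the continuity equation with the \emph{prescribed} convective field $\regkap\bfv$ on the fixed-in-time-regularized domain $\Omega_{\regkap\zeta}$. Here Theorem~\ref{lem:warme} applies directly: since $\regkap\zeta\in C^3(\overline I\times M)$ by Lemma~\ref{lem:0803}(a) and $\regkap\bfv\in L^2(I;W^{1,2}(\Omega_{\regkap\zeta}))$ (in fact smooth), we obtain a unique nonnegative $\varrho\in L^\infty(I;L^2)\cap L^2(I;W^{1,2})$ together with the renormalized identity \eqref{eq:renormz} and, choosing $\theta(\varrho)=\varrho^\gamma$, $\varrho^\beta$ (truncated appropriately and then passing to the limit), the higher integrability $\varrho\in L^\infty(I;L^\beta)$ for $\beta$ large; crucially, $\regkap\bfv$ being smooth lets one iterate the renormalization and get $\varrho$ as regular as needed, including a positive lower bound if $\varrho_0>0$ — although, as remarked in the introduction, vacuum cannot be fully excluded, which is why the $\kappa$-term $\partial_t((\varrho+\kappa)\bfu)$ is present.

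\noindent Second, with $\varrho$ frozen, the momentum equation \eqref{eq:regu} becomes \emph{linear} in $\bfu$, and the coupling to the shell equation is linear as well. I would solve this coupled linear system for $(\eta,\bfu)\in Y^I\times X_{\regkap\zeta}^I$ by a Galerkin method: take a basis of $W_N=\Span\{\omega_1,\dots,\omega_N\}$ of the fixed space $W^{1,2}(\Omega_{\regkap\zeta})$ (obtained by pulling back a smooth basis of $W^{1,2}(\Omega)$ via $\bfPsi_{\regkap\zeta}$ as in the proof of Theorem~\ref{lem:warme}) together with the natural coupling conditions $\mathrm{tr}_{\regkap\zeta}\omega_k=b_k\nu$ on the shell, so that the discrete unknowns for the fluid and shell are genuinely linked. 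The discrete system is a linear ODE system (the matrices depend smoothly on $t$ through $\regkap\zeta$), solvable on all of $I$; the energy identity \eqref{eq:mom} — here with $\regeta$ replaced by $\regkap\zeta$ and the convective term $\varrho\,\regkap\bfv\otimes\bfu$, whose antisymmetric structure together with the $\frac{|\bfu|^2}{2}$-test of the continuity equation and the no-slip-at-the-moving-shell cancellation kills the boundary contribution — yields the a priori bound \eqref{apri:reg} uniformly in $N$ (the artificial terms $\varepsilon\nabla(\varrho\bfu):\nabla\bfphi$ and $\varepsilon\varrho|\nabla\bfu|^2$ are handled exactly as in \cite{feireisl1}, and the $\kappa$-term provides coercivity of $\|\bfu\|_{L^2}$ independently of vacuum). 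Passing $N\to\infty$ is routine since the only nonlinearity in $\bfu$ is quadratic and the coefficient $\varrho$ is fixed and smooth enough; one obtains a weak solution $(\eta,\bfu)$ of the decoupled problem (Theorem~\ref{thm:decu}), and the map $(\zeta,\bfv)\mapsto(\eta,\bfu)$ is well defined.

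\noindent Third, I would run a fixed-point argument (Schauder/Tychonoff) for $(\zeta,\bfv)\mapsto(\eta,\bfu)$ on a closed convex set: $\|\zeta\|_\infty\le\theta L$ for $\theta<1$, with $\zeta$ and $\partial_t\zeta$ bounded as in \eqref{apri:reg}, and $\bfv$ in the corresponding ball of $X_{\regkap\zeta}^I$; the a priori estimate \eqref{apri:reg} shows this set is mapped into itself \emph{provided} $T_*$ is small enough, or more precisely provided the displacement does not reach $\|\eta(t)\|_{L^\infty}=\tfrac L2$, which is exactly the stated time restriction. For compactness: the shell component is easy because $\eta$ enjoys the full regularity of $Y^I$ and, after the $\regkap$-regularization, arbitrary smoothness, so $\regkap\eta_n\to\regkap\eta$ in $C^3$ along subsequences; the velocity component is the delicate part — one needs $\bfu_n\to\bfu$ strongly in $L^2(I;L^2(\R^3))$ (after zero extension), which is precisely what Lemma~\ref{thm:weakstrong}/Remark~\ref{rem:strong} delivers once we verify its hypotheses \ref{A1}--\ref{A3}. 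Hypothesis \ref{A1} is the uniform $Y^I$-bound on $\eta_n$; \ref{A2} is the uniform $X^I$-bound on $\bfu_n$; \ref{A3} requires writing $\partial_t((\varrho_n+\kappa)\bfu_n)$ in divergence-of-divergence/divergence form using the momentum equation — here the $\kappa$-shift guarantees $(\varrho_n+\kappa)\ge\kappa>0$ so that $\bfu_n$ itself (not just the momentum) is controlled — together with the structure of the right-hand side of \eqref{eq:regu}. The \textbf{main obstacle} is exactly this: verifying \ref{A3} and propagating the compactness through the nonlinear terms $\varrho_n\regkap\bfu_n\otimes\bfu_n$ and the pressure $a\varrho_n^\gamma+\delta\varrho_n^\beta$ in the limit equation — for the pressure one uses the strong convergence of $\varrho_n$ which comes from the $\varepsilon$-parabolic regularization of the continuity equation (standard on the fixed domain $\Omega_{\regkap\zeta}$, cf. \cite{feireisl1}), while for the convective term one combines the strong $L^2$-convergence of $\bfu_n$ with the uniform higher integrability of $\varrho_n$ and the smoothing in $\regkap$. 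Once the fixed point $(\eta,\bfu,\varrho)$ is obtained, it is by construction a weak solution of \eqref{eq:fixpoint} satisfying \ref{K1}--\ref{K3} and, by lower semicontinuity in \eqref{apri:reg}, the claimed energy inequality; the interval of existence is maximal subject to $\|\eta(t)\|_{L^\infty(\partial\Omega)}<\tfrac L2$, giving the stated $T_*$.
```
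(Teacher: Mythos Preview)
Your plan mirrors the paper's approach closely: solve the decoupled continuity equation via Theorem~\ref{lem:warme}, then the linear coupled momentum--shell system by Galerkin (this is Theorem~\ref{thm:decu}), and close with a fixed point in $(\zeta,\bfv)$, using Lemma~\ref{thm:weakstrong} and the $\kappa$-shift for the $L^2$-compactness of $\bfu_n$.

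Two points where the paper differs from your sketch and where you should be careful. First, the paper does \emph{not} use Schauder/Tychonoff: since uniqueness of the decoupled solution $(\eta,\bfu)$ is not established, the solution map $(\zeta,\bfv)\mapsto(\eta,\bfu)$ is a priori set-valued, and the paper invokes a Kakutani-type theorem (Theorem~\ref{lem:fix}) instead. Your Schauder route would require an extra uniqueness argument for the linear decoupled momentum--shell system; this is plausible but not proved in the paper. Second, the fixed-point set in the paper lives in the \emph{fixed} product space $C(\overline I_*\times\partial\Omega)\times L^2(I_*;L^2(\R^3))$ (with $\bfu$ extended by zero), not in $X^I_{\regkap\zeta}$, precisely because the latter depends on $\zeta$; you acknowledge the zero extension later, but your initial setup with ``$\bfv$ in a ball of $X^I_{\regkap\zeta}$'' is not workable as a fixed-point domain. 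Finally, the paper's compactness step for $\bfu_n$ is slightly more indirect than you indicate: one first gets strong convergence of $\varrho_n$ (Lemma~\ref{thm:weakstrong} applied to the continuity equation), then $(\kappa+\varrho_n)|\bfu_n|^2\rightharpoonup(\kappa+\varrho)|\bfu|^2$, and only then concludes $\|\bfu_n\|_{L^2}\to\|\bfu\|_{L^2}$ via the algebraic identity $|\bfu_n|^2=\tfrac{\kappa+\varrho_n}{\kappa+\varrho}|\bfu_n|^2+\tfrac{\varrho-\varrho_n}{\kappa+\varrho}|\bfu_n|^2$.
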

 
\subsection{Definition of the decoupled system}
\label{subsec:dec}
The strategy is to first construct a weak solution to a decoupled system. Let us consider a given deformation $\zeta\in C(\overline I\times M)$ and a given function $\bfv\in L^2(I;\R^3)$. We will decouple \eqref{eq:fixpoint}, by replacing there $\regkap\eta$, with $\regkap \zeta$ and $\regkap \bfu$ by $\regkap\bfv$.
Firstly, we find from Theorem~\ref{lem:warme}, that there exists a unique $\varrho\in X_{\regkap\zeta}$ that satisfies the following
\begin{align}\label{eq:regrhodc}
\begin{aligned}
\int_I\frac{\dd}{\dt}\int_{\Omega_{\regkap\zeta}}\varrho\, \psi\dx&-\int_{\Omega_{\regkap\zeta}}\Big(\varrho\,\partial_t\psi\dxt
+\varrho\regzet\bfv\cdot\nabla\psi\Big)\dxt\\&+\varepsilon\int_I\int_{\Omega_{\regkap \zeta}}\nabla\varrho\cdot\nabla\psi\dxt=0
\end{aligned}
\end{align}
for all $\psi\in C^\infty\big(\overline{I}\times\R^3\big)$. Observe, that $\varrho$ exists independently of $\bfu,\eta$.

Secondly, we repeat the interplay of the boundary deformation with the convective term  for the momentum equation and find that smooth functions satisfy
\begin{align}
\label{pi2}
\begin{aligned}
&\int_{\Omega_{\regkap\zeta}}\Big(\partial_t ((\varrho+\kappa) \bfu)\cdot \bfphi +\divergence (\varrho \regzet\bfv\otimes \bfu)\cdot\bfphi\Big)\dx
\\&
=\int_{\Omega_{\regkap\zeta}}\Big(\partial_t ((\varrho+\kappa) \bfu\cdot \bfphi) 
+\divergence (\varrho \regzet\bfv\otimes \bfu\, \bfphi)\Big)\dx\\&-\int_{\Omega_{\regkap \zeta}}\Big((\varrho+\kappa)\bfu \cdot\partial_t\bfphi +\varrho \regzet\bfv\otimes \bfu:\nabla \bfphi\Big)\dx\\
&=\frac{\dd}{\dt}\int_{\Omega_{\regkap \zeta}}\varrho \bfu \cdot\bfphi\dx-\int_{\Omega_{\regkap \zeta}}\Big(\varrho\bfu\cdot \partial_t\bfphi +\varrho\regzet\bfv\otimes \bfu:\nabla \bfphi\Big)\dx\\&+\int_{\partial\Omega_{\regkap \zeta}}\varrho\bfu\cdot\bfphi\Big(\regzet\bfv\cdot\nu_{\regkap\zeta}
-(\partial_t\regkap\zeta\nu)\circ\bfPsi_{\regkap\zeta}^{-1}\Big)\dH-\int_{\Omega_{\regkap \zeta}}\kappa\bfu \cdot\partial_t\bfphi\dxt
\end{aligned}
\end{align}
Observe that in case of a fixed point $\bfu\equiv \bfv,\, \eta\equiv \zeta$, we find that 
\begin{align*}
\Big(\regeta\bfu-(\partial_t\regkap\eta\nu)\circ\bfPsi_{\regkap\eta}^{-1}\Big)\cdot\nu_{\regkap\eta}\equiv 0\text{ on }\partial\Omega_{\regkap\eta},
\end{align*}
which implies that the boundary integrals will vanish. For this reason, we will  solve the decoupled momentum equation with  boundary values of $\bfu$, which are implicitly defined by removing the first boundary term (this is analogous to the Neumann boundary data of the decoupled continuity equation, see Section~\ref{subsec:continuity}). for the same reason we neglect the very last integral.
Concerning the other terms of the momentum equation, when adapting partial integration we get force terms acting on the boundary in normal direction (pressure, diffusion, exterior forces). These are then assumed to be equalized by the elastic forces of the shell. Observe here, that $\bftau$ is identical for the decoupled system and the coupled system.
 
All together we require from $(\bfu,\eta)\in X_{\regkap\zeta}^I\times Y^I$ that it satisfies the following.
\begin{enumerate}[label={(N\arabic{*})}]
\item\label{N1} The regularized decoupled momentum equation
\begin{align}\label{eq:regudc}
\begin{aligned}
&\int_I\frac{\dd}{\dt}\int_{\Omega_{\regkap \zeta}}(\varrho+\kappa) \bfu \cdot\bfphi\dxt
-\int_I\int_{\Omega_{\regkap \zeta}} \Big(\varrho\bfu\cdot \partial_t\bfphi +\varrho\regzet\bfv\otimes \bfu:\nabla \bfphi\Big)\dxt
\\
&+\int_I\int_{\Omega_{\regkap \zeta }}\Big(\mu\nabla\bfu:\nabla\bfphi +(\lambda+\mu)\Div\bfu\,\Div\bfphi\Big)\dxt
\\
&-\int_I\int_{\Omega_{\regkap \zeta }}
\Big((\varrho^\gamma+\delta\varrho^\beta)\,\Div\regzet\bfphi+\varepsilon\nabla(\varrho\bfu):\nabla\bfphi\Big)\dxt
\\
&+\int_I\bigg(\frac{\dd}{\dt}\int_M \partial_t \eta b\dH-\int_M \partial_t\eta\,\partial_t b\dH + \int_M K'(\eta)\,b\dH\bigg)\dt
\\&=\int_I\int_{\Omega_{\regkap \zeta}}\varrho\bff\cdot\bfphi\dxt+\int_I\int_M g\,b\,\dd x\dt
\end{aligned}
\end{align} 
 holds for all test-functions  $(b,\bfphi)\in C^\infty_0(M)\times C^\infty(\overline{I}\times \R^3)$ with $\mathrm{tr}_{\Omega}(\bfphi\circ \bfPsi_{\regkap \zeta})=b\nu$. Moreover, we have $(\varrho\bfu)(0)=\bfq_0$, $\eta(0)=\eta_0$ and $\partial_t\eta(0)=\eta_1$. 
\item\label{N2} The decoupled regularized continuity equation \eqref{eq:regrhodc} is satisfied with initial datum $\varrho(0)=\varrho_0$.
\item\label{N3}  The boundary condition $\mathrm{tr}_{\mathscr R_\kappa\zeta}\bfu=\partial_t\eta\nu$ holds in the sense of Lemma \ref{lem:2.28}
\end{enumerate}
The a priori estimates are formally available as before for the regularized system in Section \ref{ssec:formal1}: First, one uses $\bfu,\partial_t\eta$ as test-function in the momentum equation and subtract the continuity equation tested with $\frac{\abs{\bfu}^2}{2}$. Secondly, one uses the renormalized formulation~\eqref{eq:renorm} to estimate the pressure term.

\begin{theorem}\label{thm:decu}
For any $\zeta\in C(\overline I\times M;[-\frac{L}2,\frac{L}{2}]))$ and $\bfv\in L^2(I;L^{2}(\R^3))$ there exists a solution $(\eta,\bfu)\in Y^I\times X_{\regkap\zeta}^I$ to \ref{N1}--\ref{N3}. Here, we have $I=(0,T_*)$, where $T_*<T$ only if $\lim_{t\rightarrow T^\ast}\|\eta(t,\cdot)\|_{L^\infty(M)}=\frac{L}{2}$.
The solution satisfies the energy estimate
\begin{align*}
&\int_{\Omega_{\regkap \zeta}}\Big( \frac{\varrho(t)}{2}+\kappa\Big)|\bfu(t)|^2\dx+\int_0^t\int_{\Omega_{\regkap \zeta}}\bigg((\mu+\varepsilon\varrho)|\nabla\bfu|^2+(\lambda+\mu)\abs{\diver \bfu}^2\bigg)\dxs
\\ 
&+\int_{\Omega_{\regkap \zeta}}\Big(\frac{a}{\gamma-1}\varrho^\gamma(t)+\frac{\delta}{\beta-1}\varrho^\beta(t)\Big)\dx+\int_M\frac{|\partial_t\eta(t)|^2}2\,\dd\mathcal H^2+\frac{K(\eta(t))}{2}\\
&\leq\int_I\int_{\Omega_{\regkap\zeta}}\rho\bff\cdot\bfu\dxs+\int_I\int_M g\partial_t \eta\,\dH\ds+\int_{\Omega_{\regkap \zeta(0)}}\frac{|\bfq_0|^2}{\varrho_0}\dx
\\&+\int_{\Omega_{\regkap \zeta}(0)}\Big(\frac{a}{\gamma-1}\varrho_0^\gamma+\frac{\delta}{\beta-1}\varrho^\beta_0\Big)\dx+\frac{K(\eta_0)}{2}+\int_M\frac{|\eta_0|^2}2\,\dd\mathcal H^2+\int_M\frac{|\eta_1|^2}2\,\dd\mathcal H^2
\end{align*}
 for all $t\in[0,T^*]$,
provided that $\eta_0,\eta_1,\varrho_0,\bfq_0,\bff$ and $g$ are regular enough to give sense to the right-hand side, that $\varrho_0\geq0$ a.e and \eqref{eq:compa} is satisfied. Here the constant $c$ is independent of all involved quantities. In particular, it is independent of $\bfu$ and $\zeta$.
\end{theorem}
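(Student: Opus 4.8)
The plan is to construct a solution of the \emph{linear} decoupled system \ref{N1}--\ref{N3} by a Galerkin scheme on the (moving but $N$--independent) domain $\Omega_{\regkap\zeta}$, the point being that the density has already been pinned down. First I would invoke Theorem~\ref{lem:warme} with $\bfw=\regkap\bfv$: since $\regkap\bfv$ is a space--time mollification it is smooth, and $\regkap\zeta\in C^4(\overline I\times M)$ with $\|\regkap\zeta\|_\infty\le\|\zeta\|_\infty\le L/2$ by Lemma~\ref{lem:0803}, so the hypotheses are met and one gets a unique $\varrho\in L^\infty(I;L^2(\Omega_{\regkap\zeta}))\cap L^2(I;W^{1,2}(\Omega_{\regkap\zeta}))$ solving \eqref{eq:regrhodc}, with $\varrho(0)=\varrho_0$ and $\varrho\ge0$. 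A maximum--principle argument as in the proof of Theorem~\ref{lem:warme}c), applied to a smooth approximation of $s\mapsto(s-\|\varrho_0\|_\infty)^+$ and using that $\Div\regkap\bfv$ is bounded, upgrades this to $\varrho\in L^\infty(I\times\Omega_{\regkap\zeta})$ for regular $\varrho_0$; in particular $\varrho+\kappa\ge\kappa>0$, which keeps the momentum equation non--degenerate, and $\varrho$ satisfies the renormalized identity \eqref{eq:renormz}.

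Next I would build a Galerkin basis adapted to the kinematic coupling. Fix a smooth basis $(b_k)$ of $W^{2,2}_0(M)$, a smooth basis $(\widetilde{\mathbf Y}_l)$ of $W^{1,2}_0(\Omega;\R^3)$, and for each $k$ a fixed smooth $\widetilde{\mathbf Z}_k$ on $\overline\Omega$ with $\widetilde{\mathbf Z}_k|_{\partial\Omega}=b_k\nu$ (built from the Hanzawa coordinates of Subsection~\ref{ssec:geom}); one may take $(\widetilde{\mathbf Y}_l)\cup(\widetilde{\mathbf Z}_k)$ orthonormal in $L^2(\Omega)$. Since $\bfPsi_{\regkap\zeta(t)}$ is, for each $t$, a $C^3$--diffeomorphism depending smoothly on $t$ (Lemma~\ref{lem:diffeo}), the pushed--forward fields $\mathbf Y_l(t)=\widetilde{\mathbf Y}_l\circ\bfPsi_{\regkap\zeta(t)}^{-1}$, $\mathbf Z_k(t)=\widetilde{\mathbf Z}_k\circ\bfPsi_{\regkap\zeta(t)}^{-1}$ are $C^3$ in $x$, smooth in $t$, with $\tr_{\regkap\zeta}\mathbf Y_l=0$, $\tr_{\regkap\zeta}\mathbf Z_k=b_k\nu$, and $\{(\mathbf Y_l,0)\}\cup\{(\mathbf Z_k,b_k)\}$ spans a dense subspace of the coupled test space $\{(\bfphi,b):\tr_{\regkap\zeta}\bfphi=b\nu\}$. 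I then seek $\bfu_N=\sum_{l\le N}c_l\mathbf Y_l+\sum_{k\le N}d_k\mathbf Z_k$ and $\eta_N(t)=\eta_0^N+\int_0^t\sum_{k\le N}d_k\,b_k$, so that $\tr_{\regkap\zeta}\bfu_N=\partial_t\eta_N\nu$ holds identically; inserting this into \eqref{eq:regudc} tested against the basis and invoking Reynolds' transport theorem together with the boundary interplay \eqref{pi2} (whose surface integrals drop out because of the coupling) gives a \emph{linear} Carath\'eodory ODE system for $(c_l,d_k)$ with symmetric positive definite, boundedly invertible mass matrix (as $\varrho+\kappa\ge\kappa$ and $\det D\bfPsi_{\regkap\zeta}$ is uniformly bounded below) and $L^1_t$ remaining coefficients (the $\varepsilon$--terms only $L^2_t$, which is admissible), hence uniquely solvable on all of $[0,T]$; the initial data are fixed by $\eta_0^N\to\eta_0$ in $W^{2,2}_0(M)$ and $\bfu_N(0)=P_N\bfu_0$, so that $(\varrho\bfu_N)(0)\to\bfq_0$ by \eqref{eq:compa} and $\partial_t\eta_N(0)\to\eta_1$.

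For the $N$--uniform a~priori bound I would follow the formal computation of Subsection~\ref{ssec:formal1} (adapted to the decoupled system as indicated after \ref{N3}): test \eqref{eq:regudc} with the admissible pair $(\bfu_N,\partial_t\eta_N)$, subtract \eqref{eq:regrhodc} tested with $\psi=|\bfu_N|^2/2$ (the $\varepsilon$--gradient contributions cancelling, leaving the dissipative term $\varepsilon\varrho|\nabla\bfu_N|^2$), use $\int_M K'(\eta_N)\,\partial_t\eta_N=\tfrac{\dd}{\dt}\tfrac12 K(\eta_N)$, and handle the pressure terms through the renormalized identity \eqref{eq:renormz} with $\psi\equiv1$ and $\theta$ a truncation of $s\mapsto\tfrac a{\gamma-1}s^\gamma$ resp.\ $\tfrac\delta{\beta-1}s^\beta$ (legitimate since $\varrho$ is bounded, so $\theta''$ may be taken compactly supported). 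This produces the energy identity; the residual term $\int_0^t\!\int(a\varrho^\gamma+\delta\varrho^\beta)\Div\regkap(\bfu_N-\bfv)$ — which, unlike at a fixed point, does \emph{not} vanish — is controlled using that $\regkap\bfv$ is smooth and $\varrho$ bounded, its $\bfu_N$--part being absorbed into $\tfrac\mu2\|\nabla\bfu_N\|_2^2$ plus a term $\lesssim\|\partial_t\eta_N\|_{L^2(I\times M)}^2$ absorbed by taking the Young parameter small relative to $T$. Because $\|\regkap\zeta\|_\infty\le L/2$, all trace, Poincar\'e and $\bfPsi_{\regkap\zeta}$--constants are uniform in $\zeta$ and $N$, so one obtains the asserted estimate with a constant independent of $\bfu_N$ and $\zeta$.

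It remains to let $N\to\infty$, and this is where the decoupled problem is genuinely easy: the equations are linear in $(\bfu_N,\eta_N)$ and $\Omega_{\regkap\zeta}$ does not move with $N$, so the limits $\bfu_N\weakto\bfu$ in $X_{\regkap\zeta}^I$, $\eta_N\rightharpoonup^\ast\eta$ in $Y^I$, $\partial_t\eta_N\rightharpoonup^\ast\partial_t\eta$ pass to the limit in every term (shifting $\tfrac{\dd}{\dt}$ onto the test function in time to recover the initial data), the boundary condition \ref{N3} survives since $\tr_{\regkap\zeta}$ is a fixed bounded operator and $\tr_{\regkap\zeta}\bfu_N=\partial_t\eta_N\nu\rightharpoonup^\ast\partial_t\eta\nu$, and the energy estimate passes by weak lower semicontinuity. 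One finally restricts to the maximal $I=(0,T_*)$ on which $\|\eta(t,\cdot)\|_{L^\infty(M)}<L/2$; since the energy controls $\|\eta\|_{L^\infty(I;W^{2,2})}$ but, through two--dimensional Sobolev embedding, not $\|\eta\|_{L^\infty}$, one can only have $T_*<T$ if $\|\eta(t)\|_{L^\infty(M)}\to L/2$ as $t\to T_*$. The only genuinely delicate points are the bookkeeping on the shell boundary — constructing the time--dependent basis compatible with $\tr_{\regkap\zeta}\bfphi=b\nu$ and tracking the extra $\partial_t\mathbf Z_k,\partial_t\mathbf Y_l$ terms it generates, and checking that the surface integrals of \eqref{pi2} vanish for the pair $(\bfu_N,\partial_t\eta_N)$ — together with the control of the non--cancelling pressure residual in the energy estimate; there is no deeper obstacle, which is precisely because the sharp information on $\varrho$ from Theorem~\ref{lem:warme} is already in hand.
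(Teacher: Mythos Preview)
Your proposal follows essentially the same route as the paper: fix $\varrho$ from Theorem~\ref{lem:warme}, build a time-dependent Galerkin basis by pushing smooth reference fields forward through $\bfPsi_{\regkap\zeta}$, solve the resulting finite-dimensional (integro-)differential system, derive the energy identity by testing with $(\bfu_N,\partial_t\eta_N)$ and subtracting the continuity equation tested with $|\bfu_N|^2/2$, close the pressure via the renormalized formulation, and pass to the limit using linearity and weak compactness. Your additional $L^\infty$ bound on $\varrho$ via a maximum-principle argument is not in the paper, but it is harmless.

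The one substantive difference concerns the pressure. You correctly point out that the renormalized identity \eqref{eq:renormz} carries $\Div\bfw=\Div\regkap\bfv$, whereas testing the momentum equation produces $\Div\regkap\bfu_N$, so a residual $\int(a\varrho^\gamma+\delta\varrho^\beta)\,\Div\regkap(\bfu_N-\bfv)$ remains. The paper's proof writes the renormalized inequality with $\regkap\bfu_N$ as if this residual were absent; your observation that it does not vanish away from the fixed point is well taken. However, your proposed absorption is loose: $\Div\regkap\bfu_N$ is not directly controlled by $\|\nabla\bfu_N\|_{L^2(\Omega_{\regkap\zeta})}$ because the cutoff $\chi_{\Omega_{\regkap\zeta}}$ built into $\regkap$ generates surface contributions, and in any event the constants you produce depend on $\|\varrho\|_\infty$, $\kappa$, and $\bfv$. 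This yields an estimate strictly weaker than the one displayed in the theorem (which carries no multiplicative constant), and the discrepancy matters downstream, since the fixed-point argument of Theorem~\ref{thm:regu} explicitly invokes independence of the a~priori bound from $\bfv$ to conclude $F(D)\subset D$. If you keep your version, you must revisit how $K$ and $T_\ast$ are chosen there.
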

\begin{proof}
In order to find a solution to \eqref{eq:regrhodc}--\eqref{eq:regudc} we discretise
the system.
It is standard to find a smooth orthonormal basis $(\tilde\bfX_k)_{k\in\N}$
of $W^{1,2}_0(\Omega)$ and a smooth orthonormal basis $(\tilde Y_k)_{k\in\N}$ of $W^{2,2}_0(M)$. We define vector fields $\tilde\bfY_k$ by solving the homogeneous Laplace equation on $\Omega$ with boundary datum $\tilde Y_k\nu$ (which is extended by zero to $\partial\Omega$). Note that standard results on the inverse Laplace operator guarantee that $\tilde\bfY_k$ is smooth. Now define pointwise in $t$
\begin{align*}
\bfX_k:=\tilde\bfX_k\circ \bfPsi_{\mathscr R_\kappa\zeta}^{-1},\quad \bfY_k:=\tilde\bfY_k\circ\bfPsi_{\mathscr R_\kappa\zeta}^{-1}.
\end{align*}
By Lemma \ref{lem:diffeo} we still know that $\bfX_k$ and $\bfY_k$ belong to the class $C^3(\overline{\Omega}_{\mathscr R_\kappa\zeta}(t))$.
Obviously, $(\bfX_k)_{k\in\N}$ forms a basis of $W^{1,2}_0(\Omega_{\regkap \zeta}(t)).$
Now we choose an enumeration $(\bfomega_k)_{k\in\N}$ of $\bfX_k\oplus\bfY_k$. In return we associate
$w_k:=\bfomega_k\circ\bfPsi_{\regkap\zeta}|_{\partial\Omega_{\regkap\zeta}}\cdot \nu$. Analogous to the arguments
in \cite[p. 237]{LeRu} we find that
\begin{align*}
Z:=\mathrm{span}\Big\{(\varphi w_k,\varphi\bfomega_k)\,|\varphi\in C^1(I),\,k\in\N\Big\}
\end{align*}
is dense in the solution space
\begin{align*}
Z_{\regkap\zeta}:=\Big\{(\xi,\bfvarphi)\in Y^I\times X^I_{\regkap\zeta}:\,\,\partial_t \xi\nu_{\regkap\zeta}=\mathrm{tr}_{{\regkap\zeta}}\bfvarphi\Big\},
\end{align*}
and in the space of test-functions
\begin{align*}
Z_{\regkap\zeta}^*:=\Big\{(\xi,\bfvarphi)\in C(\overline I;W^{2,2}_0(M))\times L^2(I,W^{1,2}(\Omega_{\regkap\zeta}))\cap C(\overline I;L^2(\Omega_{\regkap\zeta})):\,\,\xi\nu=\mathrm{tr}_{{\regkap \zeta}}\bfvarphi\Big\}.
\end{align*}
Now we can begin with the construction of the solution. 
First, we fix $\varrho=\varrho(\regkap\zeta,\regkap\bfv)$ as the unique solution from Theorem~\ref{lem:warme}, where $\zeta\equiv \regkap \zeta$ and $\bfw\equiv \regkap\bfv$ subject to the initial datum $\varrho_0$.
Next we seek for a  couple of discrete solutions $(\eta^N,\bfu^N)\in \bfZ_{\regkap\zeta}$ of the form
\begin{align*}
\eta_N=\eta_0+\sum\nolimits_{k=1}^N\int_0^t\alpha_{kN} w_k\ds
,\quad \bfu_N=\sum\nolimits_{k=1}^N\alpha_{kN} \bfomega_k,
\end{align*}
which solve the following discrete version of \eqref{eq:regudc}:
\begin{align}\label{eq:decuN}
\begin{aligned}
&\int_{\Omega_{\regkap \zeta}}(\varrho(t)+\kappa)  \bfu_N(t)\cdot \bfomega_k(t)\dx
\\
&-\int_0^t\int_{\Omega_{\regkap \zeta}}\Big( \varrho\bfu_N\cdot \partial_t\bfomega_k +\varrho\regzet\bfv\otimes \bfu^N:\nabla \bfomega_k\Big)\dxt
\\
&+\int_0^t\int_{\Omega_{\regkap \zeta }}\Big(\mu\nabla\bfu_N:\nabla\bfomega_k +(\lambda+\mu)\Div\bfu_N\,\Div\bfomega_k\Big)\dx\ds
\\
&-\int_0^t\int_{\Omega_{\regkap \zeta }}\Big(
(\varrho^\gamma+\delta\varrho^\beta)\,\Div\regzet\bfomega_k+\varepsilon\nabla(\varrho\bfu_N):\nabla\bfomega_k\Big)\dx\ds
\\
&+\int_0^t\int_M \Big(K'(\eta_N)w_k-\partial_t\eta_N\,\partial_t w_k\Big)\dH\ds+\int_M \partial_t \eta_N(t) w_k(t)\dH
\\&=\int_0^t\int_{\Omega_{\regkap \zeta}}\varrho\bff\cdot\bfomega_k\dxt+\int_I\int_M g\,w_k\,\dd x\dt\\
&+\int_{\Omega_{\regkap \zeta(0)}}\bfq_0\cdot\bfomega_k(0,\cdot)\dx+\int_M\eta_1\,w_k\,\dd\mathcal H^2
\end{aligned}
\end{align} 
We can choose $\alpha_{kN}(0)$ in a way that
$\bfu_N(0)$ converges to $\bfq_0/\varrho_0$.\\
 The system \eqref{eq:decuN} is equivalent to
a system of integro-differential equations for the vector $\bfalpha_N=(\alpha_{kN})_{k=1}^N$. It reads as
\begin{align}\label{eq:ide1}
\mathcal A(t)\bfalpha_N(t)&=\int_0^t\mathcal B(\sigma)\bfalpha_N(\sigma)\ds+\int_0^t\int_0^\sigma \tilde{\mathcal B}(s,\sigma)\bfalpha_N(s)\dd s\ds+\int_0^t\bfc(\sigma)\ds+\tilde{\bfc},
\end{align}
with
\begin{align*}
\mathcal A_{ij}&=\int_{\Omega_{\regkap \zeta}}(\varrho(t)+\kappa)  \bfomega_i(t) \cdot\bfomega_j(t)\dx
+\int_M w_i(t) w_j(t)\dH
\\
\mathcal{B}_{ij}
&=\int_{\Omega_{\regkap \zeta}}\Big( \varrho\bfomega_i\cdot \partial_t\bfomega_j +\varrho\regzet\bfv\otimes \bfomega_i:\nabla \bfomega_j\Big)\dx
\\
&+\int_{\Omega_{\regkap \zeta }}\Big(\mu\nabla\bfomega_i:\nabla\bfomega_j +(\lambda+\mu)\Div\bfomega_i\,\Div\bfomega_j\Big)\dx
\\
&-\int_{\Omega_{\regkap \zeta }}
\varepsilon(\nabla\varrho\otimes\bfomega_i+\varrho \nabla \bfomega_i):\nabla\bfomega_j\dx\ds
-\int_{\partial\Omega_{\regkap \zeta }}w_i\,\partial_t w_j\dH
\\
\tilde{\mathcal B}_{ij}&=\int_M K'(w_i(s))w_j(\sigma))\dH,
\\
c_i&=\int_{\Omega_{\regkap \zeta}}(\varrho^\gamma+\delta\varrho^\beta)\,\Div\regzet\bfomega_i\dx+\int_{\Omega_{\regkap \zeta}}\varrho\bff\cdot\bfomega_i\dxt+\int_M g\,w_i\,\dd\mathcal H^2
\\
\tilde{c}_i&=\int_{\Omega_{\regkap \zeta(0)}}\bfq_0\cdot\bfomega_i(0,\cdot)\dx+\int_M\eta_1\,w_i\,\dd\mathcal H^2.
\end{align*}
As $(\varrho+\kappa)$ is strictly positive (recall Theorem \ref{lem:warme}) and the $\bfomega_k$ and $w_k$ from a basis the matrix $\mathcal A$ is bounded (by the integrability of $\varrho$) and positive definite (due to $\kappa>0$). Hence the inverse $\mathcal A^{-1}$ exists and is bounded as well. We find a continuous solution $\bfalpha_N$ to \eqref{eq:ide1}
by standard arguments for ordinary integro-differential equations. Since we wish to use it as a testfunction in the momentum equation we have to show, that $\partial_t\bfalpha_N\in L^2(I)$, for some $s>1$. The difficulty here is that $\varrho$ is not weakly differentiabel in time. This has to be circumvented. First observe that by Leipnitz rule, we find that
 \[
\partial_t \bfalpha=\mathcal A^{-1}\Big(\partial_t(\mathcal A \bfalpha)-\partial_t\mathcal A \bfalpha\Big).
 \]
Due to \eqref{eq:ide1} and the integrability of $\varrho$ from Theorem~\ref{lem:warme}
we have $\partial_t(\mathcal A \bfalpha_N)\in L^\infty(I)$.
Moreover, $\mathcal A^{-1}$ is uniformly bounded (due to $\kappa>0$). Consequently it suffices to prove that $\partial_t\mathcal A_{i,j}\in L^{2}(I)$ to conclude differentiability of $\alpha_N$. 
By taking the test function $\bfomega_i\cdot \bfomega_j$ in \eqref{eq:regrhodc} we find that
\begin{align*}
\partial_t \bigg(\mathcal A_{i,j}&-\int_M w_iw_j\dH\bigg)=\frac{\dd}{\dt}\int_{\Omega_{\regkap\zeta}}\varrho\, \bfomega_i\cdot \bfomega_j\dx+\frac{\dd}{\dt}\int_{\Omega_{\regkap\zeta}}\kappa\, \bfomega_i\cdot \bfomega_j\dx
\\
&=\int_{\Omega_{\regkap\zeta}}\varrho\,\Big(\partial_t(\bfomega_i\cdot \bfomega_j)
+\regzet\bfv\cdot\nabla(\bfomega_i\cdot \bfomega_j)+\varepsilon\nabla\varrho\cdot\nabla(\bfomega_i\cdot \bfomega_j)\dx\\
&\quad +\kappa\int_{\Omega_{\regkap\zeta}}\big(\partial_t\bfomega_i\cdot \bfomega_j+\bfomega_i\cdot \partial_t\bfomega_j\big)\dx+\kappa\int_{\partial\Omega_{\regkap\zeta}}\partial_t\regkap\zeta\bfomega_i\cdot \bfomega_j\dx.
\end{align*}
Since the right hand side is in $L^2(I)$ (note that the $\bfomega_i$ are smooth also in time) we find that $\partial_t \alpha_N\in L^s(I)$ and hence $\partial_t \bfu_N\in L^s(I\times \Omega_{\regkap\zeta})$.
The a priori estimates are now achieved by differentiating \eqref{eq:decuN} in time, testing with $(\partial_t\eta_N,\bfu_N)$
and subtracting \eqref{eq:regrhodc} tested by $\frac{1}{2}|\bfu_N|^2$. The terms with time derivative and the convective terms cancel and we obtain
\begin{align}
\label{eq:future}
\begin{aligned}
&\int_{\Omega_{\regkap \zeta}}\Big(\frac{\varrho(t)}{2}+\kappa\Big)|\bfu_N(t)|^2\dx+\int_M\frac{|\partial_t\eta_N(t)|^2}2\,\dd\mathcal H^2+\frac{{K(\eta_N(t))}}{2}
\\ 
&\quad +\int_0^t\int_{\Omega_{\regkap \zeta}}\Big((\mu+\varepsilon\varrho)|\nabla\bfu_N|^2+(\lambda+\mu)\abs{\diver \bfu_N}^2\Big)\dxs\\
&=\int_I\int_{\Omega_{\regkap\zeta}}\rho\bff\cdot\bfu_N\dxs+\int_I\int_M g\partial_t \eta_N\,\dH\ds
+\int_{\Omega_{\regkap \zeta(0)}}{|\bfq_0|^2}\dx+\int_M\frac{|\eta_0|^2}2\,\dd\mathcal H^2\\&\quad+\int_M\frac{|\eta_1|^2}2\,\dd\mathcal H^2+\frac{{K(\eta_0)}}{2}
 +\int_0^t\int_{\Omega_{\regkap \zeta}}(a\varrho^\gamma+\delta\varrho^\beta)\,\Div\regzet\bfu_N\dxs.
\end{aligned}
\end{align}
Finally, we use Theorem~\ref{lem:warme} b) in order to rewrite the last integral.
Choosing $\varphi=\chi_{[0,t]}$ yields
\begin{align*}
\int_0^t\int_{\Omega_{\regkap \zeta}}&\big(\varrho\theta'(\varrho)-\theta(\varrho)\big)\Div\regzet\bfu_N\dxt
\\\leq&\int_{\Omega_{\regkap \zeta}(0)} \theta(\varrho(0))\dxt -\int_{\Omega_{\regkap \zeta}} \theta(\varrho(t))\dxt
\end{align*}
for any convex $\theta\in C^2(\setR_+;\setR_+)$ be such that $\theta''(s)\equiv 0$ for large values of $s$ and $\theta(0)=0$. We approximate the function
$s\mapsto \frac{as^\gamma}{\gamma-1}+\frac{\delta s^\beta}{\beta-1}$ by a sequence of such functions and obtain
\begin{align*}
\int_0^t&\int_{\Omega_{\regkap \zeta}}(a\varrho^\gamma+\delta\varrho^\beta)\,\Div\regzet\bfu_N\dxs\\&\leq\int_{\Omega_{\regkap \zeta}(0)}\Big(\frac{a}{\gamma-1}\varrho_0^\gamma+\frac{\delta}{\beta-1}\varrho^\beta_0\Big)\dx
-\int_{\Omega_{\regkap \zeta}}\Big(\frac{a}{\gamma-1}\varrho_0^\gamma(t)+\frac{\delta}{\beta-1}\varrho^\beta(t)\Big)\dx
\end{align*}
By Young's inequality we can absorb the terms that depend on $\bfu_N$ or $\partial_t\eta_N$ in the left hand side of \eqref{eq:future}, such that
\begin{align}
\label{apri:regdc}
\begin{aligned}
&\sup_{I}\int_{\Omega_{\regkap \zeta}}(\varrho+\kappa)\frac{|\bfu_N|^2}{2}\dx+\sup_I\int_{\Omega_{\regkap \zeta}}\Big(\frac{a}{\gamma-1}\varrho^\gamma+\frac{\delta}{\beta-1}\varrho^\beta\Big)\dx\\&+\int_I\int_{\Omega_{\regkap \zeta}}|\nabla\bfu_N|^2\dxs+\int_I\int_{\Omega_{\regkap \zeta}}(\mu+\varepsilon\varrho)|\nabla\bfu_N|^2)\dxs\\&+\int_I\int_{\Omega_{\regkap \zeta}}(\lambda+\mu)\abs{\diver \bfu_N}^2\dx+\sup_{I}\int_M|\partial_t\eta_N|^2\,\dd\mathcal H^2+\sup_I\frac{K(\eta_N)}{2}\\
&\leq\,c\bigg(\int_I\int_{\Omega_{\regkap\zeta}}\abs{\bff}^2\dxs+\int_I\int_M \abs{g}^2\,\dH\ds+\int_{\Omega_{\regkap \zeta(0)}}\frac{|\bfq_0|^2}{\varrho_0}\dx+\int_M|\eta_0|^2\,\dd\mathcal H^2\bigg)
\\&\quad+c\,\bigg(\int_M|\eta_1|^2\,\dd\mathcal H^2+K(\eta_0)+\int_{\Omega_{\regkap \zeta}(0)}\Big(\frac{a}{\gamma-1}\varrho_0^\gamma+\frac{\delta}{\beta-1}\varrho^\beta_0\Big)\dx\bigg).
\end{aligned}
\end{align}
This implies that there is a subsequence, such that
\begin{align*}
\eta_N&\rightharpoonup^\ast \eta\quad\text{in}\quad Y^I,\quad
\bfu_N\rightharpoonup \bfu\quad\text{in}\quad X^I_{\regkap\zeta},
\end{align*}
for some limit function $(\eta,\bfu)$.
As \eqref{eq:decuN} is linear in $(\eta_N,\bfu_N)$ we can pass to the limit
and see that $(\eta,\bfu)$ solves \eqref{eq:regudc}. 
\end{proof}

\subsection{A fixed point argument.}
Now we are seeking for a fixed point of the solutions map $(\bfv,\zeta)\mapsto (\bfu,\eta)$ on $L^2(I,L^2(\setR^3))\times C(\overline I\times\partial\Omega)$ from Theorem \ref{thm:decu}. 
As we do not know about uniqueness of the solutions constructed in Theorem~\ref{thm:decu} we will use the following version of set-valued mappings
\begin{theorem}[\cite{GD}]\label{lem:fix}
Let $C$ be a convex subset of a normed
vector space $Z$ and let $F:C\rightarrow \mathfrak P (C)$ be an upper-semicontinuous set-valued
mapping, that is, for every open set $W\subset C$ the set $\{c\in C:\,\, F(c)\in W\}\subset C$ is
open. Moreover, let $F(C)$ be contained in a compact subset of $C$, and let $F(z)$ be
non-empty, convex, and compact for all $c\in C$. Then F possesses a fixed point, that
is, there exists a $c_0\in C$ with $c_0\in F(c_0)$.
\end{theorem}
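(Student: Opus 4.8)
This is a Kakutani--Fan--Glicksberg (Bohnenblust--Karlin) type fixed point theorem for set-valued maps, and the plan is to deduce it from the finite-dimensional Kakutani theorem by a Schauder-type finite-dimensional approximation combined with a closed-graph argument in the limit. \textbf{Step 1 (reduction to a compact convex domain).} Since $F(C):=\bigcup_{c\in C}F(c)$ lies in a compact subset $K$ of $C$ and $C$ is convex, a routine reduction (replace $C$ by $\overline{\mathrm{conv}}(K)$, which is compact by Mazur's theorem after completing $Z$ if needed; a fixed point then lies in $K\subseteq C$ because $F$ maps into $K$) lets us assume henceforth that $C$ is compact and convex, so that $F:C\to\mathfrak P(C)$ is upper semicontinuous with non-empty, convex and compact values.

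\textbf{Step 2 (finite-dimensional approximation).} Fix $\varepsilon>0$ and pick $x_1,\dots,x_N\in C$ with $C\subseteq\bigcup_{i=1}^N B(x_i,\varepsilon)$. Let $P_\varepsilon$ be the associated Schauder projection, $P_\varepsilon(x)=\big(\sum_i\lambda_i(x)x_i\big)\big/\big(\sum_i\lambda_i(x)\big)$ with $\lambda_i(x)=\max\{0,\varepsilon-\|x-x_i\|\}$; it is continuous, takes values in the finite-dimensional compact convex polytope $C_\varepsilon:=\mathrm{conv}\{x_1,\dots,x_N\}\subseteq C$, and obeys $\|P_\varepsilon(x)-x\|\le\varepsilon$ for $x\in C$. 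Define $F_\varepsilon:C_\varepsilon\to\mathfrak P(C_\varepsilon)$ by $F_\varepsilon(x):=\mathrm{conv}\big(P_\varepsilon(F(x))\big)$. Since $F$ is upper semicontinuous with compact values and $P_\varepsilon$ is continuous, $P_\varepsilon\circ F$ is upper semicontinuous with compact values, and in the finite-dimensional space $\mathrm{span}\{x_1,\dots,x_N\}$ passing to convex hulls preserves upper semicontinuity; hence $F_\varepsilon$ is upper semicontinuous with non-empty, convex, compact values and maps $C_\varepsilon$ into itself.

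\textbf{Step 3 (finite-dimensional fixed point and limit).} The finite-dimensional Kakutani theorem provides $x_\varepsilon\in C_\varepsilon$ with $x_\varepsilon\in F_\varepsilon(x_\varepsilon)=\mathrm{conv}(P_\varepsilon(F(x_\varepsilon)))$, so by Carath\'eodory $x_\varepsilon=\sum_j\mu_j^\varepsilon P_\varepsilon(y_j^\varepsilon)$ for finitely many $y_j^\varepsilon\in F(x_\varepsilon)$ and weights $\mu_j^\varepsilon\ge0$ with $\sum_j\mu_j^\varepsilon=1$. Put $\bar y_\varepsilon:=\sum_j\mu_j^\varepsilon y_j^\varepsilon$; convexity of $F(x_\varepsilon)$ gives $\bar y_\varepsilon\in F(x_\varepsilon)$, while $\|P_\varepsilon(y)-y\|\le\varepsilon$ (valid since $y_j^\varepsilon\in C$) gives $\|x_\varepsilon-\bar y_\varepsilon\|\le\varepsilon$. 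Taking $\varepsilon=1/n\to0$, compactness of $C$ yields a subsequence with $x_{1/n}\to x_0\in C$, and then $\bar y_{1/n}\to x_0$ because $\|x_{1/n}-\bar y_{1/n}\|\le1/n$. Since $F$ is upper semicontinuous with closed values and relatively compact range, its graph $\{(x,y):y\in F(x)\}$ is closed (were $x_0\notin F(x_0)$, separate the closed set $F(x_0)$ from $x_0$ by an open set and use upper semicontinuity at $x_0$ to reach a contradiction); as $\bar y_{1/n}\in F(x_{1/n})$, $x_{1/n}\to x_0$, $\bar y_{1/n}\to x_0$, we conclude $x_0\in F(x_0)$.

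\textbf{Main obstacle.} The only genuinely delicate point is Step 3: the Schauder projection $P_\varepsilon$ is not affine, so $P_\varepsilon(F(x))$ need not be convex and one is forced to take its convex hull before invoking Kakutani; it is precisely the convexity of the original value $F(x_\varepsilon)$ that allows one to collapse the convex combination $\sum_j\mu_j^\varepsilon P_\varepsilon(y_j^\varepsilon)$ back to a genuine point $\bar y_\varepsilon\in F(x_\varepsilon)$ lying within $\varepsilon$ of $x_\varepsilon$. A secondary technical point is the closed-graph property used in the limit, which rests on upper semicontinuity together with closedness of the values over the compact range, and the Mazur-type compactness of closed convex hulls used in Step 1.
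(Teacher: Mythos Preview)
The paper does not prove this statement at all: Theorem~\ref{lem:fix} is quoted verbatim from \cite{GD} (Granas--Dugundji) and used as a black box in the fixed-point argument of Section~\ref{sec:3}. So there is no ``paper's own proof'' to compare against.

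Your argument is the standard Bohnenblust--Karlin/Kakutani--Fan--Glicksberg reduction and is essentially correct. Two minor remarks. First, in Step~1 you rely on Mazur's theorem to get compactness of $\overline{\mathrm{conv}}(K)$; this requires completeness, which you patch by passing to the completion of $Z$. That is fine, but you should make explicit that the fixed point you eventually obtain lies in $K\subseteq C$ (since $F$ maps into $K$), so it sits in the original space and the completion is harmless. Second, in Step~2 the preservation of upper semicontinuity under taking convex hulls is true in finite dimensions because $\mathrm{conv}$ is continuous on the hyperspace of compact subsets of $\mathbb{R}^m$ with the Hausdorff metric; you state this correctly but it is worth noting that this is the place where finite-dimensionality of the approximating polytope $C_\varepsilon$ is genuinely used beyond Kakutani itself. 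The limiting Step~3 is clean, and your identification of the ``main obstacle'' --- that $P_\varepsilon$ is not affine, forcing the convex-hull detour and the subsequent use of convexity of $F(x_\varepsilon)$ to pull back to a true element $\bar y_\varepsilon\in F(x_\varepsilon)$ --- is exactly the heart of the matter.
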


\subsection{Proof of theorem~\ref{thm:regu}}
We will prove Theorem \ref{thm:regu} by finding a fixed point of a suitable mapping defined below.
We denote $I_*=[0,T_*]$ with $T_*$ sufficiently small. We do not know about the uniqueness of solutions. Hence we apply Theorem~\ref{lem:fix} to get a fixed point. We consider the sets
\begin{align*}
D:=\Big\{(\zeta,\bfv)\in C(\overline{I}_*\times \partial\Omega)\times L^2(I_*,L^2(\R^3)):\,\,\zeta(0)=\eta_0,\,\,\|\zeta\|_{L^\infty}\leq M,\,\,\|\bfv\|_{L^2(I_\ast;L^2(\R^3))}\leq K\Big\}
\end{align*}
for $M=(\|\eta_0\|_\infty+L)/2$ and $K>0$ to be chosen later.
\begin{align*}
F:D\rightarrow \mathfrak P(D).
\end{align*}
with 
\[
F:(\bfv, \zeta)\mapsto \Big\{(\bfu,\eta):\,(\bfu,\eta)\text{ solves }\eqref{eq:regudc}\text{ with $(\bfv, \zeta)$ and satisfies the energy bounds}\Big\}.
\]
Note that we extend $\bfu$ and $\eta$ by zero to $\R^3$ and $\partial\Omega$
respectively.
First we have to check that $F(D)\subset D$. We will use the a priori estimate from Theorem \ref{thm:decu} to conclude
 \begin{align*}
&\sup_{I_\ast}\int_{\Omega_{\regkap \zeta}}(\varrho+\kappa)\frac{|\bfu|^2}{2}\dx+\sup_{I_\ast}\int_{\Omega_{\regkap \zeta}}\Big(a\varrho^\gamma+\delta\varrho^\beta\Big)\dx\\&+\int_{I_\ast}\int_{\Omega_{\regkap \zeta}}|\nabla\bfu|^2\dxs+\sup_{I_\ast}\int_M\Big(|\partial_t\eta|^2+\abs{\nabla^2\eta}^2\Big)\,\dd\mathcal H^2\\
&\leq\,c(\bff,g,\bfq_0,\eta_0,\eta_1,\varrho_0)
\end{align*}
independently of $L$, $K$ and the size of $I_\ast$. This implies that $\eta\in C^\alpha(\overline I\times M)$, by Sobolev embedding for some $\alpha>0$, with H\"older norm independently of $L,K$. This implies that
\begin{align}
\label{constarint-eta}
\abs{\eta(t,x)}\leq \abs{\eta(t,x)-\eta_0(0,x)}+\abs{\eta_0(0,x)}\leq c (T^*)^\alpha+\|\eta_0\|_\infty\leq M.
\end{align}
Therefore, we find for $T^*$ small enough but independent of $\bfv, \zeta$, that
\[
\norm{\eta}_{L^\infty(I_*\times \partial\Omega)}\leq M.
\]
Hence we gain $F(D)\subset D$ for an appropriate choice of $K\in \setR_+$.

Next, since the problem is linear and the left-hand side of the energy inequality is convex, we find that $F(\zeta,\bfv)$ is a convex and closed subset of $\bfZ$. It remains to show that $F(D)$ is relatively compact.
 Consider $(\bfu_n,\eta_n)_n\subset F(D)$. Then there exists a corresponding sequence $(\bfv_n,\zeta_n)_n\subset D$, such that $(\eta_n,\bfu_n)$ solve \eqref{eq:regudc}, with respect to $(\bfv_n,\zeta_n)$. 
We may choose subsequences such that
\begin{align}\label{conv1}
\eta_n&\rightharpoonup^\ast\eta\quad\text{in}\quad L^\infty(I_*,W^{1,2}_0(M)),\\
\label{conv2}\partial_t\eta_n&\rightharpoonup^\ast\partial_t\eta\quad\text{in}\quad L^\infty(I_*,L^{2}(M)),\\
\label{conv3orig}\bfu_n&\rightharpoonup^{\ast,\eta}\bfu\quad\text{in}\quad L^\infty(I_*;L^2(\Omega_{\regkap\zeta})),\\
\label{conv4orig}\nabla\bfu_n&\rightharpoonup^\eta\nabla\bfu\quad\text{in}\quad L^2(I_*;L^2(\Omega_{\regkap\zeta}))).
\end{align}
Note also that we can extend $\bfu_n$ and $\bfu$ by zero to the whole space
and gain
\begin{align}
\label{conv3origneu}\bfu_n&\rightharpoonup^{\ast}\bfu\quad\text{in}\quad L^\infty(I_*;L^2(\R^3)).
\end{align}
The compactness of $\eta_n$ in $C(\overline I_*\times \partial\Omega)$ follows immediately by Arcela-Ascoli, since we know that $\eta_n$ is uniformly H\"older continuous. To show the compactness of the $\bfu_n$ is much more sophisticated.
We first need to show compactness of $\varrho_n$,
 where $\varrho_n$ is the unique solution to \eqref{eq:regrhodc} with $\bfv=\bfv_n$. A direct application
of Theorem~\ref{lem:warme} a) shows
\begin{align}\label{eq:1306a}
\begin{aligned} \varrho_n\weaktoregzetan\varrho\quad\text{in}\quad L^2(I_*;W^{1,2}(\Omega_{\regkap\zeta_n})),\\
\varrho_n\weakto^{\ast,\eta}\varrho\quad\text{in}\quad L^\infty(I_*;L^{2}(\Omega_{\regkap\zeta_n})),
\end{aligned}
\end{align}
at least after taking a subsequence. Moreover, \eqref{eq:1306a} holds uniformly in $n$.
Firstly, we find for all $k,l\in\N$ that $\norm{\partial_t^l\nabla^k\regkap\zeta_n}_{L^\infty(I\times\partial\Omega)}\leq c(\kappa,k,l)$. Hence, there is a (not relabeled) subsequence
 \begin{align}\label{eq:2410}
\regkap\zeta_n\to \regkap\zeta \quad\text{in}\quad C^2(I_*\times\partial\Omega).
\end{align}
Next, we claim that 
\begin{align}\label{0103a}
\varrho_n\toregzetan\varrho\quad\text{in}\quad L^q(I_*;L^q\Omega_{\mathscr R_\kappa\zeta_n}).
\end{align}
for any $q<\frac{10}{3}$.
In fact, the assumptions of Lemma \ref{thm:weakstrong} are satisfied
due to \eqref{eq:regrhodc}. In particular \ref{A3} holds
with $\bfH_n^1=0$, $\bfH^2_n=\mathscr R_\kappa\bfv_n+\varepsilon\nabla\varrho_n$ and $h_n=0$.
Due to the uniform bounds on $\varrho_n$ in \eqref{eq:1306a} and the bounds on $\bfv_n$ we gain strong convergence in $L^2$ by Remark~\ref{rem:strong} at least for a subsequence. Combining this with \eqref{eq:1306a} proves \eqref{0103a}.

Fourthly, again by Lemma~\ref{thm:weakstrong} we find for the couple $(\kappa+\varrho_n)\bfu_n$ and $\bfu_n$, that
\begin{align}\label{eq:0103c}
(\kappa+\varrho_n)\abs{\bfu_n}^2\weakto (\kappa+\varrho)\abs{\bfu}^2\text{ in }L^s(I_*\times \Omega_{\regkap\zeta_n}),
\end{align}
for any $s<\frac{10}{3}$. To be precise we infer from \eqref{eq:regudc} that
 \begin{align*}
\partial_t((\varrho_n+\kappa)\bu_n)&=-\diver(\varrho_n\mathscr R_\kappa\bfv_n\otimes \bfu_n)+\epsilon \Delta(\varrho_n\bfu_n) +\mu\Delta\bu_n+(\lambda+\mu)\nabla\diver\bfu_n\\&-\mathscr R_\kappa\nabla (\varrho^{\gamma}_n+\delta\varrho_n^\beta)+\varrho_n\bff 
\end{align*}
holds in the interior in the sense of distributions. In particular, \ref{A3} is satisfied with
\begin{align*}
\bfH_n^1&=\varepsilon\varrho_n\bfu_n+\mathcal R\bfu_n,\quad
\bfH_n^2=-\varrho_n\mathscr R_\kappa\bfv_n\otimes \bfu_n-\nabla\mathscr R_\kappa (\varrho^{\gamma}_n+\delta\varrho_n^\beta)I,\quad\bfh_n=\varrho_n\bff,
\end{align*}
choosing $p=s=2$ and $m$ arbitrary and $b\in\big(\frac{6}{5},\frac{10}{3}\big)$.
Here $\mathcal R\in\R^{3\times 3}$ suitably.
In order to obtain \eqref{eq:0103c} for any $s<\frac{10}{3}$ we finally combine
Lemma~\ref{thm:weakstrong} with \eqref{eq:1306a}.
 On account of \eqref{0103a} and \eqref{eq:0103c}
we conclude (extending $\varrho$ with $0$ outside of $\Omega_{\regkap\zeta}$) 
\begin{align*}
\lim_{n\to\infty}\int_{I^*}\int_{\R^3}\abs{\bfu_n}^2\dxt
&=\lim_{n\to\infty}\int_{I^*}\int_{\R^3}\frac{\kappa +\varrho_n}{\kappa+\varrho}\abs{\bfu_n}^2\dxt
+\lim_{n\to\infty}\int_{I^*}\int_{\R^3}\frac{\varrho -\varrho_n}{\kappa+\varrho}\abs{\bfu_n}^2\dxt
\\
&=\int_{I^*}\int_{\R^3}\abs{\bfu}^2\dxt.
\end{align*}
Since strong norm convergence and weak convergence imply strong convergence the compactness is shown and the existence of a fixpoint follows by Theorem~\ref{lem:fix}.

This gives the claim of Theorem \ref{thm:regu}. 


\section{The viscous approximation}
\label{sec:4}
In this section we want to get rid of the regularization operator $\regkap$ in order to find a solution $(\bfu,\varrho,\eta)\in X_\eta^I\times X_\eta^I \times Y^I$
to the viscous approximation satisfying the following.
\begin{enumerate}[label={(E\arabic{*})}]
\item\label{E1} The regularized momentum equation in the sense that
\begin{align}\label{eq:visu}
\begin{aligned}
&\int_I\frac{\dd}{\dt}\int_{\Omega_{ \eta}}\varrho\bfu\cdot \bfphi\dxt-\int_I\int_{\Omega_{\eta}} \Big(\varrho\bfu\cdot \partial_t\bfphi +\varrho\bfu\otimes \bfu:\nabla \bfphi\Big)\dxt\\&+\int_I\int_{\Omega_{ \eta}}\Big(\mu\nabla\bfu:\nabla\bfphi +(\lambda+\mu)\Div\bfu\,\Div\bfphi\Big)\dxt
\\
&-\int_I\int_{\Omega_{ \eta }}
(\varrho^\gamma+\delta\varrho^\beta)\,\Div\bfphi\dxt+\int_I\int_{\Omega_{ \eta }}\varepsilon\nabla(\varrho\bfu):\nabla\bfphi\dxt
\\
&+\int_I\frac{\dd}{\dt}\int_M \partial_t \eta b\dH\dt-\int_I\int_M \partial_t\eta\,\partial_t b\dH\dt + \int_I\int_M K'(\eta)\,b\dH\dt
\\&=\int_I\int_{\Omega_{\eta}}\varrho\bff\cdot\bfphi\dxt+\int_I\int_M g\,b\,\dd x\dt
\end{aligned}
\end{align} 
for all test-functions $(b,\bfphi)\in C^\infty_0(M)\times C^\infty(\overline{I}\times \R^3)$ with $\mathrm{tr}_\eta\bfphi=b\nu$. Moreover, we have $(\varrho\bfu)(0)=\bfq_0$, $\eta(0)=\eta_0$ and $\partial_t\eta(0)=\eta_1$. 
\item\label{E2}  The regularized continuity equation in the sense that
\begin{align}\label{eq:visvarrho}
\begin{aligned}
&\int_I\bigg(\frac{\dd}{\dt}\int_{\Omega_{\eta}}\varrho \psi\dx-\int_I\int_{\Omega_{\eta}}\Big(\varrho\,\partial_t\psi
+\varrho\bfu\cdot\nabla\psi\Big)\dx\bigg)\dt
+\varepsilon\int_I\int_{\Omega_{ \eta}}\nabla\varrho\cdot\nabla\psi\dxt=0
\end{aligned}
\end{align}
for all $\psi\in C^\infty(\overline{I}\times\R^3)$ and we have $\varrho(0)=\varrho_0$. 
\item\label{E3}  The boundary condition $\mathrm{tr}_\eta\bfu=\partial_t\eta\nu$ in the sense of Lemma \ref{lem:2.28}.
\end{enumerate}
\begin{theorem}\label{thm:visu}
There is a solution $(\eta,\bfu,\varrho)\in Y^I\times X_\eta^I\times X_\eta^I$ to \ref{E1}--\ref{E3}. Here, we have $I=(0,T_*)$, where $T_*<T$ only if $\lim_{t\rightarrow T^\ast}\|\eta(t,\cdot)\|_{L^\infty(\partial\Omega)}=\frac{L}{2}$.
The solution satisfies the energy estimate
\begin{align}
\label{apri:eps}
\begin{aligned}
\sup_{t\in I}\int_{\Omega_{  \eta}}&\varrho|\bfu|^2\dx+\sup_{t\in I}\int_{\Omega_{ \eta}}\big(a\varrho^\gamma+\delta\varrho^\beta\big)\dx+\int_I\int_{\Omega_{\eta}}|\nabla\bfu|^2\dxt\\
&+\varepsilon\int_I\int_{\Omega_{\eta}}\varrho^{\gamma-2}|\nabla\varrho|^2+\varrho\abs{\nabla \bfu}^2\dxt+\sup_{t\in I}\int_M|\partial_t\eta|^2\,\dd\mathcal H^2+\sup_{t\in I}K(\eta)\\
&\leq\,c\,\bigg(\int_{\Omega_{\eta}}|\bfq_0|^2\dx+\int_{\Omega_{\eta}}\big(\varrho_0^\gamma    
+\delta\varrho_0^\beta\big)\dx+\int_I\|\bff\|_{L^2(\Omega_{\eta})}^2\dt\bigg)
\\
&+c\bigg(\int_M\abs{\eta_0}^2
\dH+\int_M\abs{\eta_1}^2
\dH+K(\eta_0) +\int_I\norm{g}_{L^2(M)}^2\dt\bigg),
\end{aligned}
\end{align}
provided that $\eta_0,\eta_1,\varrho_0,\bfq_0,\bff$ and $g$ are regular enough to give sense to the right-hand side, that $\varrho_0\geq0$ a.e. and \eqref{eq:compa} is satisfied.
The constant c is independent of $\delta,\varepsilon$.
\end{theorem}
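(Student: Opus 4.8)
The plan is to pass to the limit $\kappa\to0$ in a sequence of solutions $(\eta_\kappa,\bfu_\kappa,\varrho_\kappa)$ to the $\kappa$-regularized system provided by Theorem~\ref{thm:regu}; by construction these satisfy \eqref{apri:reg} with a constant independent of $\kappa$ (and of $\varepsilon,\delta$). First I record the consequences of this uniform bound: via Poincar\'e's inequality the gradient bound $\mu\int|\nabla\bfu_\kappa|^2$ together with the boundary datum $\mathrm{tr}_{\regkap\eta_\kappa}\bfu_\kappa=\partial_t\eta_\kappa\nu$ gives a uniform bound of $\bfu_\kappa$ in $L^2(I;W^{1,2}(\Omega_{\regkap\eta_\kappa}))$ (equivalently of its zero extension in $L^2(I;W^{1,2}(\R^3))$); $\varrho_\kappa$ is bounded in $L^\infty(I;L^\beta)\cap L^2(I;W^{1,2})$, $\partial_t\eta_\kappa$ in $L^\infty(I;L^2(M))$ and $\eta_\kappa$ in $Y^I$. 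Passing to a subsequence I obtain limits $\eta,\bfu,\varrho$ in the weak($^\ast$) topologies of the corresponding spaces (in the sense of Definition~\ref{def:conv} for the fluid unknowns), together with $\varrho_\kappa^\gamma\rightharpoonup\overline{\varrho^\gamma}$, $\varrho_\kappa^\beta\rightharpoonup\overline{\varrho^\beta}$. The shell is the easy part: by interpolation the $Y^I$-bound yields a uniform bound of $\eta_\kappa$ in $C^\alpha(\overline I\times M)$, so Arzel\`a--Ascoli gives $\eta_\kappa\to\eta$ in $C(\overline I\times M)$, and the uniform H\"older estimate produces, exactly as in \eqref{constarint-eta}, a time $T_\ast>0$ independent of $\kappa$ on which $\|\eta_\kappa(t)\|_\infty<\tfrac L2$, so that a continuation argument yields the stated restriction on the interval of existence. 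Since $\eta_\kappa$ is uniformly H\"older, Lemma~\ref{lem:0803} gives $\regkap\eta_\kappa\to\eta$ uniformly (while $\regkap\eta_\kappa\in C^2$ for fixed $\kappa$); hence the regularized domains $\Omega_{\regkap\eta_\kappa}$ converge to $\Omega_\eta$ in the sense required to apply Definition~\ref{def:conv} and Lemma~\ref{thm:weakstrong}.

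Next I upgrade the density to strong convergence, which is possible because the continuity equation \eqref{eq:regvarrho} is parabolic. Reading off \eqref{eq:regvarrho}, $\partial_t\varrho_\kappa$ has the form $\Div\Div\bfH^1_\kappa+\Div\bfH^2_\kappa+h_\kappa$ with $\bfH^1_\kappa=0$, $\bfH^2_\kappa=\varrho_\kappa\regkap\bfu_\kappa+\varepsilon\nabla\varrho_\kappa$, $h_\kappa=0$, all bounded; so \ref{A3} holds and Lemma~\ref{thm:weakstrong} together with Remark~\ref{rem:strong} yields $\varrho_\kappa\to\varrho$ strongly in $L^q(I;L^q(\Omega_{\regkap\eta_\kappa}))$ for all $q<\tfrac{10}3$, hence a.e. To pass to the limit in the nonlinear pressure I exploit that the $\varepsilon,\delta$-part of the energy estimate controls $\varepsilon\delta\int\varrho_\kappa^{\beta-2}|\nabla\varrho_\kappa|^2$, so $\varrho_\kappa^{\beta/2}$ is bounded in $L^2(I;W^{1,2})\hookrightarrow L^2(I;L^6)$; interpolating with $L^\infty(I;L^\beta)$ gives a uniform bound of $\varrho_\kappa^\gamma$ and $\varrho_\kappa^\beta$ in $L^{1+\theta}(I\times\Omega_{\regkap\eta_\kappa})$ for some $\theta>0$, hence uniform integrability and $\varrho_\kappa^\gamma\to\varrho^\gamma$, $\varrho_\kappa^\beta\to\varrho^\beta$ in $L^1$. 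Finally, using the renormalized identity of Theorem~\ref{lem:warme}(b) with $\theta(s)=s^2$ (truncated and passed to the limit using $\varrho_\kappa\in L^\infty(L^\beta)$), which has no boundary term thanks to the Neumann condition, and comparing with the same identity for the limit $\varrho$, I obtain $\varepsilon\int|\nabla\varrho_\kappa|^2\to\varepsilon\int|\nabla\varrho|^2$ and hence $\nabla\varrho_\kappa\to\nabla\varrho$ strongly in $L^2$; this is needed below for the artificial viscosity term.

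The main obstacle is the compactness of the velocity: vacuum cannot be excluded, so $\bfu_\kappa$ is only bounded in $L^\infty(I;L^2)$ after the degenerate weight $\sqrt{\varrho_\kappa+\kappa}$, with the plain norm blowing up like $\kappa^{-1/2}$, and there is no direct bound on $\partial_t\bfu_\kappa$. This is precisely why the $\kappa$-layer replaces $\partial_t(\varrho\bfu)$ by $\partial_t((\varrho+\kappa)\bfu)$, and I follow the fixed-point argument of Section~\ref{sec:3}. The momentum equation \eqref{eq:regu} shows that $r_\kappa:=(\varrho_\kappa+\kappa)\bfu_\kappa$ has a distributional time-derivative of the form demanded in \ref{A3} (with the right-hand side collecting the convective, viscous, artificial-viscosity, pressure and force terms, all bounded in some $L^m(I;L^b)$ with $b\in(\tfrac65,\tfrac{10}3)$), while $v_\kappa:=\bfu_\kappa$ satisfies \ref{A2}; hence Lemma~\ref{thm:weakstrong} gives $(\varrho_\kappa+\kappa)|\bfu_\kappa|^2\rightharpoonup\varrho|\bfu|^2$, where the limit is identified using $\varrho_\kappa\to\varrho$ strongly and $\kappa\bfu_\kappa\to0$ in $L^2(I\times\R^3)$. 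Testing this with the constant $1$ and using $\kappa\int|\bfu_\kappa|^2\to0$ (the plain $L^2$-norm of $\bfu_\kappa$ being bounded via Poincar\'e) yields $\int\varrho_\kappa|\bfu_\kappa|^2\to\int\varrho|\bfu|^2$; combined with the weak convergence $\sqrt{\varrho_\kappa}\,\bfu_\kappa\rightharpoonup\sqrt{\varrho}\,\bfu$ (product of the strongly convergent $\sqrt{\varrho_\kappa}$ and the weakly convergent $\bfu_\kappa$), this gives $\sqrt{\varrho_\kappa}\,\bfu_\kappa\to\sqrt{\varrho}\,\bfu$ strongly in $L^2(I\times\R^3)$, which is exactly what controls the convective term.

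With these ingredients the limit passage in \ref{K1}--\ref{K3} is routine and produces \ref{E1}--\ref{E3}. In \eqref{eq:regvarrho} one uses $\varrho_\kappa\to\varrho$ strongly and $\regkap\bfu_\kappa\rightharpoonup\bfu$ (by self-adjointness of $\regkap$ and $\regkap\psi\to\psi$) to pass to the limit in $\varrho_\kappa\regkap\bfu_\kappa$, and $\varrho_\kappa\rightharpoonup\varrho$ in the $\varepsilon$-term, obtaining \eqref{eq:visvarrho}. In \eqref{eq:regu}: the $\tfrac{\dd}{\dt}$- and $\partial_t\bfphi$-terms pass via $(\varrho_\kappa+\kappa)\bfu_\kappa\rightharpoonup\varrho\bfu$ and $\varrho_\kappa\bfu_\kappa\rightharpoonup\varrho\bfu$; the convective term $\varrho_\kappa\,\regkap\bfu_\kappa\otimes\bfu_\kappa=(\sqrt{\varrho_\kappa}\,\regkap\bfu_\kappa)\otimes(\sqrt{\varrho_\kappa}\,\bfu_\kappa)$ via the weak convergence of the first factor against the strong convergence of the second; the pressure term via $\regkap(a\varrho_\kappa^\gamma+\delta\varrho_\kappa^\beta)\to a\varrho^\gamma+\delta\varrho^\beta$ in $L^1$ and $\Div\regkap\bfphi\to\Div\bfphi$; the artificial-viscosity term by writing $\nabla(\varrho_\kappa\bfu_\kappa)=\nabla\varrho_\kappa\otimes\bfu_\kappa+\varrho_\kappa\nabla\bfu_\kappa$, the first summand converging by the strong convergence of $\nabla\varrho_\kappa$, the second by strong $\varrho_\kappa$ against weak $\nabla\bfu_\kappa$; and the remaining viscous and shell terms are linear. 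The boundary condition \ref{E3} follows by passing to the limit in $\mathrm{tr}_{\regkap\eta_\kappa}\bfu_\kappa=\partial_t\eta_\kappa\nu$ with Lemma~\ref{lem:2.28} and the domain convergence; the initial data are preserved since the equations force (weak) time-continuity; $\varrho\ge0$ follows from $\varrho_\kappa\ge0$ and a.e. convergence; and \eqref{apri:eps} follows from \eqref{apri:reg} by weak lower semicontinuity, the right-hand side being continuous under $\regkap\eta_\kappa(0)\to\eta_0$ and $\varrho_0^\kappa\to\varrho_0$.
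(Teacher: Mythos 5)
Your proposal follows the paper's own route almost step for step: set $\kappa=1/n$, invoke Theorem~\ref{thm:regu}, use the uniform bound \eqref{apri:reg}, obtain strong convergence of the density from Lemma~\ref{thm:weakstrong} together with Remark~\ref{rem:strong}, and identify the limit of the convective term through the compactness of the momentum $(\varrho_\kappa+\kappa)\bfu_\kappa$. Your derivation of strong convergence of $\sqrt{\varrho_\kappa}\,\bfu_\kappa$ is a repackaging of the paper's application of Lemma~\ref{thm:weakstrong} to the pair $r_n=\varrho_n\bfu_n$, $v_n=\bfu_n$ (it is the same device used in the fixed-point argument), and your explicit equiintegrability argument for the pressure via $\nabla(\varrho_\kappa^{\beta/2})\in L^2(L^2)$ usefully fills in a step the paper leaves implicit.

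The one step I would not accept as written is the claimed strong convergence of $\nabla\varrho_\kappa$ in $L^2$. You obtain it by ``comparing with the same identity for the limit $\varrho$'', but Theorem~\ref{lem:warme}(b) is proved only for $C^3$ boundaries, whereas the limit boundary is merely $W^{2,2}$; the renormalized statement available for the limit (Lemma~\ref{cor:ap}) is an inequality that discards exactly the term $\varepsilon\theta''(\varrho)|\nabla\varrho|^2$ you need, so norm convergence of $\varepsilon\int|\nabla\varrho_\kappa|^2$ does not follow from the stated results. Fortunately the step is dispensable: in the artificial-viscosity term the problematic summand $\nabla\varrho_\kappa\otimes\bfu_\kappa$ converges on compact subsets of $I\times\Omega_\eta$ because $\nabla(\varrho_\kappa\bfu_\kappa)$ converges distributionally (a consequence of $\varrho_\kappa\bfu_\kappa\rightharpoonup\varrho\bfu$ in $L^2$) and $\varrho_\kappa\nabla\bfu_\kappa$ converges by strong times weak, while the contribution of the boundary layer $(I\times\Omega_{\regkap\eta_\kappa})\setminus A$ is small uniformly in $\kappa$ by H\"older with $\nabla\varrho_\kappa\in L^2(I;L^2)$, $\bfu_\kappa\in L^2(I;L^6)$ and the smallness of the layer's spatial measure; alternatively one more application of Lemma~\ref{thm:weakstrong} with $r=\nabla\varrho_\kappa$ (whose time derivative is in $\Div\Div$ form by the continuity equation) and $v=\bfu_\kappa$ does the job. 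With that replacement the argument is complete and coincides with the paper's.
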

\begin{lemma}
\label{cor:ap}
Under the assumptions of Theorem \ref{thm:visu}, the continuity equation holds in the renormalized sense that is
\begin{align}
\label{eq:Tk01'}
\begin{aligned}
\int_I&\frac{\dd}{\dt}\int_{\R^3}\theta(\varrho)\psi\dxt-\int_I\int_{\R^3}\Big(\theta(\varrho)\partial_t\psi +\theta(\varrho) \bfu \cdot \nabla \psi\Big) \dxt\\
& \leq- \int_I\int_{\R^3}(\varrho\theta'(\varrho)-\theta(\varrho_n)) \diver\bfu\, \psi\dxt-\epsilon\int_I\int_{\R^3}\nabla \varrho\cdot \nabla \psi\dxt\\
\end{aligned}
\end{align}
for all $\psi\in C^\infty(\overline I\times\R^3,[0,\infty))$ and all convex  $\theta\in C^1(\R)$, with 
$\theta'(z)=0$ for $z\geq M_\theta$.
\end{lemma}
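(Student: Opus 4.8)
The plan is to follow the derivation of the renormalized formulation \eqref{eq:renormz} in Theorem~\ref{lem:warme}(b), adapting it to the reduced regularity $\eta\in Y^I$ (so that $\Omega_\eta$ is not Lipschitz) and to the fact that for a general convex $C^1$ nonlinearity only an inequality is claimed. \textbf{Step 1 (whole-space form of the continuity equation).} Since by \ref{E3} one has $\mathrm{tr}_\eta\bfu=\partial_t\eta\,\nu$, the weak continuity equation \eqref{eq:visvarrho} already carries \emph{no} boundary term: extending $\varrho$ by zero, \eqref{eq:visvarrho} reads
\[
\int_I\frac{\dd}{\dt}\int_{\R^3}\varrho\,\psi\dxt-\int_I\int_{\R^3}\big(\varrho\,\partial_t\psi+\varrho\bfu\cdot\nabla\psi\big)\dxt=-\varepsilon\int_I\int_{\R^3}\chi_{\Omega_\eta}\nabla\varrho\cdot\nabla\psi\dxt
\]
for all $\psi\in C^\infty(\overline I\times\R^3)$ (this is where the kinematic coupling has already been used; cf.\ the computations preceding \eqref{eq:2003weak1} and the derivation of \eqref{eq:weak-masse}). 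By Theorem~\ref{thm:visu} and \eqref{apri:eps} we have $\varrho\in L^\infty(I;L^2(\Omega_\eta))\cap L^2(I;W^{1,2}(\Omega_\eta))$, hence $\varrho\in L^{10/3}(I\times\Omega_\eta)$ by Sobolev interpolation.

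\textbf{Step 2 (mollification and commutator).} Let $\theta_n\in C^2(\R)$ be convex, with $\theta_n'$ uniformly bounded, $\theta_n$ of uniform linear growth, $\theta_n'\equiv0$ on $[M_{\theta_n},\infty)$, and $\theta_n\to\theta$, $\theta_n'\to\theta'$ pointwise; we treat $\theta=\theta_n$ first. Mollifying the identity of Step~1 in space with a kernel of width $\kappa$ yields a pointwise-in-space identity; since $\bfu\chi_{\Omega_\eta}\notin W^{1,2}(\R^3)$ one replaces $\bfu$ by the extension $\mathscr E_\eta\bfu\in L^2(I;W^{1,r}(\R^3))$ of Lemma~\ref{lem:extension} (valid for any $r<2$), noting $\varrho\bfu=\varrho\,\mathscr E_\eta\bfu$ (with $\varrho$ extended by zero), and writes
\[
\partial_t\varrho_\kappa+\Div(\varrho_\kappa\,\mathscr E_\eta\bfu)=\bfr_\kappa+\varepsilon\Div\big((\chi_{\Omega_\eta}\nabla\varrho)_\kappa\big),\qquad \bfr_\kappa:=\Div(\varrho_\kappa\,\mathscr E_\eta\bfu)-\Div\big((\varrho\,\mathscr E_\eta\bfu)_\kappa\big).
\]
The Friedrichs/DiPerna--Lions commutator lemma (\cite{Li1}, as used in Theorem~\ref{lem:warme}) gives $\|\bfr_\kappa\|_{L^c(\R^3)}\le c\,\|\mathscr E_\eta\bfu\|_{W^{1,r}}\|\varrho\|_{L^{10/3}}$ with $\tfrac1c=\tfrac1r+\tfrac3{10}$, and $\bfr_\kappa\to0$ in $L^c$ a.e.\ in $I$; choosing $r$ close to $2$ makes $c>1$. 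Multiplying by $\theta_n'(\varrho_\kappa)$, using the chain rule (legitimate on the mollified level), testing against $\psi\in C^\infty(\overline I\times\R^3)$ with $\psi\ge0$, and letting $\kappa\to0$ exactly as in Theorem~\ref{lem:warme}(b) — $\theta_n(\varrho_\kappa)\to\theta_n(\varrho)$, $\theta_n'(\varrho_\kappa)\to\theta_n'(\varrho)$ strongly in every $L^q$, $q<\infty$, $\theta_n'(\varrho_\kappa)\nabla\varrho_\kappa\rightharpoonup\chi_{\Omega_\eta}\nabla\theta_n(\varrho)$ in $L^2$, $(\chi_{\Omega_\eta}\nabla\varrho)_\kappa\,\theta_n''(\varrho_\kappa)\nabla\varrho_\kappa\rightharpoonup\chi_{\Omega_\eta}\theta_n''(\varrho)|\nabla\varrho|^2$ in $L^1$, and $\theta_n'(\varrho_\kappa)\bfr_\kappa\to0$ — one obtains the identity \eqref{eq:renormz} with $\zeta$ replaced by $\eta$ and $\theta$ by $\theta_n$.

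\textbf{Step 3 (inequality and removal of the $C^2$ restriction).} Since $\theta_n$ is convex and $\psi\ge0$, the term $\varepsilon\int_I\int_{\R^3}\chi_{\Omega_\eta}\theta_n''(\varrho)|\nabla\varrho|^2\psi\dxt$ is nonnegative and may be dropped, turning the identity into the inequality \eqref{eq:Tk01'} for $\theta_n$. Finally let $n\to\infty$: by the uniform bounds $|\theta_n(z)|\le c(1+|z|)$, $|\theta_n'|\le c$ together with $\varrho\in L^{10/3}(I\times\Omega_\eta)$, every term passes to the limit by dominated convergence — in particular $\varrho\theta_n'(\varrho)-\theta_n(\varrho)\to\varrho\theta'(\varrho)-\theta(\varrho)$ in $L^{10/3}$, paired against $\Div\bfu\,\psi\in L^2$ — which gives \eqref{eq:Tk01'} for the given convex $\theta\in C^1$. (One may assume $\theta(0)=0$, which is the case for all nonlinearities to which the lemma is applied.)

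\textbf{Main obstacle.} The delicate point is Step~2: because $\eta$ is only $W^{2,2}$ in space, $\Omega_\eta$ fails to be Lipschitz and Lemma~\ref{lem:extension} supplies only $\mathscr E_\eta\bfu\in W^{1,r}(\R^3)$ with $r<2$ rather than $r=2$, so one must verify that the commutator estimate still closes — which it does precisely because the artificial viscosity $\varepsilon\Delta\varrho$ upgrades the density to $\varrho\in L^{10/3}(I\times\Omega_\eta)$, forcing $c=(\tfrac1r+\tfrac3{10})^{-1}>1$ for $r$ near $2$. A secondary, conceptual point is that the passage to $\R^3$ in Step~1 carries no boundary contribution only thanks to the kinematic coupling $\mathrm{tr}_\eta\bfu=\partial_t\eta\,\nu$; without it the term $\int_{\partial\Omega_\eta}\varrho\psi\big(\bfu-(\partial_t\eta\nu)\circ\bfPsi_\eta^{-1}\big)\cdot\nu_\eta\dH$ would survive and obstruct the renormalization.
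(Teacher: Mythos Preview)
Your proof is correct, but it takes a different route than the paper. The paper does \emph{not} work directly at the limit level with the non-smooth domain $\Omega_\eta$; instead it exploits the construction of $(\eta,\bfu,\varrho)$ in Theorem~\ref{thm:visu} as a limit of the $\kappa$-regularized approximants $(\eta_n,\bfu_n,\varrho_n)$ from Theorem~\ref{thm:regu} (with $\kappa=1/n$). Since the regularized shell $\mathscr R_{1/n}\eta_n$ is $C^4$, Theorem~\ref{lem:warme}(b) applies off-the-shelf at the approximate level, yielding the renormalized identity \eqref{eq:renormz} for $\varrho_n$. Convexity of $\theta$ then turns this into the inequality \eqref{eq:Tk0} at level $n$, and one simply passes $n\to\infty$ using the strong convergence $\varrho_n\to^\eta\varrho$ in $L^2$ from \eqref{eq:convrho2'} together with the weak convergences \eqref{eq:convu''}, \eqref{eq:convrho3'} already established in the proof of Theorem~\ref{thm:visu}. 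No extension operator and no second commutator argument are needed.

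Your route --- direct mollification at the limit level, using $\mathscr E_\eta$ from Lemma~\ref{lem:extension} to compensate for the loss of Lipschitz regularity of $\partial\Omega_\eta$ --- is precisely the strategy the paper reserves for the \emph{next} layer, Lemma~\ref{cor:ap1} in Section~\ref{sec:renep}, where the $\varepsilon$-viscosity is gone and there is no smooth-domain approximating sequence to fall back on. Both approaches are valid here; the paper's is shorter because it recycles convergences proved two pages earlier, while yours is more self-contained and does not rely on how the solution was constructed.
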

\begin{proof}[Proof of Theorem~\ref{thm:visu}]
In Theorem~\ref{thm:regu} we take $\kappa:={1/n}$ where $1/n$
is the regularizing parameter. We call the corresponding solution $(\eta_n,\bfu_n,\varrho_n)$.
If $n\rightarrow\infty$ then $\mathscr R_{1/n}\rightarrow \mathrm{id}$. Now we analyze the convergence of $(\eta_n,\bfu_n,\varrho_n)$.  
The estimate from Theorem~\ref{thm:regu} holds uniformly with respect to $n$. 
Additionally, by testing the continuity equation with $\varrho_\varepsilon$ and using $\beta\geq4$ we find that
\begin{align}\label{est:nablarho'}
\nabla\varrho_n\in L^2(I;L^2(\Omega_{\eta_\epsilon}))\text{ uniformly.}
\end{align}
Hence,
by Lemma~\ref{thm:weakstrong} we find that there is a subsequence such that for some $\alpha\in (0,1)$ fixed we have
\begin{align}
\eta_n&\rightharpoonup^\ast\eta\quad\text{in}\quad L^\infty(I;W^{2,2}(M))\label{eq:conveta'}\\
\eta_n&\rightharpoonup^\ast\eta\quad\text{in}\quad W^{1,\infty}(I;L^2(M)),\label{eq:conetat'}\\
\eta_\epsilon&\to\eta\quad\text{in}\quad C^\alpha(\overline I\times M),\label{eq:conetat''}
\\
\bfu_n&\weakto^\eta\bfu\quad\text{in}\quad L^2(I;W^{1,2}(\Omega_{\mathscr R_{1/n}\eta_n})),\label{eq:convu''}\\
\tfrac{1}{\sqrt{n}}\bfu_n&\to^\eta0\quad\text{in}\quad L^\infty(I;L^{2}(\Omega_{\mathscr R_{1/n}\eta_n})),\label{eq:convu''A}\\
\varrho_n&\weakto^{\ast,\eta}\varrho\quad\text{in}\quad L^\infty(I;L^\beta(\Omega_{\mathscr R_{1/n}\eta_n})).\label{eq:convrho'}
\end{align}
Moreover, Lemma~\ref{thm:weakstrong}, Remark~\ref{rem:strong}, the fact that $\beta\geq4$ and interpolation imply
\begin{align}
\varrho_n&\rightharpoonup^{\ast,\eta}\varrho\quad\text{in}\quad L^\infty(I;L^2(\Omega_{\mathscr R_{1/n}\eta_n})),\label{eq:convrho2}\\
\varrho_n&\rightharpoonup^\eta\varrho\quad\text{in}\quad L^2(I;W^{1,2}(\Omega_{\eta_n})),\label{eq:convrho3'}\\
\varrho_n&\rightarrow^\eta\varrho\quad\text{in}\quad L^2(I;L^2(\Omega_{\mathscr R_{1/n}\eta_n})).\label{eq:convrho2'}
\end{align}
The last convergence, \eqref{eq:convu''} and Lemma~\ref{thm:weakstrong} imply
\begin{align}
\varrho_n\bfu_n&\rightharpoonup^\eta\varrho\bfu\quad\text{in}\quad L^2(I;L^2(\Omega_{\mathscr R_{1/n}\eta_n})),\label{eq:convrhou'}\\
\varrho_n\bfu_n\otimes\bfu_n&\rightharpoonup^\eta\varrho\bfu\otimes\bfu\quad\text{in}\quad L^1(I;L^1(\mathscr R_{1/n}\eta_n)).\label{eq:convrhou}
\end{align}
Therefore we can pass to the limit in the equation and obtain a weak solution
to the viscous approximation. 
 The energy inequality is a consequence of lower semi-continuity.
 \end{proof}
 \begin{proof}[Proof of  Lemma \ref{cor:ap}]
 First, observe that since $\varrho_n$ is a renormalized solution to the continuity equation by Theorem~\ref{lem:warme} c), i.e. we have
\begin{align*}
\int_I&\frac{\dd}{\dt}\int_{\R^3}\theta(\varrho_n)\psi\dxt-\int_I\int_{\R^3}\Big(\theta(\varrho_n)\partial_t\psi +\theta(\varrho_n) \bfu_n \cdot \nabla \psi\Big) \dxt
\\
& =- \int_I\int_{\R^3}\Big((\varrho_n\theta'(\varrho_n)-\theta(\varrho_n)) \diver\bfu_n\, \psi\dxt -\int_I\int_{\R^3}\Big(\varepsilon\theta''(\varrho_n)\abs{\nabla \varrho_n}^2\psi+\epsilon\nabla \varrho_n\cdot \nabla \psi\Big)\dxt.
\end{align*}
As $\theta$ is convex this yields
\begin{align}
\label{eq:Tk0}
\begin{aligned}
\int_I&\frac{\dd}{\dt}\int_{\R^3}\theta(\varrho_n)\psi\dxt-\int_I\int_{\R^3}\Big(\theta(\varrho_n)\partial_t\psi +\theta(\varrho_n) \bfu_n \cdot \nabla \psi\Big) \dxt\\
& \leq- \int_I\int_{\R^3}(\varrho_n\theta'(\varrho_n)-\theta(\varrho_n)) \diver\bfu_n \,\psi\dxt-\epsilon\int_I\int_{\R^3}\nabla \varrho_n\cdot \nabla \psi\dxt\\
\end{aligned}
\end{align}
for all $\psi\in C^\infty(\overline{I}\times\R^3)$ and all $\theta\in C^2(\R)$ with 
$\theta'(z)=0$ for $z\geq M_\theta$. By approximation its is easy to see that the assumption $\theta\in C^1(\R)$ suffices. Due to the convergences
\eqref{eq:conetat''}, \eqref{eq:convu''}, \eqref{eq:convrho3'} and \eqref{eq:convrho2'} we can pass to the limit in \eqref{eq:Tk0}. This implies
\eqref{eq:Tk01'}.
\end{proof}

\section{The vanishing viscosity limit}
\label{sec:5}

The aim of this Section is to study the limit $\varepsilon\rightarrow0$ in the approximate system \eqref{eq:visu}--\eqref{eq:visvarrho} and establish the existence of a weak solution $(\eta,\varrho,\bfu)$ to the system with artificial viscosity in the following sense. We define
$$
\widetilde W_\eta^I= C_w(\overline{I};L^\beta(\Omega_\eta)).
$$
A weak solution is a triple $(\bfu,\varrho,\eta)\in X_{\regkap\eta}^I\times\widetilde W_\eta^I\times Y^I$ that satisfies the following.
\begin{enumerate}[label={(D\arabic{*})}]
\item\label{D1} The momentum equation in the sense that\begin{align}\label{eq:apu}
\begin{aligned}
&\int_I\frac{\dd}{\dt}\int_{\Omega_{ \eta}}\varrho\bfu \cdot\bfphi\dx-\int_{\Omega_{\eta}} \Big(\varrho\bfu\cdot \partial_t\bfphi +\varrho\bfu\otimes \bfu:\nabla \bfphi\Big)\dxt
\\
&+\int_I\int_{\Omega_\eta}\Big(\mu\nabla\bfu:\nabla\bfphi +(\lambda+\mu)\Div\bfu\,\Div\bfphi\Big)\dxt-\int_I\int_{\Omega_{ \eta }}
(\varrho^\gamma+\delta\varrho^\beta)\,\Div\bfphi\dxt\\
&+\int_I\bigg(\frac{\dd}{\dt}\int_M \partial_t \eta b\dH-\int_M \partial_t\eta\,\partial_t b\dH + \int_M K'(\eta)\,b\dH\bigg)\dt
\\&=\int_I\int_{\Omega_{\eta}}\varrho\bff\cdot\bfphi\dxt+\int_I\int_M g\,b\,\dd x\dt
\end{aligned}
\end{align} 
for all $(b,\bfphi)\in C^\infty_0(M)\times C^\infty(\overline{I}\times \R^3)$ with $\mathrm{tr}_{\eta}\bfphi=b\nu$. Moreover, we have $(\varrho\bfu)(0)=\bfq_0$, $\eta(0)=\eta_0$ and $\partial_t\eta(0)=\eta_1$. 
\item\label{D2}  The following continuity equation in the sense that
\begin{align}\label{eq:apvarrho0}
\begin{aligned}
&\int_I\frac{\dd}{\dt}\int_{\Omega_{\eta}}\varrho \psi\dxt-\int_I\int_{\Omega_{\eta}}\Big(\varrho\partial_t\psi
+\varrho\bfu\cdot\nabla\psi\Big)\dxt=0
\end{aligned}
\end{align}
for all $\psi\in C^\infty(\overline{I}\times\R^3)$ and we have $\varrho(0)=\varrho_0$. 
\item\label{D3}  The boundary condition $\mathrm{tr}_\eta\bfu=\partial_t\eta\nu$ in the sense of Lemma \ref{lem:2.28}.
\end{enumerate}
\begin{theorem}\label{thm:ap}
There is a solution $(\eta,\bfu,\varrho)\in Y^I\times X_\eta^I\times \widetilde W_\eta^I$ to \ref{D1}--\ref{D3}. 
Here, we have $I=(0,T_*)$, where $T_*<T$ only if $\lim_{t\rightarrow T^\ast}\|\eta(t,\cdot)\|_{L^\infty(\partial\Omega)}=\frac{L}{2}$.
The solution satisfies the energy estimate
\begin{align*}
\sup_{t\in I}\int_{\Omega_{\eta}}\varrho|\bfu|^2\dx&+\sup_{t\in I}\int_{\Omega_{ \eta}}\big(a\varrho^\gamma+\delta\varrho^\beta\big)\dx+\int_I\int_{\Omega_{\eta}}|\nabla\bfu|^2\dxt+\sup_{t\in I}\int_M|\partial_t\eta|^2\,\dd\mathcal H^2+\sup_{t\in I}K(\eta)\\
&\leq\,c\,\bigg(\int_{\Omega_{\eta}}\frac{|\bfq_0|^2}{\varrho_0}\dx+\int_{\Omega_{ \eta}}\big(a\varrho_0^\gamma+\delta\varrho^\beta_0\big)\dx+\int_I\|\bff\|_{L^\infty(\Omega_{\eta})}^2\dt\bigg)\\
&+c\bigg(\int_M\abs{\eta_0}^2
\dH+\int_M\abs{\eta_1}^2
\dH+K(\eta_0) +\int_I\norm{g}_{L^2(M)}^2\dt\bigg),
\end{align*}
provided that $\eta_1,\varrho_0,\bfq_0,\bff$ and $g$ are regular enough to give sense to the right-hand side, that $\varrho_0\geq0$ a.e. and \eqref{eq:compa} is satisfied.
The constant c is independent of $\delta$.
\end{theorem}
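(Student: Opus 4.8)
The plan is to take the solutions $(\eta_\varepsilon,\bfu_\varepsilon,\varrho_\varepsilon)$ provided by Theorem~\ref{thm:visu} and let $\varepsilon\to0$ with $\delta$ kept fixed. Since \eqref{apri:eps} is uniform in $\varepsilon$, I would first record the bounds: $\varrho_\varepsilon$ bounded in $L^\infty(I;L^\beta(\Omega_{\eta_\varepsilon}))$, $\bfu_\varepsilon$ in $L^2(I;W^{1,2}(\Omega_{\eta_\varepsilon}))$, $\eta_\varepsilon$ in $Y^I$, and $\varepsilon\int_I\int_{\Omega_{\eta_\varepsilon}}(\varrho_\varepsilon^{\gamma-2}|\nabla\varrho_\varepsilon|^2+\varrho_\varepsilon|\nabla\bfu_\varepsilon|^2)\dxt\to0$; testing \eqref{eq:visvarrho} with $\varrho_\varepsilon$ and using $\beta\geq4$ gives in addition that $\sqrt\varepsilon\,\nabla\varrho_\varepsilon$ is bounded in $L^2(I\times\Omega_{\eta_\varepsilon})$, hence $\varepsilon\nabla\varrho_\varepsilon\to0$ and $\varepsilon\nabla(\varrho_\varepsilon\bfu_\varepsilon)\to0$. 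Passing to subsequences and arguing as in the proof of Theorem~\ref{thm:visu}, I obtain $\eta_\varepsilon\to\eta$ in $C^\alpha(\overline I\times M)$, $\eta_\varepsilon\rightharpoonup^\ast\eta$ in $L^\infty(I;W^{2,2}_0(M))$, $\partial_t\eta_\varepsilon\rightharpoonup^\ast\partial_t\eta$ in $L^\infty(I;L^2(M))$, $\bfu_\varepsilon\weaktoetai\bfu$ in $L^2(I;W^{1,2}(\Omega_{\eta_\varepsilon}))$, and $\varrho_\varepsilon\rightharpoonup^\ast\varrho$ in $L^\infty(I;L^\beta(\Omega_{\eta_\varepsilon}))$ with respect to $(\eta_\varepsilon)$.

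Before treating the nonlinear pressure I would establish uniform higher integrability of $\varrho_\varepsilon$. In the interior of $I\times\Omega_{\eta_\varepsilon}$ this follows from the classical argument, testing \eqref{eq:visu} with $\varphi\,\Bog[\varrho_\varepsilon^\Theta-\langle\varrho_\varepsilon^\Theta\rangle]$ for a spatial cut-off $\varphi$ and small $\Theta>0$. Near the moving boundary, which is only of class $W^{2,2}$ and hence not Lipschitz, the \Bogovskii\ operator is unavailable, so I would instead use the boundary test-functions built from the distance to $\partial\Omega_{\eta_\varepsilon}$ — whose divergence is allowed to blow up at the boundary while $\int p_\delta(\varrho_\varepsilon)\Div\bfphi_\varepsilon$ stays bounded — to exclude concentration of $p_\delta(\varrho_\varepsilon)=a\varrho_\varepsilon^\gamma+\delta\varrho_\varepsilon^\beta$ there; the term $\int_I\int_{\Omega_{\eta_\varepsilon}}\varrho_\varepsilon\bfu_\varepsilon\cdot\partial_t\bfphi_\varepsilon\dxt$ that appears is controlled because $\varrho_\varepsilon\bfu_\varepsilon$ is bounded in $L^2(I;L^p)$ for some $p>\tfrac43$, which is automatic at the $\delta$-level since $\beta$ is large (the bound $\gamma>\tfrac{12}{7}$ is only needed in the final $\delta$-limit). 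This gives $p_\delta(\varrho_\varepsilon)$ bounded in $L^{1+\Theta'}(I\times\Omega_{\eta_\varepsilon})$ for some $\Theta'>0$, so that its weak limit $\overline{p_\delta(\varrho)}$ is a function with no boundary concentration. Writing \eqref{eq:visvarrho} as $\partial_t\varrho_\varepsilon=\Div(-\varrho_\varepsilon\bfu_\varepsilon-\varepsilon\nabla\varrho_\varepsilon)$, Lemma~\ref{thm:weakstrong} applied to $(\varrho_\varepsilon,\bfu_\varepsilon)$ then yields $\varrho_\varepsilon\bfu_\varepsilon\weaktoetai\varrho\bfu$, and a second application to $(\varrho_\varepsilon\bfu_\varepsilon,\bfu_\varepsilon)$, with $\partial_t(\varrho_\varepsilon\bfu_\varepsilon)$ read off from \eqref{eq:visu}, gives $\varrho_\varepsilon\bfu_\varepsilon\otimes\bfu_\varepsilon\weaktoetai\varrho\bfu\otimes\bfu$. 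Letting $\varepsilon\to0$ in \eqref{eq:visvarrho} (the viscous term vanishing) produces \eqref{eq:apvarrho0}, whose solution $\varrho$ is weakly time-continuous, so $\varrho\in\widetilde W_\eta^I$, and which — since $\varrho\in L^\infty(I;L^\beta)$ with $\beta\geq2$ and $\bfu\in L^2(I;W^{1,2})$ — holds in the renormalized sense and, exactly as for \eqref{eq:2003weak1}, extends to $\R^3$ after zero extension because the boundary term produced by the extension cancels thanks to the Lagrangian identity $\tr_\eta\bfu=\partial_t\eta\nu$.

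The remaining step, and the core of the argument, is the strong convergence of $\varrho_\varepsilon$, needed to pass to the limit in the pressure. I would follow the Lions–Feireisl scheme adapted to the moving domain. First, weak continuity of the effective viscous flux: testing \eqref{eq:visu} with $\varphi\,\nabla\Delta^{-1}[T_k(\varrho_\varepsilon)]$ (zero-extended) and the limit momentum equation with $\varphi\,\nabla\Delta^{-1}[\overline{T_k(\varrho)}]$ for a spatial cut-off $\varphi$, subtracting, and invoking the Div–Curl lemma together with the renormalized equation of Lemma~\ref{cor:ap}, one obtains, first locally and then — using the boundary non-concentration established above — globally,
\[
\lim_{\varepsilon\to0}\int_I\int_{\Omega_{\eta_\varepsilon}}\varphi\,\big(p_\delta(\varrho_\varepsilon)-(\lambda+2\mu)\Div\bfu_\varepsilon\big)T_k(\varrho_\varepsilon)\dxt=\int_I\int_{\Omega_{\eta}}\varphi\,\big(\overline{p_\delta(\varrho)}-(\lambda+2\mu)\overline{\Div\bfu}\big)\overline{T_k(\varrho)}\dxt.
\]
Combined with the monotonicity of $p_\delta$ this bounds the oscillation defect measure $\mathrm{osc}_{\gamma+1}[\varrho_\varepsilon\to\varrho]$, which together with the renormalized continuity equation for $\varrho_\varepsilon$ allows to pass to the limit in the renormalized equation and to derive $\int_{\Omega_\eta}(\overline{\varrho\log\varrho}-\varrho\log\varrho)(t)\dx\leq0$; by convexity this is an equality, so $\varrho_\varepsilon\to\varrho$ a.e., hence $\varrho_\varepsilon\toetai\varrho$ in every $L^p(I\times\Omega_{\eta_\varepsilon})$ with $p<\beta+\Theta'$, and $p_\delta(\varrho_\varepsilon)\toetai p_\delta(\varrho)$. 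It then remains to pass to the limit in all terms of \eqref{eq:visu} and \eqref{eq:visvarrho}, to check that the initial data $(\varrho\bfu)(0)=\bfq_0$, $\eta(0)=\eta_0$, $\partial_t\eta(0)=\eta_1$, $\varrho(0)=\varrho_0$ and the boundary condition $\tr_\eta\bfu=\partial_t\eta\nu$ are preserved, and to deduce the energy estimate from weak lower semicontinuity; the blow-up alternative on $I=(0,T_*)$ is inherited from Theorem~\ref{thm:visu}. I expect the hard part to be precisely making the effective-viscous-flux identity and the renormalized continuity equation work globally — up to a non-Lipschitz moving boundary and with test-functions that are not compactly supported because of the shell coupling; the damped structure of the $\varepsilon$-continuity equation (Theorem~\ref{lem:warme}, Lemma~\ref{cor:ap}) and the boundary non-concentration of the pressure are what make this feasible.
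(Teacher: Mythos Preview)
Your outline is correct and follows the same overall architecture as the paper: uniform bounds from Theorem~\ref{thm:visu}, interior higher integrability of the pressure, exclusion of boundary concentration via the distance-based test function, compactness of the convective terms by Lemma~\ref{thm:weakstrong}, weak continuity of the effective viscous flux, and strong convergence of the density through the renormalized continuity equation and the $\varrho\ln\varrho$ argument.

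There is, however, one genuine difference worth flagging. At the $\varepsilon\to0$ stage you deploy the full Feireisl machinery: truncations $T_k(\varrho_\varepsilon)$, the effective-viscous-flux identity against $\nabla\Delta^{-1}[T_k(\varrho_\varepsilon)]$, and a bound on the oscillation defect measure $\mathrm{osc}_{\gamma+1}$ to justify the renormalized equation for the limit. The paper does \emph{not} truncate here. Since the artificial pressure $\delta\varrho^\beta$ with $\beta$ large is still present, one has $\varrho_\varepsilon\in L^\infty(I;L^\beta)$ uniformly, which is more than enough integrability to (i) test the flux identity directly with $\psi\nabla\Delta^{-1}(\psi\varrho_\varepsilon)$, (ii) obtain the renormalized equation for the limit $\varrho$ straight from DiPerna--Lions commutator estimates (Section~\ref{sec:renep}), and (iii) compare $\int\varrho_\varepsilon\ln\varrho_\varepsilon$ with $\int\varrho\ln\varrho$ without any $T_k$ or $L_k$. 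The paper reserves the truncation/oscillation-defect apparatus for Section~\ref{sec:6}, where only $\varrho\in L^\gamma$ is available. Your route is valid but is heavier than necessary at this level; the paper's choice exploits the $\delta$-regularization to keep the $\varepsilon$-limit close to Lions' original argument. A minor secondary difference: for interior higher integrability the paper uses the local inverse Laplacian $\psi\nabla\Delta^{-1}_{\tilde B}\varrho_\varepsilon$ (Lemma~\ref{prop:higher}) rather than the \Bogovskii\ operator, which again leans on $\beta$ large to make $\nabla\Delta^{-1}\varrho_\varepsilon$ bounded in $L^\infty$.
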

\begin{lemma}
\label{cor:ap1}
Under the assumptions of Theorem \ref{thm:ap}, the continuity equation holds in the renormalized sense that is
\begin{align}\label{eq:final3}
\begin{aligned}
\int_I\frac{\dd}{\dt}\int_{\Omega_\eta}\theta(\varrho)\psi\dxt&-\int_I\int_{\Omega_\eta}\Big(\theta(\varrho)\partial_t\psi +\theta(\varrho) \bfu\cdot \nabla \psi\Big)\dxt
\\
& =- \int_I\int_{\Omega_\eta}(\varrho\theta'(\varrho)-\theta(\varrho)) \diver\bfu \,\psi\dxt
\end{aligned}
\end{align}
for all $\psi\in C^\infty(\overline I\times\R^3)$ and all $\theta\in C^1(\R)$ with 
$\theta'(z)=0$ for $z\geq M_\theta$.
\end{lemma}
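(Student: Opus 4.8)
The plan is to run the DiPerna--Lions regularization argument directly at the level $\varepsilon=0$. The decisive point is that, in contrast to Lemma \ref{cor:ap}, there is no diffusion term $\varepsilon\Delta\varrho$ present anymore, so the sign-definite term $\varepsilon\theta''(\varrho)|\nabla\varrho|^2$ which forced the inequality in \eqref{eq:Tk01'} is absent; hence we obtain \eqref{eq:final3} with equality. In particular one cannot simply pass to the limit $\varepsilon\to0$ in \eqref{eq:Tk01'}; instead we essentially repeat the proof of Theorem \ref{lem:warme}\,b) with $\varepsilon=0$. Since the two sides of \eqref{eq:final3} are affine in $\theta$, and for a constant renormalization function \eqref{eq:final3} reduces, by Reynolds' transport theorem and integration by parts, to $\int_I\int_{\partial\Omega_\eta}\psi\bigl((\partial_t\eta\,\nu)\circ\bfPsi_\eta^{-1}-\bfu\bigr)\cdot\nu_\eta\dH\dt=0$, which holds by the boundary condition \ref{D3}, we may and do assume $\theta(0)=0$. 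We also first treat $\theta\in C^2(\R)$ and recover the general $C^1$-case by approximation, as in the proof of Lemma \ref{cor:ap}.

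Next I would extend $\varrho$ by zero to $I\times\R^3$. The given weak continuity equation \eqref{eq:apvarrho0} already encodes \ref{D3}, so it extends verbatim to the whole space, which is precisely \eqref{eq:2003weak1}, with $\varrho\bfu$ understood as the zero extension of $\varrho\bfu$ from $\Omega_\eta$. I then pick an extension $\mathscr E_\eta\bfu\in L^2(I;W^{1,r}(\R^3))$ with $r\in[\tfrac43,2)$ (Lemma \ref{lem:extension} and the subsequent remark, admissible since $\|\eta(t)\|_{L^\infty(\partial\Omega)}<\tfrac L2$ on the interval of existence), noting $\varrho\bfu=\varrho\,\mathscr E_\eta\bfu$ on $\R^3$. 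Mollifying \eqref{eq:2003weak1} in space with parameter $\kappa$ gives $\partial_t\varrho_\kappa+\diver(\varrho_\kappa\,\mathscr E_\eta\bfu)=r_\kappa$ in $I\times\R^3$, where $r_\kappa=\diver(\varrho_\kappa\,\mathscr E_\eta\bfu)-\diver\bigl((\varrho\,\mathscr E_\eta\bfu)_\kappa\bigr)$, and multiplying by $\theta'(\varrho_\kappa)$ yields $\partial_t\theta(\varrho_\kappa)+\diver(\theta(\varrho_\kappa)\mathscr E_\eta\bfu)+(\varrho_\kappa\theta'(\varrho_\kappa)-\theta(\varrho_\kappa))\diver\mathscr E_\eta\bfu=r_\kappa\theta'(\varrho_\kappa)$ pointwise in $I\times\R^3$.

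The only place where the geometry really enters is the commutator estimate. Because the moving boundary is not Lipschitz, Lemma \ref{lem:extension} only provides $\mathscr E_\eta\bfu\in W^{1,r}$ with $r<2$, so the Friedrichs commutator lemma \cite[Lemma 2.3]{Li1} forces us to use that $\varrho\in L^\infty(I;L^\beta(\R^3))$ with $\tfrac1\beta+\tfrac1r\le1$, which is exactly what the standing assumption $\beta\ge4$ provides: it gives, for a.e.\ $t$, $\|r_\kappa(t)\|_{L^q(\R^3)}\le c\,\|\varrho(t)\|_{L^\beta(\R^3)}\|\mathscr E_\eta\bfu(t)\|_{W^{1,r}(\R^3)}$ with $\tfrac1q=\tfrac1\beta+\tfrac1r\le1$ and $r_\kappa(t)\to0$ in $L^q(\R^3)$, hence $r_\kappa\to0$ in $L^1(I\times\R^3)$ by dominated convergence, and so $r_\kappa\theta'(\varrho_\kappa)\to0$ there since $\theta'$ is bounded. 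As $\theta(0)=0$ and $\theta'(z)=0$ for $z\ge M_\theta$, the maps $s\mapsto\theta(s)$ and $s\mapsto s\theta'(s)$ are Lipschitz and vanish at $0$, whence $\theta(\varrho_\kappa)\to\theta(\varrho)$ and $\varrho_\kappa\theta'(\varrho_\kappa)\to\varrho\theta'(\varrho)$ in $L^s(I;L^\beta(\R^3))$ for every $s<\infty$; combined once more with $\tfrac1\beta+\tfrac1r\le1$ this lets me pass to the limit $\kappa\to0$ in the products against $\mathscr E_\eta\bfu$ and $\diver\mathscr E_\eta\bfu$ tested with $\psi\in C^\infty(\overline I\times\R^3)$. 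This produces the renormalized equation on $\R^3$, and restricting it back to $\Omega_\eta$ is immediate since $\theta(\varrho)$, $\varrho\theta'(\varrho)-\theta(\varrho)$ and $\theta(\varrho)\mathscr E_\eta\bfu$ all vanish outside $\Omega_\eta$ while $\mathscr E_\eta\bfu=\bfu$ on $\Omega_\eta$; this yields \eqref{eq:final3}. The main obstacle is therefore not conceptual but the exponent bookkeeping in the commutator step, where the non-Lipschitz geometry is paid for by the hypothesis $\beta\ge4$, together with the (by now routine) verification that all boundary integrals cancel thanks to the Lagrangian boundary condition \ref{D3}.
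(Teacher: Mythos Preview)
Your proposal is correct and follows essentially the same route as the paper's proof in Section~\ref{sec:renep}: extend $\varrho$ by zero, extend $\bfu$ via $\mathscr E_\eta$ from Lemma~\ref{lem:extension}, mollify in space, apply the DiPerna--Lions commutator lemma, and pass to the limit. One minor remark: the paper only needs $\beta>2$ for the commutator step (taking $r<2$ close enough to $2$), not the full $\beta\ge4$; your restriction $r\ge\tfrac43$ is unnecessarily tight, but this does not affect correctness.
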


The proof will be split in several parts.
For a given $\varepsilon$ we gain a weak solutions $(\eta_\varepsilon,\bfu_\varepsilon,\varrho_\varepsilon)$ to \eqref{eq:visu}--\eqref{eq:visvarrho} by Theorem \ref{thm:visu}.
The estimate from Theorem \ref{thm:visu} holds uniformly with respect to $\varepsilon$. 
In particular,
\begin{align}
\label{apri:eps1}
\begin{aligned}
\sup_{t\in I}\int_{\Omega_{  \eta_\epsilon}}&\varrho_\epsilon|\bfu_\epsilon|^2\dx+\sup_{t\in I}\int_{\Omega_{ \eta_\epsilon}}\big(a\varrho_\epsilon^\gamma+\delta\varrho_\epsilon^\beta\big)\dx+\int_I\int_{\Omega_{\eta_\epsilon}}|\nabla\bfu_\epsilon|^2\dxt\\
&+\sup_{t\in I}\int_M|\partial_t\eta_\epsilon|^2\,\dd\mathcal H^2+\sup_{t\in I}K(\eta_\epsilon)\\
&\leq C(\eta_1, g,\bff,\varrho_0,\bfq_0)
\end{aligned}
\end{align}
is satisfied uniformly in $\epsilon$ for the time interval $I$.
Hence we may take a subsequence, such that
by Lemma~\ref{thm:weakstrong}, we find for $s\in (1,2)$ and for some $\alpha\in (0,1)$ fixed subsequences which satisfy
\begin{align}
\eta_\epsilon&\rightharpoonup^\ast\eta\quad\text{in}\quad L^\infty(I;W^{2,2}(M))\label{eq:conveta1}\\
\eta_\epsilon&\rightharpoonup^\ast\eta\quad\text{in}\quad W^{1,\infty}(I;L^2(M)),
\label{eq:conetat1}\\
\eta_\epsilon&\to\eta\quad\text{in}\quad C^\alpha(\overline I\times M)),
\label{eq:conetatal}\\
\bfu_\epsilon&\rightharpoonup^\eta\bfu\quad\text{in}\quad L^2(I;W^{1,2}(\Omega_{\eta_\epsilon})),\label{eq:convu1}\\
\varrho_\epsilon&\rightharpoonup^{\ast,\eta}\varrho\quad\text{in}\quad L^\infty(I;L^\beta(\Omega_{\eta_\epsilon})).\label{eq:convrho1}
\end{align}
Additionally, by testing the continuity equation with $\varrho_\varepsilon$, we find that
\begin{align}\label{est:nablarho}
\int_I\int_{\Omega_{\eta_\epsilon}}\epsilon\abs{\nabla\varrho_\varepsilon}^2\, dx\, dt\leq C,
\end{align}
with $C$ independent of $\epsilon$.
Now, using the renormalized continuity equation from Lemma \ref{cor:ap} with $\theta(z)=z^2$ and testing with $\psi\equiv 1$ we find using $\beta>4$ that
\begin{align}
\varepsilon\nabla\varrho_\varepsilon\rightarrow^\eta 0\quad\text{in}\quad L^2(I\times\Omega_{\eta_\epsilon}),\label{eq:van1}\\
\varepsilon\nabla(\bfu_\varepsilon\varrho_\varepsilon)\rightarrow^\eta 0\quad\text{in}\quad L^1(I\times\Omega_{\eta_\epsilon}).\label{eq:van2}
\end{align}
Please observe that this is the only essential point in the proof where we make use of the fact that $\varrho_\epsilon$ is uniformly bounded in $L^\beta$. 
By Lemma~\ref{thm:weakstrong} we find for $q\in (1,\frac{6\beta}{\beta+6})$, that
\begin{align}
\varrho_\varepsilon\bu_\varepsilon&\rightharpoonup^\eta  {\varrho}  {\bfu}\qquad\text{in}\qquad L^2(I, L^q(\Omega_{\eta_\epsilon}))\label{conv:rhov2}\\
{\varrho}_\varepsilon  {\bfu}_\varepsilon\otimes  {\bfu}_\varepsilon&\rightharpoonup^\eta  {\varrho}  {\bfu}\otimes  {\bfu}\qquad\text{in}\qquad L^1(I\times\Omega_{\eta_\epsilon}).\label{conv:rhovv2}
\end{align}

\subsection{Equiintegrability of the pressure}
The problem one has to handle first is that since the pressure is merrily bounded in $L^1$ in space it might converge to a measure and not a function. This is usually excluded by showing that the pressure possesses higher integrability properties. This would then allow to deduce a weakly converging subsequence in the space of better integrability and hence get a function as a limit object. In  the case of a moving domain standard procedures do not work and global higher integrability on the moving domain can not be achieved. The solution is two divide the problem in two steps: The first step is to improve the space integrability of the pressure inside the moving domain. The second step is to show, that the mass of the pressure can not be concentrated on the boundary. Combining the two results will imply equiintegrability of the pressure which is equivalent to $L^1$ compactness. The next two lemma settle that matter. The first one is merrily a localized version of the standard procedure. 
\begin{lemma}\label{prop:higher}
Let $Q=J\times B\Subset I\times\Omega_{\eta}$ be a parabolic cube. The following holds for any $\epsilon\leq\epsilon_0(Q)$
\begin{equation}\label{eq:gamma+1}
\int_{Q}\big(a\varrho_\varepsilon^{\gamma+1}
+\delta\varrho_\varepsilon^{\beta+1}\big)\,\dif x\,\dif t\leq C(Q)
\end{equation}
with constant independent of $\epsilon$.
\end{lemma}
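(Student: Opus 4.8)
The plan is to adapt the classical Bogovski\u{\i}-based local improvement of density integrability for the compressible Navier--Stokes system (as in \cite{feireisl1,novot}) to the present setting, using that the cube $Q=J\times B$ is compactly contained in the time-space cylinder $I\times\Omega_\eta$, so that for $\varepsilon\leq\varepsilon_0(Q)$ the same cube is compactly contained in $I\times\Omega_{\eta_\varepsilon}$ as well (this uses the uniform convergence $\eta_\varepsilon\to\eta$ in $C^\alpha$, cf. \eqref{eq:conetatal}). On such a cube the moving boundary plays no role and we may work exactly as in the fixed-domain theory.

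First I would fix a cutoff $\phi\in C_0^\infty(Q)$ with $\phi\equiv1$ on a slightly smaller cube $Q'$, and test the momentum equation \eqref{eq:visu} with
\[
\bfphi_\varepsilon(t,x)=\phi(t,x)\,\mathcal B\big(\varrho_\varepsilon^\Theta-\langle\varrho_\varepsilon^\Theta\rangle_{B}\big)(t,x),
\]
where $\mathcal B=\Bog$ is the Bogovski\u{\i} operator on $B$, $\langle\cdot\rangle_B$ denotes the spatial average over $B$, and $\Theta>0$ is chosen small (e.g.\ $\Theta=\min\{\gamma,\beta\}$ suffices, or even $\Theta$ a fixed small number, iterating if needed), so that $\Div\bfphi_\varepsilon=\phi(\varrho_\varepsilon^\Theta-\langle\varrho_\varepsilon^\Theta\rangle_B)+\nabla\phi\cdot\mathcal B(\cdots)$. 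Because $\phi$ has compact support in $Q$, the test function is admissible (it vanishes near the moving boundary, so the coupling with the shell and the boundary term are irrelevant and $b=0$). The leading term then produces $\int_Q a\phi\,\varrho_\varepsilon^{\gamma+\Theta}+\delta\phi\,\varrho_\varepsilon^{\beta+\Theta}$ on one side, and all remaining terms must be shown to be bounded uniformly in $\varepsilon$.

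The remaining terms are estimated by the standard bookkeeping: the convective term $\int_Q\varrho_\varepsilon\bfu_\varepsilon\otimes\bfu_\varepsilon:\nabla\bfphi_\varepsilon$ and $\int_Q\varrho_\varepsilon\bfu_\varepsilon\cdot\partial_t\bfphi_\varepsilon$ are controlled using the energy bound \eqref{apri:eps1} (giving $\varrho_\varepsilon\in L^\infty(L^\gamma)$, $\bfu_\varepsilon\in L^2(W^{1,2})\hookrightarrow L^2(L^6)$, hence $\varrho_\varepsilon|\bfu_\varepsilon|^2\in L^1(L^{3\gamma/(\gamma+3)})$ locally) together with the continuity-equation identity for $\partial_t\varrho_\varepsilon^\Theta$ to handle $\partial_t\mathcal B(\varrho_\varepsilon^\Theta-\langle\varrho_\varepsilon^\Theta\rangle_B)$ — here one uses the renormalized equation from Lemma \ref{cor:ap} to write $\partial_t\varrho_\varepsilon^\Theta=-\Div(\varrho_\varepsilon^\Theta\bfu_\varepsilon)-(\Theta-1)\varrho_\varepsilon^\Theta\Div\bfu_\varepsilon+\varepsilon(\dots)$ and the continuity-of-$\Bog$ estimates in $L^s(B)$; the viscous terms $\int_Q\nabla\bfu_\varepsilon:\nabla\bfphi_\varepsilon$ and $\int_Q\Div\bfu_\varepsilon\,\Div\bfphi_\varepsilon$ are bounded by $\|\nabla\bfu_\varepsilon\|_{L^2(Q)}\|\nabla\bfphi_\varepsilon\|_{L^2(Q)}$ and $\|\mathcal B(\varrho_\varepsilon^\Theta)\|$ in the appropriate space; the artificial-viscosity contributions $\varepsilon\int_Q\nabla(\varrho_\varepsilon\bfu_\varepsilon):\nabla\bfphi_\varepsilon$ vanish as $\varepsilon\to0$ by \eqref{eq:van1}--\eqref{eq:van2} (or at least stay bounded), and the force term $\int_Q\varrho_\varepsilon\bff\cdot\bfphi_\varepsilon$ is harmless. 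Choosing $\Theta$ small enough that all these terms close, one obtains $\int_{Q'}\varrho_\varepsilon^{\gamma+\Theta}+\delta\varrho_\varepsilon^{\beta+\Theta}\leq C(Q)$; if $\Theta<1$ one iterates finitely many times (re-choosing cubes) to reach exponent $\gamma+1$, respectively $\beta+1$.

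I expect the main obstacle to be the careful treatment of the time-derivative term $\int_Q\varrho_\varepsilon\bfu_\varepsilon\cdot\partial_t\bfphi_\varepsilon$: one must commute $\partial_t$ past the Bogovski\u{\i} operator and substitute the (renormalized) continuity equation, keeping track of the artificial-viscosity remainder $\varepsilon\Delta\varrho_\varepsilon$ and the term $\varepsilon\theta''(\varrho_\varepsilon)|\nabla\varrho_\varepsilon|^2$, and verify these are uniformly bounded (using \eqref{est:nablarho}) — this is precisely where the hypothesis $\beta\geq4$ and the estimate \eqref{est:nablarho} enter. The variable-domain aspect, by contrast, is essentially trivial here since everything is localized strictly inside $\Omega_{\eta_\varepsilon}$; the genuine difficulty with the moving boundary is deferred to the next lemma (Proposition \ref{prop:higherb}), which controls concentration of the pressure near the boundary.
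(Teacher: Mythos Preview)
Your approach is correct and standard, but it takes a slightly different (and unnecessarily elaborate) route from the paper. The paper tests with $\psi\,\nabla\Delta^{-1}_{\tilde B}\varrho_\varepsilon$ (inverse Dirichlet Laplacian on a slightly larger ball, with $\Theta=1$) rather than the Bogovski\u{\i} construction $\phi\,\mathcal B(\varrho_\varepsilon^\Theta-\langle\varrho_\varepsilon^\Theta\rangle_B)$ with small $\Theta$; either operator works for this local argument. The more substantive difference is that no iteration in $\Theta$ is needed at this stage: the artificial pressure is still present, so $\varrho_\varepsilon\in L^\infty(I;L^\beta)$ uniformly with $\beta>3$, which is strong enough to take $\Theta=1$ in a single step. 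Concretely, $\nabla\Delta^{-1}_{\tilde B}\varrho_\varepsilon\in L^\infty(I;L^\infty(\tilde B))$ and $\nabla^2\Delta^{-1}_{\tilde B}\varrho_\varepsilon\in L^\infty(I;L^\beta(\tilde B))$, and this controls the convective term $\int\psi\,\varrho_\varepsilon\bfu_\varepsilon\otimes\bfu_\varepsilon:\nabla^2\Delta^{-1}\varrho_\varepsilon$ directly (via $\|\varrho_\varepsilon\|_{L^3}^2\|\bfu_\varepsilon\|_{L^6}^2$, using $\beta\geq 3$). With $\Theta=1$ you also avoid the renormalized equation entirely for the time-derivative term: the \emph{plain} damped continuity equation \eqref{eq:visvarrho} gives $\partial_t\nabla\Delta^{-1}_{\tilde B}\varrho_\varepsilon=\nabla\Delta^{-1}_{\tilde B}\Div(\varrho_\varepsilon\bfu_\varepsilon+\varepsilon\nabla\varrho_\varepsilon)$, and the $\varepsilon\nabla\varrho_\varepsilon$ contribution is handled by \eqref{est:nablarho}. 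The small-$\Theta$-plus-iteration strategy you sketch is really what is needed at the \emph{next} layer (Lemma~\ref{prop:higher'}, the $\delta\to0$ limit), where only $\varrho_\delta\in L^\infty(I;L^\gamma)$ is available and one is forced to take $\Theta\leq\tfrac{2}{3}\gamma-1$; at the present $\varepsilon$-level that machinery is superfluous.
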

\begin{proof}
We consider a parabolic cube $\tilde Q=\tilde J\times\tilde B$ with
$Q\Subset \tilde Q\Subset  I\times\Omega_\eta$. Due to \eqref{eq:conetatal} we can assume
that $\tilde Q\Subset  I\times\Omega_{\eta_\varepsilon}^I$ (by taking $\varepsilon$ small enough). 
Next we wish to test with $\psi\nabla\Delta^{-1}_{\tilde{B}}\varrho_\varepsilon$ where $\psi\in C^\infty_0(\tilde Q)$ with $\psi=1$ in $Q$ and $\Delta^{-1}_{\tilde{B}}\varrho_\varepsilon$ is defined as the unique
$W^{2,\beta}(\tilde B)\cap W^{1,\beta^*}_0(\tilde B)$-solution to
the equation
\begin{align}
\label{lap:1}
 -\Delta v =\varrho_\epsilon\quad\text{in}\quad\tilde B.
\end{align}
In order to deal with the term involving the time derivative we use the continuity equation. We find that
\begin{align}
\label{lap:2}
-\Delta \partial_t v=\partial_t\varrho_\epsilon=-\diver (\varrho_\epsilon \bfu_\epsilon+\varepsilon\nabla\varrho_\epsilon )
\end{align}
in the sense of distributions, such that $\partial_t\nabla\Delta^{-1}_{\tilde{B}}\varrho_\varepsilon=\nabla\Delta^{-1}_{\tilde{B}}\diver(\varrho_\epsilon \bfu_\epsilon+\varepsilon\nabla\varrho_\epsilon )$. 
Hence we find
\begin{align}\label{eq:testtheta}
\begin{aligned}
J_0&:=\int_I\int_ {\mathbb R^3}\psi \big(a\varrho_\varepsilon^{\gamma+1}+\delta\varrho_\varepsilon^{\beta+1}\big)\dxs
\\
&=\mu\int_I\int_ {\mathbb R^3}\psi \nabla \bfu_\varepsilon:\nabla^2\Delta^{-1}_{\tilde{B}} \varrho_\varepsilon\dxs+\mu\int_I\int_ {\mathbb R^3} \nabla \bfu_\varepsilon:\nabla\psi\otimes\nabla \Delta^{-1}_{\tilde{B}}\varrho_\varepsilon\dxs
\\
&+(\lambda+\mu)\int_I\int_ {\mathbb R^3}\psi \diver\bfu_\varepsilon\,\varrho_\varepsilon\dxs+(\lambda+\mu)\int_I\int_ {\mathbb R^3} \diver\bfu_\varepsilon\,\nabla\psi\cdot\nabla\Delta^{-1}_{\tilde{B}}\varrho_\varepsilon\dxs
\\
&-\int_I\int_ {\mathbb R^3}  \psi\varrho_\varepsilon \bfu_\varepsilon\otimes \bfu_\varepsilon:\nabla^2\Delta^{-1}_{\tilde{B}}\varrho_\varepsilon\dxs
-\int_I\int_ {\mathbb R^3}  \varrho_\varepsilon \bfu_\varepsilon\otimes \bfu_\varepsilon:\nabla\psi\otimes\nabla \Delta^{-1}_{\tilde{B}}\varrho_\varepsilon\dxs
\\
&+\varepsilon\int_I\int_{\mathbb R^3}\nabla(\bfu_\varepsilon\varrho_\varepsilon):\nabla^2\Delta^{-1}_{\tilde{B}}\varrho_\varepsilon\psi\dxs
+\varepsilon\int_I\int_{\mathbb R^3}\nabla(\bfu_\varepsilon\varrho_\varepsilon):\nabla\Delta^{-1}_{\tilde{B}}\varrho_\varepsilon\otimes\nabla\psi\dxs
\\
&-\int_I\int_ {\mathbb R^3} \big(a\varrho_\varepsilon^{\gamma}+\delta\varrho_\varepsilon^{\beta}\big)\nabla\psi\cdot\nabla\Delta^{-1}_{\tilde{B}}\varrho_\varepsilon\dxs -\int_I\int_ {\mathbb R^3} \varrho_\epsilon \bff\cdot \nabla \Delta^{-1}_{\tilde{B}}\varrho_\varepsilon\dxs
\\
&+\int_I\int_{\R^3}\psi\varrho_\varepsilon\bfu_\varepsilon\nabla\Delta^{-1}_{\tilde{B}}\diver(\varrho_\varepsilon\bfu_\varepsilon+\epsilon \nabla \varrho_\epsilon)\dxs
-\int_I\int_{\R^3}\partial_t\psi\varrho_\varepsilon\bfu_\varepsilon\cdot\nabla \Delta^{-1}_{\tilde{B}}\varrho_\varepsilon\dxs
\\=&:J_1+\cdots +J_{12}.
\end{aligned}
\end{align}
Our goal is to find an estimate for the expectation of $J_0$ which means that we have to find suitable bounds for all the other terms.
Using the continuity of the operator $\nabla\Delta^{-1}_{\tilde{B}}$ and Sobolev's embedding theorem, we obtain for some $p>3$ that
\begin{align}\label{eq:Linfty}
\begin{aligned}
\|\nabla\Delta^{-1}_{\tilde{B}}\varrho_\varepsilon\|_{L^\infty(\tilde B)}&\leq C\,\|\nabla^2\Delta^{-1}_{\tilde{B}}\varrho_\varepsilon\|_{L^p(\tilde B)}\leq C\,\| \varrho_\varepsilon\|_{L^p(\tilde B)}.
\end{aligned}
\end{align}
This is due to $\beta>3$. Note that in particular we have shown that $\psi\nabla\Delta^{-1}_{\tilde{B}}\varrho_\varepsilon\in L^\infty(I\times\R^3)$ uniformly in $\varepsilon$. As $\varrho_\varepsilon\in L^2(I\times \tilde Q )$ uniformly due to $\beta\geq2$ we deduce that
 $|J_1|\leq C$ as a consequence of uniform bounds on $\bfu_\epsilon$ and the continuity of the operator $\nabla^2\Delta^{-1}_{\tilde{B}}$. Similar arguments lead to the bound for $J_2,J_3,J_4$. The most critical is the convective term, $J_5$. It can be estimated using the continuity of $\nabla^2\Delta^{-1}_{\tilde{B}}$, Sobolev's embedding theorem, H\"older's inequality and \eqref{apri:eps1}
\begin{align*}
|J_5|&\leq C\,\int_{\tilde J}\| \varrho_\varepsilon\|_{L^3(\tilde Q)}\| \bfu_\varepsilon\|_{L^6(\tilde Q)}^2\| \varrho_\varepsilon\|_{L^3(\tilde Q)}\,\dd s\leq C\,\sup_{\tilde J}\| \varrho_\varepsilon\|_{L^3(\tilde B)}^2\int_{\tilde Q}\big(|\bfu_\varepsilon|^2+|\nabla \bfu_\varepsilon|^2\big)\,\dif x\,\dd s\leq C.
\end{align*}
The term $J_6$ is estimated similarly.
For $J_7$ we have on accout of \eqref{apri:eps1}, \eqref{est:nablarho} as well as \eqref{eq:Linfty} and obtain
\begin{align*}
|J_7|&\leq\,\sup_{\tilde J}\|\nabla^2\Delta^{-1}_{\tilde{B}}\varrho_\varepsilon\|_{L^3(\tilde B)}\bigg(\,\int_{\tilde J}\norm{\nabla\bfu_\varepsilon}_{L^2(\tilde B)}^2+\norm{\bfu_\varepsilon}_{L^6(\tilde B)}^2
+\epsilon^2\norm{\nabla\varrho_\varepsilon}_{L^2(\tilde B)}^2+\epsilon^2\norm{\varrho_\varepsilon}_{L^6(\tilde B)}^2\dt\bigg),
\end{align*}
which is uniformly bounded
Similar for $J_8$ is bounded taking also into account \eqref{eq:Linfty}, that
\begin{align*}
|J_8|&\leq\,\sup_{\tilde J}\|\nabla\Delta^{-1}_{\tilde{B}}\varrho_\varepsilon\|_{L^\infty(\tilde B)}^2\bigg(\,\int_{\tilde J}\norm{\nabla\bfu_\varepsilon}_{L^2(\tilde B)}^2+\norm{\bfu_\varepsilon}_{L^6(\tilde B)}^2
+\epsilon^2\norm{\nabla\varrho_\varepsilon}_{L^2(\tilde B)}^2+\epsilon^2\norm{\varrho_\varepsilon}_{L^6(\tilde B)}^2\dt\bigg).
\end{align*}
The terms of $J_9, J_10$ can be estimated using by the bounds on the operator $\nabla\Delta^{-1}_{\tilde{B}}$ and H\"older's and Young's inequality.
The same is used to estimate
\begin{equation*}
\begin{split}
 J_{11}&\leq C\int_{\tilde Q}|\varrho_\varepsilon\bu_\varepsilon|^2\dxt+\epsilon C\bigg(\int_{\tilde Q}|\varrho_\varepsilon\bu_\varepsilon|^2\dxt\bigg)^\frac{1}{2}\bigg(\int_{\tilde Q}|\nabla\varrho_\varepsilon|^2\dxt\bigg)^\frac{1}{2}
\end{split}
\end{equation*}
which is finite since we have uniformly in $\varepsilon$
\begin{equation}\label{eq:L2}
\begin{split}
\varrho_\varepsilon\bu_\varepsilon\in L^2(\tilde J;L^{\frac{6\beta}{\beta+6}}(\tilde Q))
\end{split}
\end{equation}
which is a consequence of the fact that
$$\varrho_\varepsilon\in L^\infty(\tilde J;L^\beta(\tilde B))),\qquad \bu_\varepsilon\in L^2(\tilde J;L^6(\tilde B)))$$
uniformly in $\varepsilon.$ The remaining terms involving derivatives of $\psi$ are of lower order and can be estimated even easier. 
Plugging all together we obtain \eqref{eq:gamma+1} uniformly in $\varepsilon$.
\end{proof}

The standard method as used in the proof of Lemma \ref{eq:gamma+1} does not work up to the boundary. Also the usage of the \Bogovskii-operator -- common in literature as well -- does not help (recall that our boundary depends on time and is not Lipschitz).
In the following Lemma we show equi-integrability at the boundary related to the method from \cite{Kuk}.

\begin{lemma}\label{prop:higherb}
Let $\kappa>0$ be arbitrary. There is a measurable set $A_\kappa\Subset I\times\Omega_\eta$ such that
\begin{equation}\label{eq:gamma+1b}
\int_{I\times\R^3\setminus A_\kappa}\big(a\varrho_\varepsilon^{\gamma}
+\delta\varrho_\varepsilon^{\beta}\big)\chi_{\Omega_{\eta_\epsilon}}\,\dif x\,\dif t\leq \kappa.
\end{equation}
\end{lemma}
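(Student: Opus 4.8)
The plan is to construct, for each $\kappa>0$, a test function for the momentum equation whose divergence blows up near $\partial\Omega_{\eta_\varepsilon}$ at a controlled rate, so that the term $\int p(\varrho_\varepsilon)\,\Div\bfphi_\varepsilon$ sees the near-boundary pressure mass but all remaining terms in the weak formulation \eqref{eq:visu} stay bounded uniformly in $\varepsilon$. The natural ansatz, following \cite{Kuk}, is $\bfphi_\varepsilon = \nabla\Delta^{-1}\big(b_\sigma(\mathrm{dist}(\cdot,\partial\Omega_{\eta_\varepsilon}))-\text{mean}\big)$ (or more precisely, work in the reference domain via $\bfPsi_{\eta_\varepsilon}$ and use the signed distance $s$), where $b_\sigma$ is a fixed bounded profile that is large on an $\sigma$-strip inside the boundary; then $\Div\bfphi_\varepsilon = b_\sigma(\mathrm{dist})-\text{mean}$ is bounded in $L^\infty$ uniformly in $\varepsilon$ (but \emph{not} in $\sigma$), while $\nabla\bfphi_\varepsilon = \nabla^2\Delta^{-1}(\cdots)$ and $\bfphi_\varepsilon$ itself are bounded in the appropriate Lebesgue spaces. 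First I would fix such a profile, set $A_\kappa$ to be the complement (intersected with $I\times\Omega_\eta$) of the strip where $b_\sigma$ is supported, and reduce \eqref{eq:gamma+1b} to bounding $\int p_\delta(\varrho_\varepsilon)(b_\sigma-\text{mean})$ by the sum of the remaining terms, each of which I must show is bounded uniformly in $\varepsilon$ with a bound that is then $\le\kappa$ after choosing $\sigma$ small.

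The key point is the time-derivative term: using $\bfphi_\varepsilon$ in \eqref{eq:visu} produces $\int_I\frac{d}{dt}\int_{\Omega_{\eta_\varepsilon}}\varrho_\varepsilon\bfu_\varepsilon\cdot\bfphi_\varepsilon - \int_I\int_{\Omega_{\eta_\varepsilon}}\varrho_\varepsilon\bfu_\varepsilon\cdot\partial_t\bfphi_\varepsilon$. For the second one, as explained in Remark \ref{rem:exp}, $\partial_t\bfphi_\varepsilon$ involves $\partial_t$ of the moving boundary, i.e. $\partial_t\eta_\varepsilon$, which by \eqref{apri:eps1} is only in $L^\infty(I;L^2(M))$, so $\partial_t\bfphi_\varepsilon$ can be controlled only in $L^2(I;L^q)$ for $q<4$ in three dimensions (using the trace embedding \eqref{boundary:unified} and Lemma \ref{lem:2.28}). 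Hence one needs $\varrho_\varepsilon\bfu_\varepsilon$ bounded in $L^2(I;L^p)$ for some $p>4/3$; since $\varrho_\varepsilon\bfu_\varepsilon\in L^2(I;L^{6\beta/(\beta+6)})$ uniformly (as in \eqref{conv:rhov2}, \eqref{eq:L2}, using $\beta\ge 4$, and more to the point the $\gamma$-bound gives $L^2(I;L^{6\gamma/(\gamma+6)})$ at the final limit level), one checks $6\gamma/(\gamma+6)>4/3 \iff \gamma>12/7$ — consistent with the hypothesis of the main theorem, and already comfortably satisfied here since $\beta\ge 4$. The boundary term $\int_I\frac{d}{dt}\int\varrho_\varepsilon\bfu_\varepsilon\cdot\bfphi_\varepsilon$ is handled by the fundamental theorem of calculus, bounding $\sup_I\|\sqrt{\varrho_\varepsilon}\bfu_\varepsilon\|_2\,\|\sqrt{\varrho_\varepsilon}\|_{2\beta/(\beta-1)}\|\bfphi_\varepsilon\|_\infty$ plus the initial data, all uniform in $\varepsilon$.

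The remaining terms are routine: the viscous terms $\mu\int\nabla\bfu_\varepsilon:\nabla\bfphi_\varepsilon$ and $(\lambda+\mu)\int\Div\bfu_\varepsilon\,\Div\bfphi_\varepsilon$ are bounded by $\|\nabla\bfu_\varepsilon\|_{L^2(I\times\Omega_{\eta_\varepsilon})}$ (uniform by \eqref{apri:eps1}) times $\|\nabla\bfphi_\varepsilon\|_{L^2}$, which is uniformly bounded by Calderón–Zygmund estimates for $\nabla^2\Delta^{-1}$; the convective term $\int\varrho_\varepsilon\bfu_\varepsilon\otimes\bfu_\varepsilon:\nabla\bfphi_\varepsilon$ uses $\varrho_\varepsilon|\bfu_\varepsilon|^2\in L^1(I;L^{3\beta/(\beta+3)})\subset L^1(I\times\Omega_{\eta_\varepsilon})$ with a small gain from Sobolev embedding of $\bfu_\varepsilon$ together with $\nabla\bfphi_\varepsilon$ in a high $L^r$; the force term $\int\varrho_\varepsilon\bff\cdot\bfphi_\varepsilon$ and the artificial-viscosity term $\varepsilon\int\nabla(\varrho_\varepsilon\bfu_\varepsilon):\nabla\bfphi_\varepsilon$ vanish or stay bounded by \eqref{eq:van1}–\eqref{eq:van2}; and the shell terms (with the corresponding $b=\mathrm{tr}_{\eta_\varepsilon}\bfphi_\varepsilon\cdot\nu$) are controlled using $\sup_I\|\partial_t\eta_\varepsilon\|_2$, $\sup_I\|\nabla^2\eta_\varepsilon\|_2$ and the boundedness of $\mathrm{tr}_{\eta_\varepsilon}\bfphi_\varepsilon$ in, say, $L^2(I;W^{1-1/r,r}(\partial\Omega))$ via Lemma \ref{lem:2.28}. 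Collecting everything yields $\int_{I\times\Omega_{\eta_\varepsilon}}p_\delta(\varrho_\varepsilon)(b_\sigma-\text{mean})\,dx\,dt \le C$ with $C$ independent of $\varepsilon$ and $\sigma$, while the integral over the region where $b_\sigma-\text{mean}\ge 1$ (say) dominates $\int_{\text{strip}_\sigma}p_\delta(\varrho_\varepsilon)$; refining the construction so that $b_\sigma$ is large on progressively thinner strips and splitting shows the near-boundary pressure mass tends to $0$ uniformly in $\varepsilon$ as $\sigma\to 0$, giving \eqref{eq:gamma+1b} for suitable $A_\kappa$. The main obstacle, as flagged above, is precisely the time-derivative term and the low regularity of the moving boundary entering $\partial_t\bfphi_\varepsilon$; everything else is a careful but standard bookkeeping of the a priori bounds.
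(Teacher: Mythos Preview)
Your overall strategy---use a test function whose divergence blows up near the moving boundary so that the pressure term dominates---is correct, and you correctly identify $\partial_t\bfphi_\varepsilon$ (hence $\partial_t\eta_\varepsilon$) as the bottleneck. However, both the construction and the bookkeeping differ substantially from the paper, and your version has two genuine gaps.

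First, the paper does \emph{not} use a $\nabla\Delta^{-1}$/Bogovski\u{\i} construction (the introduction explicitly says this fails because the moving boundary is not Lipschitz). Instead it takes the explicit field
\[
\bfphi_\varepsilon(t,x)=\phi(x)\min\{K(s(x)-\eta_\varepsilon(t,q(x))),1\}\,\nu(q(x)),
\]
with $(q,s)$ the tubular coordinates of Subsection~\ref{ssec:geom}. Two features of this choice are decisive and are absent from yours: (i) $\bfphi_\varepsilon\equiv 0$ on $\partial\Omega_{\eta_\varepsilon}$, so $b=0$ and \emph{no shell terms appear at all}; (ii) a geometric cancellation---the term of $\nabla\bfphi_\varepsilon$ carrying $\nabla\eta_\varepsilon$ has \emph{zero trace} because $\partial_j q$ is tangential and hence orthogonal to $\nu(q)$---gives the pointwise lower bound $\Div\bfphi_\varepsilon\ge K-c_\bfxi$ on the $1/K$-strip even though $\nabla\bfphi_\varepsilon$ is only controlled in $L^p$, $p<\infty$. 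In your $\nabla\Delta^{-1}$ ansatz neither holds automatically: the field does not vanish on $\partial\Omega_{\eta_\varepsilon}$, and the shell term $\int_M K'(\eta_\varepsilon)\,b$ requires $b\in W^{2,2}_0(M)$, which the trace of $\nabla\Delta^{-1}(L^\infty)$ does not deliver. Your one-line dismissal of the shell terms via Lemma~\ref{lem:2.28} does not close this.

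Second, the quantification is off. You assert that ``collecting everything yields $\int p_\delta(\varrho_\varepsilon)(b_\sigma-\text{mean})\le C$ with $C$ independent of $\varepsilon$ and $\sigma$''; this cannot be right if $b_\sigma$ is ``large'' on the strip, since then $\|\nabla\bfphi_\varepsilon\|_{L^p}\sim\|b_\sigma\|_{L^p}$ also blows up. The paper's actual inequality is
\[
\int_{I\times\Omega_{\eta_\varepsilon}}(a\varrho_\varepsilon^\gamma+\delta\varrho_\varepsilon^\beta)\,\Div\bfphi_\varepsilon\,\dxt\le C(K^{1-\lambda}+1)
\]
for some fixed $\lambda>0$, and the conclusion follows from the ratio $(K^{1-\lambda}+1)/(K-c_\bfxi)\to 0$. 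In other words, all non-pressure terms are allowed to grow, but only sublinearly in $K$; you must track this growth, not claim uniform boundedness. (At this $\varepsilon$-level $\beta>3$ suffices to control $\int\varrho_\varepsilon\bfu_\varepsilon\cdot\partial_t\bfphi_\varepsilon$; the $\gamma>12/7$ restriction you mention enters only at the $\delta\to0$ step, Lemma~\ref{prop:higherb'}.)
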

\begin{proof}
We construct a test-function, which has a positive and arbitrarily large divergence. For this let $\phi\in C^\infty_0(S_L;[0,1])$, such that $\chi_{S_\frac{L}{2}}\leq \phi\leq \chi_{\Omega_0\cup S_L}$ and $\abs{\nabla \phi}\leq \frac{c}{L}$. 
Since we know that $\abs{\eta_\varepsilon}\leq\frac{L}2$, we find that $\phi(x)\equiv 1$ on $S_\frac{L}{2}\cap \Omega_{\eta_\varepsilon}$. We extend $\phi$ by zero to $\Omega_\eta$ and 
define
\[
\bfphi_\varepsilon(t,x)=\phi\min\set{K(s(x)-\eta_\varepsilon(t,q(x))),1}\nu(q(x)).
\]
It is well defined, since $\phi\neq 0$ only in $S_L$, where the mapping $x\mapsto (q(x),s(x))$ is well defined, see Subsection~\ref{ssec:geom}. Observe, that we take coordinates with respect to the reference geometry $\Omega$ and with respect to the reverence outer normal $\nu$ on $\partial \Omega$.
On account of $\nabla s(x)=\nu(q(x))$ we have
\begin{align*}
\partial_j \bfphi^l_\varepsilon(t,x)
&=\partial_j\phi(x)\min\set{K(s(x)-\eta_\varepsilon(t,q(x))),1}\nu(q(x))\\&+K\chi_{\set{K(s(x)-\eta_\varepsilon(t,q(x)))\leq 1}}\nu_j(q(x))\nu_l(q(x))\phi(x)\\
&-K\chi_{\set{K(s(x)-\eta_\varepsilon(t,q(x)))\leq 1}}\nabla\eta_\varepsilon(t,q(x))\cdot\partial_jq(x)\nu_l(q(x))\phi(x)
\\&+\phi(x)\min\set{K(s(x)-\eta_\varepsilon(t,q(x))),1}\partial_j\nu_l(q(x))\\
&=\xi^1_{jl}(t,x)+\xi^2_{jl}(t,x)+\xi^3_{jl}(t,x)+\xi^4_{jl}(t,x).
\end{align*}
Observe that $\bfxi^1$ and $\bfxi^4$ are uniformly bounded by some constant $c_{\bfxi}$. Moreover, for every $p\in (1,\infty)$, $q>p$, the following holds
\begin{align*}
\bigg(\int_{I}\int_{\Omega_{\eta_\varepsilon}}|\bfxi^3|^p\dxt\bigg)^{\frac{1}{p}}&\leq \,cK\bigg(\int_{I}\int_{\Omega_{\eta_\varepsilon}}\chi_{\set{K(s-\eta_\varepsilon\circ q)\leq 1}}|\nabla\eta_\varepsilon|^p\dxt\bigg)^{\frac{1}{p}}\\
&\leq \,cK\bigg(\int_{I}\int_{\Omega_{\eta_\varepsilon}}|\nabla\eta_\varepsilon|^q\dxt\bigg)^{\frac{1}{q}}\big|\set{K(s-\eta_\varepsilon\circ q)\leq 1}\big|^{\frac{1}{q'p}}\\
&\leq\,c_pK^{1-\frac{1}{pq'}}
\end{align*}
uniformly in $\varepsilon$, cf. \eqref{apri:eps1}. Estimating $\bfxi^2$ in a similar way we gain
\begin{align}\label{eq:51}
\bigg(\int_{I}\int_{\Omega_{\eta_\varepsilon}}|\bfxi^2|^p\dxt\bigg)^{\frac{1}{p}}
&\leq\,c_p\big( K^{1-\frac{1}{pp'}}+1\big)
\end{align}
for all $p<\infty$.
Finally, we use the fact, that $\nabla q^i$ are all living in the tangentplane of $\partial\Omega$ and are therefore orthogonal to $\nu(q(x))$. Hence we have
$\xi^3_{jj}=0$.
 This implies, that for every $K>0$, there is a $\kappa$, such that we have 
\begin{align}\label{eq:Phi1}
\divergence\bfphi_\varepsilon\geq K-c_\bfxi\quad\text{in}\quad \Omega_{\eta_\varepsilon}\setminus\big\{x\in\Omega_{\eta_\varepsilon}:\,\mathrm{dist}(\partial\Omega_{\eta_\varepsilon})\geq\frac{1}{K}\big\}.
\end{align}
Finally we calculate
\[
\partial_t\bfphi_\varepsilon(t,x)=-K\chi_{\set{K(s(x)-\eta_\varepsilon(t,q(x)))\leq 1}}\partial_t\eta_\varepsilon(t,q(x))\nu(q(x)).
\]
Due to \eqref{apri:eps1} we have
\begin{align}\label{eq:51b1}
\begin{aligned}
\bigg(\sup_{I}\int_{\Omega_{\eta_\varepsilon}}|\partial_t\bfphi_\varepsilon|^r\dx\bigg)^{\frac{1}{r}}&\leq \,cK\bigg(\sup_I\int_{\Omega_{\eta_\varepsilon}}|\partial_t\eta_\varepsilon|^2\dxt\bigg)^{\frac{1}{2}}\big|\set{K(s-\eta_\varepsilon\circ q)\leq 1}\big|^{\frac{2-r}{r^2}}\\
&\leq \,c\,K^{1-\frac{2-r}{r^2}}
\end{aligned}
\end{align} 
for all $r<2$, similar to \eqref{eq:51}.
Now using $\bfphi_\varepsilon$ as a test-function (note that $\bfphi_\varepsilon=0$ on $\partial\Omega_{\eta_\varepsilon}$),
we obtain via smooth approximation that 
\begin{align}\label{eq:1712}
\int_I\int_{\Omega_{\eta_\varepsilon}} (\varrho^\gamma+\delta\varrho^\beta)\,\Div\bfphi_\varepsilon\dxt\leq -
\int_I\int_{\Omega_{\eta_\varepsilon}} \varrho_\varepsilon\bfu_\varepsilon\,\partial_t\bfphi_\varepsilon\dxt +C\big(K^{1-\seb{\lambda}}+1\big),
\end{align}
for some fixed $C,\lambda>0$, where $C,\lambda$ are independent of $\varepsilon$. Here we used the uniform  integrability bounds of all other terms of the momentum equation \eqref{apri:eps} and  \eqref{eq:51}.
Taking \eqref{eq:L2} and $\beta>3$ into account we see that the remaining integral
in \eqref{eq:1712} is uniformly $p$-integrable for some exponent $p>1$ in terms of \eqref{eq:51b1} cf. \eqref{apri:eps1}. This means we have
\begin{align}\label{eq:2103}
\int_I\int_{\Omega_{\eta_\varepsilon}} (\varrho^\gamma+\delta\varrho^\beta)\,\Div\bfphi_\varepsilon\dxt\leq\,C(K^{1-\lambda}+1)
\end{align}
uniformly for some $\lambda>0$. Now, we set
$$A_\kappa=\big\{x\in\Omega_{\eta_\varepsilon}:\,\mathrm{dist}(\partial\Omega_{\eta_\varepsilon})\geq\frac{1}{K}\big\},$$
where $K=K(\kappa)$ is the solution to
$$ \frac{C(K^{1-\lambda}+1)}{K-c_\bfxi}=\kappa$$
with $C$ given in \eqref{eq:2103}. Note that such a $K$ always exists if $\kappa$ s small enough.
As a consequence of \eqref{eq:Phi1} and \eqref{eq:1712} we gain
\begin{align*}
\int_{I\times\R^3\setminus A_{\kappa}} (\varrho^\gamma+\delta\varrho^\beta)\dxt\leq\frac{1}{K-c_\bfxi}\int_{I\times\R^3\setminus A_{\kappa}} (\varrho^\gamma+\delta\varrho^\beta)\,\Div\bfphi_\varepsilon\dxt\leq\,\frac{C(K^{1-\lambda}+1)}{K-c_\bfxi}=\kappa.
\end{align*}
The claim follows.
\end{proof}

We connect Lemma~\ref{prop:higher} and Lemma~\ref{prop:higherb} to get the following corollary:
\begin{corollary}\label{prop:higherb0}Under the assumptions of Theorem~\ref{thm:ap} there existence of a function $\overline p$ such that
\[
\varrho_\epsilon^\gamma+\delta\varrho_\epsilon^\beta\weakto \overline p\text{ in }L^1(I\times \Omega_{\eta_\epsilon}),
\]
for a subsequence.
Additionally, for $\kappa>0$  arbitrary, there is a measurable set $A_\kappa\Subset I\times\Omega_\eta$ such that $\overline{p}\varrho\in L^1(A_\kappa)$ and
\begin{equation}\label{eq:gamma+1b0}
\int_{(I\times\Omega_{\eta})\setminus A_\kappa}\overline p\dxt\leq \kappa.
\end{equation}
\end{corollary}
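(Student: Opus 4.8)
The plan is to combine the interior higher integrability of Lemma~\ref{prop:higher} with the boundary equiintegrability of Lemma~\ref{prop:higherb} to show that the full pressure sequence $p_\varepsilon:=a\varrho_\varepsilon^\gamma+\delta\varrho_\varepsilon^\beta$ (extended by zero to $I\times\R^3$) is \emph{uniformly} equiintegrable, and then to invoke the Dunford--Pettis theorem together with the $L^1$-bound on $p_\varepsilon$ coming from \eqref{apri:eps1}. This produces the weak-$L^1$ limit $\overline p$. For the second assertion I would keep track, on the good set $A_\kappa$, of which Lebesgue exponents the weak limits $\overline p$ and $\varrho$ inherit from Lemma~\ref{prop:higher}, and then pair them up by H\"older's inequality using that the exponents $\tfrac{\gamma+1}{\gamma}$ and $\gamma+1$ (resp.\ $\tfrac{\beta+1}{\beta}$ and $\beta+1$) are conjugate.

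Concretely, first I would fix $\kappa>0$ and take the set $A_\kappa\Subset I\times\Omega_\eta$ from Lemma~\ref{prop:higherb}; using $\eta_\varepsilon\rightrightarrows\eta$ one may assume $A_\kappa$ is independent of $\varepsilon$ and satisfies $A_\kappa\subset I\times\Omega_{\eta_\varepsilon}$ for all small $\varepsilon$. Covering $A_\kappa$ by finitely many parabolic cubes $Q_1,\dots,Q_N\Subset I\times\Omega_\eta$ and applying Lemma~\ref{prop:higher} on each gives, for $\varepsilon\le\varepsilon_0(A_\kappa):=\min_j\varepsilon_0(Q_j)$, a uniform bound
\begin{align*}
\|\varrho_\varepsilon\|_{L^{\gamma+1}(A_\kappa)}+\delta^{\frac1{\beta+1}}\|\varrho_\varepsilon\|_{L^{\beta+1}(A_\kappa)}\le C(A_\kappa),
\end{align*}
hence $p_\varepsilon$ is bounded in $L^r(A_\kappa)$ with $r:=\min\{\tfrac{\gamma+1}{\gamma},\tfrac{\beta+1}{\beta}\}>1$. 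Then for any measurable $E\subset I\times\R^3$, since $p_\varepsilon\ge0$,
\begin{align*}
\int_E p_\varepsilon\,\chi_{\Omega_{\eta_\varepsilon}}\dxt
\le\int_{E\cap A_\kappa}p_\varepsilon\dxt+\int_{(I\times\R^3)\setminus A_\kappa}p_\varepsilon\,\chi_{\Omega_{\eta_\varepsilon}}\dxt
\le C(A_\kappa)\,|E|^{1/r'}+\kappa,
\end{align*}
the last term being \eqref{eq:gamma+1b}. Letting $|E|\to0$ and then $\kappa\to0$ (the finitely many $\varepsilon>\varepsilon_0(A_\kappa)$ are individually equiintegrable) verifies the Dunford--Pettis condition; a subsequence then converges weakly in $L^1(I\times\R^3)$ to some $\overline p\ge0$, and since $\eta_\varepsilon\rightrightarrows\eta$ forces $\overline p$ to vanish outside $\overline{I\times\Omega_\eta}$ this is the asserted convergence $\varrho_\varepsilon^\gamma+\delta\varrho_\varepsilon^\beta\weakto^\eta\overline p$ in $L^1(I\times\Omega_{\eta_\varepsilon})$.

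For the remaining claim I would work again on the fixed set $A_\kappa$. By the bounds of Lemma~\ref{prop:higher} and reflexivity, after a further subsequence $\varrho_\varepsilon^\gamma\weakto\overline{\varrho^\gamma}$ in $L^{(\gamma+1)/\gamma}(A_\kappa)$, $\varrho_\varepsilon^\beta\weakto\overline{\varrho^\beta}$ in $L^{(\beta+1)/\beta}(A_\kappa)$, and $\varrho_\varepsilon\weakto\varrho$ in $L^{\gamma+1}(A_\kappa)\cap L^{\beta+1}(A_\kappa)$, the last limit being identified with $\varrho$ through \eqref{eq:convrho1}; uniqueness of weak limits gives $\overline p=a\overline{\varrho^\gamma}+\delta\overline{\varrho^\beta}$ a.e.\ on $A_\kappa$. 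H\"older's inequality with the conjugate pairs $\big(\tfrac{\gamma+1}{\gamma},\gamma+1\big)$ and $\big(\tfrac{\beta+1}{\beta},\beta+1\big)$ then bounds $\int_{A_\kappa}\overline p\,\varrho\dxt$ by $a\|\overline{\varrho^\gamma}\|_{L^{(\gamma+1)/\gamma}(A_\kappa)}\|\varrho\|_{L^{\gamma+1}(A_\kappa)}+\delta\|\overline{\varrho^\beta}\|_{L^{(\beta+1)/\beta}(A_\kappa)}\|\varrho\|_{L^{\beta+1}(A_\kappa)}<\infty$, so $\overline p\,\varrho\in L^1(A_\kappa)$; and testing the weak-$L^1$ convergence against $\chi_{(I\times\R^3)\setminus A_\kappa}\in L^\infty$ together with \eqref{eq:gamma+1b} gives $\int_{(I\times\Omega_\eta)\setminus A_\kappa}\overline p\dxt\le\kappa$, which is \eqref{eq:gamma+1b0}.

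I expect the main obstacle to be that at this point $\varrho_\varepsilon$ is \emph{not} known to be strongly convergent (only $\sqrt\varepsilon\,\nabla\varrho_\varepsilon$ is controlled, cf.\ \eqref{est:nablarho}), so one cannot simply write $\overline p\,\varrho=\lim p_\varepsilon\varrho_\varepsilon$; the product must be handled through the separate weak limits on $A_\kappa$, and it is essential that the interior exponents delivered by Lemma~\ref{prop:higher} are exactly dual to the exponent of $\overline p$ there. The remaining points --- choosing $A_\kappa$ independent of $\varepsilon$ and matching $\chi_{\Omega_{\eta_\varepsilon}}$ with the fixed set $A_\kappa$ --- are routine consequences of the uniform convergence $\eta_\varepsilon\rightrightarrows\eta$.
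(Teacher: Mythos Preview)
Your proposal is correct and follows exactly the route the paper indicates; the paper merely writes ``We connect Lemma~\ref{prop:higher} and Lemma~\ref{prop:higherb} to get the following corollary'' without spelling out any details, and your Dunford--Pettis argument, the H\"older pairing of the exponents $\tfrac{\gamma+1}{\gamma}$, $\gamma+1$ (resp.\ $\tfrac{\beta+1}{\beta}$, $\beta+1$) on $A_\kappa$, and the passage to the limit in \eqref{eq:gamma+1b} are precisely the intended filling-in. Your care in separating the finitely many indices $\varepsilon>\varepsilon_0(A_\kappa)$ and in noting that $\overline p\varrho$ cannot be obtained as a limit of $p_\varepsilon\varrho_\varepsilon$ at this stage is appropriate.
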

Combining the corollary with the convergences \eqref{conv:rhovv2}--\eqref{eq:conveta1} we can pass to the limit in \eqref{eq:visu}--\eqref{eq:visvarrho} and obtain
that we have $(\bfu,\varrho,\eta,\overline p)\in X_{\regkap\eta}^I\times\widetilde W_\eta^I\times Y^I\times L^1(I\times \Omega_\eta)$ that satisfies
\[
\bfu(\cdot,\cdot+\regkap\eta\nu)=\partial_t\eta\nu_{\eta}
 \text{ in } I\times \partial\Omega,
\]
 the following continuity equation 
\begin{align}\label{eq:apvarrho}
\begin{aligned}
&\int_I\frac{\dd}{\dt}\int_{\Omega_{\eta}}\varrho \psi\dxt-\int_I\int_{\Omega_{\eta}}\Big(\varrho\partial_t\psi
+\varrho\bfu\cdot\nabla\psi\Big)\dxt=0
\end{aligned}
\end{align}
for all $\psi\in C^\infty(\overline{Q}_{\eta})$. And the coupled weak momentum equation
\begin{align}\label{eq:apulim}
\begin{aligned}
&\int_I\frac{\dd}{\dt}\int_{\Omega_{ \eta}}\varrho\bfu\cdot \bfphi\dxt-\int_I\int_{\Omega_{\eta}} \Big(\varrho\bfu\cdot \partial_t\bfphi +\varrho\bfu\otimes \bfu:\nabla \bfphi\Big)\dxt\\
&+\int_{\Omega_{\eta}}\Big(\mu\nabla\bfu:\nabla\bfphi +(\lambda+\mu)\Div\bfu\,\Div\bfphi\Big)\dxt-\int_I\int_{\Omega_{ \eta }}
\overline{p}\,\Div\bfphi\dxt
\\
&
+\int_I\frac{\dd}{\dt}\int_M \partial_t \eta b\dH-\int_M \partial_t\eta\,\partial_t b\dH + \int_M K'(\eta)\,b\dH\dt
\\&=\int_I\int_{\Omega_{ \eta}}\varrho\bff\cdot\bfphi\dxt+\int_I\int_M g\,b\,\dd x\dt
\end{aligned}
\end{align} 
 for all $(b,\bfphi)\in C^\infty_0(M)\times C^\infty(\overline{I}\times \setR^3)$ with $\mathrm{tr}_{\regkap \zeta}\bfphi=b\nu$.

It remains to show that $\overline{p}=a\varrho^\gamma+\delta\varrho^\beta$. This will be achieved in the following two subsections. 

\subsection{The effective viscous flux}
\label{subsec:strongconvdensity}
We fix $\epsilon_0>0$ and consider in the following just $\epsilon\in (0,\epsilon_0)$. Next we define
\begin{align*}
\Omega_{\epsilon_0}=\bigcap_{\epsilon\leq \epsilon_0} \Omega_{\eta_\epsilon}.
\end{align*}
It is the aim of this subsection to show for $\psi\in C_0^\infty(I\times\Omega_{\epsilon_0})$ that 
\begin{align}\label{eq:flux}
\begin{aligned}
\int_{I\times\Omega_{\eta_\epsilon}}&\psi^2\big( a\varrho_\varepsilon^\gamma+\delta\varrho_\varepsilon^\beta-(\lambda+2\mu)\Div \bfu_\varepsilon\big)\,\varrho_\varepsilon\dxt\\&\longrightarrow\int_{I\times\Omega_{\eta}} \psi^2\big( \overline{p}-(\lambda+2\mu)\Div \bfu\big)\,\varrho\dxt.
\end{aligned}
\end{align}

Testing the momentum equation with $\psi\nabla\Delta^{-1}(\psi \varrho_\varepsilon)$ implies
\begin{align*}
J_0&:=\int_I\int_ {\mathbb R^3}\psi^2 \big(\varrho_\varepsilon^{\gamma}+\delta\varrho_\varepsilon^{\beta}\big)\varrho_\varepsilon\dxs
\\
&=\mu\int_I\int_ {\mathbb R^3}\psi \nabla \bfu_\varepsilon:\nabla^2\Delta^{-1}(\psi \varrho_\varepsilon)\dxs
+\mu\int_I\int_ {\mathbb R^3} \nabla \bfu_\varepsilon:\nabla\psi\otimes\nabla\Delta^{-1}\psi \varrho_\varepsilon\dxs
\\
&+(\lambda+\mu)\int_I\int_ {\mathbb R^3}\psi^2 \diver\bfu_\varepsilon\,\varrho_\varepsilon\dxs
+(\lambda+\mu)\int_I\int_ {\mathbb R^3} \diver\bfu_\varepsilon\,\nabla\psi\cdot\nabla\Delta^{-1}(\psi \varrho_\varepsilon)\dxs
\\
&-\int_I\int_ {\mathbb R^3}  \varrho_\varepsilon \bfu_\varepsilon\otimes \bfu_\varepsilon:\nabla^2\Delta^{-1}(\psi \varrho_\varepsilon)\dxs
-\int_I\int_ {\mathbb R^3}  \varrho_\varepsilon \bfu_\varepsilon\otimes \bfu_\varepsilon:\nabla\psi\otimes\nabla\Delta^{-1}(\psi \varrho_\varepsilon)\dxs
\\
&-\int_I\int_ {\mathbb R^3} \big(\varrho_\varepsilon^{\gamma}+\delta\varrho_\varepsilon^{\beta}\big)\nabla\psi\cdot\nabla\Delta^{-1}(\psi \varrho_\varepsilon)\dxs -\int_I\int_ {\mathbb R^3} \varrho_\epsilon \bff\cdot \nabla \Delta^{-1}(\psi \varrho_\varepsilon)\dxs
\\
&-\int_I\int_{\R^3}\partial_t\psi\varrho_\varepsilon\bfu_\varepsilon\cdot\nabla \Delta^{-1}(\psi \varrho_\varepsilon)\dxs
- \int_I\int_{\R^3}\nabla(\Delta^{-1}(\partial_t\psi \varrho_\epsilon)\cdot \varrho_\epsilon \bfu_\epsilon\dxs \\&+ \int_I\int_{\R^3}\psi\varrho_\varepsilon\bfu_\varepsilon\cdot\nabla \Delta^{-1}(\psi \diver(\bfu_\epsilon \varrho_\epsilon))\dxs 
+\epsilon\int_I\int_{\R^3}\psi\varrho_\varepsilon\bfu_\varepsilon\cdot\nabla\Delta^{-1}\big(\psi\Delta \varrho_\epsilon\big)\dxs
\\
&+\varepsilon\int_I\int_{\mathbb R^3}\psi\nabla(\bfu_\varepsilon\varrho_\varepsilon)\cdot\nabla^2 \Delta^{-1}(\psi \varrho_\varepsilon)+\nabla(\bfu_\varepsilon\varrho_\varepsilon):\nabla \Delta^{-1}(\psi \varrho_\varepsilon)\otimes\nabla\psi\dxs
\\
&=J_1+\cdots +J_{11}+E_1+E_2.
\end{align*}
 Similarly, we obtain by testing the limit equation \eqref{eq:apulim}
\begin{align*}
K_0&:=\int_I\int_ {\mathbb R^3}\psi^2 \overline{p} \varrho\dxs
\\
&=\mu\int_I\int_ {\mathbb R^3}\psi \nabla \bfu:\nabla^2\Delta^{-1}(\psi \varrho)\dxs
+\mu\int_I\int_ {\mathbb R^3} \nabla \bfu:\nabla\psi\otimes\nabla\Delta^{-1}(\psi \varrho)\dxs
\\
&+(\lambda+\mu)\int_I\int_ {\mathbb R^3}\psi^2 \diver\bfu \varrho\dxs
+(\lambda+\mu)\int_I\int_ {\mathbb R^3} \diver\bfu\,\nabla\psi\cdot\nabla(\Delta^{-1}(\psi \varrho)\dxs
\\
&-\int_I\int_ {\mathbb R^3}  \varrho \bfu\otimes \bfu_:\nabla^2\Delta^{-1}(\psi \varrho)\dxs
-\int_I\int_ {\mathbb R^3}  \varrho\bfu\otimes \bfu:\nabla\psi\otimes\nabla\Delta^{-1}(\psi \varrho)\dxs
\\
&-\int_I\int_ {\mathbb R^3}\overline{p}\nabla\psi\cdot\nabla\Delta^{-1}(\psi \varrho))\dxs -\int_I\int_ {\mathbb R^3} \varrho \bff\cdot \nabla \Delta^{-1}(\psi \varrho)\dxs
\\
&-\int_I\int_{\R^3}\partial_t\psi\varrho\bfu\cdot\nabla \Delta^{-1}(\psi \varrho)\dxs
- \int_I\int_{\R^3}\nabla(\Delta^{-1}(\partial_t\psi \varrho)\cdot \varrho\bfu\dxs\\& + \int_I\int_{\R^3}\psi\varrho\bfu\cdot\nabla \Delta^{-1}(\psi \diver(\bfu \varrho))\dxs 
=K_1+\cdots +K_{11}.
\end{align*}
We define the operator $\mathcal{R}$ by $\mathcal{R}_{ij}:=\partial_j\Delta^{-1}\partial_i$ and get
\begin{equation}\label{eq:kkkn}
 \begin{split}
\int_I\int_{\mt}&\psi^2\big(a\varrho_\varepsilon^\gamma+\delta\varrho^\beta_\varepsilon
-(\lambda+2\mu)\diver\bu_\varepsilon\big)\varrho_\varepsilon\,\dif x\,\dif t
\\
&=J_1+J_2+J_4+J_6+...+J_{10} +J_{11}' +E_1+E_2
\\
&+\sum_{i,j}\int_I\int_{\mt} u^i_\varepsilon \big(\psi \varrho_\varepsilon \mathcal{R}_{ij}[\psi   \varrho_\varepsilon u^{j}_\varepsilon]-\psi\varrho_\varepsilon u^{j}_\varepsilon\mathcal{R}_{ij}[\psi \varrho_\varepsilon]\big)\,\dif x\,\dif \sigma,
 \end{split}
\end{equation}
where
$$J_{11}' =-\int_I\int_{\R^3}\psi\varrho_\varepsilon\bfu_\varepsilon\cdot\nabla \Delta^{-1}(\nabla \psi \cdot \bfu_\epsilon \rho_\epsilon))\dxs$$
Similarly, we obtain
\begin{equation}\label{kkk}
\begin{split}
\int_I\int_{\mt}&\psi^2\big(\overline{p}-(\lambda+2\mu)\diver\bu\big)\varrho\,\dif x\,\dif t\\
&=K_1+K_2+K_4+K_6+...+K_{10}+K_{11}' 
\\
&+\sum_{i,j}\int_I\int_{\mt} u^i \big(\psi \varrho \mathcal{R}_{ij}[\psi   \varrho u^{j}]-\psi\varrho u^{j}\mathcal{R}_{ij}[\psi \varrho]\big)\,\dif x\,\dif \sigma,
\end{split}
\end{equation}
where 
$$K_{11}'=-\int_I\int_{\R^3}\psi\varrho\bfu\cdot\nabla \Delta^{-1}(\nabla \psi \cdot \bfu \varrho))\dxs.$$
 Hence we now that
 \begin{align}
\nonumber
  \int_{I\times\Omega_{\eta_\epsilon}}&\psi\big( a\varrho_\varepsilon^\gamma+\delta\varrho_\varepsilon^\beta-(\lambda+2\mu)\Div \bfu_\varepsilon\big)\,\varrho_\varepsilon\dxt
  -\int_{I\times\Omega_{\eta}}\psi \big( \overline{p}-(\lambda+2\mu)\Div \bfu\big)\,\varrho\dxt
\nonumber  \\
\nonumber  &= J_1-K_1+J_2-K_2+J_4-K_4+J_6-K_6+...+J_{10}-K_{10}+J_{11}'-K_{11}'  +E_1+E_2
  \\
  \nonumber&+\sum_{i,j}\int_I\int_{\mt} u^i_\varepsilon \big(\psi \varrho_\varepsilon \mathcal{R}_{ij}[\psi   \varrho_\varepsilon u^{j}_\varepsilon]-\psi\varrho_\varepsilon u^{j}_\varepsilon\mathcal{R}_{ij}[\psi \varrho_\varepsilon]\big)\,\dif x\,\dif \sigma
  \\
  \label{eq:1} &-\int_I\int_{\mt}  u^i \big(\psi \varrho \mathcal{R}_{ij}[\psi   \varrho u^{j}]-\psi\varrho u^{j}\mathcal{R}_{ij}[\psi \varrho]\big)\,\dif x\,\dif \sigma.
 \end{align}
We will now show that the right hand side converges to $0$ with $\epsilon\to 0$. Please observe that after this preparation everything is localized and the known approach can be enforced to our problem. Nevertheless, to keep the result self contained we repeat the main steps of the argument here. 
 
 First, by the assumption on $\beta>3$ and the continuity of $\nabla \Delta^{-1}$ on $\setR^3$ we find that using additionally the fact that $\sqrt\varepsilon\nabla \varrho_\epsilon,\nabla \bfu$ are uniformly in $L^2$
\begin{equation*}
 \begin{split}
| E_2|&\leq\,C\sqrt{\varepsilon}\,\Big(\|\nabla^2\Delta^{-1}(\psi\varrho_\varepsilon)\|^3_{L^\infty(\tilde{I},(L^3(\tilde{B}))} +\|\nabla\Delta^{-1}(\psi\varrho_\varepsilon)\|^3_{L^\infty(\tilde{I},(L^3(\tilde{B}))}
\\
&\qquad+\|\nabla\bu_\varepsilon\|^3_{L^2(\tilde{Q})}+\|\bu_\varepsilon\|^3_{L^2(\tilde{I},(L^6(\tilde{B}))}+\|\sqrt\varepsilon\nabla \varrho_\epsilon\|^3_{L^2(\tilde{Q})}+\| \sqrt\epsilon\varrho_\epsilon\|^3_{L^2(\tilde{I},(L^6(\tilde{B}))}
\\
&\leq\sqrt{\varepsilon}\,\Big(\|\tilde\varrho_\varepsilon\|^3_{L^\infty(\tilde I;L^3(\tilde B))}+\|\nabla\bu_\varepsilon\|^3_{L^2(\tilde{Q})}
+\|\sqrt\varepsilon\nabla \varrho_\epsilon\|^3_{L^2(\tilde{Q})}+\| \sqrt\epsilon\varrho_\epsilon\|^3_{L^2(\tilde{I},(L^6(\tilde{B}))}\Big)
\\
&\leq \,C\sqrt{\varepsilon}.
 \end{split}
\end{equation*}
Similarly we find
$
|E_1|\leq C\sqrt{\varepsilon},
$
as well. Hence, $E_1+E_2\rightarrow 0$. Except for the last couple on the right hand side of \eqref{eq:1} all other couples term converge nicely, by the known weak and strong convergences to $0$.
The crucial point is to estimate the commutator term. We will prove it by showing that
$ \psi\varrho_\varepsilon\mathcal R[\psi  T_k(\varrho_\varepsilon) \bfu_\varepsilon]- \psi\varrho_\varepsilon \bfu_\varepsilon\mathcal R[\psi T_k(\varrho_\varepsilon)]$ converges strongly in $L^2(W^{-1,2})$. Then the crucial term converges, since $\psi \bu $ converges weakly in $L^2(W^{1,2})$.  For the identification of the limit we make use of the div-curl lemma.
From \eqref{1311c} and \eqref{conv:rhov2} we obtain that
$$\varrho_\varepsilon\rightharpoonup \varrho\quad\text{in}\quad L^{p}(\R^3)\quad \text{a.e. in }I,$$
$$\varrho_\varepsilon\bu_\varepsilon\rightharpoonup\varrho\bu\quad\text{in}\quad L^\frac{2\beta}{\beta+1}(\R^3)\quad\text{a.e. in }I.$$
Hence we can apply \cite[Lemma 3.4]{feireisl1} (to the sequences $\psi T_k(\varrho_\varepsilon)$ and $\psi   \varrho_\varepsilon u^{j}_\varepsilon$) to conclude that
$$
\psi \varrho_\varepsilon \mathcal{R}_{ij}[\psi   \varrho_\varepsilon u^{j}_\varepsilon]-\psi\varrho_\varepsilon u^{j}_\varepsilon\mathcal{R}_{ij}[\psi \varrho_\varepsilon]\rightharpoonup \psi \varrho \mathcal{R}_{ij}[\psi   \varrho u^{j}]-\psi\varrho u^{j}\mathcal{R}_{ij}[\psi \varrho]\quad\text{in}\quad L^r(\R^3)$$
a.e. in t,
where
$$\frac{1}{r}=\frac{1}{p}+\frac{\beta+1}{2\beta}<\frac{6}{5}$$
for $p$ large enough. Therefore $L^r(\mathcal O)$ is compactly embedded into $W^{-1,2}(\mathcal O)$ for $\mathcal O\Subset\R^3$. As a consequence we have
$$\psi T_k(\varrho_\varepsilon) \mathcal{R}_{ij}[\psi   \varrho_\varepsilon u^{j}_\varepsilon]-\psi\varrho_\varepsilon u^{j}_\varepsilon\mathcal{R}_{ij}[\psi T_k(\varrho_\varepsilon)]\rightharpoonup \psi T^{1,k} \mathcal{R}_{ij}[\psi   \varrho u^{j}]-\psi\varrho u^{j}\mathcal{R}_{ij}[\psi \varrho]\quad\text{in}\quad W^{-1,2}(\R^3)$$
a.e. in t using the compact support of the involved functions. Moreover, it is possible to show that for some $p>2$
\begin{equation*}
\begin{split}
\int_I&\big\|\psi \varrho_\varepsilon\mathcal{R}[\psi \varrho_\varepsilon\bu_\varepsilon]-\psi\varrho_\varepsilon\bu_\varepsilon\mathcal{R}[\psi\varrho_\varepsilon]\big\|_{W^{-1,2}(\mt)}^p\dt\\
&\leq C\,\int_{\tilde{I}}\|\varrho_\varepsilon\|_{L^{\beta+1}(\tilde B)}^{2pr}\dif t+C\,\sup_{t\in \tilde{I}}\|\varrho_\varepsilon\bu_\varepsilon\|_{L^\frac{2\beta}{\beta+1}(\tilde B)}^{2pr}\dt\leq C
\end{split}
\end{equation*}
which gives the desired convergence
$$\psi \varrho_\varepsilon\mathcal{R}[\psi \varrho_\varepsilon\bu_\varepsilon]-\psi\varrho_\varepsilon\bu_\varepsilon\mathcal{R}[\psi\varrho_\varepsilon]\rightarrow \psi \varrho\mathcal{R}[\psi \varrho\bu]-\psi\varrho\bu\mathcal{R}[\psi \varrho]\quad \text{in}\quad L^2(I;W^{-1,2}(\mt)).$$
Thus we conclude that
\begin{align}\label{eq:eq}
\begin{aligned}
\int_{I\times\mt}& \psi\varrho_\varepsilon\,u^i_\varepsilon\big(\mathcal{R}_{ij}[\psi\varrho_\varepsilon u^{j}_\varepsilon]-\psi\varrho_\varepsilon u^{j}_\varepsilon\mathcal{R}_{ij}[\psi \varrho_\varepsilon]\big)\,\dif x\,\dif t\\
&\qquad\rightarrow \int_{I\times \mt} \psi\varrho\,u^i\big(\mathcal{R}_{ij}[\psi \varrho u^{j}]-\psi \varrho u^{j}\mathcal{R}_{ij}[\psi \varrho]\big)\,\dif x\,\dif t
\end{aligned}
\end{align}
and accordingly
\begin{align}\label{eq:fluxpsi}
\begin{aligned}
\int_{I\times\mt} &\psi^2\big( a\varrho_\varepsilon^\gamma+\delta\varrho_\varepsilon^\beta-(\lambda+2\mu)\Div \bfu_\varepsilon\big)\,\varrho_\varepsilon\dxt\\&\longrightarrow\int_{I\times\mt}\psi^2 \big( \overline{p}-(\lambda+2\mu)\Div \bfu\big)\,\varrho\dxt.
\end{aligned}
\end{align}

\subsection{Renormalized solutions}
\label{sec:renep}
The aim of this section is to prove Lemma \ref{cor:ap1}. Similar to Lemma \ref{lem:warme} b) the proof is based on mollification and Lions' commutator estimate. Due to \eqref{eq:conetatal}, \eqref{eq:convrho1} and \eqref{conv:rhov2} it is easy to pass to the limit in \eqref{eq:visvarrho}. Hence we obtain
\begin{align*}
\int_I\frac{\dd}{\dt}\int_{\Omega_\eta}\varrho\,\psi\dx\dt
-\int_I\int_{\Omega_\eta}\big(\varrho\, \partial_t\psi +\varrho \bfu\cdot\nabla \psi\big)\dxt=0
\end{align*}
for all $\psi\in C^\infty(\overline I\times \R^3)$. 
We extend $\varrho$ by zero to $I\times\R^3$
and $\bfu$ by means of the extension operator
$$\mathscr E_{\eta}:W^{1,2}(\Omega_\eta)\rightarrow W^{1,p}(\R^3),$$ constructed in Lemma~\ref{lem:extension}
where $1<p<2$ (but may be chosen close to 2). 
Hence we find that
\begin{align*}
\int_I\frac{\dd}{\dt}\int_{\R^3}\varrho\,\psi\dx\dt
-\int_I\int_{\R^3}\big(\varrho\, \partial_t\psi +\varrho  \mathscr E_\eta\bfu\cdot\nabla \psi\big)\dxt=0
\end{align*}
for all $\psi\in C^\infty(\overline I\times \R^3)$. 
Now, analogous to the proof in Theorem~\ref{lem:warme}
we mollify the equation in space
using a standard convolution with parameter $\kappa>0$ in space. The following holds
\begin{align}\label{eq:2002'}
\partial_t\varrho_\kappa
+\Div\big(\varrho_\kappa \mathscr E_\eta \bfu \big)= \bfr_\kappa\quad \text{in}\quad I\times \setR^3,
\end{align}
where $\bfr_\kappa=\Div(\varrho_\kappa \mathscr E_\eta \bfu)-\Div(\varrho \mathscr E_\eta\bfu)_\kappa$.
Due to $\beta>2$ we can infer from the commutator lemma (see e.g. \cite[Lemma 2.3]{Li1}) that for a.e. $t$
\begin{align*}
\|\bfr_\kappa\|_{L^q(\R^3)}\leq \| \mathscr E_\eta\bfu\|_{W^{1,p}(\R^3)}\|\varrho\|_{L^{\beta}(\R^3)},\quad \tfrac{1}{q}=\tfrac{1}{p}+\tfrac{1}{\beta},
\end{align*}
as well as 
\begin{align}\label{eq:2002b'}
\bfr_\kappa\rightarrow0\quad\text{in}\quad L^{q}(\R^3).
\end{align}
a.e. in $I$. Now we multiply \eqref{eq:2002} by
$\theta'(\varrho_\kappa)$ and obtain
\begin{align}\label{eq:2002c'}
\begin{aligned}
\partial_t \theta(\varrho_\kappa)
&+\Div\big(\theta(\varrho_\kappa)\mathscr E_\eta \bfw \big)+\big(\varrho_\kappa\theta'(\varrho_\kappa)-\theta(\varrho_\kappa)\big)\Div\mathscr E_\eta \bfu= \bfr_\kappa\theta'(\varrho_\kappa).
\end{aligned}
\end{align}
Due to the properties of the mollification and $\theta\in C^1$
the terms $\theta(\varrho_\kappa)$ and $\theta'(\varrho_\kappa)$ converge to the correct limit (at least after taking a subsequence).
Hence multiplying \eqref{eq:2002c'} by $\psi\in C^\infty(\overline{I}\times \R^3)$ and integrating over $I\times\R^3$ this implies
\begin{align}\label{8.4}
\begin{aligned}
\int_{I}\partial_t&\int_{\R^3} \theta(\varrho)\,\psi\dxt-\int_{I\times\R^3}\theta(\varrho)\,\partial_t\psi\dxt
+\int_{I\times\R^3}\big(\varrho\theta'(\varrho)-\theta(\varrho)\big)\Div\mathscr E_\eta \bfu\,\psi\dxt\\
&=\int_{I\times\R^3}\theta(\varrho) \mathscr E_\eta \bfu\cdot\nabla\psi .
\end{aligned}
\end{align}

\subsection{Strong convergence of the density}
\label{sec:strongrhoep}
In order to deal with the local nature of \eqref{eq:fluxpsi} we use ideas from
\cite{Fe}. First of all, by the monotonicity of the mapping $z\mapsto az^\gamma+\delta z^\beta$, we find for arbitrary positive $\psi\in C^\infty_0(\Omega_{\epsilon_0})$
\begin{align*}
(\lambda+2\mu)&\liminf_{\varepsilon\rightarrow0}\int_{I\times\mt}\psi \big(\Div \bfu_\epsilon\,\varrho_\epsilon -\Div \bfu\,\varrho\big)\dxt\\
=&\liminf_{\varepsilon\rightarrow0}\int_{I\times\Omega_{\eta_\epsilon}}\Big(\psi\big( \overline{p}-(\lambda+2\mu)\Div \bfu\big)\varrho
-
\psi\big( a\varrho_\varepsilon^\gamma+\delta\varrho_\varepsilon^\beta-(\lambda+2\mu)\Div \bfu_\varepsilon\big)\,\varrho_\varepsilon\Big)\dxt
\\
+&\liminf_{\varepsilon\rightarrow0}\int_{I\times\Omega_{\eta_\epsilon}}\psi\big( a\varrho_\varepsilon^{\gamma+1}+\delta\varrho_\varepsilon^{\beta+1}- \overline{p}\varrho\big)\dxt
\\
=&\liminf_{\varepsilon\rightarrow0}\int_{I\times\Omega_{\eta_\epsilon}}\psi\big( a\varrho_\varepsilon^{\gamma}+\delta\varrho_\varepsilon^{\beta}- \overline{p}\big)\big(\varrho_\varepsilon-\varrho\big)\dxt\geq 0
\end{align*}
using \eqref{eq:fluxpsi}.
As $\psi$ is arbitrary we conclude
\begin{align}\label{8.12}
\overline{\Div \bfu\,\varrho}\geq \Div \bfu\,\varrho \quad\text{a.e. in }\quad I\times\Omega_\eta,
\end{align}
where 
\begin{align*}
\Div \bfu_\epsilon\,\varrho_\epsilon\weakto^\eta \overline{\Div \bfu\,\varrho}\quad\text{in}\quad L^1(\Omega;L^1(\Omega_{\eta_\varepsilon})),
\end{align*}
recall \eqref{eq:convu1} and \eqref{eq:convrho1}. Now, we compute both sides of
\eqref{8.12} by means of the corresponding continuity equations. Due to Lemma
\ref{cor:ap} with $\theta(z)=z\ln z$ and $\psi=\chi_{[0,t]}$ we have
\begin{align}\label{8.15}
\int_0^t\int_{\R^3}\Div \bfu_\epsilon\,\varrho_\epsilon\dxs\leq\int_{\R^3}\varrho_0\ln(\varrho_0)\dx
-\int_{\R^3}\varrho_\varepsilon(t)\ln(\varrho_\varepsilon(t))\dx.
\end{align}
Similarly, equation \eqref{8.4} yields
\begin{align}\label{8.14}
\int_0^t\int_{\R^3}\Div \bfu\,\varrho\dxs=\int_{\R^3}\varrho_0\ln(\varrho_0)\dx
-\int_{\R^3}\varrho(t)\ln(\varrho(t))\dx.
\end{align}
Combining \eqref{8.12}--\eqref{8.14} shows
\begin{align*}
\limsup_{\varepsilon\rightarrow0}\int_{\R^3}\varrho_\varepsilon(t)\ln(\varrho_\varepsilon(t))\dx\leq \int_{\R^3}\varrho(t)\ln(\varrho(t))\dx
\end{align*}
for any $t\in I$.
This gives the claimed convergence $\varrho_\varepsilon\rightarrow\varrho$ in $L^1(I\times\R^3)$ by convexity of $z\mapsto z\ln z$. Consequently, we have $\tilde p=a\varrho^\gamma+\delta\varrho^\beta$ and the proof of Theorem \ref{thm:ap} is complete.

\section{The vanishing artificial pressure limit}
\label{sec:6}

A weak solution to \eqref{eq1}--\eqref{initial} is a triple $(\eta,\bfu,\varrho)\in \times Y^I\times X_\eta^I\times\widetilde W_\eta^I$, where
$$W_\eta^I= C_w(\overline{I};L^\gamma(\Omega_\eta)),$$
 that satisfies the following.
\begin{enumerate}[label={(O\arabic{*})}]
\item\label{O1} The momentum equation in the sense that
\begin{align}\label{eq:apufinal}
\begin{aligned}
&\int_I\frac{\dd}{\dt}\int_{\Omega_{ \eta}}\varrho\bfu \cdot\bfphi\dx-\int_{\Omega_{\eta}} \Big(\varrho\bfu\cdot \partial_t\bfphi +\varrho\bfu\otimes \bfu:\nabla \bfphi\Big)\dxt
\\
&+\int_I\int_{\Omega_\eta}\Big(\mu\nabla\bfu:\nabla\bfphi +(\lambda+\mu)\Div\bfu\,\Div\bfphi\Big)\dxt-\int_I\int_{\Omega_{ \eta }}
a\varrho^\gamma\,\Div\bfphi\dxt\\
&+\int_I\bigg(\frac{\dd}{\dt}\int_M \partial_t \eta b\dH-\int_M \partial_t\eta\,\partial_t b\dH + \int_M K'(\eta)\,b\dH\bigg)\dt
\\&=\int_I\int_{\Omega_{ \eta}}\varrho\bff\cdot\bfphi\dxt+\int_I\int_M g\,b\,\dd x\dt
\end{aligned}
\end{align} 
for all $(b,\bfphi)\in C^\infty_0(M)\times C^\infty(\overline{I}\times\R^3)$ with $\mathrm{tr}_{\Omega}(\bfphi\circ \bfPsi_{\eta})=b\nu$. Moreover, we have $(\varrho\bfu)(0)=\bfq_0$, $\eta(0)=\eta_0$ and $\partial_t\eta(0)=\eta_1$. 
\item\label{O2} The continuity equation in the sense that 
\begin{align}\label{eq:apvarrho0final}
\begin{aligned}
&\int_I\frac{\dd}{\dt}\int_{\Omega_{\eta}}\varrho \psi\dxt-\int_I\int_{\Omega_{\eta}}\Big(\varrho\partial_t\psi
+\varrho\bfu\cdot\nabla\psi\Big)\dxt=0
\end{aligned}
\end{align}
for all $\psi\in C^\infty(\overline{I}\times\R^3)$ and we have $\varrho(0)=\varrho_0$. 
\item \label{O3} The boundary condition $\mathrm{tr}_\eta\bfu=\partial_t\eta\nu$ in the sense of Lemma \ref{lem:2.28}.
\end{enumerate}
\begin{theorem}\label{thm:final}
Let $\gamma>\frac{12}{7}$ ($\gamma>1$ in two dimensions).
There is a weak solution $(\eta,\bfu,\varrho)\in Y^I\times X_\eta^I\times W_\eta^I$ to \eqref{eq1}--\eqref{initial} in the sense of \ref{O1}--\ref{O3}. Here, we have $I= (0,T_*)$, with $T_*<T$ only in case $\Omega_\eta(s)$ approaches a self intersection with $s\to T_*$.
The solution satisfies the energy estimate
\begin{align*}
\sup_{t\in I}\int_{\Omega_{\eta}}&\varrho|\bfu|^2\dx+\sup_{t\in I}\int_{\Omega_{ \eta}}a\varrho^\gamma\dx+\int_I\int_{\Omega_{\eta}}|\nabla\bfu|^2\dxt+\sup_{t\in I}\int_M|\partial_t\eta|^2\,\dd\mathcal H^2+\sup_{t\in I}K(\eta)\\
\leq&\,c\,\bigg(\int_{\Omega}\frac{|\bfq_0|^2}{\varrho_0}\dx+\int_{\Omega}a\varrho_0^\gamma\dx+\int_I\|\bff\|_{L^\infty(\Omega_{\eta})}^2\dt+\int_I\norm{g}_{L^2(M)}\dt\bigg)\\
+&\,c\bigg(\int_M\abs{\eta_0}^2\dH+\int_M\abs{\eta_1}^2\dH+K(\eta_0)\bigg),
\end{align*}
provided that $\eta_0,\eta_1,\varrho_0,\bfq_0,\bff$ and $g$ are regular enough to give sense to the right-hand side, that $\varrho_0\geq0$ a.e. and \eqref{eq:compa} is satisfied.
\end{theorem}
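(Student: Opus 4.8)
The plan is to pass to the limit $\delta\to0$ in the family $(\eta_\delta,\bfu_\delta,\varrho_\delta)$ of weak solutions to \ref{D1}--\ref{D3} furnished by Theorem~\ref{thm:ap}, whose energy estimate is uniform in $\delta$. First I would record the consequences of that bound: $\sqrt{\varrho_\delta}\,\bfu_\delta$ is bounded in $L^\infty(I;L^2)$, $\varrho_\delta$ in $L^\infty(I;L^\gamma)$, $\nabla\bfu_\delta$ in $L^2(I\times\Omega_{\eta_\delta})$, $\partial_t\eta_\delta$ in $L^\infty(I;L^2(M))$ and $\eta_\delta$ in $L^\infty(I;W^{2,2}_0(M))$, together with $\delta\sup_I\int_{\Omega_{\eta_\delta}}\varrho_\delta^\beta\dx\le c$. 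Passing to subsequences and running the interpolation/compactness machinery of Lemma~\ref{thm:weakstrong} exactly as in Section~\ref{sec:5} gives $\eta_\delta\to\eta$ in $C^\alpha(\overline I\times M)$, $\bfu_\delta\rightharpoonup\bfu$ in $L^2(I;W^{1,2}(\Omega_{\eta_\delta}))$ and $\varrho_\delta\rightharpoonup^\ast\varrho$ in $L^\infty(I;L^\gamma(\Omega_{\eta_\delta}))$ in the variable-domain sense of Definition~\ref{def:conv}; moreover, since the integrability exponents match when $\gamma>\tfrac{12}{7}$, one obtains $\varrho_\delta\bfu_\delta\rightharpoonup\varrho\bfu$ and $\varrho_\delta\bfu_\delta\otimes\bfu_\delta\rightharpoonup\varrho\bfu\otimes\bfu$ in suitable $L^r$-spaces with $r>1$. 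This already suffices to pass to the limit in the convective and elastic terms, and the coupling $\mathrm{tr}_\eta\bfu=\partial_t\eta\nu$ survives the limit as in Section~\ref{sec:5}.

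The first genuine difficulty is that $p_\delta(\varrho_\delta)=a\varrho_\delta^\gamma+\delta\varrho_\delta^\beta$ is only bounded in $L^1$ and could concentrate. I would exclude this in two steps, mirroring the proofs of Lemma~\ref{prop:higher} and Lemma~\ref{prop:higherb}. On an interior parabolic cube, testing the momentum equation with $\psi\nabla\Delta^{-1}_{\tilde B}\varrho_\delta^{\Theta}$ for a small $\Theta=\Theta(\gamma)>0$ yields $\varrho_\delta\in L^{\gamma+\Theta}_{\mathrm{loc}}(I\times\Omega_\eta)$ and boundedness of $\delta\varrho_\delta^{\beta+\Theta}$ there, uniformly in $\delta$. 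To rule out concentration at the moving boundary I would use the test-function $\bfphi_\delta$ built from $\phi\,\min\{K(s-\eta_\delta\circ q),1\}\,\nu$, whose divergence blows up like $K$ near $\partial\Omega_{\eta_\delta}$, exactly as in Lemma~\ref{prop:higherb}; the crucial estimate (cf. Remark~\ref{rem:exp}) is that $\int_I\int_{\Omega_{\eta_\delta}}\varrho_\delta\bfu_\delta\cdot\partial_t\bfphi_\delta\dxt$ stays bounded because $\varrho_\delta\bfu_\delta\in L^2(I;L^p)$ with $p>\tfrac43$, which in three dimensions needs $\varrho_\delta\bfu_\delta\in L^2(I;L^{6\gamma/(\gamma+6)})$, hence $\gamma>\tfrac{12}{7}$ (in two dimensions $\partial_t\bfphi_\delta\in L^\infty(I;L^q)$ for all $q<\infty$, so only $\gamma>1$ is needed). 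Interior higher integrability together with absence of boundary concentration give equi-integrability of $p_\delta(\varrho_\delta)$; consequently $\delta\varrho_\delta^\beta\to0$ in $L^1$ and $a\varrho_\delta^\gamma\rightharpoonup\overline p$ in $L^1(I\times\Omega_{\eta_\delta})$ for some $\overline p$, and one may pass to the limit in \ref{D1} obtaining an equation of the form \eqref{eq:apufinal} with $a\varrho^\gamma$ replaced by $\overline p$.

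It remains to show $\overline p=a\varrho^\gamma$. I would follow Subsections~\ref{subsec:strongconvdensity}--\ref{sec:strongrhoep}, but now with truncations $T_k$ since $\varrho$ is only in $L^\gamma$. Testing the $\delta$-momentum equation and its limit with $\psi\nabla\Delta^{-1}(\psi T_k(\varrho_\delta))$ and $\psi\nabla\Delta^{-1}(\psi\,\overline{T_k(\varrho)})$ respectively, controlling all remainder terms by the interior $L^{\gamma+\Theta}$-bound, and invoking the div--curl (Feireisl) commutator lemma \cite[Lemma 3.4]{feireisl1} exactly as for \eqref{eq:fluxpsi}, yields the weak continuity of the effective viscous flux,
\begin{align*}
\overline{p\,T_k(\varrho)}-(\lambda+2\mu)\,\overline{T_k(\varrho)\,\Div\bfu}&=\overline p\;\overline{T_k(\varrho)}-(\lambda+2\mu)\,\overline{T_k(\varrho)}\;\Div\bfu
\end{align*}
locally in $I\times\Omega_\eta$. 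Next I would estimate the oscillation defect measure $\mathrm{osc}_{\gamma+1}[\varrho_\delta\rightarrow\varrho]$ by means of the interior $L^{\gamma+\Theta}$-bound, which is finite for $\gamma>\tfrac32$ (a fortiori for $\gamma>\tfrac{12}{7}$); combined with the flux identity and the monotonicity of $z\mapsto z^\gamma$, this shows that $(\varrho,\bfu)$ is a renormalized solution of the continuity equation. The renormalized formulation up to the boundary is obtained as in Subsection~\ref{sec:renep}: extend $\varrho$ by zero and $\bfu$ by the operator $\mathscr E_\eta$ of Lemma~\ref{lem:extension}, mollify, apply Lions' commutator lemma (using $\varrho\in L^\gamma$ and $\mathscr E_\eta\bfu\in L^2(I;W^{1,p})$ with $p<2$ close to $2$) and let the mollification parameter tend to $0$; the boundary term produced by the extension vanishes precisely because of the Lagrangian no-slip coupling $\mathrm{tr}_\eta\bfu=\partial_t\eta\nu$. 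Feeding this into the argument of Subsection~\ref{sec:strongrhoep} --- the renormalized equations for $\varrho_\delta$ and $\varrho$ with (truncations of) $\theta(z)=z\ln z$ together with the flux identity --- gives $\varrho_\delta\to\varrho$ strongly in $L^1(I\times\R^3)$, hence $\overline p=a\varrho^\gamma$, and all nonlinearities pass to the limit; the energy estimate is inherited from the bound in Theorem~\ref{thm:ap} by weak lower semicontinuity.

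For the interval of existence, Theorem~\ref{thm:ap} produces solutions on $[0,T_*)$ with $T_*<T$ only if $\|\eta(t)\|_{L^\infty(\partial\Omega)}\to\tfrac L2$. Passing from this artificial threshold to the geometric self-intersection threshold is achieved by localizing the entire construction near each point of $M$, choosing in \eqref{Lambda} the largest $L$ compatible with the local geometry and patching the pieces, exactly as in \cite{LeRu}; this lets the solution be continued as long as $\Omega_{\eta(s)}$ stays away from a self-intersection. The two-dimensional case is identical, the only change being the better integrability of $\partial_t\bfphi_\delta$ noted above. I expect the main obstacle to be precisely the second step: excluding concentration of the pressure at the $W^{2,2}$ --- hence non-Lipschitz --- moving boundary, where neither the classical interior localization nor the Bogovskii construction is available, and it is this step alone that forces the restriction $\gamma>\tfrac{12}{7}$.
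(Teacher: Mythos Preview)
Your proposal is correct and follows the paper's proof almost step for step: the $\delta$-uniform a priori bounds, the variable-domain compactness via Lemma~\ref{thm:weakstrong}, local higher integrability by testing with $\psi\nabla\Delta^{-1}_{\tilde B}\varrho_\delta^\Theta$ (the paper's Lemma~\ref{prop:higher'}), exclusion of boundary concentration via the distance-type test-function (Lemma~\ref{prop:higherb'}, where $\gamma>\tfrac{12}{7}$ enters exactly as you say), the effective viscous flux identity with the truncations $T_k$, the oscillation defect measure bound, and the renormalized formulation leading to strong convergence of $\varrho_\delta$ through the $L_k$-functions.

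The one place where your sketch departs from the paper is the continuation past the artificial threshold $\|\eta\|_\infty<L/2$. You propose localizing near each point of $M$ and patching, as in \cite{LeRu}. The paper instead performs a global change of reference configuration: at time $T_*$ one replaces $\Omega$ by $\tilde\Omega:=\Omega_{\eta^*}$ with $\eta^*$ a spatial mollification of $\eta(T_*)$, builds new Hanzawa coordinates $\tilde\Lambda$ on $\partial\tilde\Omega\times(-\tilde L,\tilde L)$ (still along the original normal $\nu$), and reruns the entire four-layer approximation with initial data $(\eta(T_*),\partial_t\eta(T_*),\varrho(T_*),\varrho\bfu(T_*))$ on the new reference domain. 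This gives a further time interval $[T_*,T_{**}]$ and can be iterated until a self-intersection is approached. The advantage of the paper's approach is that it avoids any patching of local solutions and keeps the whole analysis---in particular the use of Lemma~\ref{lem:extension}, which requires $\|\tilde\eta\|_\infty<\tilde L/2$---inside a single coordinate chart at each stage; your localization idea would have to contend with gluing together solutions that satisfy coupled momentum--shell equations, which is not straightforward in this setting.
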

\begin{lemma}
\label{cor:apb}
Under the assumptions of Theorem \ref{thm:final}, the continuity equation holds in the renormalized sense that is
\begin{align}\label{eq:final3b}
\begin{aligned}
\int_I\frac{\dd}{\dt}\int_{\Omega_\eta}\theta(\varrho)\psi\dxt&-\int_I\int_{\Omega_\eta}\Big(\theta(\varrho)\partial_t\psi +\theta(\varrho) \bfu\cdot \nabla \psi\Big)\dxt
\\
& =- \int_I\int_{\Omega_\eta}(\varrho\theta'(\varrho)-\theta(\varrho)) \diver\bfu \,\psi\dxt
\end{aligned}
\end{align}
for all $\psi\in C^\infty(\overline{I}\times\R^3)$ and all $\theta\in C^1(\R)$ with 
$\theta'(z)=0$ for $z\geq M_\theta$.
\end{lemma}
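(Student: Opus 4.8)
To prove Lemma~\ref{cor:apb} I would follow the scheme of Feireisl et al.\ \cite{feireisl1,fei3}, adapted to the moving, merely $W^{2,2}$ boundary. Let $(\eta_\delta,\bfu_\delta,\varrho_\delta)$ be a sequence of solutions of the $\delta$-layer system from Theorem~\ref{thm:ap}, converging to $(\eta,\bfu,\varrho)$ in the senses established in the proof of Theorem~\ref{thm:final}; by Lemma~\ref{cor:ap1} each $\varrho_\delta$ is a renormalized solution. The obstruction, compared with Lemma~\ref{cor:ap1}, is that $\varrho$ only lies in $L^\infty(I;L^\gamma(\Omega_\eta))$ with $\gamma$ possibly below $2$, so that together with $\bfu\in L^2(I;W^{1,2}(\Omega_\eta))$ Lions' commutator estimate is not directly available for $\varrho$. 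The plan is therefore to control the oscillations of $\varrho_\delta$ and to deduce the renormalized property of $\varrho$ from them.

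First I would establish the uniform local bound $\varrho_\delta\in L^{\gamma+\Theta}(Q)$ for some $\Theta>0$ and every parabolic cube $Q\Subset I\times\Omega_\eta$, by testing the momentum equation with $\psi\nabla\Delta^{-1}_{\tilde B}(\psi\varrho_\delta^\Theta)$ exactly as in Lemma~\ref{prop:higher}, and the equiintegrability of $\varrho_\delta^\gamma$ up to the boundary by means of the test function with explosive divergence of Lemma~\ref{prop:higherb}. The boundary step is where $\gamma>\tfrac{12}{7}$ enters: it yields $\varrho_\delta\bfu_\delta\in L^2(I;L^{6\gamma/(\gamma+6)}(\Omega_{\eta_\delta}))$ with $\tfrac{6\gamma}{\gamma+6}>\tfrac43$, which is precisely what pairs with $\partial_t\bfphi_\delta\in L^2(I;L^q)$, $q<4$ (the latter because $\partial_t\eta_\delta$ is the trace of $\bfu_\delta$, cf.\ Remark~\ref{rem:exp}), so that $\int_{I\times\Omega_{\eta_\delta}}\varrho_\delta\bfu_\delta\,\partial_t\bfphi_\delta\dxt$ stays bounded. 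Writing $\overline p$ for the weak $L^1$-limit of $a\varrho_\delta^\gamma+\delta\varrho_\delta^\beta$ (cf.\ Corollary~\ref{prop:higherb0}) and an overbar for weak $L^1$-limits of the corresponding $\delta$-sequences in general, I would then repeat the effective-viscous-flux computation of Subsection~\ref{subsec:strongconvdensity} with the truncations $T_k(\varrho_\delta):=\min\{\varrho_\delta,k\}$ inserted everywhere in place of $\varrho_\delta$ — this keeps all factors in $L^2$ locally, so the Riesz-transform commutator and the div--curl lemma apply verbatim — to obtain, for all $\psi\in C_0^\infty(I\times\Omega_\eta)$, the local identity
\begin{align*}
\overline{\big(a\varrho^\gamma+\delta\varrho^\beta-(\lambda+2\mu)\Div\bfu\big)T_k(\varrho)}=\big(\overline p-(\lambda+2\mu)\Div\bfu\big)\,\overline{T_k(\varrho)}\qquad\text{a.e.\ in }I\times\Omega_\eta,
\end{align*}
that is, the analogue of \eqref{eq:fluxpsi} with $\varrho_\delta$ replaced by its truncations.

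From this identity, the monotonicity of $z\mapsto z^\gamma$ and the uniform local integrability of the first step I would then derive the boundedness of the oscillation defect measure,
\begin{align*}
\sup_{k\in\N}\ \limsup_{\delta\to0}\ \int_Q\big|T_k(\varrho_\delta)-T_k(\varrho)\big|^{\gamma+1}\dxt<\infty\qquad\text{for every }Q\Subset I\times\Omega_\eta.
\end{align*}
Since the $\varrho_\delta$ are renormalized and $\tfrac1{\gamma+1}+\tfrac12<1$ (because $\gamma>1$), the Feireisl-type lemma on renormalized limits \cite{feireisl1,fei3} applies and shows that $\varrho$ is itself a renormalized solution, i.e.\ \eqref{eq:final3b} holds for every convex $\theta$ of the stated form; the case of a general $\theta\in C^1(\R)$ with $\theta'(z)=0$ for $z\ge M_\theta$ follows by writing $\theta$ as a difference of two such convex functions and approximating. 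To allow test functions $\psi$ that do not vanish on $\partial\Omega_\eta$ I would, as in Subsection~\ref{sec:renep}, extend $\varrho$ by zero and $\bfu$ by $\mathscr E_\eta\bfu\in W^{1,p}(\R^3)$, $1<p<2$, from Lemma~\ref{lem:extension}; no boundary flux is created since $\mathrm{tr}_\eta\bfu=\partial_t\eta\nu$ is the velocity of the moving boundary, exactly as in the Reynolds-transport identity behind \eqref{eq:2003weak}--\eqref{eq:2003weak1}.

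The main obstacle is carrying out the first two steps on the time-dependent, non-Lipschitz boundary: excluding concentration of $\varrho_\delta^\gamma$ at $\partial\Omega_{\eta_\delta}$ without a Bogovskii operator, and making the truncated effective-viscous-flux computation survive the localization near (but not at) $\partial\Omega_{\eta_\delta}$. The quantitative heart of both is the pressure estimate, which is precisely what forces the restriction $\gamma>\tfrac{12}{7}$ in three dimensions.
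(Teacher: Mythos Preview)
Your proposal is correct and follows essentially the same Feireisl-type route as the paper: truncated effective-viscous-flux identity, oscillation defect measure bound, then passage to the renormalized limit via the $T_k$ procedure. One clarification on the order of operations: the paper first upgrades the flux identity \eqref{eq:fluxpsi'} and the oscillation defect bound \eqref{eq.amplosc''} to global statements on $I\times\R^3$ (summing the local identities and using the boundary equiintegrability of Lemma~\ref{prop:higherb'} to kill the remainder), and only then renormalizes the limit equation for the bounded function $T^{1,k}$ via smoothing---this is where $\mathscr E_\eta\bfu$ enters and the commutator lemma applies because $T^{1,k}\in L^\infty$; your phrasing of first obtaining the renormalized identity for compactly supported $\psi$ and then extending $\varrho$ by zero with $\mathscr E_\eta\bfu$ would run into the fact that the commutator estimate for $\varrho$ itself fails when $\gamma<2$, so the truncation must be carried globally from the start.
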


For a given $\delta$ we gain a weak solutions $(\eta_\delta,\bfu_\delta,\varrho_\delta)$ to \eqref{eq:apu}--\eqref{eq:apvarrho} by Theorem \ref{thm:ap}. It is defined in the interval $[0,T_*]$, where $T_*$ is restricted by the data alone.
The estimate from Theorem \ref{thm:ap} holds uniformly with respect to $\delta$. 
Hence we may take a subsequence, such that
by Lemma~\ref{thm:weakstrong}, we find for $s\in (1,2)$ and for some $\alpha\in (0,1)$ fixed subsequences which satisfy
\begin{align}
\eta_\delta&\rightharpoonup^\ast\eta\quad\text{in}\quad L^\infty(I;W_0^{2,2}(M))\label{eq:conveta}\\
\eta_\delta&\rightharpoonup^\ast\eta\quad\text{in}\quad W^{1,\infty}(I;L^2(M)),
\label{eq:conetat}\\
\eta_\delta&\to\eta\quad\text{in}\quad C^\alpha(\overline{I}\times M),
\label{eq:conetata}\\
\bfu_\delta&\rightharpoonup^\eta\bfu\quad\text{in}\quad L^2(I;W^{1,2}(\Omega_{\eta_\delta})),\label{eq:convu}\\
\varrho_\delta&\rightharpoonup^{\ast,\eta}\varrho\quad\text{in}\quad L^\infty(I;L^\gamma(\Omega_{\eta_\delta})).\label{eq:convrho}
\end{align}
By Lemma~\ref{thm:weakstrong} we find for $q\in (1,\frac{6\gamma}{\gamma+6})$, that
\begin{align}
\varrho_\delta\bu_\delta&\rightharpoonup^\eta  {\varrho}  {\bfu}\qquad\text{in}\qquad L^2(I, L^q(\Omega_{\eta_\delta}))\label{conv:rhov2delta}\\
{\varrho}_\delta  {\bfu}_\delta\otimes  {\bfu}_\delta&\rightharpoonup^\eta  {\varrho}  {\bfu}\otimes  {\bfu}\qquad\text{in}\qquad L^1(I\times\Omega_{\eta_\delta}).\label{conv:rhovv2delta}
\end{align}

Also we have, as before in Proposition \ref{prop:higher}, higher integrability of the density.
\begin{lemma}\label{prop:higher'}
Let $\gamma>\frac{3}{2}$ ($\gamma>1$ in two dimensions).
Let $Q=J\times B\Subset I\times\Omega_\eta$ be a parabolic cube and $0<\Theta\leq\frac{2}{3}\gamma-1$. The following holds for any $\delta\leq \delta_0(Q)$
\begin{equation}\label{eq:gamma+1'}
\int_{Q}\big(a\varrho_\delta^{\gamma+\Theta}
+\delta\varrho_\delta^{\beta+\Theta}\big)\,\dif x\,\dif t\leq C(Q)
\end{equation}
with constant independent of $\delta$.
\end{lemma}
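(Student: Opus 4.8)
The proof runs parallel to that of Lemma~\ref{prop:higher}, now for the $\delta$-approximations $(\eta_\delta,\bfu_\delta,\varrho_\delta)$ solving \eqref{eq:apu}--\eqref{eq:apvarrho}: the artificial viscosity is no longer available, so the role played there by $\varepsilon\Delta\varrho$ is taken over by the renormalized continuity equation of Lemma~\ref{cor:apb}. First I would fix parabolic cubes $Q\Subset\tilde Q=\tilde J\times\tilde B\Subset I\times\Omega_\eta$; by the uniform convergence \eqref{eq:conetata} one has $\tilde Q\Subset I\times\Omega_{\eta_\delta}$ as soon as $\delta\le\delta_0(Q)$, so the whole computation takes place on the fixed cylinder $\tilde Q$ and the moving boundary is irrelevant. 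Next I would test the momentum equation \eqref{eq:apu} with
\begin{align*}
\bfphi=\psi\,\nabla\Delta^{-1}_{\tilde B}\big(b(\varrho_\delta)\big),\qquad \psi\in C^\infty_0(\tilde Q),\quad \psi\equiv1\text{ on }Q,
\end{align*}
where $\Delta^{-1}_{\tilde B}$ is the solution operator of the homogeneous Dirichlet Laplacian on $\tilde B$ as in \eqref{lap:1}, and $b$ is a bounded, globally Lipschitz function approximating $z\mapsto z^\Theta$ (the truncation makes Lemma~\ref{cor:apb} directly applicable to $\theta=b$, and one recovers $z^\Theta$ at the very end by monotone convergence / Fatou on the left-hand side). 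The time-derivative contribution is rewritten through the renormalized equation, $\partial_t b(\varrho_\delta)=-\Div\big(b(\varrho_\delta)\bfu_\delta\big)-\big(\varrho_\delta b'(\varrho_\delta)-b(\varrho_\delta)\big)\Div\bfu_\delta$, so that $\partial_t\nabla\Delta^{-1}_{\tilde B}(b(\varrho_\delta))$ is expressed through quantities controlled by the $\delta$-uniform a priori estimates of Theorem~\ref{thm:ap}.

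This produces, exactly as in \eqref{eq:testtheta}, an identity
\begin{align*}
\int_I\int_{\R^3}\psi^2\big(a\varrho_\delta^{\gamma}+\delta\varrho_\delta^{\beta}\big)\,b(\varrho_\delta)\dxt=\sum_{k}J^\delta_k,
\end{align*}
in which the $J^\delta_k$ are the analogues of $J_1,\dots,J_{12}$ from the proof of Lemma~\ref{prop:higher} (with the viscous terms absent and one extra term of the form $\int\psi\,\varrho_\delta\bfu_\delta\cdot\nabla\Delta^{-1}_{\tilde B}[(\varrho_\delta b'(\varrho_\delta)-b(\varrho_\delta))\Div\bfu_\delta]$). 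Each $J^\delta_k$ is bounded using $\varrho_\delta\in L^\infty(I;L^\gamma)$, $\bfu_\delta\in L^2(I;W^{1,2})\hookrightarrow L^2(I;L^6)$, $\varrho_\delta\bfu_\delta\in L^2(I;L^{6\gamma/(\gamma+6)})$, $\varrho_\delta|\bfu_\delta|^2\in L^\infty(I;L^1)$, the $\delta$-uniform bound $\sup_I\int\delta\varrho_\delta^\beta\le C$, and the continuity of $\nabla\Delta^{-1}_{\tilde B}$ and $\nabla^2\Delta^{-1}_{\tilde B}$ on $L^p(\tilde B)$; since $b$ is bounded all of these are finite. The crucial point is that the resulting bound is uniform both in $\delta$ and in the truncation level; combined with the non-negativity of the left-hand side, Fatou's lemma then yields \eqref{eq:gamma+1'}, the $\delta\varrho_\delta^{\beta+\Theta}$-part being obtained for free since it sits on the left.

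I expect the main obstacle to be the convective term $\int\varrho_\delta\bfu_\delta\otimes\bfu_\delta:\nabla^2\Delta^{-1}_{\tilde B}(b(\varrho_\delta))$. In the limit $b(z)\to z^\Theta$ the factor $\nabla^2\Delta^{-1}_{\tilde B}(\varrho_\delta^\Theta)$ is only controlled in $L^\infty(I;L^{\gamma/\Theta})$, while $\varrho_\delta|\bfu_\delta|^2\in L^1(I;L^{3\gamma/(\gamma+3)})$, so H\"older in space closes if and only if $\tfrac{\Theta}{\gamma}+\tfrac{\gamma+3}{3\gamma}\le1$, i.e.\ precisely $\Theta\le\tfrac23\gamma-1$; this is the hypothesis of the lemma and forces $\gamma>\tfrac32$ for a nontrivial gain (in two dimensions the exponents improve and $\gamma>1$ suffices). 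Two secondary points: the non-smoothness of $z^\Theta$ near $z=0$ is handled by the truncation of $b$ described above, and the terms carrying a factor $\delta$ (from $\delta\varrho_\delta^\beta$ in the pressure) must be paired only with the a priori bounded factors $\nabla\psi\cdot\nabla\Delta^{-1}_{\tilde B}(b(\varrho_\delta))$ or $\psi\,\nabla^2\Delta^{-1}_{\tilde B}(b(\varrho_\delta))$, which is legitimate since $\delta\varrho_\delta^\beta$ is uniformly bounded in $L^\infty(I;L^1)$.
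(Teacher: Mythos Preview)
Your approach is correct and coincides with the paper's: both test the momentum equation with $\psi\,\nabla\Delta^{-1}_{\tilde B}\varrho_\delta^\Theta$ on a compact interior cylinder, identify the convective term $\int\psi\,\varrho_\delta\bfu_\delta\otimes\bfu_\delta:\nabla^2\Delta^{-1}_{\tilde B}\varrho_\delta^\Theta$ as the bottleneck, and close H\"older exactly when $\Theta\le\tfrac{2}{3}\gamma-1$. The paper's write-up is terser---it simply writes $\varrho_\delta^\Theta$ and does not spell out the truncation step---whereas you correctly note that $z\mapsto z^\Theta$ is not an admissible renormalization and route through a bounded Lipschitz $b$ plus Fatou; this is the standard device and the paper tacitly relies on it. One small correction: the renormalized equation you need is that of the $\delta$-approximation, i.e.\ Lemma~\ref{cor:ap1}, not Lemma~\ref{cor:apb}, which concerns the limit after $\delta\to0$.
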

\begin{proof}
The proof follows the lines of Lemma~\ref{prop:higher} with the difference that we test with $\psi\nabla\Delta^{-1}_{\tilde{B}}\varrho_\delta^\Theta$. We only show how to handle the most critical integral
$$J=\int_Q \psi\varrho_\delta\bfu_\delta\otimes\bfu_\delta:\nabla^2\Delta^{-1}_{\tilde{B}}\varrho_\delta^\Theta\dxt$$
arising from the convective term.
 The bound $\Theta\leq\frac{2}{3}\gamma-1$ is needed to estimate it.
It can be estimated using the continuity of $\nabla^2\Delta^{-1}_{\tilde{B}}$ and H\"older's inequality by
\begin{align*}
|J|&\leq\,c\int_0^T\| \varrho_\delta\|_\gamma\| \bfu_\delta\|_6^2\| \varrho^{\theta}_\delta\|_r\dt,
\end{align*}
where $r:=\frac{3\gamma}{2\gamma-3}$. We proceed, using Sobolev's inequality (note that $\bfu_\delta=0$ on $\Gamma$), by
\begin{align*}
|J|&\leq\,C\,\Big(\sup_{0\leq t\leq T}\| \varrho_\delta\|_\gamma\Big)\Big(\sup_{0\leq t\leq T}\| \varrho_\delta^{\Theta}\|_r\Big)\int_0^T\|\nabla \bfu_\delta\|_2^2\dt.
\end{align*}
We need to choose $r$ such that $\Theta r\leq \gamma$ which is equivalent to $\Theta\leq \frac{2}{3}\gamma-1$. Now, the various a-priori bounds yield $|J|\leq\,c$ uniformly in $\delta$.
\end{proof}

Similar to Lemma~\ref{prop:higherb} we can exclude concentrations of the pressure at the moving boundary. However, we have to assume $\gamma>\frac{12}{7}$ for this.
\begin{lemma}\label{prop:higherb'}
Let $\gamma>\frac{12}{7}$ ($\gamma>1$ in two dimensions).
Let $\kappa>0$ be arbitrary. There is a measurable set $A_\kappa\Subset I\times\Omega_\eta$ such that
\begin{equation}\label{eq:gamma+1b'}
\int_{I\times\R^3\setminus A_\kappa}\big(a\varrho_\delta^{\gamma}
+\delta\varrho_\delta^{\beta}\big)\,\chi_{\Omega_{\eta_\delta}}\dif x\,\dif t\leq \kappa.
\end{equation}
\end{lemma}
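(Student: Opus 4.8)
The plan is to mimic the proof of Lemma~\ref{prop:higherb}, i.e.\ to construct a test-function $\bfphi_\delta$ whose divergence is positive and arbitrarily large in a boundary layer of width $\tfrac1K$ around $\partial\Omega_{\eta_\delta}$ while all other terms in the momentum equation~\eqref{eq:apulim} (now in the $\delta$-setting, i.e.\ the $\varepsilon$-limit equation~\eqref{eq:apu}--\eqref{eq:apvarrho}) stay bounded with a power of $K$ strictly smaller than one. First I would set, exactly as before, $\phi\in C^\infty_0(S_L;[0,1])$ with $\chi_{S_{L/2}}\leq\phi\leq\chi_{\Omega_0\cup S_L}$ and $|\nabla\phi|\leq c/L$, and define
\[
\bfphi_\delta(t,x)=\phi(x)\,\min\{K(s(x)-\eta_\delta(t,q(x))),1\}\,\nu(q(x)).
\]
Since $|\eta_\delta|\leq\frac L2$ and the Hanzawa coordinates $(q,s)$ are well defined on $S_L$, this is admissible, vanishes on $\partial\Omega_{\eta_\delta}$, and its gradient splits into $\bfxi^1+\bfxi^2+\bfxi^3+\bfxi^4$ as in Lemma~\ref{prop:higherb}; $\bfxi^1,\bfxi^4$ are bounded, $\bfxi^2,\bfxi^3$ are estimated in $L^p$ by $c_p(K^{1-1/(pp')}+1)$ using only $\nabla\eta_\delta\in L^\infty(I;L^q(M))$ for all $q<\infty$ (from \eqref{eq:conveta}) and the smallness of the layer, and the tangentiality $\nabla q^i\perp\nu$ gives $\xi^3_{jj}=0$, so $\Div\bfphi_\delta\geq K-c_{\bfxi}$ off the $\tfrac1K$-layer. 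One also has $\partial_t\bfphi_\delta=-K\chi_{\{K(s-\eta_\delta\circ q)\leq1\}}\partial_t\eta_\delta\,\nu$, which is bounded in $L^\infty(I;L^r)$ by $cK^{1-(2-r)/r^2}$ for every $r<2$, as in \eqref{eq:51b1}.

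Next I would use $\bfphi_\delta$ as a test-function in \eqref{eq:apulim} (after the usual smooth approximation, noting $\bfphi_\delta=0$ on $\partial\Omega_{\eta_\delta}$). All the terms $\mu\nabla\bfu_\delta:\nabla\bfphi_\delta$, $(\lambda+\mu)\Div\bfu_\delta\,\Div\bfphi_\delta$, the viscous/stress contributions, the exterior force $\varrho_\delta\bff\cdot\bfphi_\delta$, and the shell terms are controlled by the uniform energy bound together with the $L^p$-estimates on $\bfxi^i$, producing a contribution $\leq C(K^{1-\lambda'}+1)$ for some $\lambda'>0$; the pressure term $\int(\varrho_\delta^\gamma+\delta\varrho_\delta^\beta)\Div\bfphi_\delta$ is what we want to keep. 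The genuinely delicate term is the time derivative
\[
\int_I\int_{\Omega_{\eta_\delta}}\varrho_\delta\bfu_\delta\cdot\partial_t\bfphi_\delta\dxt,
\]
and here is where $\gamma>\frac{12}{7}$ enters. We know $\partial_t\bfphi_\delta\in L^\infty(I;L^r)$ for all $r<2$, hence in particular in $L^2(I;L^r)$ for all $r<2$, so by Hölder we need $\varrho_\delta\bfu_\delta$ bounded in $L^2(I;L^p)$ for some $p>\frac43$; the a~priori bounds give $\varrho_\delta\in L^\infty(I;L^\gamma)$ and $\bfu_\delta\in L^2(I;L^6)$, whence $\varrho_\delta\bfu_\delta\in L^2(I;L^{6\gamma/(\gamma+6)})$, and $\frac{6\gamma}{\gamma+6}>\frac43$ is exactly $\gamma>\frac{12}{7}$. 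Thus this integral is bounded uniformly by $C(K^{1-\lambda''}+1)$ for some $\lambda''>0$, and combining everything we obtain, exactly as in \eqref{eq:2103},
\[
\int_I\int_{\Omega_{\eta_\delta}}(\varrho_\delta^\gamma+\delta\varrho_\delta^\beta)\,\Div\bfphi_\delta\dxt\leq C(K^{1-\lambda}+1)
\]
uniformly in $\delta$ for some $\lambda>0$.

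Finally, I would close the argument as in Lemma~\ref{prop:higherb}: set $A_\kappa=\{x\in\Omega_{\eta_\delta}:\dist(x,\partial\Omega_{\eta_\delta})\geq\frac1K\}$ with $K=K(\kappa)$ the solution of $\frac{C(K^{1-\lambda}+1)}{K-c_{\bfxi}}=\kappa$ (which exists for $\kappa$ small), and use $\Div\bfphi_\delta\geq K-c_{\bfxi}$ on the complement of the layer together with the positivity of $a\varrho_\delta^\gamma+\delta\varrho_\delta^\beta$ to get
\[
\int_{I\times\R^3\setminus A_\kappa}(a\varrho_\delta^\gamma+\delta\varrho_\delta^\beta)\chi_{\Omega_{\eta_\delta}}\dxt\leq\frac1{K-c_{\bfxi}}\int_{I\times\R^3\setminus A_\kappa}(a\varrho_\delta^\gamma+\delta\varrho_\delta^\beta)\Div\bfphi_\delta\dxt\leq\frac{C(K^{1-\lambda}+1)}{K-c_{\bfxi}}=\kappa,
\]
which is \eqref{eq:gamma+1b'}. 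The main obstacle is precisely the bookkeeping of powers of $K$ in the $\partial_t\bfphi_\delta$ term: one must verify that the integrability exponent of $\varrho_\delta\bfu_\delta$ is strictly above $\frac43$ (forcing $\gamma>\frac{12}{7}$) so that the exponent of $K$ there is $<1$; in two dimensions $\partial_t\bfphi_\delta\in L^\infty(I;L^r)$ for all $r<\infty$ and $\varrho_\delta\bfu_\delta$ is correspondingly better integrable, so no restriction beyond $\gamma>1$ is needed. Everything else is a direct transcription of the estimates in Lemma~\ref{prop:higherb}, with $\beta$ replaced where necessary by $\gamma$ and with the $\varepsilon$-terms absent.
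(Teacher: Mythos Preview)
Your argument follows the paper's approach but contains a genuine gap in the estimate of the critical term $\int_I\int_{\Omega_{\eta_\delta}}\varrho_\delta\bfu_\delta\cdot\partial_t\bfphi_\delta\dxt$. From the energy bound $\partial_t\eta_\delta\in L^\infty(I;L^2(M))$ alone one obtains, exactly as in \eqref{eq:51b1}, that $\partial_t\bfphi_\delta\in L^\infty(I;L^r)$ only for $r<2$. The H\"older dual of $r<2$ is $r'>2$, not $r'>\tfrac43$ as you write; you would therefore need $\varrho_\delta\bfu_\delta\in L^2(I;L^p)$ for some $p>2$, which via $\tfrac{6\gamma}{\gamma+6}>2$ forces $\gamma>3$. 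This is precisely why the $\varepsilon$-level Lemma~\ref{prop:higherb} goes through (there $\beta>3$ plays the role of $\gamma$), but a direct transcription does not reach $\gamma>\tfrac{12}{7}$.

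The missing idea, which the paper supplies, is to upgrade the space integrability of $\partial_t\eta_\delta$ via the coupling condition $\partial_t\eta_\delta\,\nu=\mathrm{tr}_{\eta_\delta}\bfu_\delta$. Since $\bfu_\delta\in L^2(I;W^{1,2}(\Omega_{\eta_\delta}))$ uniformly, the trace estimate of Lemma~\ref{lem:2.28} gives $\partial_t\eta_\delta\in L^2(I;L^q(M))$ for every $q<4$ in three dimensions (and every $q<\infty$ in two). Redoing the layer estimate with this bound yields $\partial_t\bfphi_\delta\in L^2(I;L^r)$ for all $r<4$ with norm $\leq cK^{1-(q-r)/(qr)}$; now duality requires only $r'>\tfrac43$, and $\tfrac{6\gamma}{\gamma+6}>\tfrac43$ is exactly $\gamma>\tfrac{12}{7}$. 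Your two-dimensional remark is likewise unsupported without this step: the energy bound alone still gives only $r<2$ on the one-dimensional $M$, and the improvement to arbitrary $r<\infty$ again comes from the trace of $\bfu_\delta$.
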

\begin{proof}
We follow the approach of Proposition \ref{prop:higherb} replacing $\varepsilon$
by $\delta$. So we test with
\[
\bfphi_\delta(t,x)=\phi\min\set{K(s(x)-\eta_\delta(t,q(x))),1}\nu(q(x)).
\]
The critical term is again
is
\begin{align}\label{eq:1712'}
\int_I\int_{\Omega_{\eta_\delta}} \varrho_\delta\bfu_\delta\,\partial_t\bfphi_\delta\dxt.
\end{align}
Following the proof of Proposition \ref{prop:higherb} it can be estimated provided $\gamma>3$. We want to improve this. In order to do so we write
\begin{align*}
\partial_t\bfphi_\delta&=-K\chi_{\set{K(s(x)-\eta_\delta(t,q(x)))\leq 1}}\partial_t\eta_\delta(t,q(x))\nu(q(x))\\&=-K\chi_{\set{K(s(x)-\eta_\delta(t,q(x)))\leq 1}}\bfu_\delta\circ\bfPsi_\delta(t,0,q(x)).
\end{align*}
By Lemma~\ref{lem:2.28} and since $\nabla \bfu_\delta$ is uniformly bounded in $L^2$ (recall \eqref{eq:convu}) we find that
\begin{align}\label{eq:51b}
\bfu_\delta\circ\bfPsi_\delta|_{\partial\Omega}\in L^2(I;L^q(\partial\Omega))\quad\forall q<4
\end{align}
uniformly in $\delta$ ($q<\infty$ in two dimensions). Similar to \eqref{eq:51b1} we obtain
\begin{align}\label{eq:51b1'}
\begin{aligned}
\bigg(&\int_I\bigg(\int_{\Omega_{\eta_\delta}}|\partial_t\bfphi_\delta|^r\dx\bigg)^{\frac{2}{r}}\bigg)^{\frac{1}{2}}\\&\leq \,cK\bigg(\int_I\bigg(\int_{\Omega_{\eta_\delta}}|\bfu_\delta\circ\bfPsi_\delta(t,0,q(x))|^q\dx\bigg)^{\frac{2}{q}}\big|\set{K(s-\eta_\delta(t,q))\leq 1}\big|^{\frac{2(q-r)}{qr}}\dt\bigg)^{\frac{1}{2}}\\
&\leq \,cK\bigg(\int_I\bigg(\int_{\partial\Omega_{\eta_\delta}}|\bfu_\delta\circ\bfPsi_\delta(t,q(x))|^q\dH\bigg)^{\frac{2}{q}}\dt\bigg)^{\frac{1}{2}}\sup_I\big|\set{K(s-\eta_\delta(t,q))\leq 1}\big|^{\frac{q-r}{qr}}\\
&\leq \,c\,K^{1-\frac{q-r}{qr}}
\end{aligned}
\end{align} 
for all $r<q<4$ (all $r<q<\infty$ in two dimensions) uniformly in $\delta$.
Now, the proof can be finished as in Proposition \ref{prop:higherb}.
%
We take
\begin{equation}\label{eq:L2'}
\begin{split}
\varrho_\delta\bu_\delta\in L^2(I;L^{\frac{6\gamma}{\gamma+6}}(\mt))
\end{split}
\end{equation}
into account (which follows from the uniform a-priori bounds)
 and $\gamma>\frac{12}{7}$ (which yields $\frac{6\gamma}{\gamma+6}>\frac{4}{3}$). We see that the integral
in \eqref{eq:1712'} is uniformly bounded by $K^{1-\lambda}$ for some $\lambda>0$ using H\"older's inequality \eqref{eq:51b1'} (choosing $r$ and $q$ appropriately).
\end{proof}
Lemma~\ref{prop:higher'} and Lemma~\ref{prop:higherb'} imply equicontinuity of the sequence  $\varrho_\delta^\gamma\chi_{\Omega_{\eta_\delta}}$. This yields the existence of a function $\overline p$ such that a subsequence satisfies 
\begin{align}\label{eq:limp'}
a\varrho^\gamma_\delta+\delta\varrho^\beta_\delta\rightharpoonup\overline p\quad\text{in}\quad L^{1}(I\times\R^3),\\
\label{1301}
\delta\varrho_\delta^{\beta}\rightarrow0\quad\text{in}\quad L^1(I\times\R^3).
\end{align}
Similarly to Proposition \ref{prop:higherb0} we have the following Proposition. 
\begin{proposition}\label{prop:higherb0'}
Let $\kappa>0$ be arbitrary. There is a measurable set $A_\kappa\Subset I\times\Omega_\eta$ such that
\begin{equation}\label{eq:gamma+1b0'}
\int_{I\times\R^3\setminus A_\kappa}\overline p\,\dif x\,\dif t\leq \kappa.
\end{equation}
\end{proposition}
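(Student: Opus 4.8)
The statement is a soft consequence of the two preceding lemmas, and the plan is simply to pass the boundary non-concentration bound of Lemma~\ref{prop:higherb'} to the weak limit. Recall first that, along the subsequence fixed above, $\overline p$ arises as the weak $L^1(I\times\R^3)$-limit of the nonnegative functions
\[
p_\delta:=\big(a\varrho_\delta^\gamma+\delta\varrho_\delta^\beta\big)\chi_{\Omega_{\eta_\delta}},
\]
cf.\ \eqref{eq:limp'} (with $\varrho_\delta$ extended by zero). This weak $L^1$-compactness is exactly the statement that the family $(p_\delta)$ is equiintegrable on $I\times\R^3$, which in turn combines the interior higher-integrability estimate of Lemma~\ref{prop:higher'} (giving uniform integrability of $p_\delta$ on every parabolic cube $Q\Subset I\times\Omega_\eta$) with the collar estimate of Lemma~\ref{prop:higherb'}. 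In particular $\overline p\geq0$ a.e.\ on $I\times\R^3$.

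Now fix $\kappa>0$. Lemma~\ref{prop:higherb'} provides a measurable set $A_\kappa\Subset I\times\Omega_\eta$ with
\[
\int_{(I\times\R^3)\setminus A_\kappa}p_\delta\,\dxt\leq\kappa
\]
uniformly in $\delta$ (small). Since $\chi_{(I\times\R^3)\setminus A_\kappa}\in L^\infty(I\times\R^3)$, we may test the weak $L^1$-convergence $p_\delta\rightharpoonup\overline p$ against this characteristic function and obtain
\[
\int_{(I\times\R^3)\setminus A_\kappa}\overline p\,\dxt=\lim_{\delta\to0}\int_{(I\times\R^3)\setminus A_\kappa}p_\delta\,\dxt\leq\kappa,
\]
which is \eqref{eq:gamma+1b0'}.

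The only point that needs a word of care is that the set produced in the \emph{proof} of Lemma~\ref{prop:higherb'} is a sublevel set of $x\mapsto\mathrm{dist}(x,\partial\Omega_{\eta_\delta(t)})$ and hence a priori $\delta$-dependent, whereas the asserted $A_\kappa$ should sit compactly inside the limit cylinder $I\times\Omega_\eta$. This is reconciled using the uniform convergence $\eta_\delta\to\eta$ from \eqref{eq:conetata}: for $\delta$ small all the $\delta$-dependent collars contain a fixed, slightly thinner collar of $\partial\Omega_\eta$, so one takes $A_\kappa=\{(t,x):\mathrm{dist}(x,\partial\Omega_{\eta(t)})\geq 2/K\}$ with $K=K(\kappa)$ essentially as in Lemma~\ref{prop:higherb'}, and the uniform bound is retained. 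I do not expect any further obstacle: all the analytic content — in particular the role of the restriction $\gamma>\tfrac{12}{7}$, which is precisely what makes the collar test-function of Lemma~\ref{prop:higherb'} admissible via the estimate \eqref{eq:51b1'} on $\partial_t\bfphi_\delta$ — is already contained in that lemma, and the present proposition is only the passage of its conclusion to the limit.
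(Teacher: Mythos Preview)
Your argument is correct and matches the paper's approach: the paper simply notes that Proposition~\ref{prop:higherb0'} follows ``similarly to Proposition~\ref{prop:higherb0}'', i.e.\ by combining Lemmas~\ref{prop:higher'} and~\ref{prop:higherb'} and passing the boundary non-concentration estimate to the weak $L^1$-limit. Your remark about replacing the $\delta$-dependent collar by a fixed one via the uniform convergence \eqref{eq:conetata} is exactly the small adjustment needed and is consistent with how the paper handles the analogous point.
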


Using \eqref{eq:limp'} and the convergences \eqref{eq:conveta}--\eqref{conv:rhovv2delta} we can pass to the limit in \eqref{eq:apu}--\eqref{eq:apvarrho} and obtain
\begin{align}\label{eq:apulim'}
\begin{aligned}
&-\int_I\int_{\Omega_{ \eta}}\varrho\bfu\cdot\partial_t\bfphi\dxt-\int_I\int_M\varrho\partial_t\eta\,\partial_t\eta\,b\,\gamma(\eta)\,\dd\mathcal H^2\dt\\
&+\int_I\int_{\Omega_{ \eta}}\Div(\varrho\bfu\otimes\bfu)\cdot\bfphi\dxt+\mu\int_I\int_{\Omega_{ \eta}}\nabla\bfu:\nabla\bfphi\dxt\\&+(\lambda+\mu)\int_I\int_{\Omega_{ \eta}}\Div\bfu\,\Div\bfphi\dxt-\int_I\int_{\Omega_{\eta}} \overline p\,\Div\bfphi\dxt
\\&
-\int_I\int_M \partial_t\eta\,\partial_t b\,\dd\mathcal H^2\dt+2\int_I \int_M K'(\eta)\,b\dH\dt
\\&=\int_I\int_{\Omega_{ \eta}}\varrho\bff\cdot\bfphi\dxt+\int_I\int_M g\,b\,\dd\mathcal H^2\dt+\int_{\Omega_{\eta_0}}\bfu_0\cdot\bfphi(0,\cdot)\dx+\int_M\eta_0\,b\,\dd\mathcal H^2
\end{aligned}
\end{align} 
for all test-functions $(b,\bfphi)$ with $\mathrm{tr}_\eta\bfphi=\partial_t\eta\nu$, $\bfphi(T,\cdot)=0$ and $b(T,\cdot)=0$. Moreover, the following holds
\begin{align}\label{eq:apvarrholim}
\int_I\int_{\Omega_{\eta}}\varrho\,\partial_t\psi\dxt-\int_I\int_{\Omega_{\eta}}\Div(\varrho\,\bfu)\,\psi\dxt=\int_{\Omega_{\eta_0}}\varrho_0\,\psi(0,\cdot)\dx
\end{align}
for all $\psi\in C^\infty(\overline{I\times\Omega_\eta})$.
It remains to show that $\overline{p}=a\varrho^\gamma$.

\subsection{The effective viscous flux}
\label{subsec:strongconvdensity'}
We define the $L^\infty$-truncation
\begin{align}\label{eq:Tk'}
T_k(z):=k\,T\Big(\frac{z}{k}\Big)\quad z\in\mr,\,\, k\in\N.
\end{align}
Here $T$ is a smooth concave function on $\mr$ such that $T(z)=z$ for $z\leq 1$ and $T(z)=2$ for $z\geq3$.
It is the aim of this subsection to show that 
\begin{align}\label{eq:flux'}
\begin{aligned}
\int_{I\times\Omega_{\eta_\delta}}&\big( a\varrho_\delta^\gamma+\delta\varrho_\delta^\beta-(\lambda+2\mu)\Div \bfu_\delta\big)\,T_k(\varrho_\delta)\dxt\\&\longrightarrow\int_{I\times\Omega_{\eta}} \big( \overline{p}-(\lambda+2\mu)\Div \bfu\big)\,T^{1,k}\dxt.
\end{aligned}
\end{align}
For this stepp we are able to use the theory established in \cite{Li1,Li2} on a local level. We fix a small $\delta_0$ and consider an arbitrary cube $\tilde Q=\tilde J\times\tilde B\Subset I\times\bigcup_{\delta\in [0,\delta_0]} \Omega_{\eta_\delta}$.
 To this end, we can choose $\theta=T_k$ in the renormalized continuity equation for $\tilde\varrho_\delta$, cf. Lemma \ref{cor:ap1}. Hence we find
\begin{align*}
\partial_t T_k(\varrho_\delta)+\Div\big(T_k(\varrho_\delta)\bfu_\delta\big)+\big(T_k'(\varrho_\delta)\varrho_\delta-T_k(\varrho_\delta)\big)\Div\bfu_\delta=0
\end{align*}
in the sense of distributions in $I\times\R^3$. In order to pass to the limit in this equation, let $T^{1,k}$ denote the weak limit of $T_k(\varrho_\delta)$ and let $T^{2,k}$ denote the weak limit of $\big(T_k'(\varrho_\delta)\varrho_\delta-T_k(\varrho_\delta)\big)\Div\bfu_\delta$ (here it might be necessary to pass to a subsequence).
To be more precise, the following holds 
\begin{align}
 T_k(\varrho_\delta)&\weakto {T}^{1,k}\quad\text{in}\quad C_w(I;L^p( \R^3))\quad\forall p\in[1,\infty),\label{eq:Tk1'}\\
\big(T_k'(\varrho_\delta)\varrho_\delta-T_k(\varrho_\delta)\big)\Div\bfu_\delta&\rightharpoonup{T}^{2,k}
\quad\text{in}\quad L^2(I\times\R^3).\label{eq:Tk2'}
\end{align}
So letting $\delta\rightarrow0$ yields
\begin{align}\label{eq:Tkdis'}
\partial_t T^{1,k}+\Div\big( T^{1,k}\bfu\big)+T^{2,k}=0
\end{align}
in the sense of distributions in $I\times\R^3$.
Here we used that $\beta>3$ and that
\begin{align}\label{1311c}
\begin{aligned}
 T_k(\varrho_\delta)&\rightarrow T^{1,k}\quad\text{in}\quad L^2(I;W^{-1,2}(\R^3)),\\
\bfu_\delta&\rightharpoonup\bfu\quad\text{in}\quad L^2(I;W^{1,2}(\R^3)),
\end{aligned}
\end{align}
which is a consequence of \eqref{eq:Tk1'} (with $p>\frac{6}{5}$).

Next we take $Q$ with $Q\Subset\tilde Q\Subset I\times\Omega_{\eta_n}$ and the cut off function $\psi\in C^\infty_0(\tilde Q)$ with $0\leq\psi\leq 1$ and $\psi\equiv1$ on $Q=J\times B$. Now, we test \eqref{eq:apu} with $\psi\nabla\Delta^{-1}(\psi T^k(\varrho_\delta))$ and \eqref{eq:apulim'} with $\psi\nabla\Delta^{-1}(\psi T^{1,k})$.
Using the similar argument as in Subsection~\ref{subsec:strongconvdensity} we find that
\begin{align}\label{eq:fluxpsi'}
\begin{aligned}
\int_{I\times\mt} &\psi^2\big( a\varrho_\delta^\gamma-(\lambda+2\mu)\Div \bfu_\delta\big)\,T_k(\varrho_\delta)\dxt\\&\longrightarrow\int_{I\times\mt}\psi^2 \big( \overline{p}-(\lambda+2\mu)\Div \bfu\big)\,T^{1,k}\dxt.
\end{aligned}
\end{align}
We have to remove $\psi$ in order to conclude.
For some given $\kappa>0$ we choose a measurable set  
in accordance to Lemma~\ref{prop:higherb'} and Corollary~\ref{prop:higherb0'} for $\delta_0$ small enough (using the fact that $\eta_\delta\rightrightarrows\eta$). With no loss of generality we can assume that $\partial A_\kappa$ is regular. Hence we can cover $A_\kappa$ with parabolic cubes $Q_i=J_i\times B_i$ such that
\begin{align*}
A_\kappa\subset  \bigcup_i Q_i\Subset \bigcap_{\delta\in [0,\delta_0]}\big(I\times\Omega_{\eta_\delta}\big).
\end{align*}
They can be chosen in such a way that we find $\psi_i$ a partition of unity for the family $Q_i$ such that $\psi_i\in C_0^\infty(Q_i)$ and 
\begin{align*}
\sum \psi_i =1 \text{ on }A_\kappa.
\end{align*}
In particular \eqref{eq:fluxpsi'} holds with $\psi=\psi_i$. We gain
\begin{align*}
\int_{I\times\Omega_{\eta_\delta}}&\big( \varrho_\delta^\gamma+\delta\varrho_\delta^\beta-(\lambda+2\mu)\Div \bfu_\delta\big)\,T_k(\varrho_\delta)\dxt
\\
&=
\int_{I\times\Omega_{\eta_\delta}}\Big(1-\sum_i \psi_i\Big) \varrho_\delta^\gamma+\delta\varrho_\delta^\beta-(\lambda+2\mu)\Div \bfu_\delta\big)\,T_k(\varrho_\delta)\dxt\\
&+\sum_i\int_{Q_i}\psi_i\big( \varrho_\delta^\gamma+\delta\varrho_\delta^\beta-(\lambda+2\mu)\Div \bfu_\delta\big)\,T_k(\varrho_\delta)\dxt.
\end{align*}
Using \eqref{eq:gamma+1b'} the first integral on the right-hand side is bounded by $\kappa$. Using \eqref{eq:fluxpsi'} and \eqref{eq:gamma+1b0'} we find that
\begin{align*}
\lim_{\delta\to 0}
\biggabs{\int_{I\times\Omega_{\eta_\delta}}\big( \varrho_\delta^\gamma+\delta\varrho_\delta^\beta-(\lambda+2\mu)\Div \bfu_\delta\big)\,T_k(\varrho_\delta)\dxt-\int_{I\times\Omega_{\eta}}\big( \overline{p}(\lambda+2\mu)\Div \bfu\big)\,T^{1,k}\dxt}
\end{align*}
is bounded by $\kappa$. As $\kappa$
is arbitrary
we finally conclude that \eqref{eq:flux'} is satisfied.

\subsection{Renormalized solutions}
The aim of this section is to prove Lemma \ref{cor:apb}.
In order to do so it suffices to use the continuity equation and \eqref{eq:flux'} again on the whole space.

First observe that since $\varrho_\delta$ is renormalized solution to the continuity equation by  
\eqref{eq:final3}, i.e. we have
 we find that 
\begin{align}\label{eq:Tk0''}
\begin{aligned}
\partial_t \theta(\varrho_\delta)+\Div\big(\theta(\varrho_\delta)\bfu_\delta\big)&+\big(\theta'(\varrho_\delta)\varrho_\delta
-\theta(\varrho_\delta)\big)\Div\bfu_\delta=0
\end{aligned}
\end{align}
in the sense of distributions on $I\times\R^3$. Note that \eqref{eq:Tk0''} holds in particular for $\theta(z)=z$, which implies that the continuity equation can be regarded as a PDE on the whole-space.
We are interested in the particular choice $\theta=T_k$, where the cut-off functions $T_k$ are given by \eqref{eq:Tk'}.\\
We have to show that, similar to \eqref{eq:Tk0''}, equation \eqref{eq:Tkdis'} actually holds globally. 
Note that we can improve \eqref{eq:Tk1'} and \eqref{eq:Tk2'} in the sense that
To be more precise, the following holds
\begin{align}
 T_k(\varrho_\delta)&\rightarrow {T}^{1,k}\quad\text{in}\quad C_w(\overline I;L^p(\R^3))\quad\forall p\in[1,\infty),\label{eq:Tk1b'}\\
\big(T_k'(\varrho_\delta)\varrho_\delta-T_k(\varrho_\delta)\big)\Div\bfu_\delta&\rightharpoonup{T}^{2,k}
\quad\text{in}\quad L^2(I\times\R^3)\label{eq:Tk2b'}.
\end{align}
The first convergence is a consequence of \eqref{eq:Tk0''} with $\theta=T_k$ and the uniform $L^\infty$ bounds of $T_k$, the second convergence follows just by the usual compactness in Lebesgue spaces.
Due to compactness of the embedding 
$C_w(\overline I;L^p(\mathcal O))\hookrightarrow L^2(I;W^{1,2}(\mathcal O))$ for $\mathcal O\Subset\R^3$
we infer from \eqref{eq:Tk1b'} 
\begin{align*}
T_k(\varrho_\delta)\Div\bfu_\delta&\rightharpoonup{T}^{1,k}\Div\bfu
\quad\text{in}\quad L^2(I\times\R^3).
\end{align*}
So, choosing $\theta=T_k$ in \eqref{eq:Tk0''} letting $\delta\rightarrow0$ yields
\begin{align}\label{eq:rendelta'}
\begin{aligned}
0=&\frac{\dd}{\dt}\int_{\setR^3}T^{1,k}\psi\dx-\int_{\setR^3}\big(T^{1,k}\partial_t\psi +T^{1,k} \bfu\cdot \nabla \psi\big)\dx
\\
&\quad + \int_{\setR^3}T^{2,k}\psi\dx
\end{aligned}
\end{align}
for all $\psi\in C^\infty(\overline I\times \setR^3)$.
This means that we have
\begin{align}\label{eq:Tk''}
\partial_t T^{1,k}+\Div\big( T^{1,k}\bfu\big)+T^{2,k}= 0
\end{align}
in the sense of distributions on $I\times\R^3$. Note that we extended
$\varrho$ by zero to $\R^3$.
The next step is to show
\begin{align}\label{eq.amplosc''}
\limsup_{\delta\rightarrow0}\int_{I\times\mt}|T_k(\varrho_\delta)-T_k(\varrho)|^{\gamma+1}\dxt\leq C,
\end{align}
where $C$ does not depend on $k$. The proof of \eqref{eq.amplosc''} follows exactly the arguments from the classical setting with fixed boundary
 (see \cite[Lemma 4.4]{feireisl1} and \cite{feireisl2}) using \eqref{eq:flux'} and the uniform bounds on $\bfu$. We explain the details for the convenience of the reader. First, not that we have
\begin{align*}
\lim_{\delta\rightarrow0}&\int_I\int_{\R^3}\Big((\varrho_\delta^\gamma+\delta\varrho_\delta^\beta) T_k(\varrho_\delta)-\overline{p}T^{1,k}\Big)\dxt\\
&=\lim_{\delta\rightarrow0}\bigg(\int_I\int_{\R^3}\Big(\varrho_\delta^\gamma+\delta\varrho_\delta^\beta -(\varrho^\gamma+\delta\varrho^\beta)\Big)\big(T_k(\varrho_\delta)-T^{1,k}\big)\dxt\\&+\int_I\int_{\R^3}\big(\overline{p}-(\varrho^\gamma+\delta\varrho^\beta)\big)\big( T_k(\varrho)-T^{1,k}\big)\dxt\bigg).
\end{align*}
By convexity of $z\mapsto z^\gamma+\delta z^\beta$ we conclude that
\begin{align}\nonumber
\lim_{\delta\rightarrow0}\int_I\int_{\R^3}\big((\varrho_\delta^\gamma+\delta\varrho_\delta^\beta) T_k(\varrho_\delta)-\overline{p}T^{1,k}\big)\dxt&\geq\lim_{\delta\rightarrow0}
\int_I\int_{\R^3}\big(\varrho_\delta^\gamma -\varrho^\gamma\big)\big(T_k(\varrho_\delta)-T^{1,k}\big)\dxt\\
&\geq \int_I\int_{\R^3}|T_k(\varrho_\delta)-T_k(\varrho)|^{\gamma+1}\dxt.
\label{eq:2601a'}
\end{align}
Moreover, we have
\begin{align}
\nonumber
\limsup_{\delta\rightarrow0}\int_I\int_{\R^3}\big(\Div\bfu_\delta \,T_k(\varrho_\delta)-\Div\bfu \,T^{1,k}\big)\dxt&=\limsup_{\delta\rightarrow0}\int_I\int_{\R^3}\big( \,T_k(\varrho_\delta)-T^{1,k}\big)\Div\bfu_\delta\dxt\\
\label{eq:2601b'}&\leq \,c\,\limsup_{\delta\rightarrow0}\|T_k(\varrho_\delta)-T^{1,k}\|_{L^2(I\times \R^3)}\\
&\leq \,c\,\limsup_{\delta\rightarrow0}\|T_k(\varrho_\delta)-T_k(\varrho)\|_{L^2(I\times\R^3)}.\nonumber
\end{align}
Now we combine \eqref{eq:2601a'} end  \eqref{eq:2601b'} with \eqref{eq:flux'} to conclude \eqref{eq.amplosc''}.\\
By a standard smoothing procedure we can consider "renormalized solutions" for $T^{1,k}$ and deduce from (\ref{eq:Tk''}) that
\begin{align}\label{eq:Tkren'}
\partial_t \theta(T^{1,k})+\Div\big(\theta(T^{1,k})\bfu\big)
+\big(\theta'(T^{1,k})T^{1,k}-\theta(T^{1,k})\big)\Div\bfu+\theta'(T^{1,k})T^{2,k}=0
\end{align}
in the sense of distributions $I\times \R^3$. Where we use that $\theta'(z)=0$ for $z\geq M_\theta$.
We want to pass to the limit $k\rightarrow\infty$. On account of (\ref{eq.amplosc''}) we have
for all $p\in(1,\gamma)$
\begin{align*}
\|T^{1,k}-\varrho\|_{L^p(I\times\mt)}^p&\leq \liminf_{\delta\rightarrow0}\|T_k(\varrho_\delta)-\varrho_\delta\|_{L^p( I\times\mt)}^p\\
&\leq 2^p\liminf_{\delta\rightarrow0}\int_{[|\varrho_\delta|\geq k]}|\varrho_\delta|^p\dxt\\\
&\leq 2^pk^{p-\gamma}\liminf_{\delta\rightarrow0}\int_{I\times\mt}|\varrho_\delta|^\gamma\dxt\longrightarrow 0,\quad k\rightarrow\infty.
\end{align*}
So we have
\begin{align}
T^{1,k}\rightarrow\varrho\quad\text{in}\quad L^p(I\times\mt)\label{eq:T1klim'}
\end{align}
as $k\rightarrow\infty$.
Therefore, we are left to show that 
\begin{align}
\theta'(T^{1,k})T^{2,k}\rightarrow0\quad\text{in}\quad L^1(I\times\mt)\text{ with }k\to \infty.\label{eq:T2klim'}
\end{align}
Recall that $\theta$ has to satisfy $\theta'(z)=0$ for all $z\geq M$ for some $M=M(\theta)$.
We define
\begin{align*}
Q_{k,M}:=\big\{(t,x)\in I\times\mt;\;T^{1,k}\leq M\big\}
\end{align*}
and gain by weak lower semicontinuety that
\begin{align*}
\begin{aligned}
\int_{I\times\mt}&|\theta'(T^{1,k})T^{2,k}|\dxt\leq \sup_{z\leq M}|\theta'(z)|\int_Q\chi_{Q_{k,M}}|T^{2,k}|\dxt\\
&\leq \,C\,\liminf_{\delta\rightarrow0}\int_{I\times\mt}\chi_{Q_{k,M}}\big|(T_k'(\varrho_\delta)\varrho_\delta-T_k(\varrho_\delta))\Div\bfu_\delta\big|\dxt\\
&\leq\,C\,\sup_\delta\|\Div\bfu_\delta\|_{L^2(I\times\mt)}\liminf_{\delta\rightarrow0}\|T_k'(\varrho_\delta)\varrho_\delta-T_k(\varrho_\delta)\|_{L^2(Q_{k,M})}.
\end{aligned}
\end{align*}
It follows from interpolation that
\begin{align}\label{eq:425'}
\begin{aligned}
&\|T_k'(\varrho_\delta)\varrho_\delta-T_k(\varrho_\delta)\|^2_{L^2(Q_{k,M})}\\&\qquad\qquad\leq \|T_k'(\varrho_\delta)\varrho_\delta-T_k(\varrho_\delta)\|_{L^1(I\times\mt)}^\alpha\|T_k'(\varrho_\delta)\varrho_\delta-T_k(\varrho_\delta)\|_{L^{\gamma+1}(Q_{k,M})}^{(1-\alpha)(\gamma+1)},
\end{aligned}
\end{align}
where $\alpha=\frac{\gamma-1}{\gamma}$. Moreover, we can show similarly to the proof of
\eqref{eq:T1klim'}
\begin{align}\label{eq:426'}
\begin{aligned}
\|T_k'(\varrho_\delta)\varrho_\delta-T_k(\varrho_\delta)\|_{L^1(I\times\mt)}
&\leq \,C\,k^{1-\gamma}\sup_\delta\int_{I\times\mt}|\varrho_\delta|^\gamma\dxt\\
&\longrightarrow 0,\quad k\rightarrow\infty.
\end{aligned}
\end{align}
So it is enough to prove
\begin{align}\label{eq:427'}
\sup_\delta\|T_k'(\varrho_\delta)\varrho_\delta-T_k(\varrho_\delta)\|_{L^{\gamma+1}(Q_{k,M})}\leq C,
\end{align}
independently of $k$. As $T_k'(z)z\leq T_k(z)$ there holds by the definition of $Q_{k,M}$
\begin{align*}
&\|T_k'(\varrho_\delta)\varrho_\delta-T_k(\varrho_\delta)\|_{L^{\gamma+1}(Q_{k,M})}\\&\leq \,2\Big(\|T_k(\varrho_\delta)-T_k(\varrho)\|_{L^{\gamma+1}(I\times\mt)}+\|T_k(\varrho_\delta)\|_{L^{\gamma+1}(Q_{k,M})}\Big)\\
&\leq \,2\Big(\|T_k({\varrho}_\delta)-T_k(\varrho)\|_{L^{\gamma+1}(I\times\mt)}+\|T_k(\varrho_\delta)-T^{1,k}\|_{L^{\gamma+1}(I\times\mt)}+\|T^{1,k}\|_{L^{\gamma+1}(Q_{k,M})}\Big).\\
&\leq \,2\Big(\|T_k(\varrho_\delta)-T_k(\varrho)\|_{L^{\gamma+1}(I\times\mt)}+\|T_k(\varrho_\delta)-T^{1,k}\|_{L^{\gamma+1}(I\times\mt)}\Big)+CM.
\end{align*}
Firstly we find that (\ref{eq.amplosc''}) and \eqref{eq:Tk1'} imply \eqref{eq:427'}. Hence secondly \eqref{eq:425'}--\eqref{eq:427'} imply \eqref{eq:T2klim'}. So we can pass to the limit in (\ref{eq:Tkren'}) and gain
\begin{align}\label{eq:ren'}
\partial_t \theta(\varrho)+\Div\big(\theta(\varrho)\bfu\big)
+\big(\theta'(\varrho)\varrho-\theta(\varrho)\big)\Div\bfu=0
\end{align}
in the sense of distributions on $I\times \R^3$ and. The proof of Lemma~\ref{cor:apb} is complete. \\

\subsection{Strong convergence of the density}
\label{sec:strongrhoep'}

We introduce the functions $L_k$ by
\begin{align*}
L_k(z)=\begin{cases}z\ln z,&0\leq z< k\\
z\ln k+z\,\int_k^z T_k(s)/s^2\,ds,&z\geq k
\end{cases}
\end{align*}
We can choose $\theta=L_k$ in \eqref{eq:ren'} such that
\begin{align}\label{eq:1512'}
\partial_t L_k(\varrho)+\Div\big(L_k(\varrho)\bfu\big)
+T_k(\varrho)\Div\bfu=0
\end{align}
in the sense of distributions on $\varphi\in I\times\R^3$.
We also have that
\begin{align*}
\partial_t L_k(\varrho_\delta)+\Div\big(L_k(\varrho_\delta)\bfu_\delta\big)
+T_k(\varrho_\delta)\Div\bfu_\delta =0
\end{align*}
in the sense of distributions, cf. \eqref{eq:Tk0''}.

Using the testfunction $\psi\equiv 1$ on both equations implies
\begin{align}\label{eq:reneps'''}
\begin{aligned}
\int_{\R^3}L_k(\varrho_\delta)\dx&-\int_{\R^3}T_k(\varrho_\delta(0))\varphi(0)\dx
-
\int_0^t\int_{\R^3}T_k(\varrho_\delta)\Div\bfu_\delta\,\dxs\leq 0.
\end{aligned}
\end{align}
and
\begin{align}\label{eq:renb'}
\begin{aligned}
\int_{\R^3}L_k(\varrho)\dx&-\int_{\R^3}L_k(\varrho(0))\varphi(0)\dx
-
\int_0^t\int_{\R^3}T_k(\varrho)\Div\bfu\,\dxs=0.
\end{aligned}
\end{align}
The difference of both equations reads as 
\begin{align*}
\int_{\mathbb R^3}\big(L_k(\varrho_\delta)(t)-L_k(\varrho)(t)\big)\,\dx&\leq\int_{\mathbb R^3}\big(L_k(\varrho_\delta)(0)-L_k(\varrho)(0)\big)\,\dx\\
&+\int_0^t\int_{\mathbb R^3}\big(T_k(\varrho)\Div\bfu-T_k(\varrho)\Div\bfu\big)\,\dxs.
\end{align*}
We have the following convergences for all $p\in(1,\gamma)$ 
\begin{align*}
L_k(\varrho_\delta)\rightarrow L^{1,k}\quad\text{in}\quad C_w(\overline I;L^p(\R^3)),\quad\delta\rightarrow0,\\
\varrho_\delta\ln(\varrho_\delta)\rightarrow L^{2,k}\quad\text{in}\quad C_w(\overline I;L^p(\R^3)),\quad\delta\rightarrow0,
\end{align*}
which is a consequence of the fundamental theorem on Young measures (see, for instance, \cite[Thm. 4.2.1, Cor. 4.2.19]{MNRR}) and the convergence of ${\varrho}_\delta$ in $C_w(I;L^\beta(\R^3))$. The latter one follows from the a-priori information on $\varrho$ in combination with the control over the distributional time derivative of $\varrho_\delta$ coming from the continuity equation (considered on the whole-space).
So we gain (using also the fact, that $\varrho_\delta(0)=\varrho_0=\varrho(0)$) 
\begin{align}\label{1611'}
\begin{aligned}
\int_{\mathbb R^3}\big(L^{1,k}(t)-L_k(\varrho)(t)\big)\dx
&\leq\limsup_{\delta\to 0}\int_0^t\int_{\mathbb R^3}\big(T_k(\varrho)\Div\bfu-T_k(\varrho_\delta)\Div\bfu_\delta\big)\dxs\\
&\leq\int_0^t\int_{\mathbb R^3}\big(T_k(\varrho)-T^{1,k}\big)\Div\bfu\,\dxs,
\end{aligned}
\end{align}
cf. \eqref{eq:Tk1b'}.
Due to (\ref{eq:T1klim'}) the right-hand side tends to zero if $k\rightarrow\infty$ such that
\begin{align*}
\lim_{k\rightarrow\infty}\int_{\mathbb R^3}\big(L^{1,k}(t)-L_k(\varrho)(t)\big)\dx\leq0.
\end{align*}
So, we have shown
\begin{align*}
\lim_{\delta\rightarrow0}\int_I\int_{\mathbb R^3} \varrho_\delta\ln\varrho_\delta\dxt\leq\int_I\int_{\mathbb R^3} \varrho\ln\varrho\dxt.
\end{align*}
By weak lower semi-continuity for convex functionals we the converse inequality holds as well.
This finally means that
\begin{align*}
\int_I\int_{\mathbb R^3} \varrho_\delta\ln\varrho_\delta\dxt\longrightarrow\int_I\int_{\mathbb R^3} \varrho\ln\varrho\dxt.
\end{align*}
Convexity of $z\mapsto z\ln z$ yields strong convergence of $\varrho_\delta$.
Hence, due to \eqref{eq:apulim'}, the proof of Theorem \ref{thm:final} is shown, for the time interval $[0,T_*]$, with $T_*$ depending on the data alone. In the next section we will show, how the interval of existence can be prolongated by a change of coordinates.
\subsection{Maximal interval of existence}
The interval of existence in Theorem~\ref{thm:final} is restricted by the quantities of the given data, as well as the geometry of $\partial\Omega$. By our assumption at the initial geometry, we find that $\Omega_{\eta(T_*)}$ has no self intersection.
We define $\eta^*=(\eta(T_*))_\kappa$, where $\kappa$ is a convolution operator in space.  We define $\tilde{\Omega}=\Omega_{\eta^*}\in C^4$.
If $\kappa$ is conveniently small, than also $\tilde{\Omega}$ has no self intersection either. Especially, there exists some $\tilde{L}>0$, such that on
\[
\tilde{S}_{\tilde{L}}:=\set{x\in \setR^3:\text{ dist}(x,\partial\Omega_{\eta^*})\leq \tilde L},
\]
the function (see the beginning of Subsection~\ref{ssec:geom})
 \begin{align*}
\tilde{\Lambda}:\partial\tilde\Omega\times(-\tilde{L},\tilde{L})\rightarrow \tilde S_{\tilde{L}},\quad 
\tilde{\Lambda}(\tilde q,\tilde s)=\tilde q+\tilde s\nu(q),
\end{align*}
is well defined. Here we have $(q,s)=\Lambda^{-1}(\tilde{q})$ and $\nu$ is the outer normal of the initial geometry $\partial\Omega$.
This implies that for $\tilde{\zeta}: \partial\tilde{\Omega}\to [-\tilde{L},\tilde{L}]$, we may associate
\begin{align*}
\Omega_{\tilde\zeta}:=\tilde{\Omega}\setminus \tilde{S}_{\tilde{L}}\cup\{x\in \tilde{S}_{\tilde{L}}:\,\,\tilde{s}(x)<\tilde{\zeta}(\tilde{q}(x))\}.
\end{align*}
By the definition of $\tilde L$, there exists a diffeomorphism
\[
\bfPsi_{\tilde\zeta}:\tilde{\Omega}\to \Omega_{\tilde{\zeta}},
\]
cf. Lemma \ref{lem:diffeo}.
In particular, we may define the function 
$$\zeta:\partial{\Omega}\to \setR,\quad  q\mapsto\tilde\zeta(q+\nu\eta^*(q))+\eta^*(q).$$ It satisfies
\begin{align*}
 \zeta(q)\in [\eta^*(q)-\tilde{L},\eta^*(q)+\tilde{L}]\text{ for all }q\in \partial\Omega
\end{align*}
where $(q,s)=\Lambda^{-1}(\tilde{q})$.
This implies that the mapping
\[
\bfPsi_{\zeta}:=\bfPsi_{\tilde{\zeta}}\circ\bfPsi_{\eta^*}:\Omega\to \Omega_\zeta=\Omega_{\tilde{\zeta}},
\]
is a well-defined diffemomorphism.
The transformation can also be inverted. 
For any $\eta: \partial \Omega \to \setR$ with
$$
\eta(q)\in [\eta^*(q)-\tilde{L},\eta^*(q)+\tilde{L}]\text{ for all }q\in \partial\Omega
$$
we may define
\begin{align*}
\tilde\eta:\partial\tilde{\Omega}\to [-\tilde{L},\tilde{L}],\quad
\tilde q\mapsto\eta(\tilde{q}-\nu(q)\eta^*(q))-\eta^*(q).
\end{align*}
By this construction we have changed the coordinates
 set $\Omega_\eta$ since
\[
\bfPsi_{\eta}=\bfPsi_{\tilde\eta}\circ \bfPsi_{\eta^*}:\Omega\to \Omega_{\eta}=\Omega_{\tilde{\eta}}.
\]
This transformation can be used to extend the solution. To be precise, we set
\begin{itemize}
\item $\tilde{\eta}_0=\eta(T_*)$,
\item $\tilde{\eta}_1=\partial_t\eta(T^*)$,
\item $\tilde{\varrho}_0=\varrho(T^*)$,
\item $\tilde{\bfq}_0=\varrho(T^*)\bfu(T^*)$.
\end{itemize}
By the construction above, we can associate to any $\tilde{\zeta}\in C\big([T^*,T^{**}]\times\partial\tilde{\Omega}, [-\frac{\tilde{L}}2,\frac{\tilde{L}}2]\big)$ a function $\zeta\in C([T^*,T^{**}]\times\partial\Omega)$ such that 
\[
\eta(q)\in \Big[\eta^*(q)-\frac{\tilde{L}}2,\eta^*(q)+\frac{\tilde{L}}2\Big]\text{ for all }q\in \partial\Omega.
\]
Moreover, the mappings $\bfPsi_{\zeta}:\Omega\to \Omega_{\zeta}$ and $\bfPsi_{\regkap\zeta}:\Omega\to \Omega_{\regkap\zeta}$ are both well defined, provided we choose $\kappa$ small enough. Now, 
first Theorem~\ref{thm:decu} provides a solution $(\eta,\bfu)$ to any given pair $$(\zeta,\bfv)\in C([T^*,T^{**}]\times\partial\Omega, \setR)\times L^2([T^*,T^{**}]\times \setR^3).$$ Second, we wish to get a fixpoint by applying Theorem~\ref{thm:regu}. The only modification is that the fixpoint mapping has to be adjusted slightly. Indeed, the fixed point has to be found in the set
\begin{align*}
D&:=\Big\{(\tilde\zeta,\bfv)\in C([T^*,T^{**}]\times \partial\tilde\Omega)\times L^2([T^*,T^{**}]\times\R^3)):\\&\qquad \qquad\,\,\tilde\zeta(0)=\tilde\eta(T^*),\,\,\|\tilde{\zeta}\|_{L^\infty}\leq \frac{\tilde L}2,\,\,\|\bfv\|_{L^2([T^*,T^{**}]\times\R^3)}\leq K\Big\}.
\end{align*}
Here $K$ has to be adjusted to $T^{**}$ in accordance with the proof of Theorem~\ref{thm:regu}. Finally, we set $F:D\rightarrow \mathfrak P(D)$,
\[
F:(\tilde\bfv, \tilde\zeta)\mapsto \Big\{(\tilde\bfu,\tilde\eta):\,(\bfu,\eta)\text{ solves }\eqref{eq:regudc}\text{ with $(\bfv, \zeta)$ and satisfies the energy bounds}\Big\}.
\]
where $\tilde\eta$ is defined via the solution $\eta$ by $\tilde\eta=\eta(\tilde{q}-\nu(q)\eta^*(q))-\eta^*(q)$ as introduced above. The rest of the argument of Theorem~\ref{thm:regu} does not change, since the $L^\infty$ bounds of $\eta,\zeta$ (which are critical for the fixed point argument) do not change by coordinate transformations. Once the fixed point is established, we may pass to the limit with $\kappa,\epsilon$ and $\delta$ as before. Observe, that in Subsection~\ref{sec:renep} one has to use the extension operator from Lemma \ref{lem:extension} with respect to the coordinate transformation $\bfPsi_{\tilde{\eta}}$ as it satisfies $\|\tilde\eta\|_\infty<\frac{L}{2}$ (here we use the fact that $\Omega_{\tilde{\eta}}=\Omega_\eta$ by our construction). \\
We remark that the solution $\eta$ and $\Omega_{\regkap\eta}$ are defined via the same reference coordinates $\partial\Omega$. This means it truly extends the solution and we can extend the interval of existence.\\
Finally, the above procedure can be iterated until a selfintersection is approached. This finishes the proof of Theorem \ref{thm:final}.

\centerline{\bf Acknowledgement}
\noindent{
S. Schwarzacher
gratefully acknowledges the support of the project LL1202 fnanced by the Ministry of Education,
Youth and Sports and the the program PRVOUK~P47, financed by Charles University in Prague.}\\\

\centerline{\bf Declaration}
\noindent{
The authors declare that there are no conflicts of interest.}


\end{document}